\title{Rigidification and the Coherent Nerve for Enriched Quasicategories}
\author{Harry Gindi}
\date{\today}
\numberwithin{equation}{subsection}
\theoremstyle{plain}   
\newtheorem{thm}[equation]{Theorem}
\newtheorem{prop}[equation]{Proposition}
\newtheorem{cor}[equation]{Corollary}
\newtheorem{lemma}[equation]{Lemma}
\theoremstyle{definition}
\newtheorem{defn}[equation]{Definition}
\newtheorem{note}[equation]{Note}
\newtheorem{obs}[equation]{Observation}
\theoremstyle{remark}
\newtheorem{rem}[equation]{Remark}     
\newtheorem{example}[equation]{Example}
\theoremstyle{plain}
\DeclareMathOperator{\id}{id}
\DeclareMathOperator{\colim}{colim}
\DeclareMathOperator*{\coliml}{colim}
\DeclareMathOperator{\limw}{lim}
\DeclareMathOperator{\Ob}{Ob}
\DeclareMathOperator{\Psh}{Psh}
\newcommand{\op}{\ensuremath{\mathrm{op}}}
\newcommand{\Nec}{\ensuremath{{\mathcal{N}ec}}}
\newcommand{\Hoc}{\ensuremath{{\mathcal{H}oc}}}
\newcommand{\Cat}{\ensuremath{\mathbf{Cat}}}
\newcommand{\overcat}[2]{{\left(#1\downarrow #2\right)}}
\DeclareMathOperator{\Map}{Map}
\DeclareMathOperator{\Hom}{Hom}
\DeclareMathOperator*{\hocoliml}{hocolim}
\newcommand{\realiz}[1]{\ensuremath{\left\lvert#1\right\rvert}}
\newcommand{\psh}[1]{\ensuremath{\widehat{#1}}}
\providecommand{\C}{}
\renewcommand{\C}{\ensuremath{\mathcal{C}}}
\newcommand{\M}{\ensuremath{\mathcal{M}}}
\newcommand{\Pre}{\ensuremath{\mathcal{P}}}
\newcommand{\setS}{\ensuremath{\mathscr{S}}}
\newcommand{\defeq}{\overset{\mathrm{def}}=}
\newcommand{\cellset}{\ensuremath{\widehat{\Theta[\mathcal{C}]}}}
\newcommand{\ssetlab}{\ensuremath{\widehat{\Delta} \int \widehat{\mathcal{C}}}}
\newcommand{\spsh}{\ensuremath{\operatorname{Psh}_\Delta(\mathcal{C})}}
\begin{document}
\pagenumbering{gobble}
\maketitle

\begin{abstract}
	We introduce, for \(\C\) a regular Cartesian Reedy category a model category whose fibrant objects are an analogue of quasicategories enriched in simplicial presheaves on \(\C\).  We then develop a coherent realization and nerve for this model structure and demonstrate using an enriched version of the necklaces of Dugger and Spivak that our model category is Quillen-equivalent to the category of categories enriched in simplicial presheaves on \(\C\).  We then show that for any Cartesian-closed left-Bousfield localization of the category of simplicial presheaves on \(\C\), the coherent nerve and realization descend to a Quillen equivalence on the localizations of these model categories. As an application, we demonstrate a version of Yoneda's lemma for these enriched quasicategories.
\end{abstract}

\tableofcontents

\clearpage
\pagenumbering{arabic}

\section*{Introduction}
\label{sec:intro}
\addcontentsline{toc}{section}{\nameref{sec:intro}}
In his thesis \cite{oury}, David Oury introduced machinery to give a novel proof that his constructed model structure on \(\Theta_2\)-sets is Cartesian-monoidal closed. Around the same time, in \cite{rezk-theta-n-spaces}, Charles Rezk constructed a model structure on \(\Theta_n\)-spaces, that, in the case \(n=2\), was known by folklore to be Quillen bi-equivalent\footnote{Given two model categories \(\mathcal{M}\) and \(\mathcal{N}\) we say that they are \emph{Quillen bi-equivalent} to mean that there exists a pair of \emph{left} Quillen equivalences \(\mathcal{M}\to \mathcal{N}\) and \(\mathcal{N}\to \mathcal{M}\) that are mutually quasi-inverse on homotopy categories. This is somewhat nonstandard, but we know of no other name to refer to this strong condition} to Oury's model structure.  However, Rezk's construction allows us to model weak enrichment in a much larger class of model categories, namely Cartesian-closed model categories whose underlying categories are simplicial presheaves on a small category \(\C\) satisfying some tame restrictions.  

Bergner and Rezk, in a \cite{bergner-rezk-1} and \cite{bergner-rezk-2}, also showed by means of a zig-zag of Quillen equivalences that the category of \(\Theta_n\)-spaces equipped with Rezk's model structure models the same homotopy theory as the model category of \(\Psh_\Delta(\Theta_{n-1})\)-enriched categories, equipped with the Bergner-Lurie model structure for categories enriched in \(\Theta_{n-1}\)-spaces equipped with Rezk's model structure.  Because the equivalence is indirect, however, many of the ideas from Lurie's work on \((\infty,1)\)-categories cannot be adapted in a straightforward manner, specifically his construction of the Yoneda embedding and his proof of Yoneda's lemma in \cite{htt}.  In order to rectify this problem, we split the problem up into two parts: 

We first introduce using a novel model structure on \(\Theta_n\)-sets (or more generally \(\Theta[\C]\)-sets for an appropriate \(\C\)) that emerges naturally as a hybrid of Oury's model structure on \(\Theta_2\)-sets and Rezk's model structure on \(\Theta[\C]\)-spaces.  Specifically, we use Oury's machinery to construct a model structure on \(\Theta[\C]\)-sets that models weak enrichment in simplicial presheaves on \(\C\). We then compare this model structure with an intermediate model structure of Rezk, demonstrating they are Quillen bi-equivalent.  As a result of this bi-equivalence, we can later use results of Rezk \cite{rezk-theta-n-spaces} to localize this model structure 'hom-wise' with respect to what Rezk calls a Cartesian presentation on \(\C\), which is again equivalent to Rezk's localized model structure by merit of Cisinski's results on simplicial completion (see Appendix or \cite{cisinski-book}).  Like Rezk's model structure, ours is also Cartesian-monoidal as a model category. Since we prove many of these theorems using machinery developed by Oury in the unpublished portion of his thesis \cite{oury}, we also provide full proofs of all of his relevant results, but in our more general setting.

We then construct a version of the coherent realization and nerve adjunction between \(\Theta[\C]\)-sets and categories enriched in simplicial presheaves on \(\C\), which reduce to the classical ones in the case where we take \(\C=\ast\) the terminal category. We then demonstrate that this adjunction is a Quillen equivalence between appropriate model structures using an enhanced version of Dugger and Spivak's calculus of necklaces developed in \cite{ds1} and \cite{ds2}. 

Our direct result is strictly stronger than the result of Bergner and Rezk because it allows us to account at the very least for the new case \(\Theta=\Theta_\omega\), which satisfies all of our constraints on \(\C\) by \cite{berger-iterated-wreath}, which the Bergner-Rezk approach could not handle, since one of the categories appearing in the zig-zag (the height-\(n\) analogue of Segal categories) only makes sense for \(\C=\Theta_n\) for \(n\) finite. Their approach goes through rigidification results for homotopy-coherent simplicial models of algebraic theories due to Badzioch (see \cite{bergner-rezk-2}*{Section 5}). Moreover, all of our Quillen equivalences point in the right direction to apply Lurie's construction of the Yoneda embedding and reproduce his proof Yoneda's lemma in this generalized setting.

The paper is organized into the following chapters:

\subsection*{Formal \(\C\)-quasicategories}
In the first chapter, we apply a general construction to define what we call \emph{labeled simplicial sets} with respect to a monoidal category \(\mathcal{V}\).  We then specialize to the case where \(\mathcal{V}\) is the Cartesian-monoidal category of presheaves of sets on a small category \(\C\), which we additionally require to be a special kind of Reedy category that axiomatizes a form of the Eilenberg-Zilber shuffle decomposition for products of simplicial sets.  We then define \(\Theta[\C]\) to be the full subcategory of the labeled simplicial sets whose underlying simplicial sets are simplices and whose edges are all labeled by representable presheaves on \(\C\).  

We define the category of \emph{\(\C\)-cellular sets} to be the category of presheaves of sets on this category.  We then apply machinery of Cisinski and Oury to construct the \emph{horizontal Joyal model structure} on the category of \(\C\)-cellular sets that has many of the familiar nice properties of the Joyal model structure. We call the fibrant objects of this category the \emph{formal \(\C\)-quasicategories}.

We direct the attention of the reader to \Cref{horizontal}, which proves that the model structure is Cartesian-monoidal as well as \Cref{rezkcomparison}, where we prove a useful equivalence with an analogous model structure constructed by Rezk.    

The chapter culminates with a key technical result that gives a characterization of the fibrant objects by a simple lifting property and the fibrations between them as the \emph{isofibrations}, namely the horizontal inner fibrations that have the right lifting property with respect to the inclusion of a vertex into a freestanding isomorphism, extending an important  theorem of Joyal to this setting.

\subsection*{The Coherent Nerve, Horizontal case}
In the second chapter, we define an extension of Lurie's coherent realization functor \(\mathfrak{C}_\Delta\) to our setting.  We leverage the equivalence between \(\C\)-indexed simplicially-enriched categories with a constant set of objects and \(\spsh\)-enriched categories to induce this functor pointwise from \(\mathfrak{C}_\Delta\).  We then work to give an explicit calculation of this functor on representables and more generally on \(\psh{\C}\)-labeled simplices.  

We use the pointwise characterization of this realization to straightforwardly extend the results of Dugger and Spivak \cite{ds1} on alternative realizations to our setting, while on the other hand, we make use of the explicit characterization to demonstrate directly that the coherent realization and its right adjoint, the coherent nerve, form a Quillen pair
\[
	\mathfrak{C}:\cellset_{\mathrm{hJoyal}} \rightleftarrows \Cat_{\spsh_{\mathrm{inj}}}:\mathfrak{N}.
\]

For the next step in this chapter, we introduce cosimplicial resolutions in order to compute mapping objects for formal \(\C\)-quasicategories.  We extend ideas from \cite{ds2} to demonstrate that the coherent nerve and realization actually specify a Quillen equivalence.

\subsection*{The Coherent Nerve, Local case}
In the third and final chapter, we give a way to perform a left-Bousfield localization of the horizontal Joyal model structure with respect to Cartesian presentations of the form \((\C,\setS)\) (though still under the hypothesis that \(\C\) is regular Cartesian Reedy).  The local objects are exactly the formal \(\C\)-quasicategories whose mapping objects are \(\setS\)-local.  Using our comparison theorem with Rezk's model structure, we can apply his result to show that this model structure is again Cartesian monoidal.  

To prove the main result of the paper, we use the compatibility of the coherent realization and nerve with the formation of mapping objects to demonstrate that they remain Quillen equivalences after simultaneous localization. 

As a corollary of the main result, we apply a theorem of Lurie to construct a Yoneda embedding.  We then demonstrate that it is fully faithful and also prove Yoneda's lemma, which we then leverage to define representability.  From representability, we propose a definition of weighted limits and colimits.  

\subsection*{Appendix: Recollections on Cisinski Theory}
Throughout this paper, we will make extensive use of the extremely elegant theory of Cisinski from \cite{cisinski-book}, which allows for the construction and description of model structures on presheaf categories in which the cofibrations are exactly the monomorphisms.  As such, we will recall several key results:

In the first section of the appendix, we will need to recall how to generate Cisinski model structures by anodyne closure with respect to a cellular model, a separating cylinder functor, and a small set of injective maps of presheaves.  We will also demonstrate how this plays into the theory of Cartesian-monoidal Cisinski model categories. A theorem of Cisinski demonstrates that taking an empty set of generating anodynes together with an injective separating interval object generates the minimal Cisinski model structure on a presheaf category. In particular, this will always exist by taking this object to be the subobject classifier.

We will then recall how the existence of a minimal Cisinski model structure gives rise to the theory of localizers by applying left Bousfield-localization.  This theory generalizes the theory of presentation by generating anodynes.  In particular, given any small set of maps in a presheaf category, there is a closure of this set such that it generates a minimal Cisinski model structure in which those maps are weak equivalences.  Since localizers are defined by a closure operation and determine Cisinski model structures up to identity, it will be clear that Cisinski model structures arrange themselves into the structure of a poset ordered by inclusion of their localizers.

In the next section, we will recall Cisinski's theory of simplicial completion and discrete localizers.  In particular, it is a theorem of \cite{cisinski-book} that there is a Galois connection called the simplicial completion between localizers on \(\psh{\mathcal{A}}\) and localizers on \(\psh{\mathcal{A}\times \Delta}\), which restricts to a bijection above the simplicial completion of the minimal localizer on \(\psh{\mathcal{A}}\). Localizers belonging to the image of the simplicial completion are called \emph{discrete}.

We will then describe the tricky relationship between discrete localizers on \(\psh{\mathcal{A}\times \Delta}\) and Dugger presentations on \(\mathcal{A}\), which are the localizations of the injective model structure on simplicial presheaves on \(\mathcal{A}\).  In particular, we will recall the theorem of Cisinski that the simplicial completion of a localizer is also a Dugger presentation if and only if the localizer is \emph{regular}, which is an important property that ensures that every presheaf is canonically the \emph{homotopy colimit} of its representables.  An important fact is that every localizer admits a \emph{regular completion}, which is canonically generated by the regular completion of the minimal localizer together with any localizer.  It follows from this fact that the Galois connection also restricts to a bijection between discrete localizers admitting a Dugger presentation on \(\psh{\mathcal{A}\times \Delta}\) and regular localizers on \(\psh{\mathcal{A}}\).  

Finally, we will digress into the topic of chapter 8 of \cite{cisinski-book}, the theory of skeletal categories.  These are generalized Reedy categories with canonical cellular models, which, under certain combinatorial hypotheses, have a minimal localizer that is already regular.  These categories will be important in the rest of the paper, as they greatly simplify the generation of the model structures in which we are interested.  

\subsection*{Questions}
We suspect that the arguments here can be generalized to more general small categories \(\C\) by replacing the boundary inclusions of \(\C\) with a more general cellular model and by replacing the horizontal Joyal model structure with its regular completion (see \ref{regcompletion}).  All of our motivating examples satisfy the case where \(\C\) is regular Cartesian Reedy, so we haven't attempted to work in this generality. 

\subsection*{Looking forward}
A major challenge in the theory of higher categories is the problem of coherence, that is to say, defining functors and appropriately-natural transformations valued in a higher category of higher categories. It was observed early as the 1970 that a powerful way to deal with coherence problems even for functors from a \(1\)-category to the \(2\)-category of categories was to perform a rectification of that theory to the theory of Cartesian fibrations.  

Lurie extended this point of view to the theory of \((\infty,1)\)-categories for two reasons: Less crucially, one can use the theory of Cartesian fibrations to work with \((\infty,2)\)-categorical notions without ever actually giving a definition of \((\infty,2)\)-category.  Much more important than this shortcut, however, is the fact that Cartesian fibrations greatly simplify coherence problems.  

Unfortunately, our paper does not even begin to scratch the surface of the fibrational point of view, and as a consequence, it is much more difficult to work in our setting in light of the consequent coherence problems.  We expect that to understand the fibrational point of view, attempts will have to be made to understand higher-categorical lax structure.  Lax structure is better-understood in the strict setting due to recent work of Ara, Maltsiniotis, and Steiner, but all attempts thusfar to extend these highly combinatorial results to the theory of weak higher categories have produced no tangible results.  We suspect that this might change in the future when an equivalence theorem between the Complicial model of Verity and the \(\Theta\)-style model studied here is proven.

We hope also that new approaches to dealing with coherence problems might be discovered, and if they can be made to work, we expect that the results of this paper will be even more useful.  

\subsection*{Acknowledgements} First and foremost, we would like to thank George Raptis for advising us in the writing of the version of this paper to be submitted as a Master's thesis at Universit\"at Regensburg.  His suggestions have been invaluable, and we are extremely grateful for the time he has spent reading this paper closely.

We would also like to thank Denis-Charles Cisinski for his help with axiomatizing the Eilenberg-Zilber shuffle decomposition \Cref{cishelp}, which is an important technical condition without which many of the constructions in this paper would not function.  

We would like to give a special thanks Alexander Campbell for his invaluable help in correcting the proof of \Cref{joyalisothm}, which previously cited an incorrect result as well as his advice on strictifying the construction of labeled simplicial sets.

Finally, we would like to acknowledge Andrea Gagna for spending his time discussing these ideas with us over several long conversations as well as Eric Peterson for his sage advice and friendship over the years.  


\section{Formal \(\C\)-quasicategories}
\subsection{The wreath product with \(\Delta\)}
In this section, we will consider a slightly more general definition of the wreath product with \(\Delta\), as defined in \cite{oury}.  

Segal observed long ago that a monoidal category is classified precisely by a pseudofunctor \(M_\bullet:\Delta^\op\to \Cat\) such that \(M_0=\ast\) is the terminal category and the maps \(M_n \to {(M_1)}^n\) induced by the inclusion of the spine \(Sp[n]\hookrightarrow \Delta^n\) are all equivalences of categories.  

\begin{note}
	For the sake of readability of this section, we will consider all limits taken in \(\Cat\) to be the appropriate \(2\)-categorical pseudo-limits.  A \(\Cat\)-valued pseudofunctor with \(1\)-categorical domain will consequently be called continuous if it sends limits to pseudo-limits.  With this out of the way, we proceed to our first definition
\end{note}

\begin{defn}
	Suppose \(\mathcal{V}\) is a monoidal category.  Then we construct a Grothendieck fibration 
	\[
		\Delta\int\mathcal{V}\to \Delta
	\]
	by applying the Grothendieck construction to the pseudofunctor
	\[
		\mathcal{V}_\bullet:\Delta^\op\to \Cat
	\]
	classifying \(\mathcal{V}\). We call the total space of this fibration the \emph{wreath product} of \(\Delta\) with \(\mathcal{V}\).  The objects of \(\Delta\int \mathcal{V}\) can be identified with pairs \(([n],(v_1,\dots,v_n)),\) where \((v_1,\dots,v_n)\) is a tuple of objects of \(\mathcal{V}\).  We will write such an object as \([n](v_1,\dots,v_n)\).
\end{defn}
We will also make use of a more elaborate construction from \cite{oury} that extends the wreath product to arbitrary simplicial sets:
\begin{defn}
	Notice that since \(\Cat\) is conically complete, the pseudofunctor \(\mathcal{V}_\bullet\) extends essentially uniquely along the co-Yoneda embedding to a continuous pseudofunctor 
	\[
		\psh{\mathcal{V}^\op_\bullet}^\op: \psh{\Delta}^\op \to \Cat,
	\]
	which is exactly the pseudo-right Kan extension of \(\mathcal{V}_\bullet\) along the co-Yoneda embedding \(\Delta^\op \hookrightarrow \psh{\Delta}^\op\).  Applying the Grothendieck construction to the functor \(\psh{\mathcal{V}^\op_\bullet}^\op\), we define the Grothendieck fibration
	\[
		\psh{\Delta}\int \mathcal{V} \to \psh{\Delta}.
	\]
	The total space of this fibration is called the category of \emph{\(\mathcal{V}\)-labeled simplicial sets}.
	


\end{defn}
\begin{note}
	It will be useful to explicitly compute the value of \(\psh{\mathcal{V}^\op_\bullet}^\op(S)\) for a simplicial set \(S\) in somewhat simpler terms.  First, consider \(\Delta^n\) to be a discrete simplicial object in \(\Cat\), we can naturally identify \(\mathcal{V}_n\) with the category \(\Cat^{\Delta^\op}(\Delta^n, \mathcal{V}_\bullet)\) whose objects are pseudonatural transformations of simplicial objects and whose morphisms are modifications.  We can then compute
	\begin{align*}
		\psh{\mathcal{V}^\op_\bullet}^\op(S) &= \lim_{\Delta^n \in \overcat{\Delta}{S}} \mathcal{V}_n\\
		&\simeq  \lim_{\Delta^n \in \overcat{\Delta}{S}} \Cat^{\Delta^\op}(\Delta^n, \mathcal{V}_\bullet)\\
		&\simeq \Cat^{\Delta^\op}(\colim_{\Delta^n \in \overcat{\Delta}{S}} \Delta^n, \mathcal{V}_\bullet)
		\intertext{since the cosimplicial object \(\Delta^\bullet\) in \(\Cat^{\Delta^\op}\) is Reedy-cofibrant, and therefore}
		&\simeq \Cat^{\Delta^\op}(S, \mathcal{V}_\bullet).
	\end{align*}
	In particular, we can identify the category \(\psh{\mathcal{V}^\op_\bullet}^\op(S)\) with the category whose objects are pseudonatural transformations \(\Omega:S\to \mathcal{V}_\bullet\) and whose morphisms are modifications, viewing \(S\) as a simplicial object in \(\Cat\).  However, since \(S_n\) is a discrete category for every \(n\), every pseudonatural transformation is in fact isomorphic to a natural transformation.
	
	It follows that the objects of \(\psh{\Delta}\int \mathcal{V}\) with pairs \((S,\Omega)\) consisting of a simplicial set \(S\) and a natural transformation \(\Omega:S\to \mathcal{V}_\bullet\).  
\end{note}
\begin{note}
	It is possible, by careful application of coherence results, to rectify everything in sight.  First, notice that if \(\mathcal{V}_\bullet\) is Reedy-fibrant with respect to the canonical model structure on \(\Cat\), we can compute the pseudolimit as a strict limit in the \(1\)-category \(\Cat\) while also replacing pseudonatural transformations with strict ones.  
	
	This raises the question of how to obtain a Reedy-fibrant \(\mathcal{V}_\bullet\) from a monoidal category \(\mathcal{V}\).  For this, consider the monoidal category \(\mathcal{V}\) as a one-object bicategory and apply the \(2\)-nerve of Lack and Paoli \cite{lack-paoli}.  This produces a simplicial category whose object in degree \(0\) is the terminal category and whose object in degree \(1\) is in fact isomorphic to \(\mathcal{V}\).  This simplicial object is also Reedy-fibrant and satisfies the Segal condition.  We can unwind \(\mathcal{V}_\bullet\) as follows:
	\begin{itemize}
		\item The objects of \(\mathcal{V}_n\) are the normal pseudofunctors \([n] \to \mathbf{B}\mathcal{V}\), where \(\mathbf{B}\mathcal{V}\) denotes the associated single-object bicategory.
		\item The morphisms are given by \emph{icons} between pseudofunctors.  These are lax natural transformations whose object components are identities.
	\end{itemize}
	Unraveling this construction, we have a natural \emph{isomorphism} of categories 
	\[
		\operatorname{Nat}(\Delta^n, \mathcal{V}_\bullet) \cong \mathcal{V}_n.
	\]
	and using the Reedy-fibrancy of \(\mathcal{V}_\bullet\), we also have a natural \emph{isomorphism} of categories
	\[
		\operatorname{Nat}(S,\mathcal{V}_\bullet) \cong \psh{\mathcal{V}^\op_\bullet}^\op(S),
	\]
	where \(\operatorname{Nat}\) denotes the category of strict natural transformations and modifications between them.
\end{note}
\begin{prop} The pullback of the fibration 
	\[
		\psh{\Delta}\int \mathcal{V} \to \psh{\Delta}
	\]
	along the Yoneda embedding \(\Delta\hookrightarrow \psh{\Delta}\) is exactly the fibration 
	\[
		\Delta\int\mathcal{V}\to \Delta,
	\]
	and therefore, the induced map
	\[
		\Delta\int\mathcal{V}\hookrightarrow \psh{\Delta}\int\mathcal{V}
	\]
	is a fully faithful embedding.
\end{prop}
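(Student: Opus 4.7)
The plan is to verify the pullback claim by computing the restriction of the classifying pseudofunctor along the co-Yoneda embedding, and then deduce fully-faithfulness as a formal corollary.

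I would first invoke naturality of the Grothendieck construction: restricting a \(\Cat\)-valued pseudofunctor on \(\mathcal{B}^\op\) along a functor \(G\colon\mathcal{B}'\to\mathcal{B}\) corresponds precisely to pulling back the associated fibration along \(G\). Consequently, in order to identify the pullback of \(\psh{\Delta}\int\mathcal{V}\) along the Yoneda embedding with \(\Delta\int\mathcal{V}\), it is enough to show that the restriction of \(\psh{\mathcal{V}^\op_\bullet}^\op\) along the co-Yoneda embedding \(\Delta^\op\hookrightarrow\psh{\Delta}^\op\) agrees with \(\mathcal{V}_\bullet\).

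This is essentially a defining property of the pseudo-right Kan extension along a fully faithful functor: the counit of the \(\mathrm{Ran}\)-adjunction is an equivalence. Concretely, specialising the formula \(\psh{\mathcal{V}^\op_\bullet}^\op(S)\simeq\lim_{\Delta^n\in\overcat{\Delta}{S}}\mathcal{V}_n\) from the preceding Note to \(S=\Delta^k\), one observes that \(\id_{\Delta^k}\) is terminal in the slice \(\overcat{\Delta}{\Delta^k}\), so the limit is canonically identified with \(\mathcal{V}_k\). Working through the strict presentation of the Note (via the \(2\)-nerve of Lack and Paoli), this equivalence upgrades to an honest isomorphism, so the comparison of fibrations holds on the nose, as the word ``exactly'' in the statement demands.

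For fully-faithfulness of \(\Delta\int\mathcal{V}\hookrightarrow\psh{\Delta}\int\mathcal{V}\), I would argue formally: for any pseudofunctor \(F\colon\mathcal{B}^\op\to\Cat\) and any fully faithful \(G\colon\mathcal{B}'\to\mathcal{B}\), the canonical functor from the Grothendieck construction of \(F\circ G^\op\) into that of \(F\) is fully faithful, because a morphism in a Grothendieck construction decomposes into a base morphism together with a morphism in the fiber, the fiber data is unchanged by pullback, and \(G\) is fully faithful on the base data. Since the Yoneda embedding is fully faithful, the claim follows. The only real obstacle is bookkeeping: ensuring that the chain of pseudo-categorical identifications really produces an equality of fibrations rather than merely an equivalence. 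The strictification machinery introduced in the preceding Note removes this difficulty entirely, reducing the argument to manipulations in the strict \(1\)-category \(\Cat\).
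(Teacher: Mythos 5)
Your proposal is correct and follows essentially the same route as the paper: both arguments rest on the observation that \(\mathcal{V}_\bullet\) factors as the co-Yoneda embedding followed by \(\psh{\mathcal{V}^\op_\bullet}^\op\) (i.e.\ that the pseudo-right Kan extension restricts back to \(\mathcal{V}_\bullet\) along the fully faithful embedding), so that the Grothendieck construction pulls back to \(\Delta\int\mathcal{V}\) and full faithfulness is inherited from the Yoneda embedding. The paper states this in two lines; you have merely supplied the supporting details (terminality of \(\id_{\Delta^k}\) in the slice, naturality of the Grothendieck construction, and the strictification needed to get an identification on the nose), all of which are consistent with the paper's intent.
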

\begin{proof}  As the functor \(\mathcal{V}_\bullet\) factors as the composite 
	\[\Delta^\op \hookrightarrow \psh{\Delta}^\op \xrightarrow{\psh{\mathcal{V}^\op_\bullet}^\op} \Cat,\] where the first functor is fully faithful, it follows that \(\Delta\int \mathcal{V} \to \Delta\) is the pullback of the fibration \(\psh{\Delta}\int \mathcal{V} \to \psh{\Delta}\) along the fully faithful Yoneda embedding.  Ergo, the map in question is fully faithful.  
\end{proof}

For the purposes of this paper, we do not need this level of generality.  We specialize as follows: 

\begin{defn}\label{cishelp}
	A small regular skeletal Reedy category (also called a regular skeletal category) \(\C\) (see \Cref{regskelcat}) is called a \emph{regular Cartesian Reedy category} if it satisfies two conditions:

	\begin{enumerate}[leftmargin=5em,label=(CR\arabic*{})]
		\item The class of regular presheaves on \(\C\) (see \Cref{regskelcat}) is closed under finite products.
		\item If \(I\) is a finite set and \(c\to \prod\limits_{i\in I }c_i \) is a nondegenerate section, then \(\dim c \leq \sum\limits_{i\in I}\dim c_i\).
	\end{enumerate}
\end{defn}

\begin{rem}
	The axioms for regular Cartesian Reedy categories imply that \(\C\) is a Reedy multicategory in the sense of \cite{bergner-rezk-reedy}.  It also asserts a weak form of the Eilenberg-Zilber shuffle decomposition. There may be a way to prove (CR2) from (CR1), but we were unable to do so.
\end{rem}

In the sequel, we assume that \(\mathcal{V}=\psh{\C}\) is the category of presheaves of sets on a small regular Cartesian Reedy category \(\C\) admitting a terminal object.  Then we give the following definition:

\begin{defn} For any regular Cartesian Reedy category \(\C\) admitting a terminal object, we define the category of \(\C\)-cells
	\[
		\Theta[\C]\subseteq \Delta \int \psh{\C}
	\]
	to be the full subcategory spanned by the objects of the form \([n](h_{c_1},\cdots, h_{c_n})\) for \(c_1,\cdots, c_n \in \C\), and where \(h_\bullet\) denotes the Yoneda embedding.
\end{defn}

\begin{rem}
	The requirement that \(\C\) have a terminal object is a technical condition that ensures that \(\Delta\) embeds fully and faithfully in \(\Theta[\C]\).  The condition that \(\C\) is regular Cartesian Reedy is probably not necessary, but it will ensure later on that the generating cofibrations of the injective model structure on simplicial presheaves \(\spsh\) admit a very simple description.
\end{rem}

\begin{rem} 
	For any small category \(\C\), we have a projection functor \(\pi:\Theta[C]\to \Theta[\ast]=\Delta\) sending an object to the associated underlying simplex.  
	
	When \(\C\) has a terminal object, the inclusion \(\ast\to \C\) is a fully faithful right-adjoint to the terminal functor.  It can easily be seen that the construction \(\Theta[\cdot]\) preserves fully faithful right-adjoints, giving us an adjunction
	\[
		\pi: \Theta[\C]\rightleftarrows \Theta[\ast]=\Delta: \eta.
	\]
	Passing to presheaf categories, these functors also extend to a quadruple adjunction by a routine calculation of Kan extensions.  However, we will only name and make use of three of the four adjoints.     
	\begin{center}
		\begin{tikzpicture}
			\matrix (m) [matrix of math nodes, row sep=3em,column sep=3em]
			{ \cellset & \psh{\Delta} \\};
			\path[->,font=\scriptsize]
			(m-1-1) edge[transform canvas={yshift=1.2em}]   node[auto]{\(\scriptstyle \pi\)} node[auto,swap,transform canvas={yshift=0.2em}]{\(\scriptstyle{\perp}\)} (m-1-2)
			(m-1-2) edge node[auto,swap, transform canvas={yshift=-0.2em}]{\(\scriptstyle \mathscr{H}\)} (m-1-1)
			(m-1-1) edge[transform canvas={yshift=-1.2em}]   node[auto, swap]{\(\scriptstyle \mathscr{N}\)} node[auto]{\(\scriptstyle{\perp}\)} (m-1-2);
			\end{tikzpicture},
	\end{center}
	where, by abuse of notation, we denote the \emph{simplicial projection} functor \(\pi_!\) simply by \(\pi\), we denote the \emph{local termination} functor \(\pi^\ast=\eta_!\) by \(\mathscr{H}\), and we denote the \emph{underlying simplicial set} functor \(\pi_\ast=\eta^\ast\) by \(\mathscr{N}\). 
\end{rem}

\begin{defn} We define a special cosimplicial object in \(\cellset\) by the formula \[E^\bullet=\mathscr{H}(\operatorname{cosk}_0\Delta^\bullet).\]  This cosimplicial object will be a cosimplicial resolution of a point, once we define our model structures.
\end{defn}

\subsection{The generalized intertwiner and \(\ssetlab\)}\label{sec:intertwiner}
Rezk introduced a functor called the \emph{intertwiner} by means of an explicit construction in \cite{rezk-theta-n-spaces}, but Oury has given an even more powerful version in \cite{oury}, which we recall here:

\begin{defn} Recall that we have a fully-faithful embedding \[L:\Theta[\C]\hookrightarrow \Delta\int\psh{\C}\hookrightarrow \ssetlab.\]  We define the \emph{intertwiner} to be the restricted Yoneda functor 
	\[
		\square:\ssetlab \to \cellset
	\]
	by the formula 
	\[
		(S,\Omega)\mapsto S\square\Omega=\Hom_{\ssetlab}(L(\cdot), (S,\Omega)).
	\]
\end{defn}

\begin{note} The restriction of the intertwiner to \(\Delta \int \psh{\C}\) is exactly the intertwiner of Rezk.  When we apply the intertwiner to an object belonging to the full subcategory \(\Delta\int \psh{\C}\), that is, \((S,\Omega)=[n](A_1,\dots, A_n)\), we will switch to Rezk's notation, namely \[V[n](A_1,\dots,A_n)\defeq S\square \Omega\]
\end{note}

\begin{defn} An object \((S,\Omega)\) of \(\ssetlab\) is called \emph{normalized} if the image of the component \(\Omega_1\) of the natural transformation \(\Omega\) does not contain the empty presheaf on \(\C\).
\end{defn}

\begin{prop} The restriction of the intertwiner to the full subcategory of normalized objects in \(\ssetlab\) is fully faithful.
\end{prop}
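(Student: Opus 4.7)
The intertwiner $\square$ is defined as restricted Yoneda along the embedding $L: \Theta[\C] \hookrightarrow \ssetlab$, and as such it is the right adjoint in an adjunction whose left adjoint is the realization functor given by $\realiz{X} = \colim_{L(c) \to X} L(c)$ (the colimit indexed over the category of elements $\El(X)$ of $X$ viewed as a presheaf on $\Theta[\C]$). Fully faithfulness of $\square$ at an object $(S, \Omega)$ is equivalent to the counit $\realiz{(S, \Omega)\square} \to (S, \Omega)$ being an isomorphism, so my strategy is to verify this counit for each normalized $(S, \Omega)$.

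I would unpack $\El((S, \Omega)\square)$: its objects are quadruples $([n], \vec{c}, \sigma, \vec{x})$ with $\sigma \in S_n$ and $x_i \in A_i(\sigma)(c_i)$, where $A_i(\sigma)$ denotes the $i$-th edge label. Using the strict rectification of $(\psh{\C})_\bullet$ via the Lack--Paoli $2$-nerve recalled in the preceding note, colimits in $\ssetlab$ reduce to compatible colimits of the underlying simplicial set together with colimits of labels at each simplex. For the labels, $\colim_{(c, x \colon h_c \to A_i(\sigma))} h_c = A_i(\sigma)$ by the co-Yoneda lemma, recovering $\Omega$ exactly. For the underlying simplicial set, I would show that the forgetful projection $\El((S, \Omega)\square) \to \El(S)$ has fibre over each simplex $\sigma$ given by $\prod_i \El(A_i(\sigma))$, which is non-empty precisely when no edge label of $\sigma$ is the empty presheaf---exactly the normalization condition---and that this projection is cofinal, so that the colimit of underlying simplicial sets recovers $S = \colim_{\Delta^n \to S} \Delta^n$.

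The main obstacle is the cofinality step. Colimits in the Grothendieck fibration $\ssetlab \to \psh{\Delta}$ must be handled with care, and ensuring that the fibres of $\El((S,\Omega)\square) \to \El(S)$ have the right connectivity beyond mere non-emptiness is the key combinatorial issue. The regular Cartesian Reedy axioms (CR1) and (CR2) on $\C$, which encode a weak Eilenberg--Zilber-style decomposition of products of representables, should provide the rigidity required to complete the argument, by allowing one to connect distinct label choices over a common simplex through higher-dimensional cells whose face and product structure relates them.
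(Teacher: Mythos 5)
Your strategy cannot work as stated, for two compounding reasons. First, a small logical point: for the \emph{restriction} of a right adjoint to a full subcategory, the counit being an isomorphism at each object of that subcategory is \emph{sufficient} for fully faithfulness but not equivalent to it --- the bijection $\Hom(X,Y)\cong\Hom(\realiz{X\square},Y)$ given by precomposition with the counit must hold for \emph{all} targets $Y$, not just normalized ones, to force $\epsilon_X$ to be invertible. This alone would be harmless, except that, second, the sufficient condition you propose to verify is genuinely false: the counit is \emph{not} an isomorphism at normalized objects. Take $\C=\ast$ (certainly regular Cartesian Reedy) and the normalized object $(\Delta^1,\Omega)$ where the unique nondegenerate edge is labeled by a two-element set $\{a,b\}$. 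Then $\Delta^1\square\Omega$ is the simplicial set with two vertices and two nondegenerate $1$-simplices between them, and any reasonable colimit of its cells has that $2$-cycle, not $\Delta^1$, as underlying simplicial set. Concretely, your fibre $\prod_i\El(A_i(\sigma))$ over the nondegenerate edge is the discrete category on $\{a,b\}$: non-empty but disconnected, so the projection $\El((S,\Omega)\square)\to\El(S)$ fails the connectedness criterion for finality, and no amount of rigidity from (CR1)--(CR2) can repair this, since the counterexample already lives over the terminal $\C$. (This does not contradict the proposition: the extra endomorphism of the $2$-cycle swapping the two edges \emph{is} in the image of $\square$, namely the swap of $a$ and $b$; the point is only that $\realiz{(S,\Omega)\square}\not\cong(S,\Omega)$.) There is also a foundational issue you elide: the left adjoint $\realiz{\cdot}$ requires $\ssetlab$ to admit colimits indexed by categories of elements, which the paper never establishes --- it only constructs finite products and certain pullbacks in $\ssetlab$.

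The paper's proof avoids all of this by working directly with hom-sets. Given $\gamma:S\square\Omega\to S'\square\Omega'$, one recovers the underlying simplicial map $S\to S'$ by observing that $\Hom_{\ssetlab}([n](\varnothing,\dots,\varnothing),(S,\Omega))\cong S_n$ and that normalization guarantees every such map factors through some genuine cell $[n](c_1,\dots,c_n)\to(S,\Omega)$ (a non-empty presheaf receives a map from a representable); applying $\gamma$ to the cell and precomposing with $[n](\varnothing,\dots,\varnothing)\to[n](c_1,\dots,c_n)$ yields the map $S_n\to S'_n$. The modification $\Omega\to\Omega'\circ f$ is then read off from the action of $\gamma$ on cells $[n](c_1,\dots,c_n)$, naturally in $n$ and the $c_i$, and this construction is a two-sided inverse to the action of $\square$ on hom-sets. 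If you want to salvage your approach, you would have to replace the counit criterion by this kind of explicit cell-by-cell reconstruction of a map of labeled simplicial sets from a map of the associated cellular sets; the normalization hypothesis enters exactly at the factorization step, not through any connectivity of fibres.
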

\begin{proof}
	Recall before we begin that a map \((S,\Omega)\to (S^\prime,\Omega^\prime)\) is given by a morphism of simplicial sets \(f:S\to S^\prime\) and a natural modification \(\zeta:\Omega\to \Omega^\prime\circ f\).

	In order to prove fullness, let \(\gamma:S\square\Omega\to S^\prime\square\Omega^\prime\) be a map in \(\cellset\). We notice that \(\Hom([n](\varnothing,\dots,\varnothing), (S,\Omega))\) is naturally isomorphic to \(S_n\), and proceed by diagram chase. Since by assumption \((S,\Omega)\) is normalized, every map \([n](\varnothing,\dots,\varnothing)\to (S,\Omega)\) factors through at least one map \([n](c_1,\dots,c_n)\to (S,\Omega)\).  

	Choosing such a factorization, the natural transformation \(\gamma\) sends this to a map \[[n](c_1,\dots,c_n)\to (S^\prime,\Omega^\prime),\] and finally, precomposing this map with the unique map \([n](\varnothing,\dots,\varnothing)\to [n](c_1,\dots,c_n)\), we obtain a map \([n](\varnothing,\dots,\varnothing)\to (S^\prime,\Omega^\prime)\). Taking these together gives a map \(S_n\to S^\prime_n\), naturally in \(n\).

	Now assume that \(S^\prime=S\) and that the map induced by \(\gamma\) is the identity.  Then notice that a map \[[n](c_1,\dots,c_n)\to (S,\Omega)\] is completely determined by its action on the degree \(n\) part, but this amounts to picking an \(n\)-simplex of \(S\) together with its labeling \((A_1,\dots,A_n)\), and a map \((c_1,\dots,c_n)\to (A_1,\dots,A_n)\).  Then the natural transformation gives a natural map \(((A_1)_{c_1},\dots,(A_n)_{c_n})\to ((A^\prime_1)_{c_1},\dots,(A^\prime_n)_{c_n})\), taking the naturality in \(n\) and the \(c_i\), these together determine a natural modification \(\Omega\to \Omega^\prime\).

	To see faithfulness, notice that the construction in the proof of fullness defines a left-inverse to the definition of the map on morphisms defined by the intertwiner.
\end{proof}

\begin{defn} We call a presheaf of sets on \(\Theta[\C]\) a \emph{\(\C\)-cellular set}.
\end{defn}

\begin{note} Although the case when \(\C=\Theta_{n-1}\) (respectively \(\C=\Theta=\Theta_\omega\)) are not strictly the focus of this paper, note that \(\Theta[\Theta_{n-1}]=\Theta_n\) (respectively \(\Theta[\Theta]=\Theta\)).  In these cases, we call presheaves of sets on \(\Theta[\C]\) \emph{\(n\)-cellular sets} (respectively, \emph{cellular sets}).  
\end{note}

\begin{defn} We say that a \(\C\)-cellular set \(X\) is \emph{sober} if it is the image of a normalized object of \(\ssetlab\). If \(f:X\to Y\) is the image under the intertwiner of a Cartesian map of normalized labeled simplicial sets, we call \(f\) \emph{Cartesian}.
\end{defn}

\begin{prop} All representable \(\C\)-cellular sets are sober.
\end{prop}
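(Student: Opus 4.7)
My plan is to take the object $L(\theta) \in \ssetlab$ itself as the normalized witness for the representable $\C$-cellular set associated to any $\theta \in \Theta[\C]$. The proof then reduces to two checks: first, that the intertwiner sends $L(\theta)$ to this representable; and second, that $L(\theta)$ is indeed normalized.

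The first check is immediate from full faithfulness of $L$. Since $L$ is the composite of two fully faithful embeddings $\Theta[\C] \hookrightarrow \Delta \int \psh{\C} \hookrightarrow \ssetlab$, the intertwiner sends $L(\theta)$ to
\[
  \Hom_{\ssetlab}(L(-), L(\theta)) \;=\; \Hom_{\Theta[\C]}(-, \theta),
\]
which is the representable at $\theta$.

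For the normalization, I would write $\theta = [n](c_1, \dots, c_n)$, so that $L(\theta) = (\Delta^n, \Omega)$ with the natural transformation $\Omega \colon \Delta^n \to \psh{\C}_\bullet$ determined by sending the top non-degenerate simplex to the tuple $(h_{c_1}, \dots, h_{c_n}) \in \psh{\C}_n$. Naturality together with the explicit action of the classifying pseudofunctor of the Cartesian monoidal category $\psh{\C}$ on edge inclusions $[1] \hookrightarrow [n]$ identifies the label $\Omega_1([i \le j])$ with the Cartesian product $h_{c_{i+1}} \times \cdots \times h_{c_j}$ when $i < j$, and with the monoidal unit---the terminal presheaf $h_\ast$, which is well-defined because $\C$ admits a terminal object---when $i = j$.

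It then remains to verify that none of these labels coincides with the initial (empty) presheaf on $\C$. The terminal presheaf is visibly non-empty, handling the degenerate case. For the products of representables appearing on non-degenerate edges, axiom (CR1) ensures that each $h_{c_{i+1}} \times \cdots \times h_{c_j}$ is a regular presheaf on $\C$, and regular presheaves are by construction non-empty (cf.\ \Cref{regskelcat}). This last point is the only place where the regular Cartesian Reedy hypothesis does real work in the argument; the rest is a definition chase.
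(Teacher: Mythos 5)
Your overall route is the same as the paper's (which dispatches this proposition with ``By construction''), and the first two steps are sound: full faithfulness of $L$ identifies the intertwiner of $L(\theta)$ with the representable at $\theta$, and the labels of $L([n](c_1,\dots,c_n))$ are indeed the terminal presheaf on degenerate edges and $h_{c_{i+1}}\times\cdots\times h_{c_j}$ on the edge $i<j$. The problem is the final step. Regularity of a presheaf (\Cref{regskelcat}) says only that every \emph{nondegenerate section} is monic; the empty presheaf has no sections at all, so it satisfies this condition vacuously and is therefore regular. Consequently (CR1) gives you no information about non-emptiness, and the assertion that ``regular presheaves are by construction non-empty'' is simply false. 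Since this is the one place in the argument where something nontrivial must actually be verified, the gap is not cosmetic.

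What you need is that $\prod_{k=i+1}^{j} h_{c_k}$ is never the initial presheaf, i.e., that there exists some $d\in\C$ admitting maps to each of $c_{i+1},\dots,c_j$ simultaneously, so that $\prod_{k}\Hom_{\C}(d,c_k)\neq\varnothing$. Note that non-emptiness of each factor $h_{c_k}$ does not by itself give non-emptiness of the product: two objects with no common ``test object'' mapping to both would yield an empty product of non-empty representables. In the motivating examples this is immediate --- in $\Theta_n$ the object $[0]$ maps to every object, so evaluating the product there already produces a section --- but if you want to argue from the stated axioms you must isolate and prove such a weak initiality statement (or otherwise show directly that finite products of representables in $\psh{\C}$ are non-empty). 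As written, your proof does not establish that $L(\theta)$ is normalized.
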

\begin{proof} By construction.
\end{proof}

\begin{lemma}
	The category \(\ssetlab\) has finite products.
\end{lemma}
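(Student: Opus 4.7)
The plan is to build products componentwise, using the Cartesian monoidal structure on $\mathcal{V} = \psh{\C}$ to combine labelings while taking the usual Cartesian product for underlying simplicial sets. Abstractly, $\ssetlab$ is the total category of a Grothendieck fibration over $\psh{\Delta}$; since the base has all finite products and each fiber carries a compatible product structure preserved by the pullback functors, products in the total category exist and are computed levelwise.

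First I would exhibit a terminal object. By Segal's classification recalled at the opening of the subsection, the classifying pseudofunctor $\mathcal{V}_\bullet$ satisfies $\mathcal{V}_0 \simeq \ast$, so $(\Delta^0, !)$ with its essentially unique labeling is terminal. For the binary product of $(S, \Omega)$ and $(S', \Omega')$, I would take the underlying simplicial set to be the ordinary product $S \times S'$, and define a labeling $\Psi : S \times S' \to \mathcal{V}_\bullet$ componentwise using the Cartesian product in $\psh{\C}$: on an $n$-simplex $(s, s')$ with $\Omega(s) = (A_1, \ldots, A_n)$ and $\Omega'(s') = (A'_1, \ldots, A'_n)$, set
\[
	\Psi(s, s') = (A_1 \times A'_1, \ldots, A_n \times A'_n) \in \mathcal{V}_n.
\]
This assembles into an honest natural transformation, because the Cartesian product functor on $\psh{\C}$ is strong monoidal, hence commutes with the face, degeneracy, and Segal structure maps of $\mathcal{V}_\bullet$.

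The universal property would then be verified in two stages. A map $(T, \Xi) \to (S \times S', \Psi)$ consists of a simplicial map $f : T \to S \times S'$ together with a modification $\zeta : \Xi \Rightarrow \Psi \circ f$. The simplicial part decomposes uniquely into a pair $(f_1, f_2)$ by the universal property of the product in $\psh{\Delta}$; the modification then decomposes uniquely into a pair of modifications $\zeta_i : \Xi \Rightarrow \Omega_i \circ f_i$ by applying the universal property of Cartesian products inside each $\mathcal{V}_n$ pointwise over the simplices of $T$, then assembling by naturality. The resulting bijection is evidently natural in $(T, \Xi)$.

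The main subtlety is bookkeeping: one must verify that the pointwise decompositions in the simplicial direction and in the fiber direction interact coherently, which amounts to a routine but slightly tedious diagram chase across the structure maps of $\mathcal{V}_\bullet$. The strictness concerns flagged in the Notes preceding this lemma do not cause trouble, since the second Note observes that every pseudonatural transformation out of a discrete simplicial object is isomorphic to a strict one, and the Lack--Paoli rectification furnishes a Reedy-fibrant strict model of $\mathcal{V}_\bullet$ should one prefer to work entirely inside the $1$-category $\Cat$.
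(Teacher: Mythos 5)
Your construction is exactly the paper's: the underlying simplicial set is $S\times S'$ and the labeling is the composite $S\times S'\xrightarrow{\Omega\times\Omega'}\psh{\C}_\bullet\times\psh{\C}_\bullet\xrightarrow{\times}\psh{\C}_\bullet$, which on an $n$-simplex is precisely your componentwise formula. The paper simply asserts the universal property is clear where you spell out the decomposition of maps, so the two arguments coincide.
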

\begin{proof}
	Then we define the Cartesian product of \((S,\Omega)\) and \((S^\prime,\Omega^\prime)\) by the formula 
	\[
		S\times S^\prime \xrightarrow{\Omega \times \Omega^\prime} \psh{\C}_\bullet \times \psh{\C}_\bullet \xrightarrow{\times} \psh{\C}_\bullet.
	\]  It is clear that this satisfies the universal property of the product.
\end{proof}
\begin{lemma} Given a simplicial set \(S\), the category of labelings of \(S,\) that is, the fibre \(\left(\ssetlab\right)_S\) is closed under finite Cartesian products. 
\end{lemma}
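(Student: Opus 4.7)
The plan is to construct the product pointwise, leveraging the fact that each category \(\psh{\C}_n\) has finite products. Via the Segal decomposition underlying \(\psh{\C}_\bullet\), there is an equivalence \(\psh{\C}_n \simeq \psh{\C}^n\), and \(\psh{\C}\) has finite Cartesian products (computed representable-wise). Given two labelings \(\Omega, \Omega' : S \to \psh{\C}_\bullet\) in the fibre \((\ssetlab)_S\), I would define their product \(\Omega \times_S \Omega'\) by declaring its component at each \(n\)-simplex \(x \in S_n\) to be the Cartesian product \(\Omega_n(x) \times \Omega'_n(x)\) taken in \(\psh{\C}_n\), which in tuple form is just the componentwise Cartesian product in \(\psh{\C}^n\).

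The main verification is that this pointwise assignment is natural in the simplicial direction, i.e.\ defines a genuine (pseudo)natural transformation \(S \to \psh{\C}_\bullet\). This reduces to checking that each structure functor \(\alpha^\ast: \psh{\C}_m \to \psh{\C}_n\) induced by an operator \(\alpha : [n] \to [m]\) preserves finite products. Under the identification \(\psh{\C}_n \simeq \psh{\C}^n\), these functors are built from projections (for outer faces), insertions of the terminal presheaf (for degeneracies), and Cartesian products of adjacent coordinates (for inner faces), all of which preserve finite products in a Cartesian-monoidal category.

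More conceptually, one can deduce this by taking the Cartesian pullback of the ambient product \((S \times S, \Omega \times \Omega')\) — which exists by the previous lemma — along the diagonal \(\Delta_S : S \to S \times S\), using that \(\ssetlab \to \psh{\Delta}\) is a Grothendieck fibration by construction. Its universal property in the fibre then follows formally from the universal property of the ambient product and of Cartesian lifts. The terminal object in the fibre is the labeling that assigns to each \(x \in S_n\) the \(n\)-tuple of terminal presheaves on \(\C\).

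The main obstacle, to the extent there is one, is purely bookkeeping: unpacking naturality over \(\Delta^\op\) for the pointwise product. Once one writes \(\psh{\C}_n\) in tuple form, the claim reduces to the standard facts that the Cartesian product in \(\psh{\C}\) is associative, unital with respect to the terminal presheaf, and preserved by projections, all of which are automatic.
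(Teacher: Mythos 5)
Your proposal is correct and is essentially the paper's argument: the paper defines the product labeling as the composite \(S\xrightarrow{\Delta}S\times S\xrightarrow{\Omega\times\Omega'}\psh{\C}_\bullet\times\psh{\C}_\bullet\xrightarrow{\times}\psh{\C}_\bullet\), which is exactly your pullback-along-the-diagonal description, and your pointwise formula is just its unwinding. The naturality check you spell out is the verification the paper explicitly leaves to the reader, so nothing is missing.
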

\begin{proof} Given two labelings \((S,\Omega)\) and  \((S,\Omega^\prime)\) the product of their labelings in the fibre \(\left(\ssetlab\right)_S\) can be given by the formula 
	\[S\xrightarrow{\Delta} S\times S \xrightarrow{\Omega\times \Omega^\prime}  \psh{\C}_\bullet \times \psh{\C}_\bullet \xrightarrow{\times} \psh{\C}_\bullet.\]  We leave the verification to the reader.
\end{proof}

\begin{prop} The class of sober \(\C\)-cellular sets is closed under Cartesian product.
\end{prop}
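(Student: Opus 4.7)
The approach is to combine the explicit construction of products in $\ssetlab$ given in the first of the preceding lemmas with the observation that the intertwiner
\[
	\square \;=\; \Hom_{\ssetlab}(L(-), -) : \ssetlab \to \cellset
\]
is continuous in its second variable and therefore preserves finite products. Hence if $X = S \square \Omega$ and $Y = S' \square \Omega'$ are sober cellular sets, presented by normalized objects $(S,\Omega)$ and $(S',\Omega')$ of $\ssetlab$, then
\[
	X \times Y \;\cong\; (S \times S') \square (\Omega \times \Omega').
\]

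It then suffices to exhibit the right-hand side as the intertwiner of a normalized labeled simplicial set. My plan is to first attempt to show that $(S \times S', \Omega \times \Omega')$ is itself normalized. A $1$-simplex of $S \times S'$ is a pair $(s,s')$, whose label is $\Omega_1(s) \times \Omega_1'(s')$, and each factor is a nonempty presheaf by the normalization hypotheses. Under axiom (CR1)---the closure of the regular presheaves on $\C$ under finite products---together with the regular-skeletal structure on $\C$, the product of the two label presheaves should admit a nondegenerate section from some cell and therefore fail to be the initial presheaf of $\psh{\C}$, which is exactly the condition for normalization on $1$-simplices.

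The main obstacle I anticipate is this last nonemptiness claim, since in the abstract one can produce nonempty presheaves whose Cartesian product is empty at every object of $\C$. Should the direct argument need reinforcement, my fallback is to pass to the \emph{normalized core} of $(S \times S', \Omega \times \Omega')$, namely the maximal sub-labeled-simplicial-set of $S \times S'$ whose $1$-simplex labels are all nonempty presheaves on $\C$. This is closed under face maps by construction and closed under degeneracy maps because a degenerate $1$-simplex carries the monoidal unit of $\psh{\C}$, which is the terminal presheaf and hence never the initial one. Moreover, since every object of $\Theta[\C]$ carries representable, and hence nonempty, labels on its $1$-faces, every map
\[
	[n](h_{c_1}, \dots, h_{c_n}) \to (S \times S', \Omega \times \Omega')
\]
factors through the normalized core; consequently the core has the same intertwiner as $(S \times S', \Omega \times \Omega')$ and provides the normalized presentation required to conclude that $X \times Y$ is sober.
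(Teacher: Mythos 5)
Your opening step is, word for word, the paper's entire proof: the intertwiner is the restricted Yoneda functor \(\Hom_{\ssetlab}(L(-),-)\), hence preserves whatever limits exist in \(\ssetlab\), and in particular sends \((S,\Omega)\times(S',\Omega')\) to \(X\times Y\). The paper stops there and says nothing about normalization, so everything after your first paragraph is material the paper omits. Your suspicion about the direct argument is well founded: nonempty presheaves on a general \(\C\) are not closed under Cartesian product, and (CR1) concerns regularity rather than nonemptiness, so the product labeling \(\Omega\times\Omega'\) can indeed fail to be normalized; you are right to fall back on the normalized core. That fallback is essentially the correct repair, with one small inaccuracy in its justification: the \(1\)-faces of \([n](h_{c_1},\dots,h_{c_n})\) are not all labeled by representables --- only the spine edges are, while the long edge from \(i\) to \(j\) carries the product \(h_{c_{i+1}}\times\dots\times h_{c_j}\). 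The factorization of every map \([n](h_{c_1},\dots,h_{c_n})\to(S\times S',\Omega\times\Omega')\) through the core therefore requires finite products of representables on \(\C\) to be nonempty. This is harmless in context: the paper already assumes it implicitly when it declares the representable cellular sets sober ``by construction'' (their long edges are labeled by such products), and it holds in the motivating examples \(\C=\Theta_{n}\), where every object receives a map from the \(0\)-dimensional cell. With that caveat noted, your argument is complete, follows the same route as the paper's one-line proof, and is strictly more careful about the normalization hypothesis that the definition of soberness actually demands.
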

\begin{proof} From the construction of the intertwiner, we see that since \(\ssetlab\) has all Cartesian products, the intertwiner preserves them, since
	\[
		\Hom_{\ssetlab}(L(\cdot), (S,\Omega)\times (S^\prime,\Omega^\prime))=\Hom_{\ssetlab}(L(\cdot), (S,\Omega))\times \Hom_{\ssetlab}(L(\cdot), (S^\prime,\Omega^\prime)).
	\] 
\end{proof}

\begin{prop}\label{pullbacksober}
	The projection functor \(\Theta[\C]\to \Delta\) induces an adjunction 
	\[
		\cellset \underset{\mathscr{H}}{\overset{\pi}{\rightleftarrows}} \psh{\Delta},
	\]
	(as we saw earlier). If \(X=S\square\Omega\) is sober, and \(f:S^\prime\to S\) is a map of simplicial sets, then the image of the Cartesian lift \(\tilde{f}:(S^\prime, f^\ast(\Omega))\to (S,\Omega)\) under the intertwiner is exactly the pullback of \(\mathscr{H}(f)\) along the component at \(X\) of the unit of the adjunction \(\mu_X:X\to \mathscr{H}\pi X= \mathscr{H} S\).
\end{prop}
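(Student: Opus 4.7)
The plan is to verify the claimed pullback property pointwise at each representable \([n](c_1,\ldots,c_n)\in\Theta[\C]\). Using the description from the preceding note, a morphism \([n](c_1,\ldots,c_n)\to (S,\Omega)\) in \(\ssetlab\) is the same data as a pair \((\sigma,\xi)\) consisting of an \(n\)-simplex \(\sigma\in S_n\) together with a morphism \(\xi\) in \(\psh{\C}_n\) from \((c_1,\ldots,c_n)\) to \(\sigma^\ast\Omega\); concretely, if \(\sigma^\ast\Omega=(A_1,\ldots,A_n)\), then \(\xi\) is a tuple of maps \(c_i\to A_i\). This yields \((S\square\Omega)_{[n](c_1,\ldots,c_n)}\cong\{(\sigma,\xi)\}\), while \((\mathscr{H}S)_{[n](c_1,\ldots,c_n)}=S_n\) directly from \(\mathscr{H}=\pi^\ast\), and the unit \(\mu_X\) forgets the labeling: \((\sigma,\xi)\mapsto\sigma\).

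Next, I would identify the Cartesian lift \(\tilde{f}\colon(S',f^\ast\Omega)\to(S,\Omega)\) in the Grothendieck fibration \(\ssetlab\to\psh{\Delta}\) as the pair \((f,\mathrm{id})\) given by \(f\) together with the identity modification on \(f^\ast\Omega\); this is the canonical Cartesian arrow over \(f\). Applying the intertwiner, the induced morphism \(\square(\tilde{f})\colon S'\square f^\ast\Omega\to S\square\Omega\) evaluated at \([n](c_1,\ldots,c_n)\) sends \((\sigma',\xi)\mapsto(f(\sigma'),\xi)\), where \(\xi\) is reinterpreted as a morphism \((c_1,\ldots,c_n)\to f(\sigma')^\ast\Omega=(\sigma')^\ast(f^\ast\Omega)\); in other words, \(\square(\tilde{f})\) is literally \(f\) on underlying simplices and the identity on labeling data.

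With these descriptions in place, the pullback statement at \([n](c_1,\ldots,c_n)\) reduces to the manifest bijection
\[
(S'\square f^\ast\Omega)_{[n](c_1,\ldots,c_n)}\;\longrightarrow\;(S\square\Omega)_{[n](c_1,\ldots,c_n)}\pullback{S_n} S'_n,\qquad (\sigma',\xi)\longmapsto\bigl((f(\sigma'),\xi),\sigma'\bigr),
\]
whose inverse sends \(((\sigma,\xi),\sigma')\) with \(f(\sigma')=\sigma\) to \((\sigma',\xi)\). Naturality in \([n](c_1,\ldots,c_n)\) is automatic from the constructions, so the square is indeed a pullback of \(\C\)-cellular sets.

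The main obstacle is the bookkeeping in the first two steps: making the identification of \(\Hom_{\ssetlab}([n](c_1,\ldots,c_n),(S,\Omega))\) completely explicit in the \(2\)-categorical setup of \(\psh{\C}_\bullet\) recalled in the preceding notes, and verifying that the Cartesian lift in the Grothendieck fibration \(\ssetlab\to\psh{\Delta}\) is genuinely captured by \((f,\mathrm{id})\). Once these identifications are pinned down, the pullback property is immediate; soberness of \(X\) enters only indirectly, via the preceding proposition, to guarantee that the intertwiner is fully faithful on the normalized data so that \(\tilde{f}\) is unambiguously determined by its image \(\square(\tilde{f})\).
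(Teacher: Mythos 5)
Your proof is correct, but it takes a genuinely more computational route than the paper's. The paper argues abstractly: it factors \(\mathscr{H}\) as \(\square\circ\mathfrak{t}\), where \(\mathfrak{t}\) is the right adjoint of the projection \(\ssetlab\to\psh{\Delta}\) (the terminal labeling), observes that the Cartesian lift \((S',f^\ast\Omega)\to(S,\Omega)\) is precisely the fibre product of the unit \((S,\Omega)\to\mathfrak{t}(S)\) with \(\mathfrak{t}(f)\) in \(\ssetlab\), and concludes because the intertwiner, being a restricted \(\Hom\)-functor, preserves whatever limits exist. Your pointwise verification at each representable \([n](c_1,\dots,c_n)\) buys concreteness: it exhibits the bijection on cells explicitly and does not require first establishing that the relevant fibre products exist in \(\ssetlab\), at the cost of the bookkeeping the abstract argument avoids. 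One point to tighten: you still need to identify your ``forget the labeling'' map \(X\to\mathscr{H}S\), \((\sigma,\xi)\mapsto\sigma\), with the actual unit \(\mu_X\colon X\to\mathscr{H}\pi X\), which requires the isomorphism \(\pi(S\square\Omega)\cong S\); this, rather than the determinacy of \(\tilde f\), is where soberness genuinely enters (if some edge of \(S\) were labeled by \(\varnothing\), the simplices through it would contribute no cells to \(S\square\Omega\), and \(\pi X\) would be a proper subobject of \(S\)).
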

\begin{proof} 
	By inspection of the definition of \(S\square\Omega\), we can see that \(\pi(S\square\Omega)=S\).

	We can see that \(\mathscr{H}\) factors as \(\square\circ \mathfrak{t}\), where \(\mathfrak{t}\) is the right-adjoint to the projection \(\ssetlab\to \psh{\Delta}\), which exists by explicit computation as the functor sending the simplicial set \(S\) to the object \((S,\Omega_{\mathfrak{t}})\) where \(\Omega_{\mathfrak{t}}\) is the labeling sending all simplices of \(S\) to the terminal presheaf on \(\C\).   We can see that the pullback of \((S,\Omega)\) along \(f:S^\prime\to S\) satisfies the universal property of the fibre product of the unit map \((S,\Omega)\to \mathfrak{t}(S)\) with the map \(\mathfrak{t}(f)\), so such pullbacks exist in \(\ssetlab\) and are obviously preserved by \(\square\), which by construction preserves whatever limits exist. The proposition follows immediately from these two observations.
\end{proof}

\subsection{The horizontal Joyal model structure}
We define a Cisinski model structure on \(\cellset\) and state several results that will be proven over the next few sections.

\begin{defn}\label{modelstrucdefn}
	There is a Cisinski model structure called the \emph{horizontal Joyal model structure} on \(\cellset\) where the separating interval is given by 
	\[E^1=\mathscr{H}(\operatorname{cosk}_0 \Delta^1),\]
	which is also isomorphic to \(N(G_2)\square \Omega_{\mathfrak{t}}\), where \(G_2\) is the freestanding isomorphism and \(\Omega_{\mathfrak{t}}\) is the terminal labeling of its nerve. 
	The set of generating anodynes is given by
	\[
		\mathscr{J}=\{\square_n^\lrcorner(\lambda^n_k,\delta^{c_1},\dots,\delta^{c_n}) : 0<k<n \text{ and } c_1,\dots,c_n \in \Ob \C\},
	\]
	where \(\lambda^n_k:\Lambda^n_k\hookrightarrow \Delta^n\) is the simplicial horn inclusion, and where \(\delta^c:\partial c \hookrightarrow c\) is the inclusion of the boundary of \(c\) (recall that \(\C\) was taken to be a regular Cartesian Reedy category, so this makes sense).

	We call \(\operatorname{rlp}(\mathscr{J})\) the class of \emph{horizontal inner fibrations}, and we call \(\operatorname{llp}(\operatorname{rlp}(\mathscr{J}))\) the class of \emph{horizontal inner anodynes}.
\end{defn}

\begin{rem}
	The precise definition and construction of the corner-intertwiner \(\square^\lrcorner_n\) is deferred to \Cref{cornertensor}, but in this particular case, we can compute it by hand in terms of the intertwiner to be \[V_{\Lambda^n_k}(c_1,\dots,c_n) \cup \left(\bigcup_{i=1}^n V[n](c_1,\dots,\partial c_i \dots, c_n) \right) \hookrightarrow [n](c_1,\dots,c_n),\] where \(V_{\Lambda^n_k}(c_1,\dots,c_n)\) is the pullback of \([n](c_1,\dots,c_n)\) by the inclusion \(\Lambda^n_k\hookrightarrow \Delta^n\) (whenever \(K\subseteq \Delta^n\), we can apply this formula to compute the corner tensor).
\end{rem}

\begin{defn}
	We call an object with the right lifting property with respect to \(\mathscr{J}\) a \emph{formal \(\C\)-quasicategory}.
\end{defn}

\begin{note} 
	In the case where \(\C\) is the terminal category, these are precisely the quasicategories, since the horns in the definition above become exactly the simplicial inner horn inclusions.
\end{note}

The following results are stated here without proof.  All proofs are heavily inspired by \cite{oury} and provided in full in \Cref{reedy}, \Cref{horizontal}, and \Cref{admissible}.

\begin{prop}
	The class of all monomorphisms of \(\cellset\) is exactly \(\operatorname{Cell}(\mathscr{M}),\) where \[\mathscr{M}=\{\square_n^\lrcorner(\delta^n,\delta^{c_1},\dots,\delta^{c_n}) : n\geq 0 \text{ and } c_1,\dots,c_n \in \Ob \C\},\]
	where \(\delta^n:\partial \Delta^n \hookrightarrow \Delta^n\) is the inclusion of the boundary.
\end{prop}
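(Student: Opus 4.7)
The plan is to reduce the statement to Cisinski's general theorem identifying the monomorphisms in a presheaf category over a regular skeletal category with the cellular closure of the boundary inclusions of the representables, and then to identify these boundary inclusions with the corner-intertwiner maps appearing in $\mathscr{M}$.

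First, I would argue that $\Theta[\C]$ inherits the structure of a regular skeletal Reedy category from $\C$, with degree function $[n](c_1,\dots,c_n)\mapsto n+\sum_i \dim c_i$. Here axiom (CR2) is precisely the ingredient needed to guarantee that any nondegenerate section of a product $\prod c_i$ has controlled dimension, which is exactly what makes the wreath product skeletal and regular, while (CR1) ensures that the relevant class of presheaves is closed under the finite products appearing in the fibres of the Grothendieck construction. Granting this, Cisinski's theorem on cellular models for regular skeletal categories (recalled in the appendix, following \cite{cisinski-book}*{Chapter 8}) asserts that the class of all monomorphisms of $\cellset$ coincides with $\operatorname{Cell}(\mathscr{M}_0)$, where $\mathscr{M}_0$ is the set of boundary inclusions $\partial [n](c_1,\dots,c_n)\hookrightarrow [n](c_1,\dots,c_n)$ as the cell ranges over all objects of $\Theta[\C]$.

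The remaining work is to identify, for each cell, the categorical boundary $\partial [n](c_1,\dots,c_n)$ with the corner-intertwiner map $\square^\lrcorner_n(\delta^n,\delta^{c_1},\dots,\delta^{c_n})$. I would proceed by describing an arbitrary map of cells $[m](d_1,\dots,d_m)\to [n](c_1,\dots,c_n)$ via the Grothendieck construction as a pair $(f,(\varphi_j))$, with $f:[m]\to [n]$ in $\Delta$ and $\varphi_j:d_j\to c_{f(j-1)+1}\times\cdots\times c_{f(j)}$ in $\psh{\C}$. Such a map lies in the positive part of the Reedy structure on $\Theta[\C]$ exactly when either $f$ is a positive simplicial map (hence factors through $\partial\Delta^n$), or, for some $j$, the map $\varphi_j$ factors through a nondegenerate proper subobject of its target product; by (CR2), such a subobject must itself factor through some $c_{f(j-1)+1}\times\cdots\times\partial c_i\times\cdots\times c_{f(j)}$. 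Taking the union over all positive morphisms into the cell therefore produces exactly the subpresheaf $V_{\partial\Delta^n}(c_1,\dots,c_n)\cup \bigcup_i V[n](c_1,\dots,\partial c_i,\dots,c_n)$, which is the domain of the corner-intertwiner.

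The main obstacle is in this second step: one must verify carefully that the two pieces of this union exhaust the boundary and that the intertwiner assembles them into the union described by the explicit formula recalled after \Cref{modelstrucdefn}. I would resolve both points by combining \Cref{pullbacksober}, which presents $V_{\partial\Delta^n}(c_1,\dots,c_n)$ as the honest pullback of $\partial\Delta^n\hookrightarrow\Delta^n$ along the unit $[n](c_1,\dots,c_n)\to\mathscr{H}\Delta^n$, with the observation that the intertwiner, being a restricted Yoneda functor, preserves all limits that exist in $\ssetlab$; in particular it preserves monomorphisms and the fibrewise products of labelings used to build the second piece. Once the identification $\mathscr{M}=\mathscr{M}_0$ is established, the proposition follows immediately from the cellular model theorem.
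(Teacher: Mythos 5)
Your proposal is correct and follows essentially the same route as the paper: establish that \(\Theta[\C]\) is a regular skeletal Reedy category with the dimension function \(n+\sum_i\dim c_i\), invoke Cisinski's cellular model theorem for regular skeletal categories, and identify \(\partial[n](c_1,\dots,c_n)\) with the domain of \(\square^\lrcorner_n(\delta^n,\delta^{c_1},\dots,\delta^{c_n})\) via the same dichotomy (a nondegenerate section \([m](d_1,\dots,d_m)\to[n](c_1,\dots,c_n)\) of strictly smaller dimension either has \(m<n\), hence lands in \(V_{\partial\Delta^n}(c_1,\dots,c_n)\), or has \(m=n\) and some \(\dim d_k<\dim c_k\), hence lands in \(V[n](c_1,\dots,\partial c_k,\dots,c_n)\)). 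One small quibble: your appeal to (CR2) in the boundary identification is not needed, since whenever the underlying simplicial map is not the identity the section already factors through \(V_{\partial\Delta^n}(c_1,\dots,c_n)\) irrespective of labels, and when it is the identity each label target is a single representable, so factoring through \(\partial c_k\) is just the definition of the skeleton --- it is (CR1) that the paper uses, earlier, to see that nondegenerate sections of \(\Theta[\C]\) are monic.
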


\begin{prop}
	For any inner anodyne inclusion \(\iota:K\hookrightarrow \Delta^n\) and any family \(f_1,\dots,f_n\) of monomorphisms of \(\psh{\C}\), the map \[\square^\lrcorner_n(\iota,f_1,\dots,f_n)\] is horizontal inner anodyne.
\end{prop}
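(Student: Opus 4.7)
The plan is to use the standard multi-variable Leibniz/pushout-product principle, with the simplicial variable handled afterwards by a filtration argument. Since the functor $V_{(-)}(-,\ldots,-):(\psh\Delta/\Delta^n)\times\psh{\C}^n\to\cellset$ preserves colimits in each of its $n+1$ variables, the corner operation $\square^\lrcorner_n$ interacts with saturated classes in the usual way: fixing all but one variable, the class of inputs whose corner lies in a given saturated class of $\cellset$ is itself saturated.

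\textbf{Step 1 (reduce the $\psh{\C}$-variables).} Fix $\iota=\lambda^n_k$ for some $0<k<n$, and let $\mathcal{F}$ be the class of $n$-tuples $(f_1,\ldots,f_n)$ of monomorphisms of $\psh{\C}$ for which $\square^\lrcorner_n(\lambda^n_k,f_1,\ldots,f_n)$ is horizontal inner anodyne. By the single-variable Leibniz principle applied slot by slot, $\mathcal{F}$ is closed under saturation in each variable; since $\C$ is regular skeletal, the boundary inclusions $\{\delta^c\}_{c\in\C}$ form a cellular model for the monomorphisms of $\psh{\C}$, and the tuple $(\delta^{c_1},\ldots,\delta^{c_n})\in\mathcal{F}$ by the very definition of $\mathscr{J}$. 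It follows that $\mathcal{F}$ contains every tuple of monomorphisms, establishing the claim whenever $\iota=\lambda^n_k$, at every level $n$ and for arbitrary $f_i$.

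\textbf{Step 2 (filter the simplicial variable).} For an arbitrary inner anodyne $\iota:K\hookrightarrow\Delta^n$, choose a finite filtration $K=K_0\subsetneq K_1\subsetneq\cdots\subsetneq K_r=\Delta^n$ with each inclusion $K_i\hookrightarrow K_{i+1}$ a pushout of an inner horn inclusion $\lambda^{m_i}_{k_i}:\Lambda^{m_i}_{k_i}\hookrightarrow\Delta^{m_i}$ along an attaching map that factors through a nondegenerate simplex $\phi_i:\Delta^{m_i}\hookrightarrow\Delta^n$ (necessarily with $m_i\le n$). This induces a corresponding filtration $P_0\subsetneq P_1\subsetneq\cdots\subsetneq P_r=V_{\Delta^n}(B_1,\ldots,B_n)$ of the corner's source, where $P_i$ is defined by substituting $K_i$ for $K$ in the construction of the corner source, and $f_i:A_i\hookrightarrow B_i$.

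\textbf{Step 3 (realize each step as a lower-level corner).} Set $D^{(i)}_j=B_{\phi_i(j-1)+1}\times\cdots\times B_{\phi_i(j)}$ for the pulled-back labels on $\Delta^{m_i}$, and let $g^{(i)}_j$ denote the iterated pushout-product in $\psh{\C}$ of those $f_a$ with $a\in\{\phi_i(j-1)+1,\ldots,\phi_i(j)\}$, which is a monomorphism because pushout-products of monomorphisms in a presheaf category are monomorphisms. A direct slot-by-slot intersection computation shows that $P_i\cap V_{\Delta^{m_i}}(D^{(i)}_\bullet)$ is exactly the source of the $m_i$-level corner $\square^\lrcorner_{m_i}(\lambda^{m_i}_{k_i},g^{(i)}_1,\ldots,g^{(i)}_{m_i})$, so that $P_i\hookrightarrow P_{i+1}$ is a pushout of this corner. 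By Step 1 applied at level $m_i$ with the monomorphisms $g^{(i)}_j$, this $m_i$-level corner is horizontal inner anodyne, hence so is each filtration step and so is their composite $P_0\hookrightarrow V_{\Delta^n}(B_\bullet)$. The main obstacle is this intersection computation: one has to verify that restricting a partial product $V_{\Delta^n}(B_1,\ldots,A_a,\ldots,B_n)$ to $V_{\Delta^{m_i}}(D^{(i)}_\bullet)$ replaces precisely the single factor $B_a$ in the slot $k(a)$ containing $a$, so that the union over $a$ assembles into the slot-wise pushout-product source — a routine but fiddly unpacking of the Cartesian product structure of $\psh{\C}$-labeled simplices, using that products of representables on a regular Cartesian Reedy category behave as expected.
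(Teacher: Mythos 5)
Your strategy is the paper's: reduce the \(\psh{\C}\)-slots from general monomorphisms to boundary inclusions via the corner-tensor cell lemma (\Cref{cornertensorcell}), decompose the simplicial slot into inner-horn attachments, and identify each attachment, by base change along the attaching simplex, with a lower-dimensional corner of the generating form (the single-\(\Delta^n\) version of \Cref{cornertwist} and Observation \ref{square2}). Steps 1 and 2 are fine, granting the standard caveat that an arbitrary inner anodyne is a priori only a retract of a relative cell complex of inner horns over \(\Delta^n\); since \(\square^\lrcorner_n\) is functorial and saturated classes are closed under retracts, that costs nothing.

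The genuine problem is the ``exact'' identification claimed in Step 3, and it breaks precisely where the paper is careful: the attaching simplices \(\phi_i:\Delta^{m_i}\hookrightarrow\Delta^n\) need not hit both the initial and terminal vertices of \(\Delta^n\) (already in decomposing \(\operatorname{Sp}[3]\hookrightarrow\Delta^3\) one attaches \(\Delta^{\{0,1,2\}}\) along an inner \(2\)-horn). If some index \(a\) lies outside \(\{\phi_i(0)+1,\dots,\phi_i(m_i)\}\), then \(\phi_i^\ast\) discards the label in position \(a\) entirely, so
\[
V_{\Delta^n}(B_1,\dots,A_a,\dots,B_n)\cap V_{\Delta^{m_i}}(D^{(i)}_1,\dots,D^{(i)}_{m_i})=V_{\Delta^{m_i}}(D^{(i)}_1,\dots,D^{(i)}_{m_i}),
\]
whence \(P_i\) already contains the entire new cell, the step \(P_i\hookrightarrow P_{i+1}\) is an isomorphism, and it is \emph{not} a pushout of the (generally non-invertible) corner \(\square^\lrcorner_{m_i}(\lambda^{m_i}_{k_i},g^{(i)}_1,\dots,g^{(i)}_{m_i})\). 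The conclusion survives, because isomorphisms belong to every saturated class, but your argument needs the case split: when \(\phi_i\) preserves both endpoints your identification is correct; when it does not, the step is an isomorphism and there is nothing to prove. (A similar degenerate case, an empty slot range forcing the corner to be an identity via \Cref{corneridentities}, arises if a cell is attached along a degenerate simplex.) This is exactly why the paper states \Cref{anodynelemma} only for sections preserving initial and terminal vertices and separately verifies, in Observation \ref{simplicialhorns}, that the decompositions it actually feeds into that lemma have this property.
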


\begin{thm} The horizontal Joyal model structure is Cartesian-closed, and in particular, \[\operatorname{Cell}(\mathscr{M})\times^\lrcorner \operatorname{Cell}(\mathscr{J}) \subseteq \operatorname{Cell}(\mathscr{J}).\]
\end{thm}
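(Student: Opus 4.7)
The plan is to apply the standard criterion for a Cisinski model structure to be Cartesian closed. The closure of monomorphisms under pushout-product holds automatically in any presheaf topos, so it suffices to show that for every generating cofibration $\alpha \in \mathscr{M}$ and every generating anodyne $\beta \in \mathscr{J}$, the map $\alpha \times^\lrcorner \beta$ lies in $\operatorname{Cell}(\mathscr{J})$. The full inclusion $\operatorname{Cell}(\mathscr{M}) \times^\lrcorner \operatorname{Cell}(\mathscr{J}) \subseteq \operatorname{Cell}(\mathscr{J})$ then follows from the standard saturation properties of cellular classes in each variable.

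Fix $\alpha = \square_m^\lrcorner(\delta^m, \delta^{d_1}, \ldots, \delta^{d_m})$ and $\beta = \square_n^\lrcorner(\lambda^n_k, \delta^{c_1}, \ldots, \delta^{c_n})$. Since sober cellular sets are closed under Cartesian product and the intertwiner preserves products, the target $[n](c_1,\ldots,c_n) \times [m](d_1,\ldots,d_m)$ is the image under the intertwiner of the product of the two labeled simplicial sets, whose underlying simplicial set is $\Delta^n \times \Delta^m$. Axioms (CR1) and (CR2) of the regular Cartesian Reedy structure on $\C$ are precisely what is needed to guarantee a well-behaved Eilenberg--Zilber shuffle decomposition: $\Delta^n \times \Delta^m$ is covered by its top-dimensional $(n+m)$-simplices, one per shuffle, and the restriction of the product labeling to each shuffle is an interleaving of the $c_i$ and $d_j$, the relevant presheaves remaining in $\C$ up to finite products by (CR1).

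Exploiting this decomposition, one proves a Leibniz-type rule for the corner intertwiner that expresses $\alpha \times^\lrcorner \beta$ as a finite iterated composite of pushouts of maps of the form $\square_p^\lrcorner(\iota, f_1, \ldots, f_p)$, where $\iota$ is a simplicial inner anodyne obtained from the pushout-product $\lambda^n_k \times^\lrcorner \delta^m$ (inner anodyne because the classical Joyal model structure on $\psh{\Delta}$ is Cartesian) propagated along the shuffle geometry, and each $f_\ell$ is a monomorphism in $\psh{\C}$ arising from one of the $\delta^{c_i}$ or $\delta^{d_j}$. The proposition immediately preceding the theorem, which asserts that the corner intertwiner of a simplicial inner anodyne with any family of monomorphism labels is horizontal inner anodyne, then exhibits each such map as a member of $\operatorname{Cell}(\mathscr{J})$, completing the argument.

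The principal obstacle is bookkeeping the Leibniz expansion: matching each shuffle cell with the correct corner-intertwiner shape, and confirming that the simplicial factor $\iota$ in each piece remains \emph{inner} (not outer) anodyne, i.e.\ that the removed horn sits in the middle of each shuffle simplex. Axiom (CR2) is indispensable here, since it rules out dimension drops in the shuffle decomposition that would otherwise collapse distinct cells and prevent the identification of each shuffle with a cell of $\Theta[\C]$ of the expected form.
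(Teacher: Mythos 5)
Your outline reproduces the paper's own argument: reduce to generators via the cellularity of the corner tensor, use the Leibniz identity $\square^\lrcorner_n(f_0,\dots)\times^\lrcorner\square^\lrcorner_m(g_0,\dots)\cong\square^\lrcorner_{n,m}(f_0\times^\lrcorner g_0,f_1,\dots,g_m)$, factor $\lambda^n_k\times^\lrcorner\delta^m$ into inner-horn attachments along nondegenerate endpoint-preserving simplices of $\Delta^n\times\Delta^m$, and identify each resulting piece with a corner intertwiner over a single simplex whose labels are corner products of the $\delta^{c_i}$ and $\delta^{d_j}$, so that the stability statement for $\square^\lrcorner_r(\lambda^r_k,\text{monos})$ finishes the proof. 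One correction to your commentary: axioms (CR1) and (CR2) are not what makes this work. The shuffle/horn decomposition of $\Delta^n\times\Delta^m$ is a purely simplicial fact independent of $\C$, and the labels appearing on the lower-dimensional pieces of the expansion are products that are in general \emph{not} representable; they are absorbed not by any representability or dimension axiom but by the fact that every monomorphism of $\psh{\C}$ lies in $\operatorname{Cell}(\mathscr{B})$ for $\mathscr{B}$ the set of boundary inclusions (regular skeletality of $\C$), which is what lets the cellularity lemma for the corner tensor convert product labels into cell complexes of generating anodynes. (CR1) and (CR2) earn their keep elsewhere, in establishing that $\Theta[\C]$ is regular skeletal Reedy so that $\operatorname{Cell}(\mathscr{M})$ really is the class of all monomorphisms.
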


\begin{thm}\label{isofibrations}
	A horizontal inner fibration between formal \(\C\)-quasicategories is a fibration for the horizontal Joyal model structure if and only if it has the right lifting property with respect to the map \(\Delta^0\hookrightarrow E^1\).  In particular, the formal \(\C\)-quasicategories are exactly the fibrant objects for the horizontal Joyal model structure.
\end{thm}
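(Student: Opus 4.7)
The plan is to adapt Joyal's classical argument (the case $\C=\ast$; see \cite{htt}) to the enriched setting, exploiting the Cartesian-closedness of the horizontal Joyal model structure established in \Cref{horizontal}. The ``only if'' direction is immediate: $E^1$ is by construction (\Cref{modelstrucdefn}) the separating interval for the Cisinski model structure, so $\Delta^0 \hookrightarrow E^1$ is a trivial cofibration and every fibration lifts against it.

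For the substantive direction, let $p\colon X \to Y$ be a horizontal inner fibration between formal $\C$-quasicategories having the right lifting property against $\Delta^0 \hookrightarrow E^1$. I would factor $p = q \circ i$ with $i\colon X \hookrightarrow Z$ a trivial cofibration and $q\colon Z \to Y$ a fibration; then $q$ is automatically a horizontal inner fibration and has the right lifting property against $\Delta^0 \hookrightarrow E^1$ (by the easy direction), and $Z$ is fibrant because $Y$ is. The plan is to construct a retraction $r\colon Z \to X$ of $i$ over $Y$, exhibiting $p$ as a retract of $q$ and hence a fibration.

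To produce $r$, I would invoke Cartesian-closedness to pass to internal mapping objects. For any monomorphism $A \hookrightarrow B$, the corner map
\[
\Hom(B, X) \longrightarrow \Hom(A, X) \times_{\Hom(A, Y)} \Hom(B, Y)
\]
is a horizontal inner fibration which inherits from $p$ the right lifting property against $\Delta^0 \hookrightarrow E^1$. Taking $A \hookrightarrow B$ to be $i$ itself and using that $i$ is a categorical equivalence between formal $\C$-quasicategories, the aim is to show this corner map is a trivial fibration; lifting $\mathrm{id}_X$ along it then yields the required retraction $r$.

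The main obstacle is precisely this upgrade---from the right lifting property against the single map $\Delta^0 \hookrightarrow E^1$ to a trivial fibration on the corner map associated to a categorical equivalence between fibrant objects. I expect this step to require first developing an $E^1$-based notion of equivalence internal to $\Hom(Z, X)$, proving (using the hypothesis on $p$) that such $E^1$-equivalences lift through $p$, and then characterizing Joyal weak equivalences between formal $\C$-quasicategories in terms of fibrewise $E^1$-equivalences of mapping objects---in parallel with Joyal's characterization of categorical equivalences of quasicategories via $J$-homotopy equivalence. The ``in particular'' assertion follows once the main theorem is in hand: applying it to the structural map $X \to \Delta^0$, once one knows (as in the classical case) that formal $\C$-quasicategories themselves have the right lifting property against $\Delta^0 \hookrightarrow E^1$, shows that every formal $\C$-quasicategory is fibrant, while the reverse inclusion is immediate from the fact that the maps in $\mathscr{J}$ are trivial cofibrations.
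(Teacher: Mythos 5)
Your plan is the classical Joyal retract argument, and it is genuinely different from the route the paper takes in \Cref{admissible}. The paper never develops a theory of equivalences inside formal \(\C\)-quasicategories for this theorem. Instead, writing \(e\colon \Delta^0\hookrightarrow E^1\), it observes that by \Cref{cisinskimaintheorem} the fibrant objects and the fibrations between them are detected by the generating anodynes of the Cisinski structure, which (using \Cref{anodynethm} to dispose of the corner products with the interval boundary) reduce to the horizontal inner anodynes together with the maps \(e\times^\lrcorner \square^\lrcorner_n(\delta^n,\delta^{c_1},\dots,\delta^{c_n})\). The entire content is then \Cref{joyalisothm} --- resting on Stevenson's \Cref{dannythm} that \(e\times^\lrcorner\delta^n\) is inner anodyne in simplicial sets for \(n>0\), transported through the corner intertwiner --- which shows that each of these maps is itself horizontal inner anodyne except in the single case \(n=0\), where it is \(e\). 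Both assertions of the theorem then fall out by inspection of which generators remain to be lifted against.

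Measured against that, your proposal has two genuine gaps. First, the claim that the corner map \(\Hom(B,X)\to\Hom(A,X)\times_{\Hom(A,Y)}\Hom(B,Y)\) ``inherits from \(p\) the right lifting property against \(e\)'' is not a consequence of Cartesian-closedness, because at that point \(p\) is not yet known to be a fibration --- it is only an inner fibration with RLP against the single map \(e\). By adjunction the claim amounts to \(e\times^\lrcorner(A\hookrightarrow B)\) lying in the saturated class generated by \(\mathscr{J}\cup\{e\}\) for every monomorphism \(A\hookrightarrow B\); that is precisely \Cref{joyalisothm}, so your argument silently presupposes the paper's key lemma. Second, the step you flag as ``the main obstacle'' --- upgrading an inner fibration with RLP against \(e\) which is also a weak equivalence between fibrant objects to a trivial fibration --- is not a technicality to be filled in later: in the classical case it constitutes the bulk of Joyal's theorem and requires the full apparatus of \(E^1\)-homotopies, lifting of equivalences along inner fibrations, and an intrinsic characterization of categorical equivalences between formal \(\C\)-quasicategories, none of which is developed in the paper. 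As written, the proposal is therefore a correct identification of an alternative strategy rather than a proof; carrying it out would mean redoing the corresponding development of \cite{htt} in the enriched setting, which is exactly the work the paper's detour through Stevenson's lemma is designed to avoid.
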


\subsection{The corner tensor construction}\label{cornertensor} 
The overwhelming majority of this section is due to Oury, although we had to redo some of the proofs, since they contained mistakes. Following \cite{oury}*{3.1} we define the corner tensor, a vast generalization of the corner product.
\begin{defn}
	Let \(\mathcal{V}\) be a cocomplete symmetric monoidal closed category, and let all categories, functors, and natural transformations in what follows to be \(\mathcal{V}\)-enriched.  Suppose we have a category \(\mathcal{T}\) and an \(n\)-ary functor \[\wedge:\mathcal{T}^{\otimes n} \to \mathcal{T}.\]  Let \((\mathcal{A}_i)_{i=1}^n\) and \(\mathcal{D}\) be categories admitting enough colimits such that all tensors with \(\Hom_T\) exist and coends over \(\mathcal{T}\) exist.  Let
	\[\nabla_i: \mathcal{A}_i \otimes \mathcal{A}_i \to \mathcal{A}_i, \qquad \nabla:\mathcal{D}\otimes \mathcal{D} \to \mathcal{D}, \qquad \square:\mathcal{A}_1\otimes\dots \otimes \mathcal{A}_n \to \mathcal{D}\]
	be functors.  Then we define the following functors:
	\[\nabla^\lrcorner_i: \mathcal{A}^\mathcal{T}_i \otimes \mathcal{A}^\mathcal{T}_i \to \mathcal{A}_i^\mathcal{T}, \qquad \nabla^\lrcorner:\mathcal{D}^\mathcal{T}\otimes \mathcal{D}^\mathcal{T} \to \mathcal{D}^\mathcal{T}, \qquad \square^\lrcorner: \mathcal{A}_1^\mathcal{T}\otimes\dots \otimes \mathcal{A}_n^\mathcal{T} \to \mathcal{D}^\mathcal{T}\]
	by the Day convolution, for example,
	\[\square^\lrcorner(M_1,\dots,M_n)(t) = \int^{u_1,\dots,u_n \in \mathcal{T}} \mathcal{T}(\wedge(u_1,\dots,u_n),t) \otimes \square(M_1(u_1),\dots,M_n(u_n).\]
\end{defn}
\begin{lemma}
	The functor \(\square^\lrcorner\) preserves all colimits preserved by \(\square\) in each variable and by \(\otimes\) in its second variable.  
\end{lemma}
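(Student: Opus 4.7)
The plan is to verify the preservation of colimits pointwise, reducing the problem to a short chain of formal manipulations. Since colimits in the functor category $\mathcal{D}^\mathcal{T}$ are computed pointwise, it suffices to fix an object $t \in \mathcal{T}$ and show that the formula
\[
\square^\lrcorner(M_1,\dots,M_n)(t) = \int^{u_1,\dots,u_n} \mathcal{T}(\wedge(u_1,\dots,u_n), t) \otimes \square(M_1(u_1),\dots,M_n(u_n))
\]
preserves, in each variable $M_i$ separately, exactly those colimits which are preserved both by $\square$ in the $i$-th variable and by $\otimes$ in its second variable.

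Fix an $i$ and a colimit diagram in the $i$-th slot with the two hypothesized preservation properties. I would push the colimit through the integrand one layer at a time. First, evaluation at $u_i$ preserves arbitrary colimits, since colimits in $\mathcal{A}_i^\mathcal{T}$ are pointwise, so the colimit of the $M_i$-variable passes inside $\square$ as a colimit of the $u_i$-th values. Next, $\square$ in its $i$-th variable preserves this colimit by assumption, so it commutes past $\square$. Tensoring with $\mathcal{T}(\wedge(u_1,\dots,u_n),t)$ on the left of $\square$ preserves the colimit by the hypothesis on $\otimes$ in its second variable, so it commutes past the tensor. Finally, the coend is itself a colimit (concretely, a coequalizer of coproducts of such tensors), and since $\colim$ commutes with $\colim$, the coend interchanges with the external colimit diagram, giving the required natural isomorphism.

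I do not expect any substantive obstacle; the entire argument is a formal consequence of the Day convolution formula combined with the two hypotheses, once one accepts that colimits in $\mathcal{A}_i^\mathcal{T}$ and in $\mathcal{D}^\mathcal{T}$ are pointwise and that coends commute with colimits in their arguments. The only point that could demand a little care is making the last commutation rigorous in the enriched setting, but this is standard: writing the coend as the evident coequalizer presents it manifestly as a colimit, and colimits commute with one another.
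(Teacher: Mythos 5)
Your proof is correct and is exactly the argument the paper intends: its entire proof reads ``By coend manipulation,'' and your pointwise reduction --- evaluation, then \(\square\), then \(\otimes\), then the coend, each commuting with the colimit for the stated reason --- is precisely that manipulation spelled out.
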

\begin{proof} By coend manipulation.
\end{proof}
We specialize now to the case where \(\mathcal{V}\) is just the category of sets and where \(\mathcal{T}=[1]\) is the categorical \(1\)-simplex.  The functor \(\wedge:[1]^n \to [1]\) is given by taking the infimum.   
\begin{note}
	We will often consider arrows in a category \(\mathcal{D}\) as functors \([1]\to \mathcal{D}\).  By abuse of notation, we will denote the functor \([1]\to \mathcal{D}\) classifying an arrow \(f:A\to B\) simply by \(f\).  We will denote the evaluation of this functor on the objects of \(x\in [1]\) by \(f(x)\), such that in the case of a map \(f:A\to B\),
	\begin{eqnarray}
		f(0) = A &\text{and}& f(1) = B
	\end{eqnarray} 
\end{note}
\begin{defn}
	Given \(\square:\mathcal{A}_1\times \dots \times \mathcal{A}_n \to \mathcal{D}\), where each category appearing is cocomplete, we define the \emph{corner tensor} \(\square^\lrcorner:\mathcal{A}_1^{[1]}\times \dots \times \mathcal{A}_n^{[1]} \to \mathcal{D}^{[1]}\) by the formula
	\[\square^\lrcorner(f_1,\dots,f_n)(t)=\int^{u_1,\dots,u_n\in [1]} [1](u_1\wedge\dots\wedge u_n, t) \cdot \square(f_1(u_1),\dots,f_n(u_n)).\]
	If \((g,h): f_i\to f^\prime_i\) is a commutative square, let 
	\[
		(g^\lrcorner,h^{-}):\square^\lrcorner(f_1,\dots,f_i,\dots,f_n) \to \square^\lrcorner(f_1,\dots,f^\prime_i,\dots,f_n)
	\]
	be the induced commutative square.  
\end{defn}
\begin{note}
	Let \([1]^n\) be the \(n\)-fold power of the poset \([1]\), which is a cube, and let \(C_n=[1]^n - \{(1,\dots,1)\}\) be the subposet of the cube removing the terminal vertex.  To unwind the coend, notice that the set \([1](u_1\wedge \dots \wedge u_n,0)\) vanishes when all of the \(u_i=1\).  We can therefore evaluate the domain of the corner tensor as the colimit of the restriction
	\[
		\square^\lrcorner(f_1,\dots,f_n)(0)=\colim \left.\square(f_1,\dots,f_n)\right|_{C_n}.
	\]
	The codomain of the corner tensor can be computed by noticing that the set \([1](u_1\wedge \dots \wedge u_n,1)\) is always a singleton, and therefore the colimit can be computed simply as the colimit of the functor \(\square(f_1,\dots,f_n):[1]^n \to \mathcal{D}\).  However, this colimit is indexed by a category with a terminal object and therefore agrees with the the evaluation at that terminal object. That is, we have
	\[
		\square^\lrcorner(f_1,\dots,f_n)(1)=\square(f_1(1),\dots,f_n(1)).
	\]  
\end{note}
\begin{example}
	If we take \(\square\) to be a bifunctor \(\mathcal{D}\times \mathcal{D} \to \mathcal{D}\) in an appropriately cocomplete category, then given \(f_1:A\to B\) and \(f_2:C\to D\), their corner tensor is the familiar corner product: 
	\[
		f_1 \square^\lrcorner f_2 = \left(A\square D \coprod_{A\square C} B\square C \to  B\square D\right).
	\]
\end{example}
\begin{example}
	In the category of \(n\)-fold multisimplicial sets \(\psh{(\Delta)^n}\), we have an \(n\)-fold exterior product functor sending an \(n\)-tuple of simplicial sets \((S_1,\dots,S_n)\) to the exterior product \(\square(S_1,\dots,S_n)\).  It can be seen that the exterior product preserves colimits argument-by-argument, so applying the corner tensor, we can compute exterior corner products of maps.  It is a fact beyond the scope of this paper that the Cisinski model structure on multisimplicial sets that models the homotopy theory of spaces has cellular generating cofibrations given by 
	\[
		\square^\lrcorner(\delta^{m_1},\dots, \delta^{m_n})
	\]
	where \(\delta^m:\partial \Delta^m \hookrightarrow \Delta^m\) denotes the boundary inclusion.  The generating anodynes are given by 
	\[
		\square^\lrcorner(\delta^{m_1},\dots,\lambda^{m_i}_k,\dots, \delta^{m_n})
	\]
	where \(i\in \{1,\dots,n\}\), \(m_i >0\), and \(k\in \{0,\dots, m_i\}\), and where \(\lambda^{m_i}_k:\Lambda^{m_i}_k \hookrightarrow \Delta^{m_i}\) is any horn inclusion.  This generalizes the description of the generating anodynes and cofibrations for the Cisinski model structure on bisimplicial sets that models the homotopy theory of spaces.  
\end{example}
\begin{rem}
	The previous example is in some sense universal, and it will allow us to reduce certain questions about big corner tensors to binary ones. In particular, we will frequently use the observation that, in the situation of the example the map \(f_1(0) \square f_2(1) \to (f_1 \square^\lrcorner f_2)(0)\) is a pushout of the map \(f_1(0) \square f_2(0) \to f_1(1) \square f_2(0)\).
\end{rem}
\begin{lemma}\label{corneridentities}
	If for any \(i\in \{0,\dots,n\}\), the map \(f_i:A_i\to B_i\) is an identity map, then the corner tensor \(\square^\lrcorner(f_1,\dots,f_n)\) is an identity map.
\end{lemma}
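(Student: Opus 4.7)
The plan is to unwind the description of the corner tensor at the two endpoints $0$ and $1$ and, when $f_i$ is an identity, to recognize the resulting comparison map as induced by a cofinal inclusion that collapses the indexing category of the colimit onto a single vertex.

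First, as recalled in the note preceding the statement, we have $\square^\lrcorner(f_1,\dots,f_n)(1) = \square(f_1(1),\dots,f_n(1)) = D(1,\dots,1)$, where $D = \square(f_1,\dots,f_n): [1]^n \to \mathcal{D}$, while $\square^\lrcorner(f_1,\dots,f_n)(0) = \colim_{C_n} D|_{C_n}$, with $C_n = [1]^n \setminus \{(1,\dots,1)\}$. The map of interest is the canonical cone comparison between these two colimits.

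Now suppose $f_i = \id_{A_i}$ for some fixed $i$. Then $D$ factors as $D = D' \circ p_i$, where $p_i: [1]^n \to [1]^{n-1}$ omits the $i$-th coordinate; in particular, $D$ is constant along edges of the cube parallel to the $i$-th axis. Let $v_i \in C_n$ denote the vertex obtained from $(1,\dots,1)$ by replacing the $i$-th entry with $0$, so that $D(v_i) = D(1,\dots,1)$ and the transition map $D(v_i \to (1,\dots,1))$ is the identity. I would then check that the restricted projection $p_i|_{C_n}: C_n \to [1]^{n-1}$ is final: over $u = (1,\dots,1)$ the only preimage lying in $C_n$ is $v_i$, while over $u \neq (1,\dots,1)$ the lift $\tilde{u}$ obtained by inserting $0$ in the $i$-th coordinate lies in $C_n$ and is initial in the slice $u \downarrow p_i|_{C_n}$, since any $w \in C_n$ with $p_i(w) \geq u$ satisfies $w_j \geq u_j$ for $j \neq i$ and trivially $w_i \geq 0 = \tilde{u}_i$.

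By finality this gives $\colim_{C_n} D|_{C_n} = \colim_{[1]^{n-1}} D' = D'(1,\dots,1) = D(1,\dots,1)$, and tracing cone maps shows the comparison map is precisely the one induced by $D(v_i \to (1,\dots,1)) = \id$, and is therefore the identity. The main obstacle is the finality verification, but it reduces to the elementary poset calculation above, which rests crucially on $D$ being constant along the $i$-th coordinate direction. The conclusion is that $\square^\lrcorner(f_1,\dots,f_n)$ is literally an identity map, not merely an isomorphism.
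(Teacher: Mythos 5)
Your proof is correct, and it takes a genuinely different route from the paper's. The paper fixes the distinguished index (WLOG \(i=1\)), splits the coend into a two-variable object \(U(s,t)\) via Yoneda reduction, and exhibits \(\square^\lrcorner(f_1,\dots,f_n)(0)\) as the pushout \(U(0,1)\coprod_{U(0,0)}U(1,0)\); it then observes that \(U(0,0)\to U(1,0)\) is the identity (by a cofinality argument in the remaining variables) and concludes because a pushout of an identity is an identity. You instead work directly with the punctured-cube colimit \(\colim_{C_n} D|_{C_n}\) from the Note preceding the lemma, observe that \(D\) factors through the coordinate projection \(p_i\), and prove that \(p_i|_{C_n}\colon C_n\to[1]^{n-1}\) is final by the slice-poset computation you give (which is correct: over \((1,\dots,1)\) the unique preimage in \(C_n\) is \(v_i\), and over any other \(u\) the zero-lift \(\tilde u\) is initial in the slice). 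Your argument is more direct and arguably cleaner for this particular lemma; the paper's binary \(U(s,t)\) decomposition is chosen because it is reused almost verbatim in the surrounding results (\Cref{cornertwist} and the comparison with Rezk's model structure), where the pushout presentation of \(\square^\lrcorner(-)(0)\) in a single distinguished variable is what is actually needed. Both proofs share the same caveat about "identity" versus "canonical isomorphism" of colimits, so you lose nothing there.
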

\begin{proof}
	Assume for simplicity that \(i=1\), which is without loss of generality by reindexing.  Then the result follows by setting 
	\[
		U(s,t)=\int^{u_1,\dots,u_n} \left([1](u_1,s) \times [1](u_2\wedge\dots\wedge u_n,t)\right)\cdot \square(f_1(u_1),f_2(u_2),\dots, f_n(u_n)).
	\]
	Notice that by Yoneda reduction,
	\[
		\int^{s,t} [1](s\wedge t,x) \times [1](u_1,s) \times [1](u_2\wedge\dots\wedge u_n,t) = [1](u_1 \wedge u_2 \wedge \dots\wedge u_n,x),
	\]
	so we have that 
	\[
		\square^\lrcorner(f_1,\dots,f_n)(x)=\int^{s,t} [1](s\wedge t,x) U(s,t),
	\]
	which exhibits 
	\[
		\square^\lrcorner(f_1,\dots,f_n)(0)=U(0,1)\coprod_{U(0,0)} U(1,0),
	\]
	but \(U(0,1)=\square(B_1,\dots,B_n)\), which demonstrates that the map \(U(0,0) \to U(1,0)\) is the identity by cofinality. The map \(U(0,1) \to \square^\lrcorner(f_1,\dots,f_n)(0)\) is the pushout of an identity map, and the map \(U(0,1)\to \square^\lrcorner(f_1,\dots,f_n)(1)\) is also the identity, so it follows that the map \(\square^\lrcorner(f_1,\dots,f_n)\) must also be the identity.
\end{proof}

\begin{note}
	The next few lemmata involve very complicated diagrams.  As such, we will leave the objects appearing in these diagrams as implicit.
\end{note}
\begin{lemma}\label{cornertwist}
	Suppose again that \(\square\) preserves pushouts in its first argument, and suppose we have a coCartesian square in \(\mathcal{A}\)
	\begin{center}
		\begin{tikzpicture}
			\matrix (b) [matrix of math nodes, row sep=3em, column sep=5em]
			{
				\cdot & \cdot \\
				\cdot & \cdot \\
			};
			\path[->]
			(b-1-1) edge node[auto]{\(\scriptstyle{h}\)} (b-1-2) edge node[auto]{\(\scriptstyle{u}\)} (b-2-1)
			(b-2-1) edge node[auto]{\(\scriptstyle{g}\)} (b-2-2)
			(b-1-2) edge node[auto]{\(\scriptstyle{v}\)} (b-2-2);
		\end{tikzpicture},
	\end{center}
	and suppose that we have a commutative square
	\begin{center}
		\begin{tikzpicture}
			\matrix (b) [matrix of math nodes, row sep=3em, column sep=5em]
			{
				\cdot & \cdot \\
				\cdot & \cdot \\
			};
			\path[->]
			(b-1-1) edge node[auto]{\(\scriptstyle{g}\)} (b-1-2) edge node[auto]{\(\scriptstyle{p}\)} (b-2-1)
			(b-1-2) edge node[auto]{\(\scriptstyle{q}\)} (b-2-2)
			(b-2-1) edge node[auto]{\(\scriptstyle{\id}\)} (b-2-2);
		\end{tikzpicture}.
	\end{center}
	Given family of maps \(\mathbf{f}=(f_i \in \mathcal{A}_i^{[1]})_{i=2}^n\), let \(Q_{\bullet,\mathbf{f}}\) denote the evaluation of \(\square^\lrcorner(\bullet,\mathbf{f})\) at \(0\). Then \(g^\lrcorner: Q_{p,\mathbf{f}} \to Q_{q,\mathbf{f}}\) is a pushout of \(\square^\lrcorner(h,\mathbf{f})\).  
\end{lemma}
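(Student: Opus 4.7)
The plan is to reduce the $n$-ary situation to a binary one and then compute the required pushout by rearranging iterated colimits. First, introduce two auxiliary functors $F,G \colon \mathcal{A}_1 \to \mathcal{D}$ defined by
\[
F(a) = \square(a, f_2(1), \ldots, f_n(1)), \qquad G(a) = \colim_{(u_2,\ldots,u_n)\in C_{n-1}} \square(a, f_2(u_2), \ldots, f_n(u_n)),
\]
together with the canonical natural transformation $\alpha \colon G \Rightarrow F$ arising from the colimit cocone. Because $\square$ preserves pushouts in its first argument and colimits commute with colimits, both $F$ and $G$ preserve pushouts. Partitioning the indexing of the colimit formula for $\square^\lrcorner(e,\mathbf{f})(0)$ according to the value of the first coordinate, and pulling the colimit through $\square$ in the first slot, yields for any arrow $e \colon a \to b$ in $\mathcal{A}_1$
\[
\square^\lrcorner(e,\mathbf{f})(0) = F(a) \cup_{G(a)} G(b), \qquad \square^\lrcorner(e,\mathbf{f})(1) = F(b).
\]
Specialising to $p$, $q$, and $h$ produces explicit presentations of $Q_{p,\mathbf{f}}$, $Q_{q,\mathbf{f}}$ and of the domain and codomain of $\square^\lrcorner(h,\mathbf{f})$.

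The core of the argument is then a rearrangement of iterated pushouts. Applying $F$ and $G$ to the coCartesian square gives $F(D) = F(B) \cup_{F(A)} F(C)$ and $G(D) = G(B) \cup_{G(A)} G(C)$, and hence
\[
Q_{q,\mathbf{f}} = \bigl(F(B) \cup_{F(A)} F(C)\bigr) \cup_{G(B) \cup_{G(A)} G(C)} G(X).
\]
Using the pasting lemma for pushouts, together with the identity $q\circ v\circ h = p\circ u$ (which follows from $p = q\circ g$ and the commutativity of the coCartesian square), I regroup this as
\[
Q_{q,\mathbf{f}} \;=\; F(B) \cup_{F(A)\cup_{G(A)}G(B)} \bigl(F(C)\cup_{G(C)}G(X)\bigr) \;=\; F(B)\cup_{\square^\lrcorner(h,\mathbf{f})(0)} Q_{p,\mathbf{f}}.
\]
The gluing map $\square^\lrcorner(h,\mathbf{f})(0)\to F(B)$ is precisely $\square^\lrcorner(h,\mathbf{f})$, while the map $\square^\lrcorner(h,\mathbf{f})(0)\to Q_{p,\mathbf{f}}$ is the evident one assembled from $F(u)\colon F(A)\to F(C)$ and $G(q\circ v)\colon G(B)\to G(X)$; tracing through the identifications shows that the induced map $Q_{p,\mathbf{f}} \to F(B)\cup_{\square^\lrcorner(h,\mathbf{f})(0)} Q_{p,\mathbf{f}}$ coincides with $g^\lrcorner$.

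The main obstacle is the combinatorial bookkeeping needed to justify the regrouping and to match the structure maps. A clean way to execute it is to exhibit both $Q_{q,\mathbf{f}}$ and the candidate pushout $Q_{p,\mathbf{f}}\cup_{\square^\lrcorner(h,\mathbf{f})(0)} F(B)$ as colimits of a common diagram whose vertices are $F(A), F(B), F(C), G(A), G(B), G(C), G(X)$ and whose edges are furnished by $F$, $G$, $\alpha$, $p$, and $q\circ v$. The one morphism that one presentation carries but the other might appear not to is $G(u)\colon G(A)\to G(C)$; however, adding it imposes no new constraint on cocones, since the naturality square $\alpha_C\circ G(u) = F(u)\circ \alpha_A$ combined with the relation $p\circ u = (q\circ v)\circ h$ forces the extra compatibility to hold automatically in every cocone on the other presentation. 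Hence the two colimits are canonically identified, which is exactly the asserted pushout.
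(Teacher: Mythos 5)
Your proof is correct, but it takes a different route from the paper's. The paper never unpacks the corner tensor: it assembles the two given squares into a commutative cube whose front and back faces are coCartesian, reads this as a pushout square in the arrow category \(\mathcal{A}^{[1]}\) with vertices \(h,\id,p,q\), applies \(\square^\lrcorner(\bullet,\mathbf{f})\) (which preserves that pushout), and then invokes \Cref{corneridentities} to identify the edge \(h^\lrcorner:Q_{h,\mathbf{f}}\to Q_{\id,\mathbf{f}}\) with \(\square^\lrcorner(h,\mathbf{f})\) itself. You instead make the corner tensor explicit, writing \(\square^\lrcorner(e,\mathbf{f})(0)=F(a)\cup_{G(a)}G(b)\) with \(F(a)=\square(a,f_2(1),\dots,f_n(1))\) and \(G(a)=\colim_{C_{n-1}}\square(a,f_2(u_2),\dots,f_n(u_n))\), and then rearrange the iterated pushout directly; your verification that the attaching map \(\square^\lrcorner(h,\mathbf{f})(0)\to Q_{p,\mathbf{f}}\) is well defined correctly isolates the one nontrivial identity \(p\circ u=q\circ v\circ h\), and your cocone comparison does establish the isomorphism and that the structure map is \(g^\lrcorner\). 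What the paper's argument buys is economy: it reuses \Cref{corneridentities} and the functoriality of \((-)^\lrcorner\), so no colimit bookkeeping is needed. What yours buys is transparency about where the hypothesis enters (pushout-preservation in the first slot is exactly what makes \(F\) and \(G\) preserve the coCartesian square) and an explicit description of the attaching maps, which the paper leaves implicit; it also recovers, as the case \(n=1\) of your decomposition, the binary observation stated in the remark preceding \Cref{corneridentities}.
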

\begin{proof} The data allow us to construct a commutative cube 
	\begin{center}
		\begin{tikzpicture}
			\matrix (b) [matrix of math nodes, row sep=3em, column sep=5em]
			{
				\cdot &&& \cdot\\
				&\cdot & \cdot& \\
				&\cdot & \cdot& \\
				\cdot &&& \cdot\\
			};
			\path[->]
			(b-1-1) edge node[auto]{\(\scriptstyle{\id}\)} (b-1-4) edge node[auto]{\(\scriptstyle{qv}\)} (b-4-1)
			(b-1-4) edge node[auto]{\(\scriptstyle{qv}\)} (b-4-4)
			(b-2-2) edge node[auto,swap]{\(\scriptstyle{h}\)} (b-1-1) edge node[auto]{\(\scriptstyle{h}\)} (b-2-3) edge node[auto]{\(\scriptstyle{u}\)} (b-3-2)
			(b-2-3) edge node[auto]{\(\scriptstyle{\id}\)} (b-1-4) edge node[auto]{\(\scriptstyle{v}\)} (b-3-3)
			(b-3-2) edge node[auto]{\(\scriptstyle{g}\)} (b-3-3) edge node[auto]{\(\scriptstyle{p}\)} (b-4-1)
			(b-3-3) edge node[auto,swap]{\(\scriptstyle{q}\)} (b-4-4)
			(b-4-1) edge node[auto]{\(\scriptstyle{\id}\)} (b-4-4);
		\end{tikzpicture}.
	\end{center}
	Since the front and back faces of this cube are coCartesian, it gives a coCartesian square in \(\mathcal{A}^{[1]}\),
	\begin{center}
		\begin{tikzpicture}
			\matrix (b) [matrix of math nodes, row sep=2em, column sep=3em]
			{
				h & \id \\
				p & q \\
			};
			\path[->]
			(b-1-1) edge node[auto]{\(\scriptstyle{(h,\id)}\)} (b-1-2) edge node[auto,swap]{\(\scriptstyle{(u,qv)}\)} (b-2-1)
			(b-1-2) edge node[auto]{\(\scriptstyle{(v,qv)}\)} (b-2-2)
			(b-2-1) edge node[auto]{\(\scriptstyle{(g,\id)}\)} (b-2-2);
		\end{tikzpicture}.
	\end{center}
	Then applying \(\square^\lrcorner(\bullet,\mathbf{f})\), we have a commutative cube
	\begin{center}
		\begin{tikzpicture}
			\matrix (b) [matrix of math nodes, row sep=3em, column sep=5em]
			{
				\cdot &&& \cdot\\
				&Q_{h,\mathbf{f}} & Q_{\id,\mathbf{f}}& \\
				&Q_{p,\mathbf{f}} & Q_{q,\mathbf{f}}& \\
				\cdot &&& \cdot\\
			};
			\path[->]
			(b-1-1) edge node[auto]{\(\scriptstyle{\id^-}=\id\)} (b-1-4) edge node[auto]{\(\scriptstyle{qv^-}\)} (b-4-1)
			(b-1-4) edge node[auto,swap]{\(\scriptstyle{qv^-}\)} (b-4-4)
			(b-2-2) edge node[auto,swap]{\(\scriptstyle{\square^\lrcorner(h,\mathbf{f})}\)} (b-1-1) edge node[auto]{\(\scriptstyle{h^\lrcorner}\)} (b-2-3) edge node[auto]{\(\scriptstyle{u^\lrcorner}\)} (b-3-2)
			(b-2-3) edge node[auto]{\(\scriptstyle{\square^\lrcorner(\id,\mathbf{f})}\)} (b-1-4) edge node[auto]{\(\scriptstyle{v^\lrcorner}\)} (b-3-3)
			(b-3-2) edge node[auto]{\(\scriptstyle{g^\lrcorner}\)} (b-3-3) edge node[auto]{\(\scriptstyle{\square^\lrcorner(p,\mathbf{f})}\)} (b-4-1)
			(b-3-3) edge node[auto,swap]{\(\scriptstyle{\square^\lrcorner(q,\mathbf{f})}\)} (b-4-4)
			(b-4-1) edge node[auto]{\(\scriptstyle{\id^-}\)} (b-4-4);
		\end{tikzpicture}.
	\end{center}	
	its front and back faces remain pushouts, since \(\square^\lrcorner\) preserves all colimits in each argument preserved by \(\square\).  Then the map \(g^\lrcorner: Q_{p,\mathbf{f}} \to Q_{q,\mathbf{f}}\)  is a pushout of the map \(h^\lrcorner:Q_{h,\mathbf{f}} \to Q_{\id,\mathbf{f}}\), but by \Cref{corneridentities}, we see that \(\square^\lrcorner(\id,\mathbf{f})=\id\), so by commutativity, it follows that \(h^\lrcorner = \square^\lrcorner(h,\mathbf{f})\), and therefore, \(g^\lrcorner\) is a pushout of \(\square^\lrcorner(h,\mathbf{f})\).  
\end{proof}
Assume in the sequel that \(\square\) preserves connected colimits in each argument.
\begin{lemma}[\cite{oury}*{Lemma 3.10}]\label{cornertensorcell}
	Let \((\mathscr{J}_i)_{i=1}^n\) be a family of sets of morphisms of each \(\mathcal{A}_i\). Then 
	\[\square^\lrcorner(\mathscr{J}_1,\dots,\operatorname{Cell}(\mathscr{J}_k),\dots,\mathscr{J}_n) \subseteq \operatorname{Cell}(\square^\lrcorner(\mathscr{J}_1,\dots,\mathscr{J}_k,\dots,\mathscr{J}_n)).\]
\end{lemma}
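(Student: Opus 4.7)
The plan is to fix $f_i \in \mathscr{J}_i$ for each $i \neq k$, define $\mathcal{K}$ to be the class of morphisms $g$ of $\mathcal{A}_k$ for which $\square^\lrcorner(f_1, \ldots, g, \ldots, f_n) \in \operatorname{Cell}(\square^\lrcorner(f_1, \ldots, \mathscr{J}_k, \ldots, f_n))$, and verify that $\mathcal{K} \supseteq \operatorname{Cell}(\mathscr{J}_k)$ by showing that $\mathcal{K}$ contains $\mathscr{J}_k$ and is closed under both pushouts and transfinite composition. The containment $\mathscr{J}_k \subseteq \mathcal{K}$ is immediate, and since the conclusion of the lemma is obtained by varying $k$, the full statement will follow.

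For the pushout step, I would suppose $g$ is a pushout of some $h \in \mathcal{K}$ along maps $u,v$ and invoke \Cref{cornertwist} with the degenerate second commutative square taken to be $q = \id_{g(1)}$ and $p = g$, which commutes trivially since $qg = g = p$. Then \Cref{corneridentities} guarantees that $\square^\lrcorner(q,\mathbf{f})$ is an identity, so the map called $g^\lrcorner$ in \Cref{cornertwist} coincides with $\square^\lrcorner(g,\mathbf{f})$ itself. \Cref{cornertwist} therefore exhibits $\square^\lrcorner(g,\mathbf{f})$ as a pushout of $\square^\lrcorner(h,\mathbf{f})$, which lies in $\operatorname{Cell}$ by the assumption $h \in \mathcal{K}$, so $g \in \mathcal{K}$ as well.

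The transfinite composition step is where the real work lies. Given a chain $\{g_{\alpha,\alpha+1}: A_\alpha \to A_{\alpha+1}\}_{\alpha < \lambda}$ in $\mathcal{K}$ with transfinite composite $g = g_{0,\lambda}$, I would set $N_\alpha := \square^\lrcorner(\ldots, g_{0,\alpha}, \ldots)(0)$ and $M_\alpha := \square^\lrcorner(\ldots, g_{0,\alpha}, \ldots)(1) = \square(\ldots, A_\alpha, \ldots)$, then form the intermediate pushouts $E_\alpha := N_\lambda \cup_{N_\alpha} M_\alpha$. Because $\square^\lrcorner(f_1, \ldots, -, \ldots, f_n): \mathcal{A}_k^{[1]} \to \mathcal{D}^{[1]}$ preserves filtered colimits (by the coend formula together with the standing assumption that $\square$ preserves connected colimits in each argument), applying \Cref{corneridentities} at $\alpha = 0$ gives $E_0 = N_\lambda$, while trivially $E_\lambda = M_\lambda$, so the resulting chain $N_\lambda = E_0 \to E_1 \to \cdots \to E_\lambda = M_\lambda$ composes to $\square^\lrcorner(\ldots, g, \ldots)$.

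The hard part will be identifying each connecting map $E_\alpha \to E_{\alpha+1}$ as a pushout of $\square^\lrcorner(\ldots, g_{\alpha,\alpha+1}, \ldots)$, which lies in $\operatorname{Cell}$ by the hypothesis $g_{\alpha,\alpha+1} \in \mathcal{K}$. Concretely, writing $P_\alpha$ for the source of $\square^\lrcorner(\ldots, g_{\alpha,\alpha+1}, \ldots)$, I would verify by diagram chase on the defining $C_n$-colimits that $E_{\alpha+1} \cong E_\alpha \cup_{P_\alpha} M_{\alpha+1}$, using the factorization $g_{0,\alpha+1} = g_{\alpha,\alpha+1} \circ g_{0,\alpha}$ to match up the gluing data between the two pushouts. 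This Leibniz-tensor compatibility is largely bookkeeping and parallels the standard arguments in cofibrantly generated model categories.
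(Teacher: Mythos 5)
Your proposal is correct, and it runs on the same two engines as the paper's proof --- \Cref{corneridentities} to identify $\square^\lrcorner(g,\mathbf{f})$ with the map $g^\lrcorner$ out of the corner-domain, and \Cref{cornertwist} to recognize pushouts --- but it organizes the induction differently. The paper does a single pass over the cell structure of $f_1\in\operatorname{Cell}(\mathscr{J}_1)$: it works with the structure maps $\phi_\alpha = g_{\alpha,\lambda}\colon A_\alpha\to A_\lambda$ into the colimit, observes that $\square^\lrcorner(\bullet,\mathbf{f})$ applied to the chain $\phi_0\to\phi_1\to\cdots\to\id_{A_\lambda}$ in $\mathcal{A}_k^{[1]}$ exhibits $\square^\lrcorner(\phi_0,\mathbf{f})$ as the transfinite composite of the maps $\phi_\alpha^\lrcorner\colon Q_{\phi_\alpha,\mathbf{f}}\to Q_{\phi_{\alpha+1},\mathbf{f}}$, and then applies \Cref{cornertwist} with $p=\phi_\alpha$, $q=\phi_{\alpha+1}$, and $h$ the generating map that $g_{\alpha,\alpha+1}$ is a pushout of, so that each connecting map is directly a pushout of a generator. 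You instead split the argument into two closure properties and build the intermediate stages by hand as $E_\alpha = N_\lambda\cup_{N_\alpha}M_\alpha$; the "hard part" you flag is real but dissolves once you notice that $E_\alpha$ is canonically isomorphic to $Q_{g_{\alpha,\lambda},\mathbf{f}}=\square^\lrcorner(g_{\alpha,\lambda},\mathbf{f})(0)$ (a pushout-pasting computation with the defining $C_n$-colimits), at which point the identification of $E_\alpha\to E_{\alpha+1}$ as a pushout of $\square^\lrcorner(g_{\alpha,\alpha+1},\mathbf{f})$ is exactly \Cref{cornertwist} applied with $p=g_{\alpha,\lambda}$, $q=g_{\alpha+1,\lambda}$ and the trivial coCartesian square on $g_{\alpha,\alpha+1}$. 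What your two-step version buys is a cleaner logical shape (a class $\mathcal{K}$ closed under the cell-forming operations, so no explicit presentation of $f_k$ as a transfinite composite is needed up front); what the paper's version buys is that the troublesome gluing identification never has to be verified separately, since the intermediate objects are produced as corner-domains from the start.
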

\begin{proof}
	Without loss of generality, we may assume \(k=1\) by symmetry of the Cartesian product in \(\Cat\). 
	Let \(f_1\) be a map in \(\mathcal{A}_1\) that belongs to \(\operatorname{Cell}(\mathscr{J}_1)\).  Then this gives the data of a cocontinous diagram \(D:\alpha \to \mathcal{A}_1\) for an ordinal \(\alpha\) with colimit \(C\) and structure maps \(\phi_i: D(i)\to C\) such that \(f_1=\phi_0\).  Additionally, for any \(i <\alpha\), the maps \(g_i:D(i)\to D(i+1)\) are pushouts of maps belonging to \(\mathscr{J}_1\).
	Let \(D^+:\alpha^\triangleright \to \mathcal{A}_1\) be the extension of \(D\) sending \(\alpha\) to \(C\) and such that \(D^+(i\to \alpha)=\phi_i\) for \(i<\alpha\).  Define \(D^2: \alpha^\triangleright \to \mathcal{A}_1^{[1]}\) by the rule \(i\mapsto \phi_i\) for \(i<\alpha\) and \(\alpha\mapsto \id_C\) and sending \(i\to i+1\) to the commutative square \((g_i,\id_c):\phi_i\to \phi_i+1\) and sending the map \(i\to \alpha\) to the commutative square \((\phi_i,\id_C):\phi_i\to \id_C\).  Then the colimit of \(D^2\) is \(\id_C\) as pictured below.
	\begin{center}
		\begin{tikzpicture}
			\matrix (b) [matrix of math nodes, row sep=2em, column sep=2em, nodes={asymmetrical rectangle}]
			{
				D(0) & D(1) & \cdots & C \\
				C & C & \cdots & C \\
			};
			\path[->]
			(b-1-1) edge node[auto]{\(\scriptstyle{g_0}\)} (b-1-2) edge node[auto]{\(\scriptstyle{\phi_0}\)} (b-2-1)
			(b-1-2) edge node[auto]{\(\scriptstyle{\phi_1}\)} (b-2-2) edge node[auto]{\(\scriptstyle{g_1}\)} (b-1-3)
			(b-1-3) edge node[auto]{\(\scriptstyle{\phi_i}\)} (b-2-3) edge (b-1-4)
			(b-2-1) edge[-,double] (b-2-2)
			(b-2-2) edge[-,double] (b-2-3)
			(b-2-3) edge[-,double] (b-2-4)
			(b-1-4) edge[-,double] (b-2-4);
		\end{tikzpicture}.
	\end{center}
	Let \(f_j:X_i\to Y_j \in \mathscr{J}_j\) for each \(2\leq j\leq n\). Denote \(\square^\lrcorner(\bullet,f_2,\dots, f_n)\) by \(\square(\bullet,\mathbf{f})\) and the domain by by \(Q_{\bullet,\mathbf{f}}\).  Then since \(\square^\lrcorner\) preserves connected colimits argument by argument, it follows that 
	\[
		\colim\square^\lrcorner(D^2,\mathbf{f})=\square^\lrcorner(\id_C,\mathbf{f}),
	\]
	and so we have the structure maps of the colimiting cocone
	\[
		(\phi_i^\lrcorner,\id_C): \square^\lrcorner(\phi_i,\mathbf{f}) \to \square^\lrcorner(\id_C,\mathbf{f}).
	\]
	In particular, this demonstrates that the map
	\[
		(\phi_0^\lrcorner,\id_C):\square^\lrcorner(\phi_0,\mathbf{f}) \to \square^\lrcorner(\id_C,\mathbf{f})
	\]
	is the transfinite composite of the maps 
	\[
		(g_i^\lrcorner,\id_C):\square^\lrcorner(\phi_i,\mathbf{f}) \to \square^\lrcorner(\phi_{i+1},\mathbf{f})
	\]
	But by commutativity of the square
	\begin{center}
		\begin{tikzpicture}
			\matrix (b) [matrix of math nodes, row sep=2em, column sep=3em]
			{
				Q_{\phi_0,\mathbf{f}} & Q_{\id_C,\mathbf{f}} \\
				\square(C,\mathbf{Y}) & \square(C,\mathbf{Y}) \\
			};
			\path[->]
			(b-1-1) edge node[auto]{\(\scriptstyle{\phi_0^\lrcorner}\)} (b-1-2) edge node[auto]{\(\scriptstyle{\square^\lrcorner(\phi_0,\mathbf{f}})\)} (b-2-1)
			(b-1-2) edge node[auto]{\(\scriptstyle{\square^\lrcorner(\id_C,\mathbf{f})}\)} (b-2-2)
			(b-2-1) edge node[auto]{\(\scriptstyle{\id}\)} (b-2-2);
		\end{tikzpicture},
	\end{center}
	we see that 
	\[
		\square^\lrcorner(\phi_0,\mathbf{f}) = \square^\lrcorner(\id_C,\mathbf{f}) \circ \phi^\lrcorner_0,
	\] 
	but from \Cref{corneridentities}, we see \(\square^\lrcorner(\id_C,\mathbf{f})=\id\), so it immediately follows that \(\square^\lrcorner(\phi_0,\mathbf{f}) = \phi^\lrcorner_0\). This exhibits \(\square^\lrcorner(\phi_0,\mathbf{f})\) as a transfinite composite of the \(g^\lrcorner_i\).  
	Then it suffices to show that the \(g_i^\lrcorner\) are pushouts of maps belonging to 
	\[\square^\lrcorner(\mathscr{J}_1,,\dots ,\mathscr{J}_n).\]
	Since each \(g_i\) was a pushout of a morphism \(h_i \in \mathscr{J}_1\), and since we have a commutative square \((g_i,\id):\phi_i\to \phi_{i+1}\), we are exactly in the situation of \Cref{cornertwist}, which implies that each \(g_i^\lrcorner\) is a pushout of \(\square^\lrcorner(h_i,\mathbf{h})\), which proves the proposition.
\end{proof}
\begin{cor} 
	There is an inclusion
	\[\square^\lrcorner(\operatorname{Cell}(\mathscr{J}_1),\dots,\operatorname{Cell}(\mathscr{J}_n)) \subseteq \operatorname{Cell}(\square^\lrcorner(\mathscr{J}_1,\dots,\mathscr{J}_n).\]
\end{cor}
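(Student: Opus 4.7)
The plan is to deduce the corollary from \Cref{cornertensorcell} by a straightforward induction on $n$, peeling off the $\operatorname{Cell}$ operator one argument at a time. The essential ingredients are the lemma itself (which handles a single argument at a time), together with the two formal facts that $\operatorname{Cell}(-)$ is order-preserving and idempotent on classes of morphisms.

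First, I apply \Cref{cornertensorcell} with $k=1$, taking the other arguments to be the classes $\operatorname{Cell}(\mathscr{J}_i)$ for $i\geq 2$ (the lemma is stated with arbitrary classes in the non-$k$ slots). This yields
\[
\square^\lrcorner\bigl(\operatorname{Cell}(\mathscr{J}_1),\operatorname{Cell}(\mathscr{J}_2),\dots,\operatorname{Cell}(\mathscr{J}_n)\bigr)
\subseteq \operatorname{Cell}\bigl(\square^\lrcorner(\mathscr{J}_1,\operatorname{Cell}(\mathscr{J}_2),\dots,\operatorname{Cell}(\mathscr{J}_n))\bigr).
\]
Next, I apply the lemma again with $k=2$, now with the first slot $\mathscr{J}_1$ and the remaining slots $\operatorname{Cell}(\mathscr{J}_i)$ for $i\geq 3$; this gives an inclusion of the inner class into $\operatorname{Cell}(\square^\lrcorner(\mathscr{J}_1,\mathscr{J}_2,\operatorname{Cell}(\mathscr{J}_3),\dots,\operatorname{Cell}(\mathscr{J}_n)))$. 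Applying monotonicity of $\operatorname{Cell}(-)$ to the previous display and then using $\operatorname{Cell}\circ\operatorname{Cell}=\operatorname{Cell}$ to collapse the outer double application, I obtain
\[
\square^\lrcorner\bigl(\operatorname{Cell}(\mathscr{J}_1),\dots,\operatorname{Cell}(\mathscr{J}_n)\bigr)
\subseteq \operatorname{Cell}\bigl(\square^\lrcorner(\mathscr{J}_1,\mathscr{J}_2,\operatorname{Cell}(\mathscr{J}_3),\dots,\operatorname{Cell}(\mathscr{J}_n))\bigr).
\]

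Iterating this step $n-2$ more times, each application strips one more $\operatorname{Cell}$ from inside $\square^\lrcorner$ at the cost of an outer $\operatorname{Cell}$ that is immediately absorbed by idempotency. After $n$ applications in total, all the inner $\operatorname{Cell}$'s have been removed and the desired inclusion
\[
\square^\lrcorner\bigl(\operatorname{Cell}(\mathscr{J}_1),\dots,\operatorname{Cell}(\mathscr{J}_n)\bigr)
\subseteq \operatorname{Cell}\bigl(\square^\lrcorner(\mathscr{J}_1,\dots,\mathscr{J}_n)\bigr)
\]
falls out. Formally this is an induction on the number of arguments still carrying a $\operatorname{Cell}$; it could equivalently be phrased as an induction on $n$ by grouping arguments, but the slot-by-slot version above is the most direct.

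There is really no obstacle to overcome beyond bookkeeping: the work has already been done in \Cref{cornertensorcell}, whose single-variable statement is symmetric in the arguments (via the permutation invariance noted in its proof) and applies verbatim with any classes $\mathscr{J}_i$ in the non-distinguished slots. The only mild subtlety is making sure the required preservation hypotheses on $\square$ (connected colimits in each variable) persist under the iteration, which is automatic since they are hypotheses on the functor $\square$ rather than on the classes of maps being fed into it.
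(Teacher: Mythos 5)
Your proof is correct and follows exactly the paper's argument, which simply applies \Cref{cornertensorcell} once per slot and absorbs the resulting nested $\operatorname{Cell}$'s by idempotency of the closure operator. You spell out the monotonicity and idempotency bookkeeping more explicitly than the paper does, but the route is the same.
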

\begin{proof} 
	Apply the previous lemma \(n\) times, using the fact that \(\operatorname{Cell}\) is idempotent, since it is a closure operator.
\end{proof}

\begin{defn} Let \(\mathrm{Rex}_c\) denote the symmetric sub-multi-category of \(\Cat\) whose objects are the categories admitting all finite colimits and whose \(k\)-morphisms are the \(k\)-fold functors that preserve finite connected colimits.
\end{defn}
\begin{obs}\label{cornertensorfunctoriality} The corner tensor construction, sending a multimorphism 
	\[F:\mathcal{A}_1\times\dots\times \mathcal{A}_n \to \mathcal{D}\] to its corner tensor
	\[F^\lrcorner:\mathcal{A}_1^{[1]}\times\dots\times \mathcal{A}_n^{[1]} \to \mathcal{D}^{[1]}\] is a morphism of multicategories from \(\mathrm{Rex}_c\) to itself.  In particular, it is functorial with respect to the composition in \(\mathrm{Rex}_c\).
\end{obs}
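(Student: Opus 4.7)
The plan is to verify the three standard conditions defining a morphism of multicategories: (a) that $F^\lrcorner$ lies in $\mathrm{Rex}_c$ whenever $F$ does, (b) that identities are preserved in the sense $(\id_\mathcal{A})^\lrcorner = \id_{\mathcal{A}^{[1]}}$, and (c) that multicomposition is preserved. Item (b) is immediate: for the unary functor $\id_\mathcal{A}$, Yoneda reduction on the coend defining $(\id_\mathcal{A})^\lrcorner(f)(t) = \int^{u} [1](u,t)\cdot f(u)$ recovers $f(t)$, so the work lies in (a) and (c).

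For (a), I exploit that colimits in $\mathcal{A}_i^{[1]}$ are computed pointwise and verify preservation of finite connected colimits at $t=0$ and $t=1$ separately. At $t=1$, $F^\lrcorner(f_1,\dots,f_n)(1) = F(f_1(1),\dots,f_n(1))$ inherits preservation directly from $F$. At $t=0$, the value is $\colim_{C_n} F(f_1(\cdot),\dots,f_n(\cdot))$, a colimit over the finite connected punctured cube $C_n$; since colimits commute with colimits and $F$ preserves finite connected colimits variable by variable, so does $F^\lrcorner$.

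For (c), the cleanest route is to recognize the corner tensor as a pointwise left Kan extension along the meet,
\[
F^\lrcorner(f_1,\dots,f_n) \;=\; \Lan_\wedge\bigl(F \circ (f_1 \times \dots \times f_n)\bigr),
\]
where $f_1 \times \dots \times f_n \colon [1]^n \to \mathcal{A}_1 \times \dots \times \mathcal{A}_n$ is the external product and $\wedge \colon [1]^n \to [1]$ is the meet. Given multicomposable $F\colon \prod_i \mathcal{A}_i \to \mathcal{D}$ and $G_i\colon \prod_j \mathcal{B}_{ij} \to \mathcal{A}_i$, the identity $\wedge \circ (\wedge_1 \times \dots \times \wedge_n) = \wedge_{\mathrm{tot}}\colon [1]^{\sum k_i} \to [1]$ combined with Fubini for left Kan extensions reduces the claim to showing that $F$ can be commuted past each inner Kan extension $(\wedge_i)_!$ in its corresponding slot. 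The fibers of $\wedge_i$ are terminal above $1$ and equal the connected finite poset $C_{k_i}$ above $0$, so these Kan extensions are computed by finite connected colimits that $F$ preserves by hypothesis. The main obstacle is the bookkeeping needed to confirm that the interchange of $F$ with the inner Kan extensions is natural in the remaining slots and respects the arrow structure on $\mathcal{D}^{[1]}$, not merely the pointwise values at $0$ and $1$.
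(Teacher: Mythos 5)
Your proposal is correct, and it supplies an argument for something the paper records only as an \emph{Observation} with no proof at all. The identification
\[
F^\lrcorner(f_1,\dots,f_n)\;\cong\;\Lan_\wedge\bigl(F\circ(f_1\times\dots\times f_n)\bigr)
\]
is exactly the right way to read the defining coend, and your reduction of multicomposition to ``Fubini for Kan extensions plus commuting $F$ past the inner extensions'' is the abstract form of the coend manipulations the paper carries out by hand in special cases (the $U(s,t)$ computation in \Cref{corneridentities} and again in the complete Segal space comparison). Your observation that the comma fibers of $\wedge$ are terminal over $1$ and the connected punctured cube over $0$ is precisely why the hypothesis ``preserves finite \emph{connected} colimits'' is the correct one, and your parts (a) and (b) are fine as written. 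The residual ``bookkeeping'' you flag at the end is not a genuine gap: every interchange isomorphism you invoke is the canonical comparison map arising from a universal property, hence automatically natural in the remaining slots, and since isomorphisms in $\mathcal{D}^{[1]}$ are detected by evaluation at $0$ and $1$ together with the induced square, checking the two endpoints (with the structure map coming from functoriality of the Kan extension) suffices. The only caveat worth recording is degenerate arities: if one admits nullary multimorphisms, $C_0=\varnothing$ is not connected and the argument for commuting $F$ past the inner extension fails (an initial object need not be preserved); the paper only ever composes multimorphisms of positive arity, so this does not affect any application, but it should be excluded explicitly if $\mathrm{Rex}_c$ is read as containing nullary operations.
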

\subsection{The regular Reedy structure of \(\Theta[\C]\)}\label{reedy}
In this section, we will prove some useful and interesting properties about regular Cartesian Reedy categories (see \Cref{cishelp}). In particular, we demonstrate that the class of monomorphisms is exactly the class of relative cell complexes for a set of maps \(\mathscr{M}\), which we will show coincides with the set of boundary maps for the regular skeletal Reedy category \(\Theta[\C]\).  

\begin{prop} 
	The category \(\Theta[\C]\) is a regular skeletal Reedy category whenever \(\C\) is a regular Cartesian Reedy category.  The dimension function of this regular Reedy category is given by 
	\[\dim[n](c_1,\dots,c_n)\defeq n+\dim_\C c_1 + \dots + \dim_\C c_n.\]
\end{prop}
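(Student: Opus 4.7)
The plan is to verify that $\Theta[\C]$ is a regular skeletal Reedy category by explicitly describing its direct and inverse subcategories through the wreath-product presentation, checking the Reedy factorization and dimension axioms by reduction to the analogous structures on $\Delta$ and on $\C$, and finally transferring the regular and skeletal conditions slotwise, with (CR1) and (CR2) supplying the essential control on the labels.

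First I would unwind morphisms in $\Theta[\C]$: a morphism $(\phi,\{f_i\}): [n](c_1,\ldots,c_n) \to [m](d_1,\ldots,d_m)$ consists of a $\phi:[n]\to[m]$ in $\Delta$ together with maps $f_i: c_i \to d_{\phi(i-1)+1}\times \cdots \times d_{\phi(i)}$ in $\psh{\C}$ for each $1\leq i\leq n$. By (CR1) each such product is a regular presheaf on $\C$, so the Reedy structure of $\C$ supplies a canonical decomposition of $f_i$ as an inverse morphism followed by a nondegenerate section into this product, bounded in dimension by (CR2).

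Next I would define the direct subcategory $\Theta[\C]^+$ to consist of those $(\phi,\{f_i\})$ with $\phi\in \Delta^+$ and each $f_i$ a nondegenerate section in the sense of (CR2), and the inverse subcategory $\Theta[\C]^-$ to consist of those with $\phi\in \Delta^-$ and each $f_i$ either an inverse morphism of $\C$ (when $\phi(i-1)<\phi(i)$, so the target is a single representable) or the unique map to the terminal object (when $\phi(i-1)=\phi(i)$, so the product is empty). The assumed terminal object of $\C$ is precisely what keeps inverse morphisms inside $\Theta[\C]$.

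The Reedy factorization of an arbitrary morphism is then obtained by first factoring $\phi=\phi^+\phi^-$ in $\Delta$ and allowing the labels to redistribute along $\phi^-$ in the wreath product; this produces an intermediate object whose labels are products of representables, legitimate by (CR1). One then factors each resulting $f_i$ via the Reedy structure of $\C$ applied to the section into its target product of representables, which picks out a unique representable intermediate. Uniqueness of the overall factorization follows from uniqueness in $\Delta$ and in $\C$ slotwise. To verify the dimension axiom for the formula $\dim[n](c_1,\ldots,c_n)=n+\sum_i\dim c_i$, I would separately consider the case of a non-identity direct $\phi$ (which forces $m>n$, and the possible decrease in label contribution is bounded by (CR2) so that the source dimension cannot catch up) and the case $\phi=\id$ with some $f_i: c_i\to d_i$ a non-identity direct morphism of $\C$ (which strictly raises $\dim c_i$ by the regular Reedy structure of $\C$). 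The inverse case is dual, and the regular and skeletal conditions in the sense of Cisinski then follow slotwise from the regular skeletal structures of $\Delta$ and $\C$ via the wreath-product description of cells and their boundaries.

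The main obstacle will be the bookkeeping in the dimension argument: making sure (CR2) produces a \emph{strict} increase for a non-identity direct morphism in the mixed case where $\phi$ is injective but not an identity and simultaneously some $f_i$ is non-trivial on a slot that spans multiple target factors, without double-counting the contribution of factors lying outside the image of $\phi$. Correctly identifying when equality in (CR2) is attained, and combining this with the Reedy dimension inequality of $\C$ on the single-factor slots, is the key technical point.
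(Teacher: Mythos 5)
Your overall strategy differs from the paper's: the paper does not construct the Reedy structure by hand at all, but cites \cite{bergner-rezk-reedy}*{Proposition 4.4} (via the observation that a regular Cartesian Reedy category is in particular a Reedy multicategory) to get that \(\Theta[\C]\) is normal skeletal Reedy with the stated dimension function, and then devotes the entire proof to the one thing that reference does not give, namely \emph{regularity}. Rebuilding the factorization system slotwise as you propose is in principle possible, but your sketch leaves the genuinely delicate points unaddressed: uniqueness of the factorization when a label map \(f_i\) spans several target factors, the axiom that two inverse morphisms with the same sections coincide, and normality (absence of nontrivial automorphisms). None of these is "slotwise from \(\Delta\) and \(\C\)" in an obvious way, which is precisely why the paper outsources them.

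The genuine gap is in your last sentence, where you assert that the regular condition "follows slotwise." Regularity means every nondegenerate section of a representable \([m](d_1,\dots,d_m)\) is monic, and the hard case is a nondegenerate section \((\alpha,\mathbf{f})\) whose underlying simplicial map \(\alpha\) is an \emph{inner} face: such a map factors through \(V[n](d_1,\dots,d_{k-1}\times d_k,\dots,d_m)\), so one of its label components is a nondegenerate section \(c\to d_{k-1}\times d_k\) into a binary product of representables, which is not a representable and hence not governed by the Reedy structure of \(\C\) alone. That this component is monic is exactly axiom (CR1) (regular presheaves on \(\C\) are closed under finite products), and one must also check that the components of the induced factorization are themselves nondegenerate so that (CR1) applies. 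This is the entire content of the paper's proof and cannot be dispensed with by a slotwise transfer; as written, your argument never confronts it. Your worry about strictness of the dimension inequality is, by contrast, a non-issue here: the skeletal axioms as used in the paper only require \(\dim(A)\le\dim(A')\) for direct morphisms, and the non-strict estimate follows directly from (CR2) together with \(n\le m\).
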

\begin{proof} 
	It follows by \cite{bergner-rezk-reedy}*{Proposition 4.4} that \(\Theta[\C]\) is normal skeletal Reedy with the desired dimension function (see \Cref{normskelcat}). To prove that \(\Theta[\C]\) is regular, it suffices to show that any nondegenerate section 
	\[(\alpha,\mathbf{f}):[n](c_1,\dots,c_n) \to [m](d_1,\dots, d_m)\]
	is monic.  If \(n>m\), then the map \(\alpha\) is a simplicial degeneracy and therefore the map factors through \([m](c_{i_1},\dots,c_{i_m})\), which means that \((\alpha,\mathbf{f}))\) cannot be nondegenerate.  
	
	Then we have two cases, when \(n=m\) or \(n<m\), and therefore either \(\alpha\) is the identity or a simplicial face map.  If \(n=m\), then clearly \(\alpha, \mathbf{f}\) is monic, since each nondegenerate section \(c_i\to d_i\) must be monic by the regularity of \(\C\).
	
	If \(\alpha\) is a composite of outer face maps, then \((\alpha,\mathbf{f})\) lands in \([n](d_1,\dots,d_n)\) or \([n](d_{m-n},\dots, d_m)\), and then from the previous case together with the fact that those inclusions are monic.  Otherwise, by induction on dimension, we can assume that \(\alpha\) is the inclusion of a codimension \(1\) inner face map obtained by removing the \(k^\mathrm{th}\) vertex. Then \((\alpha,\mathbf{f})\) factors through the inclusion \[V[n](d_1,\dots,d_{k-1} \times d_k,\dots, d_m) \subseteq [m](d_1,\dots,d_m)\] by a map \(\operatorname{id}, \mathbf{g}\) where each \(g_i\) is a nondegenerate section.  Then each \(g_i\) must be monic since for \(i\neq k\), \(g_i\) is a nondegenerate section of a representable, which is monic, and for \(i=k\), \(g_i\) is a nondegenerate section \(c\to d_{k-1} \times d_k\), which is monic by the fact that \(\C\) is regular Cartesian.
\end{proof}
\begin{prop} The boundary \(\partial[n](c_1,\dots,c_n)\) can be computed using the corner-intertwiner (see \ref{horizontal})
	\[Q=\square^\lrcorner_n(\delta^n, \delta^{c_1}, \dots, \delta^{c_n})(0).\]
\end{prop}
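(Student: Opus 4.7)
The plan is to unwind both sides of the claimed equality explicitly as subobjects of $[n](c_1,\dots,c_n)$ in $\cellset$ and then verify the two inclusions by a direct case analysis on nondegenerate sections, leaning on the regular skeletal Reedy structure just established. Using the hand computation of the corner-intertwiner indicated in the remark after \Cref{modelstrucdefn} (applied with $\partial\Delta^n$ in place of $\Lambda^n_k$), I would first identify
\[
	Q = V_{\partial\Delta^n}(c_1,\dots,c_n)\,\cup\,\bigcup_{i=1}^n V[n](c_1,\dots,\partial c_i,\dots, c_n)
\]
as a subobject of $V[n](c_1,\dots,c_n)$, where $V_{\partial\Delta^n}(c_1,\dots,c_n)$ is the pullback of $[n](c_1,\dots,c_n)$ along $\partial\Delta^n\hookrightarrow\Delta^n$ provided by \Cref{pullbacksober}. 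On the other hand, $\partial[n](c_1,\dots,c_n)$ is by definition the union of all proper nondegenerate subobjects in $\Theta[\C]$.

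For the inclusion $Q\subseteq \partial[n](c_1,\dots,c_n)$, I would simply note that each constituent piece lies in some proper face: the simplicial faces appearing in $V_{\partial\Delta^n}(c_1,\dots,c_n)$ have strictly smaller simplicial dimension, and each $V[n](c_1,\dots,\partial c_i,\dots,c_n)$ is a union of subobjects $V[n](c_1,\dots,c_i',\dots,c_n)$ with $\dim_\C c_i' < \dim_\C c_i$. In either case the Reedy dimension formula $\dim[n](c_1,\dots,c_n) = n+\sum \dim_\C c_i$ from the preceding proposition shows the piece is proper.

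For the reverse inclusion, I would take an arbitrary nondegenerate section $(\alpha,\mathbf{f}):[m](d_1,\dots,d_m)\hookrightarrow [n](c_1,\dots,c_n)$ of strictly smaller dimension and split into two cases on $\alpha$. By the regularity argument in the preceding proposition, $\alpha$ is a simplicial injection, hence $m\leq n$. If $m<n$, then $\alpha$ factors through $\partial\Delta^n\hookrightarrow \Delta^n$, and the universal property of the Cartesian lift identified in \Cref{pullbacksober} yields a factorization through $V_{\partial\Delta^n}(c_1,\dots,c_n)$. If $m=n$, then $\alpha=\id$ and the wreath-product description of morphisms in $\Delta\int\psh\C$ reduces $(\alpha,\mathbf{f})$ to an $n$-tuple of nondegenerate sections $\phi_j:d_j\hookrightarrow c_j$ in $\psh\C$; since the total dimension strictly decreases, at least one $\phi_j$ has $\dim_\C d_j<\dim_\C c_j$ and therefore factors through $\partial c_j\hookrightarrow c_j$, so the whole section factors through $V[n](c_1,\dots,\partial c_j,\dots,c_n)$.

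The main place where I expect to have to be careful is the $m=n$ case: one has to correctly read the wreath-product structure so that the only target components in the source tuple are the $c_j$ themselves (not products), and then use the dimension formula to force a strict decrease in one coordinate. Axiom (CR2) does not enter the bookkeeping directly, but it is the ingredient that made the Reedy and regularity structure on $\Theta[\C]$ available, so the whole argument ultimately rests on it.
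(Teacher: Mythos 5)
Your proposal is correct and follows essentially the same route as the paper's proof: the easy inclusion $Q\subseteq\partial[n](c_1,\dots,c_n)$ by properness of each constituent piece, and the reverse inclusion by the same case split on a nondegenerate section ($m<n$ factoring through $V_{\partial\Delta^n}(c_1,\dots,c_n)$, and $m=n$ using the strict dimension inequality to find a coordinate $k$ with $\dim d_k<\dim c_k$, forcing a factorization through $V[n](c_1,\dots,\partial c_k,\dots,c_n)$). The only difference is cosmetic: you spell out the first inclusion more carefully than the paper does.
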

\begin{proof}
	It is clear that \(Q\subseteq \partial ([n](c_1,\dots,c_n))\), as it is a union of representable subobjects.  Suppose conversely that \([m](d_1,\dots,d_m) \to [n](c_1,\dots,c_n)\) is a nondegenerate section with \(\dim [m](d_1,\dots,d_m) < \dim [n](c_1,\dots,c_n)\).  As it is nondegenerate, we see immediately that \(m\leq n\).  
	
	Suppose \(m<n\).  Then the map \([m]\to [n]\) factors through \(V_{\partial \Delta^n}(c_1,\dots,c_n)\), and since the inclusion map 
	\[
		V_{\partial \Delta^n}(c_1,\dots,c_n) \hookrightarrow [n](c_1,\dots,c_n)
	\]
	is monic, it follows that the map 
	\[
		[m](d_1,\dots,d_m)\to V_{\partial \Delta^n}(c_1,\dots,c_n)
	\]
	must also be monic, and ergo that \([m](d_1,\dots,d_m)\subseteq Q\).
	
	Otherwise, suppose \(m=n\).  By the strictness of the inequality \(\dim [m](d_1,\dots,d_m) < \dim [n](c_1,\dots,c_n)\), we see that there exists some \(k\) with \(1\leq k \leq n\) such that \(\dim d_k < \dim c_k\), as otherwise the dimensions would be equal, since \(\dim d_i \leq dim c_i\) for all \(1\leq i \leq n\) by nondegeneracy.  Then it follows immediately that \([m](d_1,\dots,d_m) \subset V[n](c_1,\dots,\partial c_k, \dots, c_n)\), which proves the proposition.
\end{proof}
\begin{cor} 
	We define the set
	\[\mathscr{M}=\{\partial t \to t \mid t \in \Ob(\Theta[\C])\}.\]  Then the class \(\operatorname{Cell}(\mathscr{M})\) is exactly the the class of monomorphisms of \(\cellset\).  
\end{cor}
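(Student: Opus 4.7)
The plan is to establish the two inclusions separately. For the inclusion $\operatorname{Cell}(\mathscr{M}) \subseteq \{\text{monomorphisms}\}$, I would first observe that each generator $\partial t \hookrightarrow t$ is a monomorphism: by the previous proposition, $\partial t$ is realized as a union of representable subobjects of $t$. Since the class of monomorphisms in any presheaf category is saturated---closed under pushouts, transfinite composition, coproducts, and retracts---every element of $\operatorname{Cell}(\mathscr{M})$ is a monomorphism.

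For the reverse inclusion, the strategy is the standard skeletal filtration argument for presheaves on a regular skeletal Reedy category. Given a monomorphism $f\colon X\hookrightarrow Y$, I would define $Y_n\subseteq Y$ to be the subpresheaf generated by $X$ together with the images of all cells $t\to Y$ with $\dim t \leq n$, producing a filtration $X = Y_{-1} \subseteq Y_0 \subseteq Y_1 \subseteq \cdots$ with $Y = \colim_n Y_n$. The core claim is then that each step $Y_{n-1}\hookrightarrow Y_n$ is a pushout of a coproduct of maps of the form $\partial t \hookrightarrow t$, indexed over isomorphism classes of nondegenerate $n$-cells $t\to Y$ that do not factor through $X$. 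Regularity of $\Theta[\C]$ ensures that each such nondegenerate cell is itself a monomorphism and that distinct nondegenerate cells of dimension $n$ meet only along their boundaries; combined with the previous proposition, which identifies $\partial t$ with exactly the union of proper nondegenerate faces of $t$ (and hence with a subobject already contained in $Y_{n-1}$), this produces the required pushout square at each stage of the filtration.

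The main technical obstacle is verifying the pushout square rigorously: one must check both that the attaching map $\coprod_t \partial t \to Y_{n-1}$ is well-defined and that the resulting square is coCartesian. This amounts to an Eilenberg--Zilber-style decomposition of an arbitrary presheaf into its nondegenerate cells, which is precisely the content of the general cellular-model theorem for presheaves on a regular skeletal Reedy category recalled in the appendix---indeed, this is the reason for isolating the \emph{regular skeletal Reedy} axioms. So one legitimate shortcut, now that the previous proposition has identified $\partial t$ in the desired form, is simply to invoke that general theorem.
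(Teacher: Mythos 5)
Your proposal is correct and ends up in the same place as the paper: the paper's proof is simply to invoke the general cellular-model theorem for normal/regular skeletal Reedy categories (Cisinski, \emph{Prop.\ 8.1.37}, recalled as \Cref{normskelcat}, or Bergner--Rezk 4.4), which is exactly the ``legitimate shortcut'' you identify at the end, made available by the two preceding propositions establishing that \(\Theta[\C]\) is regular skeletal Reedy and that its boundaries are the corner-intertwiners. Your sketch of the relative skeletal filtration is a faithful outline of the proof of that cited theorem, so there is no gap.
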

\begin{proof}
	This is an immediate consequence of \cite{cisinski-book}*{Proposition 8.1.37} or \cite{bergner-rezk-reedy}*{4.4}.
\end{proof}
\begin{prop}
	The category \(\Theta[\C]\) is regular Cartesian Reedy when \(\C\) is.
\end{prop}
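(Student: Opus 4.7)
The plan is to verify, beyond the regular skeletal Reedy structure of $\Theta[\C]$ established in the preceding proposition, the two axioms (CR1) and (CR2) of \Cref{cishelp}. Both reductions use the wreath product description of morphisms and products in $\Theta[\C]$, together with the intertwiner, to reduce to the corresponding properties for $\C$.

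For (CR2), I would unpack a nondegenerate section
\[
f \colon [n](c_1,\dots,c_n) \longrightarrow \prod_{i \in I} [m_i](d_{i,1},\dots,d_{i,m_i})
\]
in $\cellset$ as a tuple of morphisms $(\alpha_i,\mathbf{g}_i)$ with $\alpha_i \colon [n] \to [m_i]$ in $\Delta$ and label components $g_{i,k} \colon c_k \to \prod_{\alpha_i(k-1)<j\leq \alpha_i(k)} d_{i,j}$ in $\psh{\C}$. The Reedy-nondegeneracy of $f$ forces two things: (a) the combined simplicial map $[n]\to \prod_i [m_i]$ is a nondegenerate (hence injective) section in $\psh{\Delta}$, which, since $\Delta$ is trivially regular Cartesian Reedy, yields $n\leq \sum_i m_i$; and (b) for each $k$, the combined label morphism
\[
g_k\colon c_k \longrightarrow \prod_{i\in I}\prod_{\alpha_i(k-1)<j\leq \alpha_i(k)} d_{i,j}
\]
is a nondegenerate section in $\psh{\C}$. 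Applying (CR2) for $\C$ to each $g_k$ and summing over $k$, and using that for each fixed $i$ the intervals $(\alpha_i(k-1),\alpha_i(k)]$ are pairwise disjoint subsets of $\{1,\dots,m_i\}$, I would obtain $\sum_k \dim c_k \leq \sum_{i,j}\dim d_{i,j}$. Adding this to (a) produces $n + \sum_k \dim c_k \leq \sum_i m_i + \sum_{i,j}\dim d_{i,j} = \sum_i \dim[m_i](\mathbf{d}_i)$, which is exactly (CR2) for $\Theta[\C]$.

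For (CR1), I would compute products of representables in $\cellset$ via the intertwiner: because $\ssetlab$ has Cartesian products given componentwise on the underlying simplicial sets and on the labelings, and because $\square$ preserves whatever limits exist in $\ssetlab$, the product $V[n](\mathbf{c})\times V[m](\mathbf{d})$ is the intertwiner of the labeled simplicial set $(\Delta^n\times\Delta^m,\Omega\times\Omega')$. The Eilenberg--Zilber shuffle decomposition expresses $\Delta^n\times\Delta^m$ as an iterated pushout along monomorphisms of its nondegenerate simplices, and each such simplex $\sigma \colon [p]\hookrightarrow[n]\times[m]$ contributes a labeled simplex $[p](\mathbf{e}_\sigma)$ whose labels $e_{\sigma,k}$ are finite products (in $\psh{\C}$) of the $c_i$'s and $d_j$'s, hence regular presheaves on $\C$ by (CR1) for $\C$. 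Applying $\square$ then expresses $V[n](\mathbf{c})\times V[m](\mathbf{d})$ as a colimit along monomorphisms of regular cells, i.e.\ as a regular presheaf on $\Theta[\C]$. The general closure statement for (CR1) then follows by writing regular presheaves on $\Theta[\C]$ as colimits of their representables along monomorphisms (\emph{cf.} the description of monomorphisms via $\mathscr{M}$ above) and passing colimits through finite products.

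The main obstacle will be justifying point (b) in the (CR2) argument: one must carefully match Reedy-degeneracies in $\Theta[\C]$ with the combination of simplicial degeneracies (edges collapsed by every $\alpha_i$ simultaneously, producing terminal labels) and label-degeneracies in $\C$, and show that if any combined $g_k$ fails to be a nondegenerate section in $\psh{\C}$, then $f$ itself factors through a Reedy degeneracy in $\Theta[\C]$. This bookkeeping relies on the explicit form of the degeneracy maps of $\Theta[\C]$ as a wreath construction, and it is the one genuinely non-formal step; the rest of the argument is summation and coend manipulation.
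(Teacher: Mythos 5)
Your argument for (CR2) is the paper's argument: extract the simplicial part and the label components of a nondegenerate section, observe that nondegeneracy forces the simplicial map \([n]\to\prod_i[m_i]\) to be monic and each combined label map \(c_k\to\prod_{i,j}d_{i,j}\) to be a nondegenerate section, then apply (CR2) for \(\C\) and sum over the disjoint intervals. The step you flag as the main obstacle is handled in the paper exactly as you anticipate: a degenerate label component would exhibit a factorization of the whole section through a degeneracy of \(\Theta[\C]\). For (CR1) your shuffle-cell decomposition is a globalized repackaging of what the paper does locally: the paper takes a single nondegenerate section, factors it through the pullback labeling \(([n],(\alpha,\beta)^\ast\Omega)\) (which is precisely the one shuffle cell determined by the underlying nondegenerate simplex of \(\Delta^{m_1}\times\Delta^{m_2}\)), and shows monicity there using (CR1) for \(\C\); you decompose the whole product into all such cells first. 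Either way the content is the same computation of the edge labels as products of the \(c_i\) and \(d_j\).

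The one step that does not work as stated is your final reduction of (CR1) from products of representables to products of arbitrary regular presheaves ``by writing regular presheaves as colimits of their representables along monomorphisms and passing colimits through finite products.'' The canonical diagram of representables over a presheaf is not a diagram of monomorphisms, and a colimit of regular presheaves need not be regular (quotients destroy regularity), so this closure argument is not available. The correct reduction is elementary but different: given a nondegenerate section \(A\to X\times Y\) with \(X,Y\) regular, apply the Eilenberg--Zilber factorization to each component to get \(A\to A_1\to X\) and \(A\to A_2\to Y\) with \(A_i\to X\), \(A_i\to Y\) nondegenerate, hence monic by regularity of \(X\) and \(Y\); then \(A\to A_1\times A_2\) is a nondegenerate section of a product of representables (were it degenerate, so would be the original section), hence monic by the representable case, and \(A_1\times A_2\to X\times Y\) is monic as a product of monomorphisms. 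The paper itself only treats the representable case and leaves the rest to the reader, so this is a gap you share with the source, but your proposed justification should be replaced by the factorization argument above.
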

\begin{proof}
	We treat the case of binary products of representables.  The case of general finite products is similar, albeit more notation-heavy. We leave the details to the reader.

	By our calculations in \Cref{sec:intertwiner}, since all involved objects are sober, we see that for a section 
	\[
		(\alpha,\beta):[n](c^n_1,\dots,c^n_n) \to [m_1](c^{m_1}_1,\dots,c^{m_1}_{m_1})\times [m_2](c^{m_2}_1,\dots,c^{m_2}_{m_2})
	\]
	to be nondegenerate, the associated map of simplicial sets \([n]\to [m_1]\times [m_2]\) is monic. The map \((\alpha,\beta)\) therefore factors through the pullback of the labeling on \([m_1](c^{m_1}_1,\dots,c^{m_1}_{m_1})\times [m_2](c^{m_2}_1,\dots,c^{m_2}_{m_2})\) to \([n]\), denoted by \(([n],(\alpha,\beta)^\ast \Omega)\).  The map  
	\[
		([n],(\alpha,\beta)^\ast \Omega) \to [m_1](c^{m_1}_1,\dots,c^{m_1}_{m_1})\times [m_1](c^{m_2}_1,\dots,c^{m_2}_{m_2})
	\]
	is monic, so it will be enough to show that the map
	\[
		\iota:[n](c^n_1,\dots,c^n_n)\to ([n],(\alpha,\beta)^\ast \Omega)
	\]
	must also be monic.  The section \(\iota\) is certainly nondegenerate, since if it were not, the map \((\alpha,\beta)\) could not be either.
	
	We will directly calculate \((\alpha,\beta)^\ast \Omega\) in \ref{square2}, and it will be shown that the labeling, determined by its restriction to each edge \((e_i)_{i=1}^n\) of the spine of \([n]\) is given by the formula
	\[
		e_i^\ast (\alpha,\beta)^\ast \Omega = \prod_{k_\alpha=\alpha(i-1)+1}^{\alpha(i)} c^{m_1}_{k_\alpha} \times \prod_{k_\beta=\beta(i-1)+1}^{\beta(i)} c^{m_2}_{k_\beta}.
	\] 
	Since the section \(\iota\) is nondegenerate and the associated map on simplicial sets is an isomorphism,  we see that each of the sections 
	\[
		\iota_i:c^n_i \to \prod_{k_\alpha=\alpha(i-1)+1}^{\alpha(i)} c^{m_1}_{k_\alpha} \times \prod_{k_\beta=\beta(i-1)+1}^{\beta(i)} c^{m_2}_{k_\beta}
	\] 
	must be nondegenerate as well, and therefore since \(\C\) is regular Cartesian Reedy, it follows that each of the maps \(\iota_i\) is monic, which proves the first axiom, and that 
	\[
		\dim c^n_i \leq \sum_{k_\alpha=\alpha(i-1)+1}^{\alpha(i)} \dim c^{m_1}_{k_\alpha} + \sum_{k_\beta=\beta(i-1)+1}^{\beta(i)} \dim c^{m_2}_{k_\beta}.
	\]
	But it follows from this that 
	\begin{align*}
		\dim [n](c^n_1, \dots c^n_n) &= n + \sum_{i=1}^n \dim c^n_i\\
		&\leq n+ \sum_{i=1}^n \left(\sum_{k_\alpha=\alpha(i-1)+1}^{\alpha(i)} \dim c^{m_1}_{k_\alpha} + \sum_{k_\beta=\beta(i-1)+1}^{\beta(i)} \dim c^{m_2}_{k_\beta}\right)\\
		&\leq m_1 + \sum_{i=1}^{m_1} \dim c^{m_1}_i + m_2 + \sum_{j=1}^{m_2} \dim c^{m_2}_j\\
		&=\dim [m_1](c^m_1,\dots,c^{m_1}_{m_1}) + \dim [m_2](c^m_1,\dots,c^{m_2}_{m_2}),
	\end{align*}
	which proves the second axiom.
\end{proof}
\begin{rem}
	The proposition above gives us a way to construct new regular Cartesian Reedy categories from known examples by applying the functor \(\Theta[-]\), which we view as a kind of free-generation-under-suspension functor. Moreover, regular Cartesian Reedy categories are stable under certain kinds of filtered unions (at least filtered unions of fully faithful maps preserving the dimension grading). The list of such categories generated from the terminal category under the operation \(\Theta[-]\) is exactly the family of \(\Theta_n\) for \(0\leq n < \omega,\) and stabilizng under good-enough filtered unions, we also obtain the category \(\Theta=\Theta_\omega\).   These are ultimately the only examples we care about in this paper.
	
	Though we do not need it here, it can be also be shown that finite products of regular Cartesian Reedy categories are also regular Cartesian Reedy. Starting with the terminal category and taking finite products, \(\Theta[-]\), and good-enough filtered unions, we obtain a large supply of regular Cartesian Reedy categories.  We leave it as an open question as to whether or not these operations generate all examples. 
\end{rem}
\subsection{The anodyne theorem for horizontal inner anodynes}\label{horizontal}
In this section, following \cite{oury}*{3.4.4}, we will demonstrate that the horizontal inner anodynes are closed under corner products with monomorphisms.  As a corollary of the analysis in this section, we will demonstrate that \(\Theta[\C]\) is regular Cartesian Reedy.  We make no claim to originality.

\begin{defn} Given a simplicial set \(S\) define the functor 
	\[H_S: \overcat{\psh{\Delta}}{S} \times  \left(\ssetlab\right)_S \to \ssetlab\]
	by the rule
	\[\left(S^\prime \xrightarrow{f} S, (S,\Omega)\right) \mapsto (S^\prime,  \Omega\circ f),\]
	and we define the \emph{relative intertwiner over \(S\)}
	\[\square_S \defeq \square \circ H_S.\]
	
	Notice that when \(S=\Delta^n\), the fibre decomposes as \[\left(\ssetlab\right)_n\simeq \psh{\C}^n.\] So we can write 
	\[\square_n:\overcat{\psh{\Delta}}{\Delta^n} \times \psh{\C}^n \to \cellset.\]
\end{defn}

\begin{obs}\label{evaluationsquare}
	Given a labeled simplicial set \((S,\Omega)\), a map of simplicial sets \(Y\to S\), and an object \([t]=[n](c_1,\dots,c_n)\) of \(\Theta[\C]\), we can compute \(\square_S(f,\Omega)_t\) as follows: For any \(n\)-simplex \(s\in S_n\), let \((W_{s,i})_{i=1^n}\) be the family of \(\C\)-sets obtained by evaluation of \(\Omega\) on \(s\). A map \([t] \to Y\square \Omega\circ f\) is by definition a map \([t]\to (Y,\Omega\circ f)\).  Such a map is determined by giving an \(n\)-simplex \(y\in Y_n\) together with a family of maps 
	\[(c_i \xrightarrow{\zeta_i} W_{fy,i})_{i=1}^n.\] Then we can compute 
	\[\square_S(f,\Omega)_t\cong \coprod_{y\in Y_n} \prod_{i=1}^n W_{fy,i,c_i}.\]
\end{obs}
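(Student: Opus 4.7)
The statement is essentially an unwinding of definitions, so the plan is to trace through the constructions and assemble the formula from the pieces. The key insight is that the intertwiner computes $\Hom$ out of representables in $\ssetlab$, so I just need to describe such a $\Hom$-set explicitly.

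First I would unpack $H_S(f,\Omega)$ as the labeled simplicial set $(Y,\Omega\circ f)$ whose underlying simplicial set is $Y$ and whose labeling is obtained by precomposing with $f$; note that for an $n$-simplex $y\in Y_n$ the associated $n$-tuple of labels is $(W_{fy,1},\dots,W_{fy,n})$ by the very definition of $W_{s,i}$. Then $\square_S(f,\Omega)_t$ is, by definition of the intertwiner, the set
\[
\Hom_{\ssetlab}\!\bigl(L[t],\,(Y,\Omega\circ f)\bigr),
\]
where $L[t] = [n](c_1,\dots,c_n)$ under the canonical embedding $\Theta[\C]\hookrightarrow\ssetlab$.

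Next I would recall that a morphism $[n](c_1,\dots,c_n)\to (Y,\Omega\circ f)$ in $\ssetlab$ is, by the definition of the Grothendieck construction associated to the pseudofunctor $\psh{\psh{\C}^{\op}_\bullet}^{\op}$, the data of (i) a morphism of simplicial sets $[n]\to Y$, i.e.\ an $n$-simplex $y\in Y_n$, together with (ii) a morphism of labelings on $[n]$ from $(c_1,\dots,c_n)$ to the pullback of the labeling of $(Y,\Omega\circ f)$ along $y$. Because the pullback of a labeling along a simplex map is computed pointwise on edges of the spine (using the Reedy-fibrant rectification recorded in the first Note of \Cref{sec:intertwiner}), the pulled-back labeling on $[n]$ is precisely $(W_{fy,1},\dots,W_{fy,n})$. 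Hence the datum in (ii) is an $n$-tuple of morphisms $\zeta_i\colon c_i\to W_{fy,i}$ in $\psh{\C}$.

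Finally, since each $c_i$ is a representable presheaf, the Yoneda lemma identifies the set of such $\zeta_i$ with $W_{fy,i,c_i}$. Coproducting over the choice of $y\in Y_n$ and producting over $i$ then yields the formula
\[
\square_S(f,\Omega)_t \;\cong\; \coprod_{y\in Y_n}\prod_{i=1}^n W_{fy,i,c_i},
\]
which is the claim. There is no real obstacle here beyond keeping track of which direction the labeling pulls back; the only point requiring care is the justification that a morphism of labeled simplicial sets is determined by its underlying simplicial map together with an edgewise (hence spine-determined) modification of labelings, which is exactly the rectification result quoted from Lack--Paoli in the preceding section.
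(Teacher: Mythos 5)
Your proposal is correct and follows essentially the same route as the paper, which likewise treats this observation as a direct unwinding of the definitions: a map \([t]\to(Y,\Omega\circ f)\) is an \(n\)-simplex \(y\in Y_n\) together with spine-edgewise maps \(\zeta_i\colon c_i\to W_{fy,i}\), and Yoneda identifies the latter with elements of \(W_{fy,i,c_i}\). Your added remarks on the Grothendieck construction and the Lack--Paoli rectification only make explicit what the paper leaves implicit.
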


\begin{defn}\label{productintertwiner}
	Given a finite family of simplicial sets \(\mathbf{S}=(S_i)_{i=1}^n\), we define a functor:
	\[H_{\mathbf{S}}: \overcat{\psh{\Delta}}{\prod_{i=1}^n S_i} \times  \prod_{i=1}^n\left(\ssetlab\right)_{S_i} \to \ssetlab\] by the rule:
	\[\left(S \xrightarrow{\prod^n_{i=1} f_i} \prod_{i=1}^n S_i, (\Omega_i)_{i=1}^n\right)\mapsto \left(S,\prod_{i=1}^n (\Omega_i\circ f_i)\right).\]
	As in the previous definition, we define the \emph{relative multi-intertwiner} by the formula
	\[\square_{\mathbf{S}} \defeq \square \circ H_\mathbf{S}.\]
\end{defn}

\begin{rem} 
	Notice that given a finite family of labeled simplicial sets \((\mathbf{S},\mathbf{\Omega})=(S_i,\Omega_i)_{i=1}^n\) and a family \(\mathbf{f}=\left(S\xrightarrow{f_i} S_i\right)^n_{i=1},\) 
	\[
		H_{\mathbf{S}}(\mathbf{f},\mathbf{\Omega})\cong H_{S_1}(f_1, \Omega_1) \times^S \dots \times^S H_{S_n}(f_n,\Omega_n),
	\]
	where \(\times^S\) denotes the product in \(\left(\ssetlab\right)_S\).
\end{rem}

\begin{obs}
	Let \[(\mathbf{S},\mathbf{\Omega})=\left(\Delta^{m_i},\Omega_i\right)_{i=1}^n\] be a family of labeled simplices, and let \[\mathbf{f}=\left(\Delta^r\xrightarrow{f_i} \Delta^{m_i}\right)^n_{i=1},\] be a family of maps defining an \(r\)-simplex of the product.
	We may identify the \(\Omega_i\) with families of \(\C\)-presheaves \((X_{i,\ell})^{m_i}_{\ell=1}\), so we compute \(H_{\Delta^{m_i}}(f_i,\Omega_i)\) as the labeled simplex
	\[[r]\left(\left(\prod_{k=f_i(j-1)+1}^{f_i(j)} X_{i,k} \right)_{j=1}^r\right),\]
	and therefore, we can compute 
	\(H_{\mathbf{S}}(\mathbf{f},\mathbf{\Omega})\) as the labeled simplex
	\[ [r]\left(\left(\prod_{i=1}^n \left(\prod_{k=f_i(j-1)+1}^{f_i(j)} X_{i,k} \right)\right)_{j=1}^r\right).\]
\end{obs}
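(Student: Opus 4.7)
The plan is to reduce the Observation to two explicit calculations: first the pullback of a single labeling along a simplicial map, and then the product of labelings in the fibre over $\Delta^r$, after which the Remark preceding the statement allows assembling the two.

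First I would unwind $H_{\Delta^{m_i}}(f_i,\Omega_i)$. Using the rectification discussed in the Note following the definition of $\psh{\Delta}\int\mathcal{V}$, I identify the labeling $\Omega_i$ with a normal pseudofunctor $[m_i]\to \mathbf{B}\psh{\C}$, which by the Segal condition is determined by its values on the spine edges, namely the family $(X_{i,\ell})_{\ell=1}^{m_i}$. The pullback $H_{\Delta^{m_i}}(f_i,\Omega_i)$ is then the composite normal pseudofunctor $[r]\xrightarrow{f_i}[m_i]\to\mathbf{B}\psh{\C}$. A spine edge $j-1\to j$ of $[r]$ maps to the arrow $f_i(j-1)\to f_i(j)$ of $[m_i]$, which decomposes uniquely into the composite of the spine edges from $f_i(j-1)$ to $f_i(j)$; applying $\Omega_i$ and using that the horizontal composition in $\mathbf{B}\psh{\C}$ is the Cartesian product gives precisely the label $\prod_{k=f_i(j-1)+1}^{f_i(j)}X_{i,k}$ on the $j$-th spine edge. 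This establishes the first formula.

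Next I would use the Remark that
\[
H_{\mathbf{S}}(\mathbf{f},\mathbf{\Omega}) \cong H_{S_1}(f_1,\Omega_1)\times^S \cdots \times^S H_{S_n}(f_n,\Omega_n),
\]
together with the Lemma that the product of labelings in the fibre $(\ssetlab)_{\Delta^r}$ is computed by precomposition with the diagonal and postcomposition with the Cartesian product functor $\psh{\C}_\bullet\times\psh{\C}_\bullet\to\psh{\C}_\bullet$. Since each $H_{\Delta^{m_i}}(f_i,\Omega_i)$ was identified above with the labeled simplex whose $j$-th spine edge is labeled $\prod_{k=f_i(j-1)+1}^{f_i(j)}X_{i,k}$, the product in the fibre has its $j$-th spine edge labeled by the Cartesian product $\prod_{i=1}^n\prod_{k=f_i(j-1)+1}^{f_i(j)}X_{i,k}$, as claimed.

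The only genuine subtlety — and the place I would be careful — is the first step, where one passes from the abstract description of $\Omega_i$ as an object of $\psh{\mathcal{V}^\op_\bullet}^\op(\Delta^{m_i})$ to its concrete spine description, and from the pullback $\Omega_i\circ f_i$ to the product of labels on each spine edge. Everything else is bookkeeping: the monoidal product in $\mathbf{B}\psh{\C}$ is Cartesian product, the Segal condition determines labelings by their spine data, and the fibrewise product commutes with restriction to the spine. Once these identifications are made, both formulas drop out by direct substitution.
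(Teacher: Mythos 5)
Your argument is correct and is essentially the verification the paper leaves implicit: the first formula is just the definition of \(H_{\Delta^{m_i}}\) as pullback of the labeling along \(f_i\), combined with the fact that the composite arrow \(f_i(j-1)\to f_i(j)\) in \([m_i]\) decomposes into spine edges whose labels multiply under the Cartesian monoidal structure, and the second formula then follows from the Remark identifying \(H_{\mathbf{S}}(\mathbf{f},\mathbf{\Omega})\) with the fibrewise product and the Lemma computing that product edgewise. No gaps; the one subtlety you flag (spine description of a labeling and its pullback) is handled correctly.
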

\begin{lemma}
	The relative intertwiner \(\square_S\) preserves colimits in the first variable.
\end{lemma}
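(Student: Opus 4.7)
The plan is to use the explicit pointwise formula from \Cref{evaluationsquare}.  Fix a cell \(t=[n](c_1,\dots,c_n)\in\Theta[\C]\) and a labeling \(\Omega\) of \(S\).  Writing \(W_{s,i}\) for the \(i\)-th entry of the labeling of \(s\in S_n\) by \(\Omega\), the formula
\[
    \square_S(f,\Omega)_t \;\cong\; \coprod_{y\in Y_n} \prod_{i=1}^n W_{fy,i,c_i}
\]
rewrites the value of \(\square_S(-,\Omega)_t\) at \(f\colon Y\to S\) as the fibre product \(Y_n\times_{S_n} E\), where
\(E:=\coprod_{s\in S_n}\prod_{i=1}^n W_{s,i,c_i}\)
carries its canonical projection \(p\colon E\to S_n\).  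Crucially, \(E\) and \(p\) depend only on the fixed data \(S\), \(\Omega\), and \(t\), so the entire variation in \(f\) is channeled through the map \(f_n\colon Y_n\to S_n\).

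With this reformulation in hand, I would factor the functor \(\square_S(-,\Omega)_t\colon\overcat{\psh{\Delta}}{S}\to\mathrm{Set}\) as a composite of three colimit-preserving steps: first, the evaluation \(\overcat{\psh{\Delta}}{S}\to\overcat{\mathrm{Set}}{S_n}\) sending \((Y,f)\) to \((Y_n,f_n)\), which preserves colimits because colimits in \(\psh{\Delta}\) are computed levelwise and slice forgetful functors create colimits; second, base change \(p^{*}\colon\overcat{\mathrm{Set}}{S_n}\to\overcat{\mathrm{Set}}{E}\), which preserves colimits because \(\mathrm{Set}\) is a topos and colimits in it are universal; and third, the forgetful functor \(\overcat{\mathrm{Set}}{E}\to\mathrm{Set}\), which preserves colimits because slice forgetful functors do so.

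This establishes colimit preservation of \(\square_S(-,\Omega)_t\) for every \(t\), and since colimits in \(\cellset\) are computed pointwise, \(\square_S(-,\Omega)\) itself preserves colimits.  No real obstacle arises: the formula from \Cref{evaluationsquare} does the heavy lifting, and the naturality of the factorization in the first variable is immediate from the fact that only the evaluation step depends on \(f\).  A more conceptual alternative route would be to use \Cref{pullbacksober} to identify \(\square_S(f,\Omega)\) with the pullback \(\mathscr{H}(Y)\times_{\mathscr{H}(S)}(S\square\Omega)\) along the unit \(\mu_{S\square\Omega}\) and then invoke universality of colimits in the presheaf topos \(\cellset\) together with the fact that \(\mathscr{H}=\pi^{*}=\eta_{!}\) is itself a left adjoint, factoring \(\square_S(-,\Omega)\) through the induced slice-level functor \(\overcat{\psh{\Delta}}{S}\to\overcat{\cellset}{\mathscr{H}(S)}\).
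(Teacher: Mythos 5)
Your argument is the same as the paper's: reduce to the value at each cell \(t\) via \Cref{evaluationsquare}, exhibit \(\square_S(f,\Omega)_t\) as the pullback \(Y_n\times_{S_n}E\) of fixed data along \(f_n\), and conclude by universality of colimits in \(\mathrm{Set}\). Your closing alternative via \Cref{pullbacksober} is also exactly the remark the paper makes immediately after its proof, so both routes check out.
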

\begin{proof} 
	Since colimits are computed objectwise in presheaves, it suffices to show that the functor \(\square_S(\bullet,\Omega)_t\) preserves colimits for all \([t]=[n](c_1,\dots,c_n) \in \Theta[\C]\) and all labels \(\Omega\) of \(S\).  Therefore, it suffices by \ref{evaluationsquare} to show this in the case where \(\C\) is the terminal category, since we may fix the family of objects \((c_1,\dots,c_n)\). For each \(s\in S_n\) let \((W_{s,i})_{i=1}^n\) be the evaluation of \(\Omega\) on \(s\).  Then given \(f:Y\to S\), we have a Cartesian square
	\begin{center}
		\begin{tikzpicture}
			\matrix (b) [matrix of math nodes, row sep=3em, column sep=3em]
			{
				\coprod_{y\in Y_n} \prod_{i=1}^n W_{fy,i} & Y_n\\
				\coprod_{s\in S_n} \prod_{i=1}^n W_{s,i} & S_n \\
			};
			\path[->]
			(b-1-1) edge (b-1-2) edge node[auto,swap]{\(\scriptstyle{\tau^*f}\)} (b-2-1)
			(b-2-1) edge node[auto]{\(\scriptstyle{\tau}\)} (b-2-2)
			(b-1-2) edge node[auto]{\(\scriptstyle{f}\)} (b-2-2);
		\end{tikzpicture},
	\end{center}
	exhibiting \(\coprod_{y\in Y_n} \prod_{i=1}^n W_{fy,i}\) as the pullback of \(f\) along \(\tau\), but by the universality of colimits in the category of sets, we are done.  
\end{proof}

\begin{note} 
	This is Oury's proof, but this statement can also be seen to follow immediately from \ref{pullbacksober}.
\end{note}

\begin{lemma}\label{conncolims}
	The relative intertwiner \(\square_n\) preserves connected colimits in each variable.  
\end{lemma}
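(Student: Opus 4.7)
The plan is to treat the first variable and the $n$ label variables separately. For the first variable, the previous lemma shows that $\square_S$ preserves all colimits in the first variable; specialising to $S = \Delta^n$ yields preservation of all colimits, hence in particular of connected colimits, in the first variable of $\square_n$.

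For the $\ell$-th label variable with $1 \le \ell \le n$, I would fix $f: Y \to \Delta^n$ and labels $X_j$ for $j \neq \ell$, and check the claim using the explicit pointwise formula from \Cref{evaluationsquare}. Since colimits in $\cellset$ and in $\psh{\C}$ are computed pointwise, it suffices to show that, after evaluating at an arbitrary cell $t = [k](c_1, \ldots, c_k) \in \Theta[\C]$, the resulting functor $\psh{\C} \to \mathbf{Set}$ preserves connected colimits. By that formula,
\[
  \square_n(f, X_1, \ldots, X_n)_t \cong \coprod_{y \in Y_k} \prod_{i=1}^{k} \prod_{j = (fy)(i-1)+1}^{(fy)(i)} X_{j,\, c_i}.
\]

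The key combinatorial observation is that $fy : [k] \to [n]$ is weakly increasing, so the intervals $\{(fy)(i-1)+1, \ldots, (fy)(i)\}$ for $i = 1, \ldots, k$ are pairwise disjoint. Consequently, for each $y \in Y_k$, the fixed index $\ell$ belongs to at most one of these intervals---say the $i(y)$-th, if it exists at all. Thus each $y$-summand of the coproduct is either constant in $X_\ell$, or takes the form $A_y \times X_{\ell,\, c_{i(y)}}$ for a fixed set $A_y$ determined by the values of the $X_j$ with $j \neq \ell$. Summands of the first kind preserve connected colimits, because the colimit of a constant diagram over a connected category returns the constant value. Summands of the second kind preserve all colimits in $X_\ell$: evaluation at $c_{i(y)}$ commutes with colimits in $\psh{\C}$, and $A_y \times (-)$ is a left adjoint in $\mathbf{Set}$. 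Since the outer coproduct also commutes with all colimits, the whole functor preserves connected colimits, as required.

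The main step to pin down carefully is the combinatorial observation that the fixed index $\ell$ lands in at most one interval of the partition associated to $fy$; once this is made precise, the rest of the argument reduces to standard colimit-preservation facts in $\mathbf{Set}$ and in presheaf categories, together with the trivial but crucial fact that constant diagrams over connected shapes have the expected colimit. This last point is exactly why the hypothesis of connectedness on the colimit shape cannot be dropped: without it, the constant summands would fail to be preserved.
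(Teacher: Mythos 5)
Your argument is correct, but it takes a different route from the paper's. The paper also disposes of the first variable by specialising the preceding lemma, but for the label variables it reduces (by cocontinuity in the first variable) to the case where \(Y\) is a single simplex, computes the pullback of the labeling along a face or degeneracy \([p]\to[n]\), and thereby reduces everything to the statement that \(V[p](\bullet,\dots,\bullet)\) preserves connected colimits in each variable --- which it then outsources to \cite{rezk-theta-n-spaces}*{Proposition 4.5}, whose proof goes through the splitting \(V[p+1+q](\mathbf{A},\varnothing,\mathbf{B})\cong V[p](\mathbf{A})\amalg V[q](\mathbf{B})\) and a parametric right adjoint. You instead work entirely pointwise via the formula of \Cref{evaluationsquare}, and your key input is the elementary combinatorial fact that the intervals \(\{(fy)(i-1)+1,\dots,(fy)(i)\}\) attached to a monotone map \(fy:[k]\to[n]\) are pairwise disjoint, so each summand of \(\coprod_{y\in Y_k}\prod_i\prod_j X_{j,c_i}\) sees the distinguished label \(X_\ell\) through at most one evaluation; each summand is therefore either constant in \(X_\ell\) (hence preserves connected colimits) or of the form \(A_y\times X_{\ell,c_{i(y)}}\) (hence cocontinuous in \(X_\ell\)), and coproducts of such functors still preserve connected colimits by interchange of colimits. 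Your version is self-contained, avoids the citation to Rezk, and isolates exactly where connectedness enters (the constant summands); the paper's version is shorter on the page and reuses a statement of Rezk that is needed elsewhere, at the cost of a less transparent reduction through the face/degeneracy case analysis. Both are complete proofs.
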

\begin{proof}
	We saw from the previous lemma that it preserves colimits in the first variable, so representing \(Y\) as the colimit of its simplices, we immediately reduce to the case where \(Y\) is a simplex.  But we know in this case that any map \([p]\to [n]\) factors as a degeneracy followed by a face map.  In the case that \(f\) is a face map, we can compute the pullback of \(V[n](X_1,\dots,X_n)\) along \(f\) to be 
	\[V[p]\left(\prod_{i=f(0)+1}^{f(1)}X_i,\dots, \prod_{i=f(p-1)}^{f(p)}X_i\right).\]  By universality of colimits in \(\psh{\C}\), we see that it suffices to show that \[V[p]\left(\bullet,\dots,\bullet)=\square_p(\id_{\Delta^p},\bullet,\dots,\bullet\right)\] preserves connected colimits in each variable.  In the case where \(f\) is a degeneracy map, we can compute the pullback over \([p]\) to be \[V[p](\ast, \dots, X_1, \dots, \ast, \dots, X_n, \dots \ast),\] where we fill in the terminal object of \(\psh{\C}\) in each argument \(i\) where \(f(i-1)=f(i)\).  In this case again, it again suffices to show that \(V[p]\) preserves connected colimits in each variable, but this is precisely the content of \cite{rezk-theta-n-spaces}*{Proposition 4.5}, where the proof proceeds by first showing that if we set \(X_k=\varnothing\), then
	\[V[p+1+q](A_1,\dots, A_p, \varnothing, B_1,\dots ,B_q) \cong V[p](A_1,\dots,A_p) \coprod V[q](B_1,\dots,B_q),\]
	and then exhibiting the obvious parametric right adjoint
	\[\overcat{\left(V[p](A_1,\dots,A_p) \coprod V[q](B_1,\dots,B_q)\right)}{\psh{\Theta[\C]}}\to \psh{\C}.\]
\end{proof}
\begin{note} 
	This proof is substantially easier than Oury's proof, which relies on a long direct computation.
\end{note}

\begin{defn}
	Since the categories \(\psh{\C}\), \(\cellset\), \(\psh{\C}^n\), and \(\overcat{\psh{\Delta}}{\Delta^n}\) are all cocomplete (since they are all presheaf categories), and since the intertwiner preserves connected colimits argument-by-argument, we can use \ref{cornertensor} to define the functor
	\[\square^\lrcorner_n:\overcat{\psh{\Delta}}{\Delta^n}^{[1]} \times \underbrace{\psh{\C}^{[1]}\times \dots \times \psh{\C}^{[1]}}_{n\text{ times}} \to \cellset^{[1]},\] called the \emph{corner intertwiner}.

	More generally, for any finite family of simplices \(\left(\Delta^{m_i}\right)^n_{i=1}\), we can do the same trick and define the \emph{corner-multi-intertwiner}
	\[\square^\lrcorner_{m_1,\dots,m_n}: \overcat{\psh{\Delta}}{\Delta^{m_1}}^{[1]} \times \dots \times \overcat{\psh{\Delta}}{\Delta^{m_n}}^{[1]} \times \left(\psh{\C}^{[1]}\right)^{m_1} \times \dots \times \left(\psh{\C}^{[1]}\right)^{m_n} \to \cellset^{[1]}.\]
\end{defn}

Following \cite{oury}*{3.85}, we begin with the following observations:

\begin{obs}\label{square1}
	We saw from the definition of \(\square\) and the definition of products in \(\ssetlab\) that the diagram 
	\begin{center}
		\begin{tikzpicture}
			\matrix (b) [matrix of math nodes, row sep=3em, column sep=3em]
			{
				\ssetlab \times \ssetlab & \cellset\times \cellset\\
				\ssetlab 				 & \cellset               \\
			};
			\path[->]
			(b-1-1) edge node[auto]{\(\scriptstyle{\square \times \square}\)} (b-1-2) edge node[auto]{\(\scriptstyle{\times}\)} (b-2-1)
			(b-2-1) edge node[auto]{\(\scriptstyle{\square}\)} (b-2-2)
			(b-1-2) edge node[auto]{\(\scriptstyle{\times}\)} (b-2-2);
		\end{tikzpicture}
	\end{center}
	commutes.  We also computed that the diagram
	\begin{center}
		\begin{tikzpicture}
			\matrix (b) [matrix of math nodes, row sep=3em, column sep=4em]
			{
				\overcat{\psh{\Delta}}{\Delta^n} \times \psh{\C}^n \times \overcat{\psh{\Delta}}{\Delta^m} \times \psh{\C}^m & \ssetlab\times \ssetlab\\
				\overcat{\psh{\Delta}}{\Delta^n} \times \overcat{\psh{\Delta}}{\Delta^m} \times \psh{\C}^n \times \psh{\C}^m &\\
				\overcat{\psh{\Delta}}{\Delta^n\times \Delta^m} \times \psh{\C}^n \times \psh{\C}^m & \ssetlab\\
			};
			\path[->]
			(b-1-1) edge node[auto]{\(\scriptstyle{H_n \times H_m}\)} (b-1-2) edge node[auto]{\(\scriptstyle{\varsigma}\)} (b-2-1)
			(b-2-1) edge node[auto]{\(\scriptstyle{P\times \id \times \id}\)} (b-3-1)
			(b-1-2) edge node[auto]{\(\scriptstyle{\times}\)} (b-3-2)
			(b-3-1) edge node[auto]{\(\scriptstyle{H_{n,m}}\)} (b-3-2);
		\end{tikzpicture}
	\end{center}
	commutes as well where \(\varsigma\) permutes the factors and \(P\) is the functor sending a pair of simplicial sets \(f:S\to \Delta^n\) and \(g:S^\prime\to \Delta^m\) over \(\Delta^n\) and \(\Delta^m\) respectively to the simplicial set \[f\times g:S\times S^\prime \to \Delta^n\times \Delta^m\] over the product \(\Delta^n \times \Delta^m\).  Taking these two diagrams together, we see that the diagram 
	\begin{center}
		\begin{tikzpicture}
			\matrix (b) [matrix of math nodes, row sep=3em, column sep=4em]
			{
				\overcat{\psh{\Delta}}{\Delta^n} \times \psh{\C}^n \times \overcat{\psh{\Delta}}{\Delta^m} \times \psh{\C}^m & \cellset\times\cellset\\
				\overcat{\psh{\Delta}}{\Delta^n} \times \overcat{\psh{\Delta}}{\Delta^m} \times \psh{\C}^n \times \psh{\C}^m &\\
				\overcat{\psh{\Delta}}{\Delta^n\times \Delta^m} \times \psh{\C}^n \times \psh{\C}^m & \cellset\\
			};
			\path[->]
			(b-1-1) edge node[auto]{\(\scriptstyle{\square_n \times \square_m}\)} (b-1-2) edge node[auto]{\(\scriptstyle{\varsigma}\)} (b-2-1)
			(b-2-1) edge node[auto]{\(\scriptstyle{P\times \id \times \id}\)} (b-3-1)
			(b-1-2) edge node[auto]{\(\scriptstyle{\times}\)} (b-3-2)
			(b-3-1) edge node[auto]{\(\scriptstyle{\square_{n,m}}\)} (b-3-2);
		\end{tikzpicture}
	\end{center}
	also commutes.

Then by \ref{conncolims}, every functor appearing in this diagram preserves connected colimits in each argument, the intertwiners by the lemma, and the functors \(P\) and \(\times\), since they are products in presheaf categories and therefore preserve colimits in both arguments. Then by the functoriality of the corner tensor functor \ref{cornertensorfunctoriality}, we obtain a commutative diagram 
\begin{center}
	\begin{tikzpicture}
		\matrix (b) [matrix of math nodes, row sep=3em, column sep=4em]
		{
			\overcat{\psh{\Delta}}{\Delta^n}^{[1]} \times \left(\psh{\C}^{[1]}\right)^n \times \overcat{\psh{\Delta}}{\Delta^m}^{[1]} \times \left(\psh{\C}^{[1]}\right)^m & \cellset^{[1]}\times\cellset^{[1]}\\
			\overcat{\psh{\Delta}}{\Delta^n}^{[1]} \times \overcat{\psh{\Delta}}{\Delta^m}^{[1]} \times \left(\psh{\C}^{[1]}\right)^n \times \left(\psh{\C}^{[1]}\right)^m &\\
			\overcat{\psh{\Delta}}{\Delta^n\times \Delta^m}^{[1]} \times \left(\psh{\C}^{[1]}\right)^n \times \left(\psh{\C}^{[1]}\right)^m  & \cellset^{[1]}\\
		};
		\path[->]
		(b-1-1) edge node[auto]{\(\scriptstyle{\square^\lrcorner_n \times \square^\lrcorner_m}\)} (b-1-2) edge node[auto]{\(\scriptstyle{\varsigma}\)} (b-2-1)
		(b-2-1) edge node[auto]{\(\scriptstyle{P^\lrcorner \times \id \times \id}\)} (b-3-1)
		(b-1-2) edge node[auto]{\(\scriptstyle{\times^\lrcorner}\)} (b-3-2)
		(b-3-1) edge node[auto]{\(\scriptstyle{\square_{n,m}^\lrcorner}\)} (b-3-2);
	\end{tikzpicture}
\end{center}
also commutes, where \(P^\lrcorner=\times^\lrcorner\) is the corner product of simplicial sets.
\end{obs}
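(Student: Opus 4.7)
The plan is to establish the final commutative diagram in two steps: first verify the plain (non-corner) version commutes, then transport it through the corner-tensor construction. For the first square, I would observe that $\square$ is by construction the restricted Yoneda functor $\Hom_{\ssetlab}(L(-),-)$ and therefore preserves whatever limits exist in $\ssetlab$; since the preceding pair of lemmata identifies Cartesian products in $\ssetlab$ as being computed on underlying simplicial sets together with the pointwise product of labelings, this forces $\square$ to send products in $\ssetlab$ to products in $\cellset$, which is precisely the content of the first square.

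For the $H$-square, I would chase definitions directly: given input $(f\colon S \to \Delta^n, (\Omega_i)_{i=1}^n)$ and $(g\colon S' \to \Delta^m, (\Omega'_j)_{j=1}^m)$, the path through $H_n \times H_m$ followed by the Cartesian product in $\ssetlab$ yields the labeled simplicial set $(S \times S', (\Omega \circ f) \times (\Omega' \circ g))$, while the path through the permutation $\varsigma$, the product functor $P$ (forming $f \times g\colon S \times S' \to \Delta^n \times \Delta^m$), and $H_{n,m}$ pulls back the product labeling along $f \times g$ to arrive at the same labeled simplicial set. Pasting this square with the first one, using that $\square_k = \square \circ H_k$ and $\square_{n,m} = \square \circ H_{n,m}$, yields the commutative plain square involving $\square_n \times \square_m$ and $\square_{n,m}$.

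To pass to the corner version, I would verify that every functor appearing in the plain square is a morphism in the symmetric multicategory $\mathrm{Rex}_c$, i.e.\ preserves connected colimits in each argument. For $\square_n$, $\square_m$, and $\square_{n,m}$ this is \Cref{conncolims}; for the Cartesian product on $\cellset$ and for $P$ it follows from the fact that colimits in presheaf categories are computed pointwise and that finite products of sets preserve connected colimits in each variable. Once this is in hand, \Cref{cornertensorfunctoriality} immediately implies that applying $(-)^\lrcorner$ to the plain commutative square produces the desired commutative square of corner tensors, in which $P^\lrcorner$ agrees with the binary corner product of simplicial-set maps $\times^\lrcorner$ because $P$ is the external product at the level of simplicial-set arrows. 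The only genuine work lies in the colimit-preservation bookkeeping for $P$ and $\times$; everything else is formal from the multicategorical functoriality of the corner-tensor operation.
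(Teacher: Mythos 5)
Your proposal is correct and follows essentially the same route as the paper: verify the plain square commutes by identifying products in \(\ssetlab\) and chasing the definitions of the \(H\)-functors, check that every functor involved preserves connected colimits argument-by-argument (the intertwiners by \Cref{conncolims}, the products by universality of colimits in presheaf categories), and then invoke the multicategorical functoriality of the corner tensor from \Cref{cornertensorfunctoriality}. No gaps.
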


\begin{obs}\label{simplicialhorns}
	Consider the corner product of a simplicial inner horn inclusion with a simplicial boundary inclusion  
	\[\lambda^n_j\times^\lrcorner \delta^m: \Lambda^n_j \times \Delta^m \cup \Delta^n \times \partial \Delta^m \hookrightarrow \Delta^n\times \Delta^m.\]
	Then it is a standard fact of quasicategory theory that we can factor this map as a sequence
	\[\Lambda^n_j \times \Delta^m \cup \Delta^n \times \partial \Delta^m = X_0 \subseteq X_1 \subseteq \dots \to X_{k-1} \subseteq X_k=\Delta^n\times \Delta^m\]
	where each inclusion \(X_{i-1}\hookrightarrow X_i\) is the pushout of an inner horn inclusion \(\Lambda^{r_i}_{\ell_i} \to \Delta^{r_i}\) along an inclusion \(\Lambda^{r_i}_{\ell_i}\hookrightarrow X_{i-1}\).  By the construction of the sequence, each \([r_i]\to X_i\to \Delta^n\times \Delta^m\) is nondegenerate and does not factor through \(X_{i-1}\), so in particular, it does not factor through \(X_0\), and therefore the maps \(\alpha_i:\Delta^{r_i} \to \Delta^n\) and \(\beta_i:[r_i]\to \Delta^m\) do not factor through \(\Lambda^n_j\) or \(\partial \Delta^m\).  In particular, the image of \(\alpha_i\) is either \(\partial_j\Delta^n\) or all of \(\Delta^n\), and the image of \(\beta_i\) must be all of \(\Delta^m\), so all three maps \(\alpha_i\), \(\beta_i\), and \(\alpha_i \times \beta_i\) send the initial and terminal vertices of \(\Delta^{r_i}\) to the initial and terminal vertices of \(\Delta^n\), \(\Delta^m\), and \(\Delta^n\times\Delta^m\) respectively.
\end{obs}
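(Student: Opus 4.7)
The plan is to invoke the classical shuffle filtration of $\Delta^n \times \Delta^m$ by nondegenerate simplices, refined to start from $X_0$, and then to read off the structural claims about $\alpha_i$ and $\beta_i$ directly from the construction.

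First, I would record the parameterization: nondegenerate $r$-simplices of $\Delta^n \times \Delta^m$ correspond bijectively to pairs of monotone maps $\alpha : [r] \to [n]$, $\beta : [r] \to [m]$ which are jointly injective, in the sense that $(\alpha(i),\beta(i)) \neq (\alpha(i+1),\beta(i+1))$ for $i < r$. Such a simplex belongs to $X_0$ if and only if either $\alpha$ omits some vertex of $[n]$ other than $j$ or $\beta$ omits a vertex of $[m]$; equivalently, the \emph{missing} nondegenerate simplices are exactly those for which $\alpha$ surjects onto $[n]$ or onto $[n] \setminus \{j\}$ and $\beta$ surjects onto $[m]$.

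Second, I would totally order the missing simplices, first by dimension and then lexicographically on the ``shuffle word'' recording at each step whether only $\alpha$ advances, only $\beta$ advances, or both do. For each $\sigma$ in this list I would produce a distinguished inner vertex $k(\sigma) \in \{1, \dots, r-1\}$ such that the face $d_{k(\sigma)} \sigma$ is the unique nondegenerate face of $\sigma$ not already present in $X_{i-1}$; the inclusion $X_{i-1} \hookrightarrow X_{i-1} \cup \sigma$ is then exhibited as the pushout of the inner horn inclusion $\Lambda^{r_i}_{k(\sigma)} \hookrightarrow \Delta^{r_i}$ along the attaching map provided by the remaining faces. The hypothesis $0 < j < n$ enters exactly here to guarantee $k(\sigma) \notin \{0, r_i\}$.

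Third, the remaining assertions drop out of the parameterization: by construction each $\sigma = (\alpha_i,\beta_i)$ is nondegenerate and does not factor through $X_{i-1} \supseteq X_0$, so $\alpha_i$ does not factor through $\Lambda^n_j$ and $\beta_i$ does not factor through $\partial \Delta^m$; consequently $\alpha_i$ has image $\partial_j[n]$ or $[n]$, $\beta_i$ has image $[m]$, and monotonicity together with joint surjectivity of $\beta_i$ forces each of $\alpha_i$, $\beta_i$, and $\alpha_i \times \beta_i$ to send the initial and terminal vertices of $\Delta^{r_i}$ to the initial and terminal vertices of its codomain. The main obstacle in this program is the combinatorial bookkeeping in the second step — choosing the ordering and horn position $k(\sigma)$ so that exactly one inner face is new at each stage and so that $k(\sigma)$ never collapses to an outer vertex. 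Once that ordering is fixed, all remaining verifications are entirely mechanical.
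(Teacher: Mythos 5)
Your proposal is correct and follows essentially the same route as the paper: the paper simply cites the filtration of \(\Lambda^n_j \times \Delta^m \cup \Delta^n \times \partial\Delta^m \hookrightarrow \Delta^n \times \Delta^m\) by inner-horn attachments as a standard fact of quasicategory theory, and then reads off the vertex-preservation claims from nondegeneracy and non-factorization through \(X_0\) exactly as you do in your third step. The only difference is that you sketch the combinatorial construction of the filtration (ordering the missing nondegenerate shuffles and identifying the distinguished inner vertex) rather than citing it, which is a faithful outline of the classical argument and correctly locates where the hypothesis \(0 < j < n\) is used.
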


\begin{obs}\label{square2}
	Let \((\alpha, \beta): \Delta^r \to \Delta^n\times \Delta^m\) be an injective map preserving initial and terminal elements.  Let \(\mathbf{A}=(A_i)^n_{i=1}\) and \(\mathbf{B}=(B_i)^m_{i=1}\) be objects of \(\psh{\C}^n\) and \(\psh{\C}^m\) respectively.  Let 
	\[K_{\alpha,\beta}:\psh{\C}^n \times \psh{\C}^m\to \psh{\C}^r\] be the functor defined by the rule
	\[(\mathbf{U},\mathbf{V})\mapsto \alpha^\ast \mathbf{U} \times \beta^\ast \mathbf{V},\]
	taking the product of the pullbacks to the fibre over \(\Delta^r\).  Then we have a diagram:
	\begin{center}
		\begin{tikzpicture}
			\matrix (b) [matrix of math nodes, row sep=3em, column sep=4em]
			{
				\overcat{\psh{\Delta}}{\Delta^r} \times \psh{\C}^n \times \psh{\C}^m & \overcat{\psh{\Delta}}{\Delta^r} \times \psh{\C}^r\\
				\overcat{\psh{\Delta}}{\Delta^n\times \Delta^m} \times \psh{\C}^n \times \psh{\C}^m & \ssetlab \\
			};
			\path[->]
			(b-1-1) edge node[auto]{\(\scriptstyle{\id \times K_{\alpha,\beta}}\)} (b-1-2) 
					edge node[auto]{\(\scriptstyle{((\alpha,\beta))\circ(-) \times \id \times \id}\)} (b-2-1)
			(b-2-1) edge node[auto]{\(\scriptstyle{H_{n,m}}\)} (b-2-2)
			(b-1-2) edge node[auto]{\(\scriptstyle{H_r}\)} (b-2-2);
		\end{tikzpicture}
	\end{center}
	To show that the diagram commutes, let \(p:X\to \Delta^r\) be a map.  Then evaluating \(H_r(p,K_{\alpha,\beta}(\mathbf{U},\mathbf{V})=H_r(p,\alpha^*\mathbf{U} \times \beta^*\mathbf{V})\) on a simplex \(x:\Delta^s\to X\) is 
	\begin{align*}
		(px)^\ast(\alpha^\ast\mathbf{U} \times \beta^\ast\mathbf{V}) &= (px)^\ast\alpha^\ast\mathbf{U} \times (px)^\ast\beta^\ast\mathbf{U}\\
		&=(\alpha px)^\ast\mathbf{U} \times (\beta px)^\ast\mathbf{V}\\
		&=H_{n,m}((\alpha px, \beta px),\mathbf{U},\mathbf{V})\\
		&=H_{n,m}\circ ((\alpha,\beta)\circ \times \id \times \id) (px,\mathbf{U},\mathbf{V}), 
	\end{align*}
	which demonstrates that the diagram commutes.
	Let \((t_i)_{i=1}^r\) such that \(t_i=\alpha(i)-\alpha(i-1) +\beta(i) - \beta(i-1)\).  Note that the sum of the \(t_i\) is exactly \(n+m\), since \(\alpha\) and \(\beta\) preserve initial and terminal objects.  We define a functor 
	\[\tau_i:\psh{\C}^n \times \psh{\C}^m \to \psh{\C}^{t_i}\] by the rule
	\[(\mathbf{A},\mathbf{B})\mapsto (A_{\alpha(i-1)+1}, \dots, A_{\alpha(i)}, B_{\beta(i-1)+1},\dots, B_{\beta(i)}).\]
	Then define \[\tau:\psh{\C}^n \times \psh{\C}^m \to \prod_{i=1}^r \psh{\C}^{t_i}.\]
	It is clear that \(\tau\) is a permutation of variables and therefore an isomorphism.
	Then let \[P_i: \psh{\C}^{t_i} \to \psh{\C}\] be the functor defined by the rule
	\[(X_1,\dots,X_{t_i}) \mapsto X_1 \times \dots \times X_{t_i}\]
	Then the \(P_i\) assemble to a map \((P_1, \dots, P_r)\) such that \[(P_1,\dots,P_r)\circ \tau = K_{\alpha,\beta}.\]
	Then the diagram
	\begin{center}
		\begin{tikzpicture}
			\matrix (b) [matrix of math nodes, row sep=3em, column sep=4em]
			{
				\overcat{\psh{\Delta}}{\Delta^r} \times \psh{\C}^n \times \psh{\C}^m & \overcat{\psh{\Delta}}{\Delta^r} \times \prod_{i=1}^r \psh{\C}^{t_i} & \overcat{\psh{\Delta}}{\Delta^r} \times \psh{\C}^r\\
				\overcat{\psh{\Delta}}{\Delta^n\times \Delta^m} \times \psh{\C}^n \times \psh{\C}^m & & \ssetlab \\
			};
			\path[->]
			(b-1-1) edge node[auto]{\(\scriptstyle{\id \times \tau}\)} (b-1-2) 
					edge node[auto]{\(\scriptstyle{((\alpha,\beta))\circ(-) \times \id \times \id}\)} (b-2-1)
			(b-2-1) edge node[auto]{\(\scriptstyle{H_{n,m}}\)} (b-2-3)
			(b-1-2) edge node[auto]{\(\scriptstyle{\id \times (P_i)_{i=1}^r}\)} (b-1-3) 
			(b-1-3)	edge node[auto]{\(\scriptstyle{H_r}\)} (b-2-3);
		\end{tikzpicture}
	\end{center}
	commutes, and therefore, composing the bottom horizontal and right vertical maps with \(\square\), we have another commutative diagram
	\begin{center}
		\begin{tikzpicture}
			\matrix (b) [matrix of math nodes, row sep=3em, column sep=4em]
			{
				\overcat{\psh{\Delta}}{\Delta^r}\times \psh{\C}^n \times \psh{\C}^m & \overcat{\psh{\Delta}}{\Delta^r} \times \prod_{i=1}^r \psh{\C}^{t_i} & \overcat{\psh{\Delta}}{\Delta^r} \times \psh{\C}^r\\
				\overcat{\psh{\Delta}}{\Delta^n\times \Delta^m} \times \psh{\C}^n \times \psh{\C}^m & & \cellset \\
			};
			\path[->]
			(b-1-1) edge node[auto]{\(\scriptstyle{\id \times \tau}\)} (b-1-2) 
					edge node[auto]{\(\scriptstyle{((\alpha,\beta))\circ(-) \times \id \times \id}\)} (b-2-1)
			(b-2-1) edge node[auto]{\(\scriptstyle{\square_{n,m}}\)} (b-2-3)
			(b-1-2) edge node[auto]{\(\scriptstyle{\id \times (P_i)_{i=1}^r}\)} (b-1-3) 
			(b-1-3)	edge node[auto]{\(\scriptstyle{\square_r}\)} (b-2-3);
		\end{tikzpicture}.
	\end{center}
	The bottom horizontal and lower right vertical maps preserve connected colimits, as we have seen.  The left vertical map preserves connected colimits because colimits are computed in the domain for comma categories.  The map \(\prod_{i=1}^r P_i\) preserves colimits in each argument because colimits are universal in toposes. 
	Then applying the corner tensor functor, we have the commutative diagram
	\begin{center}
		\begin{tikzpicture}
			\matrix (b) [matrix of math nodes, row sep=3em, column sep=2em]
			{
				\overcat{\psh{\Delta}}{\Delta^r}^{[1]} \times \left(\psh{\C}^{[1]}\right)^n \times \left(\psh{\C}^{[1]}\right)^m & \overcat{\psh{\Delta}}{\Delta^r}^{[1]} \times \prod_{i=1}^r \left(\psh{\C}^{[1]}\right)^{t_i}\\
				&\overcat{\psh{\Delta}}{\Delta^r} \times \left(\psh{\C}^{[1]}\right)^r\\
				\overcat{\psh{\Delta}}{\Delta^n\times \Delta^m}^{[1]} \times \left(\psh{\C}^{[1]}\right)^n\times \left(\psh{\C}^{[1]}\right)^m & \cellset^{[1]} \\
			};
			\path[->]
			(b-1-1) edge node[auto]{\(\scriptstyle{\id \times \tau}\)} (b-1-2) 
					edge node[auto]{\(\scriptstyle{((\alpha,\beta))\circ(-)^\lrcorner \times \id \times \id}\)} (b-3-1)
			(b-3-1) edge node[auto]{\(\scriptstyle{\square^\lrcorner_{n,m}}\)} (b-3-2)
			(b-1-2) edge node[auto]{\(\scriptstyle{\id \times (P_i^\lrcorner)_{i=1}^r}\)} (b-2-2) 
			(b-2-2)	edge node[auto]{\(\scriptstyle{\square^\lrcorner_r}\)} (b-3-2);
		\end{tikzpicture}.
	\end{center}
\end{obs}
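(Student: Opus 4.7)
The plan is to prove Observation \ref{square2} in stages, leveraging the functoriality of the corner tensor construction so that almost all of the work is done once at the level of connected-colimit-preserving functors, rather than at the level of arrow categories. First I would verify commutativity of the initial non-corner diagram by a direct pointwise computation: fix a map $p:X\to \Delta^r$ and evaluate both composites on an arbitrary simplex $x:\Delta^s \to X$. The upper-right path yields the family $(px)^{\ast}(\alpha^{\ast}\mathbf{U} \times \beta^{\ast}\mathbf{V})$, while the left-bottom path yields $H_{n,m}((\alpha px,\beta px),\mathbf{U},\mathbf{V})$. Since pullback of presheaves commutes with binary products and $\alpha$, $\beta$ are simplicial, these two labelings agree edge-by-edge, which is exactly the asserted equality.

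Next I would construct the intermediate factorization through $\prod_{i=1}^{r}\psh{\C}^{t_i}$. The permutation $\tau$ merely regroups the coordinates $(A_j)_j$ and $(B_k)_k$ according to the partition of $\{1,\dots,n+m\}$ induced by the interleaving map $(\alpha,\beta)$; the identity $\sum_i t_i = n+m$ uses only that $(\alpha,\beta)$ preserves initial and terminal vertices. Taking $P_i$ to be the $t_i$-fold product functor on $\psh{\C}$, the equality $(P_1,\dots,P_r)\circ \tau = K_{\alpha,\beta}$ is then definitional, and combined with the first commutative square it gives commutativity of the enlarged diagram. Postcomposing everything with $\square$ (equivalently, replacing $H_*$ by $\square_*$) preserves commutativity for free.

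To proceed to the corner-tensor version, I would check that every functor occurring in the enlarged diagram preserves connected colimits in each variable: the relative intertwiners $\square_{n,m}$ and $\square_r$ by \Cref{conncolims}; the reindexing map $(\alpha,\beta)\circ(-)$ since colimits in slice categories are created from the domain; the permutation $\tau$ because it is an isomorphism; and each projection $P_i$ because finite products in the presheaf topos $\psh{\C}$ are universal, hence preserve colimits in each variable. This places the entire diagram inside the sub-multicategory $\mathrm{Rex}_c$ of \Cref{cornertensorfunctoriality}.

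The key and final step — which would otherwise be the main obstacle if attempted by a direct coend manipulation — is then immediate: by \Cref{cornertensorfunctoriality} the corner-tensor construction $(-)^\lrcorner$ is a morphism of multicategories $\mathrm{Rex}_c \to \mathrm{Rex}_c$, so it sends commutative diagrams in $\mathrm{Rex}_c$ to commutative diagrams of arrow-categories. Applying $(-)^\lrcorner$ to the previous commutative diagram therefore produces the asserted commutative diagram involving $\square^\lrcorner_{n,m}$, $\square^\lrcorner_r$, $(P_i^\lrcorner)_i$, and $((\alpha,\beta)\circ(-))^\lrcorner$, with no further calculation required. The whole proof thus reduces the combinatorics to one commutativity check and one colimit-preservation audit; the genuine labor has already been done inside the corner-tensor functoriality lemma.
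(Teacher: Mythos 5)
Your proposal follows essentially the same route as the paper's own argument: the same pointwise verification of the first square, the same factorization of \(K_{\alpha,\beta}\) through \(\tau\) and the product functors \(P_i\), the same audit of connected-colimit preservation (including the slice-category and universality-of-colimits observations), and the same final appeal to the multicategorical functoriality of the corner tensor in \Cref{cornertensorfunctoriality}. The argument is correct and no gaps are present.
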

\begin{lemma}\label{anodynelemma}
	Let \((\alpha,\beta): \Delta^r \to \Delta^n\times \Delta^m\) be a nondegenerate section with \(r\geq 2\) and such that \(\alpha\) and \(\beta\) preserve initial and terminal vertices. Let \(\mathbf{f}=\{f_i:\partial c_i \hookrightarrow c_i\}_{i=1}^n\) and \(\mathbf{g}=\{g_i:\partial d_i \hookrightarrow d_i\}_{i=1}^m\) be families of boundary inclusions for \(\C\).  Then for any inner horn inclusion \(\lambda^r_k: \Lambda^r_k\hookrightarrow \Delta^r\), the map
	\[
		\square^\lrcorner_{n,m}(\lambda^r_k, \mathbf{f},\mathbf{g})
	\]
	is horizontal inner anodyne.
\end{lemma}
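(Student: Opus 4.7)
The plan is to reduce $\square^\lrcorner_{n,m}(\lambda^r_k, \mathbf{f}, \mathbf{g})$ to a single corner-intertwiner of the form $\square^\lrcorner_r$ living over $\Delta^r$ by transporting along $(\alpha,\beta)$, and then to decompose the auxiliary $\psh{\C}$-factors into cell complexes of boundary inclusions so that the known anodyne statement for the single-simplex case applies.

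First I would view $\lambda^r_k\colon \Lambda^r_k \hookrightarrow \Delta^r$ as a morphism in $\overcat{\psh{\Delta}}{\Delta^n\times\Delta^m}^{[1]}$ by postcomposing its structural maps with $(\alpha,\beta)$. The final commutative diagram of \Cref{square2}, obtained from the multifunctoriality of the corner-tensor construction (\Cref{cornertensorfunctoriality}), then gives
\[
\square^\lrcorner_{n,m}(\lambda^r_k, \mathbf{f}, \mathbf{g}) \;=\; \square^\lrcorner_r\bigl(\lambda^r_k,\, h_1, \dots, h_r\bigr),
\]
where each factor is the Leibniz product of boundary inclusions
\[
h_i \;\defeq\; P_i^\lrcorner\bigl(f_{\alpha(i-1)+1}, \dots, f_{\alpha(i)},\, g_{\beta(i-1)+1}, \dots, g_{\beta(i)}\bigr).
\]
This step crucially uses that $\alpha$ and $\beta$ preserve initial and terminal vertices: this is exactly what guarantees that $\sum_i t_i = n+m$, so that the permutation $\tau$ in \Cref{square2} is a genuine reindexing isomorphism and no indices are dropped.

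Next I would observe that each $h_i$ is a monomorphism in $\psh{\C}$, since the Leibniz product of monomorphisms in a topos is again a monomorphism. Because $\C$ is a regular skeletal Reedy category, the set $\{\delta^c : c \in \Ob \C\}$ of boundary inclusions generates all monomorphisms of $\psh{\C}$ under cellular closure, so $h_i \in \operatorname{Cell}(\{\delta^c\})$ for each $i$.

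Finally, applying the Corollary to \Cref{cornertensorcell} to the multifunctor $\square^\lrcorner_r$, keeping the first variable fixed at $\lambda^r_k$ and decomposing each $h_i$ into $\{\delta^c\}$-cells in the remaining variables, yields
\[
\square^\lrcorner_r(\lambda^r_k, h_1, \dots, h_r) \;\in\; \operatorname{Cell}\bigl(\{\square^\lrcorner_r(\lambda^r_k, \delta^{e_1}, \dots, \delta^{e_r}) : e_j \in \Ob \C\}\bigr) \;\subseteq\; \operatorname{Cell}(\mathscr{J}),
\]
since for $0 < k < r$ the displayed generators are precisely the generating horizontal inner anodynes of \Cref{modelstrucdefn}. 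The only real obstacle is the combinatorial bookkeeping in the first step — matching the reindexing $\tau$ and the Leibniz factors $P_i^\lrcorner$ against the joint shape of $\alpha$ and $\beta$ — but this is exactly what \Cref{square2} is engineered to package, so once the transport is set up the conclusion is formal.
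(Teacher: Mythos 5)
Your proposal is correct and follows essentially the same route as the paper: transport $\square^\lrcorner_{n,m}(\lambda^r_k,\mathbf{f},\mathbf{g})$ along $(\alpha,\beta)$ via \Cref{square2} to $\square^\lrcorner_r(\lambda^r_k, h_1,\dots,h_r)$ with each $h_j$ a Leibniz product of boundary inclusions, observe $h_j\in\operatorname{Cell}(\mathscr{B})$, and conclude by \Cref{cornertensorcell}. Your explicit justification that each $h_j$ lies in $\operatorname{Cell}(\mathscr{B})$ (Leibniz products of monomorphisms are monomorphisms, and the boundary inclusions are a cellular model for $\psh{\C}$ by regular skeletality) is a step the paper merely asserts, so you have if anything been slightly more careful.
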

\begin{proof}
	By \ref{square2}, we see that 
	\[
		\square^\lrcorner_{n,m}(\lambda^r_k,\mathbf{f},\mathbf{g}) \cong \square^\lrcorner_r(\lambda^r_k, (P_1^\lrcorner,\dots, P_r^\lrcorner) \circ \tau(\mathbf{f},\mathbf{g})),
	\]
	But the value of the argument in position \(1\leq j \leq r\) is 
	\[
		P_j^\lrcorner\circ \tau_j(\mathbf{f},\mathbf{g})=f_{\alpha(j-1)+1}\times^\lrcorner \dots \times^\lrcorner f_{\alpha(j)} \times^\lrcorner g_{\beta(j-1)+1}\times \dots^\lrcorner \times^\lrcorner g_{\beta(j)},
	\]
	which belongs to the class \(\operatorname{Cell}(\mathscr{B})\). That is, the map 
	\[
		\square^\lrcorner_r(\lambda^r_k, (P_1^\lrcorner,\dots, P_r^\lrcorner) \circ \tau(\mathbf{f},\mathbf{g}))
	\]
	belongs to 
	\[
		\square^\lrcorner_{r}(\lambda^r_k, \operatorname{Cell}(\mathscr{B}), \dots, \operatorname{Cell}(\mathscr{B})),
	\]
	where \(\mathscr{B}\) denotes the set of boundary inclusions for \(\C\).  By \Cref{cornertensorcell}, it follows therefore that this map belongs to 
	\[
		\operatorname{Cell}(\square^\lrcorner_{r}(\lambda^r_k, \mathscr{B}, \dots, \mathscr{B})).
	\]
	But the set of maps
	\[
		\square^\lrcorner_{r}(\lambda^r_k, \mathscr{B}, \dots, \mathscr{B})
	\]
	is a subset of the generating horizontal inner anodynes, and therefore the map
	\[
		\square^\lrcorner_{n,m}(\lambda^r_k,\mathbf{f},\mathbf{g})\cong \square^\lrcorner_r(\lambda^r_k, (P_1^\lrcorner,\dots, P_r^\lrcorner) \circ \tau(\mathbf{f},\mathbf{g}))
	\]
	is horizontal inner anodyne.
\end{proof}
Finally, we reach our destination.
\begin{thm}[Anodyne Theorem \cite{oury}*{3.88}]\label{anodynethm}
	The class of horizontal anodynes is closed under corner products with monomorphisms.  In particular, if we let \[\mathscr{J}=\{\square_n^\lrcorner(\lambda^n_k, \delta^{c_1},\dots,\delta^{c_n})| \text{ for } n\geq 2, 0<k<n\}.\] Then we have
	\[\mathscr{M}\times^\lrcorner \mathscr{J} \subseteq \operatorname{Cell}(\mathscr{J}).\]
\end{thm}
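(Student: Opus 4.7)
The strategy is to reduce the statement entirely to \Cref{anodynelemma} by combining the coherence of \Cref{square1} with the simplicial decomposition of \Cref{simplicialhorns} and the closure property of \Cref{cornertensorcell}. Fix generators $m = \square^\lrcorner_p(\delta^p, \delta^{a_1}, \dots, \delta^{a_p}) \in \mathscr{M}$ and $j = \square^\lrcorner_q(\lambda^q_k, \delta^{b_1}, \dots, \delta^{b_q}) \in \mathscr{J}$, and set $\mathbf{f} = (\delta^{a_i})_{i=1}^p$, $\mathbf{g} = (\delta^{b_j})_{j=1}^q$. The first step is to invoke the corner-tensor version of the commuting diagram in \Cref{square1}---available because each of the involved functors preserves connected colimits argument-wise (\Cref{conncolims}) and the corner-tensor construction is multifunctorial by \Cref{cornertensorfunctoriality}---to obtain the identification
\[
	m \times^\lrcorner j \;=\; \square^\lrcorner_{p,q}\bigl(\delta^p \times^\lrcorner \lambda^q_k,\; \mathbf{f},\; \mathbf{g}\bigr),
\]
where the simplicial input is viewed as a morphism in $\overcat{\psh{\Delta}}{\Delta^p\times\Delta^q}^{[1]}$.

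Next, I would apply \Cref{simplicialhorns} to factor $\delta^p \times^\lrcorner \lambda^q_k$ as a cellular composite of pushouts $X_{i-1} \hookrightarrow X_i$, each one attaching an inner horn inclusion $\lambda^{r_i}_{\ell_i}: \Lambda^{r_i}_{\ell_i} \hookrightarrow \Delta^{r_i}$ along a map whose associated nondegenerate section $(\alpha_i, \beta_i): \Delta^{r_i} \to \Delta^p \times \Delta^q$ preserves initial and terminal vertices. That initial/terminal-preserving property is precisely the geometric hypothesis required to feed each such $\lambda^{r_i}_{\ell_i}$ into \Cref{anodynelemma}.

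With these two steps in place, \Cref{cornertensorcell} applied in the simplicial slot of $\square^\lrcorner_{p,q}$ (its hypotheses guaranteed by \Cref{conncolims}) yields
\[
	\square^\lrcorner_{p,q}\bigl(\delta^p \times^\lrcorner \lambda^q_k,\; \mathbf{f},\; \mathbf{g}\bigr) \;\in\; \operatorname{Cell}\bigl(\{\,\square^\lrcorner_{p,q}(\lambda^{r_i}_{\ell_i},\, \mathbf{f},\, \mathbf{g})\,\}\bigr),
\]
and \Cref{anodynelemma} shows that each generator on the right lies in $\operatorname{Cell}(\mathscr{J})$. Composing the two Cell-containments completes the proof that $m \times^\lrcorner j \in \operatorname{Cell}(\mathscr{J})$.

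The main obstacle I anticipate is bookkeeping rather than novelty: one must consistently regard each $\lambda^{r_i}_{\ell_i}$ as a morphism over $\Delta^p \times \Delta^q$ via the section $(\alpha_i, \beta_i)$ provided by \Cref{simplicialhorns}, so that the Cell-presentation arising from the simplicial filtration matches the source of \Cref{anodynelemma} on the nose. Once that identification is made explicit, no additional computation is needed---all the combinatorial work has been offloaded onto \Cref{simplicialhorns} and the corner-tensor coherence already established in this section.
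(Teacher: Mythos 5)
Your proposal is correct and follows essentially the same route as the paper's own proof: identify $m \times^\lrcorner j$ with $\square^\lrcorner_{p,q}(\delta^p \times^\lrcorner \lambda^q_k, \mathbf{f}, \mathbf{g})$ via \Cref{square1}, decompose the simplicial corner product into inner-horn attachments with initial/terminal-vertex-preserving sections via \Cref{simplicialhorns}, push this through \Cref{cornertensorcell}, and finish with \Cref{anodynelemma}. The bookkeeping point you flag about tracking the sections $(\alpha_i,\beta_i)$ is exactly the implicit identification the paper also relies on.
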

\begin{proof}
	Let \(f_0:\partial\Delta^n \hookrightarrow \Delta^n\), and let \(\mathbf{f}=(f_i)_{i=1}^n\) be a family of boundary inclusions in \(\psh{\C}\).  Let \(g_0:\Lambda^m_k \hookrightarrow \Delta^n\) be an inner horn inclusion, and let \(\mathbf{g}=(g_i)_{i=1}^n\) be a family of boundary inclusions in \(\psh{\C}\).  By \ref{square1}, we have 
	\[
		\square^\lrcorner_n(f_0,\dots,f_n) \times^\lrcorner \square^\lrcorner_m(g_0,\dots,g_m) \cong \square^\lrcorner_{n,m}(f_0 \times^\lrcorner g_0, f_1,\dots,f_n,g_1,\dots,g_n).
	\]
	By \ref{simplicialhorns}, we know that \(f_0\times^\lrcorner g_0\) can be factored as a finite sequence of pushouts of inner horn inclusions.  By \ref{cornertensorcell}, it follows that
	\[
		\square^\lrcorner_{n,m}(f_0 \times^\lrcorner g_0, f_1,\dots,f_n,g_1,\dots,g_n)
	\]
	is a finite composite of pushouts of maps
	\[
		\square^\lrcorner_{n,m}(h_i, f_1,\dots,f_n,g_1,\dots,g_n)
	\] 
	where \((h_i:\Lambda^{r_i}_{\ell_i} \to \Delta^{r_i})_{i=1}^k\) are inner horn inclusions and the implicit maps 
	\[
		(\alpha_i,\beta_i):\Delta^{r_i}\to \Delta^n\times \Delta^m
	\]
	are initial and terminal vertex preserving. 

	But by the previous lemma, we see that each of these maps is horizontal inner anodyne, so we are done.
\end{proof}

\subsection{Comparison with Rezk's complete \(\Theta[\C]\)-spaces}\label{rezkcomparison}
Since Rezk's complete Segal model structure on \(\Psh_\Delta(\Theta[\C])\) is Cartesian, since \(\ast \hookrightarrow E\) is one of the generators of the localization (see \cite{rezk-theta-n-spaces}), and since \(\Theta[\C]\) is regular skeletal Reedy, it follows by several results of Cisinski \cite{cisinski-book}*{Proposition 8.2.9, Theorem 3.4.36, Proposition 2.3.30} that Rezk's localizer for complete \(\Theta[\C]\)-Segal spaces is the simplicial completion of a localizer on \(\Theta[\C]\).  
\begin{obs}
	To show that Rezk's localizer is the simplicial completion of the localizer generated by the horizontal inner anodynes, it suffices to show the following two properties hold:
	\begin{enumerate}[leftmargin=3em,label=(\roman*{})] 
		\item The maps 
		\(\square_n^\lrcorner(\lambda^n_i,\delta^{c_1},\dots,\delta^{c_n})\times \Delta^0\) belong to the localizer for complete \(\Theta[\C]\)-Segal spaces for \(n\geq 2\) and \(0<i<n\).
		\item The Segal maps \(\operatorname{Se}[n](c_1,\dots,c_n): \operatorname{Sp}[n](c_1,\dots,c_n) \hookrightarrow [n](c_1,\dots,c_n)\) are horizontal inner anodyne.
	\end{enumerate}
\end{obs}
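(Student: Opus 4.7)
The proof is essentially a bookkeeping step invoking Cisinski's Galois correspondence between localizers on \(\psh{\Theta[\C]}\) and discrete localizers on \(\Psh_\Delta(\Theta[\C])\) (\cite{cisinski-book}, also recalled in our appendix). As noted immediately before the statement, Rezk's localizer for complete \(\Theta[\C]\)-Segal spaces has already been seen to be discrete, so it is the simplicial completion of a unique localizer \(L^\ast\) on \(\psh{\Theta[\C]}\). The task reduces to showing \(L^\ast\) coincides with the horizontal Joyal localizer \(L_{\mathrm{hJoyal}}\).

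For the inclusion \(L_{\mathrm{hJoyal}} \subseteq L^\ast\), I would observe that under the Galois correspondence \(L^\ast\) consists precisely of those maps \(f\) of \(\Theta[\C]\)-sets for which \(f \times \Delta^0\) lies in Rezk's localizer. Condition (i) asserts exactly this containment for every generator of \(L_{\mathrm{hJoyal}}\). Since tensoring with \(\Delta^0\) is colimit-preserving and sends monomorphisms to monomorphisms, the class of \(f\) for which \(f \times \Delta^0\) is a Rezk weak equivalence is itself closed under all the operations defining a localizer, so it contains the smallest localizer generated by those maps, namely \(L_{\mathrm{hJoyal}}\).

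For the reverse inclusion \(L^\ast \subseteq L_{\mathrm{hJoyal}}\), I would use that Rezk's localizer is generated over the minimal Cisinski structure by the Segal maps and the completeness map. Condition (ii) says the discrete Segal maps \(\operatorname{Sp}[n](c_1,\dots,c_n) \hookrightarrow [n](c_1,\dots,c_n)\) already belong to \(L_{\mathrm{hJoyal}}\). Under the Galois correspondence, the completeness generator corresponds to contractibility of the interval \(E^1\), which is automatic since \(E^1\) is by definition the separating interval for the horizontal Joyal model structure. Hence each generator of Rezk's localizer lifts to a map already in \(L_{\mathrm{hJoyal}}\), giving \(L^\ast \subseteq L_{\mathrm{hJoyal}}\).

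The two inclusions yield \(L^\ast = L_{\mathrm{hJoyal}}\), whence the simplicial completion of \(L_{\mathrm{hJoyal}}\) equals Rezk's localizer. The nontrivial content, deferred to the remainder of the chapter, is the verification of (i) and (ii) themselves. The main obstacle is (ii), which requires an induction on the total dimension \(n + \sum_i \dim_\C c_i\) establishing that the spine inclusion \(V_{\operatorname{Sp}[n]}(c_1,\dots,c_n) \hookrightarrow [n](c_1,\dots,c_n)\) is horizontal inner anodyne. One would first factor this inclusion through the domain of \(\square^\lrcorner_n(\operatorname{Sp}[n]\hookrightarrow \Delta^n, \delta^{c_1},\dots,\delta^{c_n})\), which is horizontal inner anodyne by the Anodyne Theorem applied to the classically inner anodyne simplicial spine inclusion, and then fill in the ``partial boundary decoration'' cells \(V[n](c_1,\dots,\partial c_j,\dots,c_n)\) one index at a time; each pushout step reduces to a smaller case of a suitably strengthened induction hypothesis allowing arbitrary subfamilies of boundary decorations, which descends in total dimension and so terminates.
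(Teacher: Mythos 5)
Your argument is correct and reconstructs exactly the reasoning the paper leaves implicit in this Observation: Rezk's localizer is discrete, the functor \((-)\times\Delta^0\) is a left Quillen equivalence (Cisinski 2.3.27) and hence detects weak equivalences, so condition (i) yields \(L_{\mathrm{hJoyal}}\subseteq L^\ast\); and since Rezk's localizer is generated over the injective structure (the simplicial completion of the minimal localizer, \(\Theta[\C]\) being regular skeletal) by the Segal maps and the completeness map \(\Delta^0\to E^1\), with the latter automatically a horizontal Joyal equivalence because \(E^1\) is the separating interval, condition (ii) yields \(L^\ast\subseteq L_{\mathrm{hJoyal}}\). One minor caveat: your side-remark that \(\{f : f\times\Delta^0\ \text{lies in Rezk's localizer}\}\) is a localizer because \((-)\times\Delta^0\) preserves colimits and monomorphisms does not by itself deliver the axiom that every trivial fibration belongs to this class (\((-)\times\Delta^0\) need not send trivial fibrations to trivial fibrations); this is harmless, since that verification is redundant once one knows \((-)\times\Delta^0\) is a left Quillen equivalence between the minimal structure and its simplicial completion, which you already invoke.
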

We will make use of the following lemma:
\begin{lemma}\label{precatproperty}
	For any inner horn inclusion \(\Lambda^n_k\hookrightarrow \Delta^n,\) and any presheaves \(X_1,\dots,X_n\) on \(\C\), the map \(V_{\Lambda^n_k}(X_1,\dots,X_n)\times \Delta^0 \hookrightarrow V[n](X_1,\dots,X_n)\times \Delta^0\) belongs to the localizer for complete Segal-\(\Theta[\C]\)-spaces.
\end{lemma}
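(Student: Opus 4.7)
The plan is to invoke two-out-of-three in Rezk's model structure against the generating Segal map
\[
V_{\Sp[n]}(X_1,\dots,X_n)\times\Delta^0 \hookrightarrow V[n](X_1,\dots,X_n)\times\Delta^0,
\]
which lies in Rezk's localizer by construction. Since this map factors as
\[
V_{\Sp[n]}(X_1,\dots,X_n)\times\Delta^0 \hookrightarrow V_{\Lambda^n_k}(X_1,\dots,X_n)\times\Delta^0 \hookrightarrow V[n](X_1,\dots,X_n)\times\Delta^0,
\]
it suffices to show that the first inclusion is a weak equivalence.

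The key combinatorial step is to exhibit a cellular decomposition of the simplicial-set inclusion $\Sp[n] \hookrightarrow \Lambda^n_k$ as a finite composition of pushouts of spine inclusions $\Sp[m] \hookrightarrow \Delta^m$ with $m < n$. This follows by adapting Joyal's argument that $\Sp[n]\hookrightarrow \Delta^n$ is inner anodyne: one filters $\Lambda^n_k$ by attaching the non-degenerate simplices of $\Lambda^n_k \setminus \Sp[n]$ in order of increasing dimension, and for each attaching face $\Delta^m \hookrightarrow \Lambda^n_k$ one applies a secondary induction to reduce the attachment to pushouts of strictly-lower-dimensional spine inclusions. One must verify that all intermediate attachments remain inside $\Lambda^n_k$ rather than escaping into $\Delta^n$; this holds because every proper face of $\Delta^n$ other than $\partial_k \Delta^n$ is already contained in $\Lambda^n_k$.

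Applying the intertwiner $V_\bullet(X_1,\dots,X_n)$, which preserves connected colimits in its first variable by \Cref{conncolims}, transforms this filtration into a cellular decomposition of $V_{\Sp[n]}(X_1,\dots,X_n) \hookrightarrow V_{\Lambda^n_k}(X_1,\dots,X_n)$ whose cells are pushouts of maps of the form $V_{\Sp[m]}(Y_1,\dots,Y_m) \hookrightarrow V[m](Y_1,\dots,Y_m)$ for $m<n$, with each $Y_j$ a product of consecutive $X_i$ according to the formula computed in \Cref{square2}. Each such cell is itself a Segal map for a labeled simplex of strictly lower dimension, hence lies in Rezk's localizer. Crossing with $\Delta^0$ is preserved under the entire construction.

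Weak equivalences in a left-Bousfield localization of the left-proper injective model structure on $\Psh_\Delta(\Theta[\C])$ are closed under pushouts along monomorphisms and under transfinite composition, so the composite $V_{\Sp[n]}(X_1,\dots,X_n)\times\Delta^0 \hookrightarrow V_{\Lambda^n_k}(X_1,\dots,X_n)\times\Delta^0$ is a weak equivalence, as required. The principal obstacle is the combinatorial cell decomposition of $\Sp[n] \hookrightarrow \Lambda^n_k$: Joyal's classical argument must be adapted to stay within the horn, though the verification is essentially elementary; the remainder is a formal application of two-out-of-three and the standard closure properties of a Bousfield localizer.
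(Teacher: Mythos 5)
Your opening reduction is correct and agrees with the paper's: by two-out-of-three against the Segal map \(V_{\operatorname{Sp}[n]}(X_1,\dots,X_n)\hookrightarrow V[n](X_1,\dots,X_n)\), which is in Rezk's localizer, everything reduces to showing that \(V_{\operatorname{Sp}[n]}(X_1,\dots,X_n)\hookrightarrow V_{\Lambda^n_k}(X_1,\dots,X_n)\) is a trivial cofibration. The gap lies in how you propose to prove this. The cellular decomposition you assert --- that \(\operatorname{Sp}[n]\hookrightarrow\Lambda^n_k\) is a finite composite of pushouts of spine inclusions \(\operatorname{Sp}[m]\hookrightarrow\Delta^m\) along faces of \(\Delta^n\) --- does not exist once \(n\geq 4\). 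The obstruction is the intersection condition you never verify: attaching a face \(\Delta^T\) to a stage \(X_{i-1}\) is a pushout of \(\operatorname{Sp}[T]\hookrightarrow\Delta^T\) only if \(\Delta^T\cap X_{i-1}=\operatorname{Sp}[T]\), so in particular every non-spine edge of \(\Delta^T\) must be absent from \(X_{i-1}\) at that moment. Now \(\Lambda^4_k\) contains the entire \(2\)-skeleton of \(\Delta^4\): ten \(2\)-faces, but only six non-spine edges. A spine attachment of a \(2\)-face introduces one new edge and one new \(2\)-face; a spine attachment of a \(3\)-face introduces three new edges and four new \(2\)-faces, and at most one \(3\)-face can ever be spine-attached since any two \(3\)-faces of \(\Delta^4\) meet in a \(2\)-face. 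Counting, spine attachments can account for at most seven of the ten \(2\)-faces; the remaining ones must be glued in along their full boundaries, and \(V_{\partial\Delta^2}(Y_1,Y_2)\hookrightarrow V[2](Y_1,Y_2)\) is not a Segal equivalence. So the map \(V_{\operatorname{Sp}[n]}\hookrightarrow V_{\Lambda^n_k}\) is genuinely not in the cellular closure of the Segal maps, and no reordering of the filtration repairs this. (Note also that "Joyal's argument that \(\operatorname{Sp}[n]\hookrightarrow\Delta^n\) is inner anodyne" decomposes into pushouts of \emph{inner horns}, which is useless here: it is precisely the inner horns whose membership in the localizer you are trying to establish.)

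The missing ingredient --- the one the paper's proof, following Joyal and Tierney, actually uses --- is that trivial cofibrations in the localized model structure enjoy right cancellation with respect to monomorphisms: if \(f\) and \(g\circ f\) are trivial cofibrations and \(g\) is monic, then \(g\) is a weak equivalence by two-out-of-three and hence a trivial cofibration, even though \(g\) need not lie in the cellular closure of the generators. With this extra closure property one proves by a double induction that \(V_{\operatorname{Sp}[n]}\hookrightarrow V_{\Lambda^n_S}\) is a trivial cofibration for every generalized inner horn \(\Lambda^n_S=\bigcup_{i\notin S}\partial_i\Delta^n\) with \(\varnothing\neq S\subseteq\{1,\dots,n-1\}\): one first treats \(S=\{1,\dots,n-1\}\), i.e.\ \(\partial_0\Delta^n\cup\partial_n\Delta^n\), and then shrinks \(S\) one face at a time, each step being a pushout of a map already known to be a trivial cofibration by the inductive hypothesis (not a pushout of a generating Segal map). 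You should replace the claimed cell decomposition with this cancellation argument; as written, the proof does not close.
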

\begin{proof}
	We will suppress the \(\times \Delta^0\) factor denoting discrete simplicial presheaves for the duration of this proof.  By \cite{rezk-theta-n-spaces}*{5.2}, we know that the maps \(\operatorname{Se}[n](X_1,\dots,X^n)\) are already weak equivalences.  Then we proceed following the argument of \cite{jtsegal}*{Lemma 3.5}.  Notice that trivial cofibrations have the right-cancellation property with respect to monomorphisms.  Then we show that since the class of trivial cofibrations contains the class of maps \(\operatorname{Se}[n](X_1,\dots,X_n)\), it also contains the class of maps 
	\[V_{\partial_0 \Delta^n \cup \partial_n \Delta^n}(X_1,\dots,X_n)\hookrightarrow V[n](X_1,\dots,X_n)\] by induction on \(n\).  Notice first that the map
	\[V_{\Lambda^2_1}(X_1,X_2)\hookrightarrow V[n](X_1,X_2)\]
	is automatically a trivial cofibration, since \(\Lambda^2_1=\operatorname{Sp}[2]\).
	For the case of \(n>2\), notice that by cancellation, it suffices to show that the maps
	\[V_{\operatorname{Sp}[n]}(X_1,\dots, X_n) \xrightarrow{i_n} V_{\partial_0\Delta^n \cup \operatorname{Sp}[n]}(X_1,\dots, X_n) \xrightarrow{j_n} V_{\partial_0 \Delta^n \cup \partial_n \Delta^n}(X_1,\dots,X_n)\]
	are trivial cofibrations.
	Then notice that 
	\[V_{\operatorname{Sp}[n]}(X_1,\dots,X_n) \xrightarrow{i_n} V_{\partial_0 \Delta^n\cup \operatorname{Sp}[n]}(X_1,\dots,X_n)\]
	is a pushout of the map 
	\[V_{\operatorname{Sp}[n-1]}(X_1,\dots,X_n) \hookrightarrow V_{\partial_0 \Delta^n}(X_1,\dots,X_n),\]
	and is therefore a trivial cofibration.  
	Notice that for \(d_0: [n-1] \to [n]\), \(d_0^{-1}(\operatorname{Sp}[n]) = \operatorname{Sp}[n-1]\) and \(d_0^{-1}(\partial_n\Delta^n) = \partial_{n-1}\Delta^{n-1}\).  Then the square
	\begin{center}
		\begin{tikzpicture}
			\matrix (b) [matrix of math nodes, row sep=3em, column sep=3em]
			{
				V_{\operatorname{Sp}[n-1]\cup \partial_{n-1}\Delta^{n-1}}(X_1,\dots,X_n) & V_{\operatorname{Sp}[n-1]\cup \partial_n\Delta^n}(X_1,\dots,X_n)\\
				V_{\partial_0\Delta^n}(X_1,\dots,X_n)				 &  V_{\partial_0\Delta^n\cup \partial_n \Delta^n}(X_1,\dots,X_n)           \\
			};
			\path[->]
			(b-1-1) edge (b-1-2) edge node[auto]{\(\scriptstyle{k_{n-1}}\)} (b-2-1)
			(b-2-1) edge node[auto]{\(\scriptstyle{d^\prime_0}\)} (b-2-2)
			(b-1-2) edge node[auto]{\(\scriptstyle{j_n}\)} (b-2-2);
		\end{tikzpicture}
	\end{center}
	is coCartesian, and \(k_{n-1}\) is a trivial cofibration by using the cancellation property with the map \(j_{n-1}\).
	Therefore, it follows that \(j_n\) is also a trivial cofibration. 

	We now prove the lemma:  By the cancellation property, it suffices to show that
	\[V_{\operatorname{Sp}[n]}(X_1,\dots,X_n) \hookrightarrow V_{\Lambda^n_k}(X_1,\dots,X_n)\]
	is a trivial cofibration for \(n\geq 2\) and \(0<i<n\).  The case \(n=2\) is obvious, so it suffices to show for the case \(n>2\).  Given a set \(S\subseteq [n]\), let \[\Lambda^n_S=\bigcup_{i\notin S} \partial_i \Delta^n.\]  
	We will show that for \(n>2\) and \(S\) a nonempty subset of \([1,\dots,n-1]\), the map 
	\[V_{\operatorname{Sp}[n]}(X_1,\dots,X_n) \hookrightarrow V_{\Lambda^n_S}(X_1,\dots,X_n)\]
	is a trivial cofibration.  We argue by induction on \(n\) and \(s=n-\operatorname{Card}(S)\).  If \(s=1\), \(\Lambda^n_S=\partial_0 \Delta^n \cup \partial_n \Delta^n\), in which case we are done by the previous argument.  If \(s>1\), let \(T=S\cup \{b\}\) for some \(b \in [1,\dots,n-1] \setminus S\).  Then by the inductive hypothesis,
	\[V_{\operatorname{Sp}[n]}(X_1,\dots,X_n) \hookrightarrow V_{\Lambda^n_T}(X_1,\dots,X_n)\]
	is a trivial cofibration.  Then it suffices to show that 
	\[V_{\Lambda^n_T}(X_1,\dots,X_n) \hookrightarrow V_{\Lambda^n_S}(X_1,\dots,X_n)\]
	is a trivial cofibration.   
	We see that the diagram
	\begin{center}
		\begin{tikzpicture}
			\matrix (b) [matrix of math nodes, row sep=3em, column sep=3em]
			{
				V_{\Lambda^n_T \cap \partial_b \Delta^n }(X_1,\dots,X_n) & V_{\Lambda^n_T}(X_1,\dots,X_n)\\
				V_{\partial_b\Delta^n}(X_1,\dots,X_n)				 &  V_{\Lambda^n_S}(X_1,\dots,X_n)           \\
			};
			\path[->]
			(b-1-1) edge (b-1-2) edge  (b-2-1)
			(b-2-1) edge  (b-2-2)
			(b-1-2) edge  (b-2-2);
		\end{tikzpicture}
	\end{center}
	is a pushout, and therefore, it suffices to show that 
	\[V_{\Lambda^n_T \cap \partial_b \Delta^n }(X_1,\dots,X_n) \hookrightarrow V_{\partial_b\Delta^n}(X_1,\dots,X_n)\]
	is a trivial cofibration.  But this is true by the inductive hypothesis on \(n\). Therefore, we are done. 
\end{proof}

\begin{prop} The map 
	\[\square_n^\lrcorner(\lambda^n_k,\delta^{c_1},\dots,\delta^{c_n})\times \Delta^0\] is a trivial cofibration for the model structure on complete \(\Theta[\C]\)-Segal spaces.   
\end{prop}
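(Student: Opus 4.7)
The strategy is a straightforward induction using \Cref{precatproperty} and two-out-of-three. By the remark following \Cref{modelstrucdefn}, the corner intertwiner in question is the inclusion
\[
D \;:=\; V_{\Lambda^n_k}(c_1,\dots,c_n) \cup \bigcup_{i=1}^n V[n](c_1,\dots,\partial c_i,\dots, c_n) \hookrightarrow [n](c_1,\dots,c_n),
\]
and the goal is to show that \(D \times \Delta^0 \hookrightarrow [n](c_1,\dots,c_n) \times \Delta^0\) is a trivial cofibration in Rezk's model structure on complete Segal \(\Theta[\C]\)-spaces.

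I would prove the following generalization by induction on \(|S|\): for \(n \geq 2\), \(0 < k < n\), presheaves \(X_1,\dots,X_n\) on \(\C\), a subset \(S \subseteq \{1,\dots,n\}\), and a sub-presheaf \(Y_i \subseteq X_i\) for each \(i \in S\), the inclusion
\[
E_S \;:=\; V_{\Lambda^n_k}(X_1,\dots,X_n) \cup \bigcup_{i \in S} V[n](X_1,\dots,Y_i,\dots,X_n) \hookrightarrow V[n](X_1,\dots,X_n),
\]
viewed as a map of discrete simplicial presheaves, is a trivial cofibration. The statement of the proposition is the case \(X_i = c_i\), \(Y_i = \partial c_i\), \(S = \{1,\dots,n\}\); the base case \(|S| = 0\) is precisely \Cref{precatproperty}.

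For the inductive step, pick \(j \in S\) and set \(S' := S \setminus \{j\}\). Then \(E_{S'} \hookrightarrow E_S\) is the pushout of
\[
E_{S'} \cap V[n](X_1,\dots,Y_j,\dots,X_n) \hookrightarrow V[n](X_1,\dots,Y_j,\dots,X_n),
\]
and the intersection can be computed slot-by-slot, using that \(V[n](-,\dots,-)\) preserves sub-presheaves in each argument (a consequence of the explicit description of the intertwiner together with \Cref{pullbacksober}), to be equal to
\[
V_{\Lambda^n_k}(X_1,\dots,Y_j,\dots,X_n) \cup \bigcup_{i \in S'} V[n](X_1,\dots,Y_i,\dots,Y_j,\dots,X_n).
\]
This is itself an instance of the claim, now with data \((X_1,\dots,Y_j,\dots,X_n)\) and subset \(S'\) of strictly smaller size, so by induction it is a trivial cofibration; since pushouts preserve trivial cofibrations, so is \(E_{S'} \hookrightarrow E_S\). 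The inductive hypothesis also gives that \(E_{S'} \hookrightarrow V[n](X_1,\dots,X_n)\) is a trivial cofibration, and two-out-of-three on the composite \(E_{S'} \hookrightarrow E_S \hookrightarrow V[n](X_1,\dots,X_n)\) then shows \(E_S \hookrightarrow V[n](X_1,\dots,X_n)\) is a weak equivalence, hence a trivial cofibration.

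The only real technical point is the combinatorial verification that the intersection computes as claimed, a routine check from slot-wise compatibility of \(V[n]\) with subobjects; the rest of the argument is entirely formal and amounts to repeated application of \Cref{precatproperty}.
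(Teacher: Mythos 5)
Your argument is correct, and it reaches the conclusion by a genuinely different route from the one in the paper. Both proofs rest on the same two pillars: \Cref{precatproperty} applied to \(V_{\Lambda^n_k}(X_1,\dots,X_n)\hookrightarrow V[n](X_1,\dots,X_n)\) for \emph{arbitrary} presheaves \(X_i\) (which is why your induction may freely substitute \(Y_j\) into a slot), and right-cancellation of trivial cofibrations among monomorphisms. Where you differ is in how the ``boundary part'' of the corner tensor is handled. The paper keeps the coend description of \(\square^\lrcorner_n\), exhibits the map \(\square_n(\Lambda^n_k,c_1,\dots,c_n)\hookrightarrow Q(0)\) as a pushout of a single map \(U(0,0)\to U(1,0)\) between colimits of punctured-cube diagrams, and argues that this map is a weak equivalence because both sides are \emph{homotopy} colimits (the diagrams are Reedy-cofibrant, latching maps being unions of subobjects) and the map of diagrams is an objectwise trivial cofibration by \Cref{precatproperty}. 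You instead attach the subobjects \(V[n](X_1,\dots,Y_i,\dots,X_n)\) one at a time, identify each attaching map as a smaller instance of your generalized claim via the slot-wise intersection formula, and conclude by pushout-stability and two-out-of-three. Your route is more elementary — no homotopy-colimit or Reedy-cofibrancy input is needed — at the price of the combinatorial verification that intersections of the subobjects \(V[n](\dots)\) and \(V_{\Lambda^n_k}(\dots)\) compute slot-by-slot; this does hold, by the objectwise coproduct-of-products formula for the intertwiner (\Cref{evaluationsquare}) and the pullback description of \(V_K\) in \Cref{pullbacksober}, and it is the same fact the paper implicitly uses when it asserts the union description of the corner tensor in the remark after \Cref{modelstrucdefn}. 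The paper's homotopy-coend argument buys uniformity (one weak equivalence of diagrams handles all the boundary pieces at once, and the same template is reused elsewhere, e.g.\ in \Cref{corneridentities}); yours buys transparency and generalizes painlessly to arbitrary families of subobjects \(Y_i\subseteq X_i\) rather than just boundaries.
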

\begin{proof} 
	We again suppress the \(\times \Delta^0\) factor. Let \(Q=\square_n^\lrcorner(\lambda^n_k,\delta^{c_1},\dots,\delta^{c_n})\).  Evaluation of \(Q\) on \(0\) is the source and evaluation on \(1\) is the target. We must show that the monomorphism \(Q:Q(0)\hookrightarrow Q(1)\) is a trivial cofibration. Notice first that 
	\[\square_n(\Lambda^n_k, c_1,\dots,c_n)\hookrightarrow Q(0) \hookrightarrow Q(1) = [n](c_1,\dots,c_n)\] is a weak equivalence by the lemma.  Then by right-cancellation, it suffices to show that 
	\[\square_n(\Lambda^n_k, c_1,\dots,c_n)\hookrightarrow Q(0)\] is a trivial cofibration.
	Let 
	\[U(s,t)=\int^{u_0,\dots,u_n} \left([1](u_0,s) \times [1](u_1\wedge\dots\wedge u_n,t)\right)\cdot \square(\lambda^n_k(u_0),\delta^{c_1}(u_1),\dots, \delta^{c_n}(u_n)),\]
	where evaluation on \(u_i\in [1]\) denotes taking the source or target.  
	Then we see by coend reduction that
	\[\int^{s,t} [1](s\wedge t,x) \times [1](u_0,s) \times [1](u_1\wedge\dots\wedge u_n,t) = [1](u_0 \wedge u_1 \wedge \dots\wedge u_n,x),\]
	so by commutation of coends, we see that
	\[Q(x)=\int^{s,t} [1](s\wedge t,x) U(s,t),\]
	which proves that \[Q(0)=U(1,0) \coprod_{U(0,0)} U(0,1),\]
	but \(U(0,1)=\square_n(\Lambda^n_k, c_1,\dots,c_n)\), so the map 
	\[\square_n(\Lambda^n_k, c_1,\dots,c_n) \hookrightarrow Q(0)\] is a pushout of \(U(0,0)\to U(1,0)\), which we will show is a trivial cofibration.  Notice that in \(U(0,0)\), everything vanishes when \(u_0=1\), so we have that 
	\[U(0,0)\cong\int^{u_1\dots,u_n} [1](u_1\wedge\dots\wedge u_n,0)\cdot \square(\lambda^n_k(0),\delta^{c_1}(u_1),\dots, \delta^{c_n}(u_n)).\]
	Notice also that by cofinality, we have that 
	\[U(1,0)=\int^{u_0,\dots,u_n} \left([1](u_0,1) \times [1](u_1\wedge\dots\wedge u_n,0)\right)\cdot \square(\lambda^n_k(u_0),\delta^{c_1}(u_1),\dots, \delta^{c_n}(u_n)),\]
	is isomorphic to 
	\[\int^{u_1,\dots,u_n} [1](u_1\wedge\dots\wedge u_n,t)\cdot \square(\lambda^n_k(1),\delta^{c_1}(u_1),\dots, \delta^{c_n}(u_n)).\]
	Then the map \(U(0,0)\hookrightarrow U(1,0)\) is induced by the natural maps
	\[\square(\lambda^n_k(0),\delta^{c_1}(u_1),\dots,\delta^{c_n}(u_n)) \hookrightarrow \square(\lambda^n_k(1),\delta^{c_1}(u_1),\dots,\delta^{c_n}(u_n)).\]
	But \(\lambda^n_k(0)\hookrightarrow \lambda^n_k(1)\) is the inner horn inclusion \(\Lambda^n_k\hookrightarrow \Delta^n\), and therefore, by \Cref{precatproperty}, these are all trivial cofibrations.  But \(U(0,0)\) and \(U(0,1)\) are homotopy coends.
	
	To see this, notice that each of these objects can be computed as colimits over cubical diagrams with the terminal vertex removed.  Equipping these finite directed categories with the degree-raising Reedy structure, we see that a diagram is projectively cofibrant if and only if it is Reedy-cofibrant.  To see that the diagrams in question are Reedy-cofibrant, it suffices to notice that the latching object at any vertex is a union of subobjects, which implies that the latching map is monic at each vertex, and consequently that the diagram is Reedy-cofibrant. 
	
	Therefore, the map \(U(0,0)\hookrightarrow U(0,1)\) is a monic weak equivalence and therefore a trivial cofibration, which proves the proposition.
\end{proof}
This proves one direction of the theorem; now we prove the converse.
\begin{prop} The maps \[\operatorname{Se}[n](c_1,\dots,c_n):\operatorname{Sp}[n](c_1,\dots,c_n) \hookrightarrow [n](c_1,\dots,c_n)\] are horizontal inner anodyne.
\end{prop}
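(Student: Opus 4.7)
My plan is to prove the Segal-map statement by transferring the classical fact that $\Sp[n]\hookrightarrow\Delta^n$ is inner anodyne in $\psh{\Delta}$ to the labeled setting along the relative intertwiner $F=\square_n(-,c_1,\dots,c_n):\overcat{\psh{\Delta}}{\Delta^n}\to\cellset$, which preserves all colimits in its first argument. The key ingredient will be a labeled version of the inner-horn lemma.

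First I would establish the following lemma: for any inner $0<k<m$ and any family of presheaves $\mathbf{Y}=(Y_1,\dots,Y_m)\in\psh{\C}^m$, the labeled horn inclusion $V_{\Lambda^m_k}(\mathbf{Y})\hookrightarrow V[m](\mathbf{Y})$ is horizontal inner anodyne. To see this I apply the earlier proposition---that $\square^\lrcorner_m(\iota,f_1,\dots,f_m)$ is horizontal inner anodyne for any inner anodyne $\iota$ in $\psh{\Delta}$ and any family of monomorphisms $f_i$ in $\psh{\C}$---to $\iota=\lambda^m_k$ and $f_i=(\varnothing\hookrightarrow Y_i)$. The source of the resulting corner intertwiner is $V_{\Lambda^m_k}(\mathbf{Y})\cup\bigcup_{i=1}^{m} V[m](Y_1,\dots,\varnothing,\dots,Y_m)$ (with the $\varnothing$ sitting in position $i$), and using the splitting formula $V[p{+}1{+}q](A_1,\dots,A_p,\varnothing,B_1,\dots,B_q)\cong V[p](A_1,\dots,A_p)\sqcup V[q](B_1,\dots,B_q)$ recalled in the proof of \Cref{conncolims}, each extra piece decomposes into a full labeled prefix face $V[\{0,\dots,i-1\}](Y_1,\dots,Y_{i-1})$ and a full labeled suffix face $V[\{i,\dots,m\}](Y_{i+1},\dots,Y_m)$; each such face is a sub-face of $\Delta^m$ lying in some outer face $V_{\partial_j\Delta^m}(\mathbf{Y})$ with $j\ne k$, and because the hypothesis $0<k<m$ forces every outer face of $\Delta^m$ to be contained in $\Lambda^m_k$, these extras already lie inside $V_{\Lambda^m_k}(\mathbf{Y})$. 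The source of the corner intertwiner therefore collapses to exactly $V_{\Lambda^m_k}(\mathbf{Y})$, and the resulting map is the desired labeled horn inclusion, which is horizontal inner anodyne.

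With the labeled horn lemma in hand, the functor $F$ preserves all colimits (by the colimit-preservation of $\square_n$ in its first variable established earlier in the section) and, using the explicit computation in \Cref{square2} of the labeling induced by a structure map $\sigma:\Delta^m\to\Delta^n$ as the tuple of products $\prod_{j=\sigma(i-1)+1}^{\sigma(i)}c_j$, sends each generating inner anodyne of $\overcat{\psh{\Delta}}{\Delta^n}$---an inner horn inclusion $\Lambda^m_k\hookrightarrow\Delta^m$ equipped with a structure map to $\Delta^n$---to a horizontal inner anodyne in $\cellset$. Since colimits, pushouts, transfinite composites, and retracts in $\overcat{\psh{\Delta}}{\Delta^n}$ are computed in $\psh{\Delta}$, the inner anodynes in the slice are exactly the weak saturation of these generators, so $F$ carries the classical Joyal inner anodyne $\Sp[n]\hookrightarrow\Delta^n$ to a horizontal inner anodyne---namely the Segal map $\Sp[n](c_1,\dots,c_n)=V_{\Sp[n]}(\mathbf{c})\hookrightarrow V[n](\mathbf{c})=[n](c_1,\dots,c_n)$. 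The main technical obstacle is the labeled horn lemma itself---the bookkeeping required to identify the source of the corner intertwiner and to verify that the non-horn pieces are subsumed by $V_{\Lambda^m_k}(\mathbf{Y})$; once that is in place, the transfer along $F$ is essentially automatic.
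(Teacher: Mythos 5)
Your argument is correct, but it reaches the conclusion by a genuinely different route than the paper. You first isolate a labeled inner-horn lemma --- that $V_{\Lambda^m_k}(Y_1,\dots,Y_m)\hookrightarrow V[m](Y_1,\dots,Y_m)$ is horizontal inner anodyne for \emph{arbitrary} presheaf labels --- by feeding the empty inclusions $\varnothing\hookrightarrow Y_i$ into the corner intertwiner (via \Cref{cornertensorcell}) and checking, through the splitting formula $V[p{+}1{+}q](\dots,\varnothing,\dots)\cong V[p](\dots)\sqcup V[q](\dots)$, that the extra terms of the corner source are absorbed into $V_{\Lambda^m_k}(\mathbf{Y})$ because an inner horn contains both outer faces; you then transfer the classical fact that $\operatorname{Sp}[n]\hookrightarrow\Delta^n$ is inner anodyne along the colimit-preserving functor $\square_n(-,c_1,\dots,c_n)$, using the computation of the induced product labels to see that each attached horn goes to an instance of your lemma. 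The paper instead applies the corner-intertwiner step only once, to the single generalized horn $\Lambda^n=\partial_0\Delta^n\cup\partial_n\Delta^n$, reducing to the inclusion $V_{\operatorname{Sp}[n]}(\mathbf{c})\hookrightarrow V_{\Lambda^n}(\mathbf{c})$, and then disposes of that by a hands-on double induction expressing it as iterated pushouts of lower-dimensional instances. Your version outsources the combinatorics to the standard Joyal spine lemma and produces, as a byproduct, the general labeled horn statement --- which the paper in fact needs later (in \Cref{precatanodyne}, where it is justified by a citation to \Cref{precatproperty} that really concerns the Segal-space localizer), so your lemma would be a worthwhile addition; the paper's induction is more self-contained, using only the outer-horn case of the generalized horn lemma rather than the full cell presentation of the spine inclusion. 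Both proofs are sound.
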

\begin{proof} 
	Since the map 
	\[\lambda^n: \Lambda^n=\partial_0 \Delta^n \cup \partial_n\Delta^n \hookrightarrow \Delta^n\]
	is inner anodyne, and since the empty maps \(e^{c_i}:\varnothing \hookrightarrow c_i\) are monic, it follows by \ref{cornertensorcell} that the corner-intertwiner 
	\[\square^\lrcorner_n(\lambda^n,e^{c_1},\dots,e^{c_n})\]
	is horizontal inner anodyne.  However, it is easy to see that this map is exactly \[\square_n(\Lambda^n,c_1,\dots,c_n) \hookrightarrow [n](c_1,\dots,c_n).\]  Therefore, it suffices to show that the map 
	\[\square_n(\operatorname{Sp}[n],c_1,\dots,c_n) \hookrightarrow \square_n(\Lambda^n,c_1,\dots,c_n)\] is a horizontal inner anodyne.  We will first show that the map 
	\[\square_n(\operatorname{Sp}[n]\cup \partial_0\Delta^n,c_1,\dots,c_n) \hookrightarrow \square_n(\Lambda^n,c_1,\dots,c_n)\] is horizontal inner anodyne.  
	To see this, we proceed by induction on \(n\).  This is immediate for \(n\leq 2\). Suppose \(n>2\).  Then the map 
	\[\square_n(\operatorname{Sp}[n]\cup \partial_0\Delta^n,c_1,\dots,c_n) \hookrightarrow \square_n(\Lambda^n,c_1,\dots,c_n)\] is horizontal inner anodyne, as it is a pushout of the map 
	\[\square_n(\operatorname{Sp}[n-1]\cup \partial_0\Delta^{n-1},c_1,\dots,c_n) \hookrightarrow \square_n(\partial_n \Delta^n,c_1,\dots,c_n),\]
	which is horizontal inner anodyne by the induction hypothesis.  Then it suffices to show that 
	\[\square_n(\operatorname{Sp}[n],c_1,\dots,c_n) \hookrightarrow \square_n(\operatorname{Sp}[n]\cup \partial_0\Delta^n,c_1,\dots,c_n)\] is horizontal inner anodyne.  Again, we proceed by induction on \(n\) and notice that this is clear for \(n\leq 2\), but we see immediately that \[\square_n(\operatorname{Sp}[n],c_1,\dots,c_n) \hookrightarrow \square_n(\operatorname{Sp}[n]\cup \partial_0\Delta^n,c_1,\dots,c_n)\] is a pushout of 
	\[\square_n(\operatorname{Sp}[n-1],c_1,\dots,c_n) \hookrightarrow \square_n(\partial_0 \Delta^n ,c_1,\dots,c_n),\]
	which is horizontal inner anodyne by the induction hypothesis, which concludes the proof.
\end{proof}
\begin{cor}
	The left Kan extension of the functor \[Y \times E^\bullet:\Theta[\C]\times \Delta \to \cellset\] (where \(Y\) is the Yoneda embedding) defined by the rule 
	\[[n](c_1,\dots,c_n)\times \Delta^m \mapsto [n](c_1,\dots,c_n) \times E^m\]
	induces a Quillen equivalence
	\[\Psh_\Delta(\Theta[\C])_{\mathrm{CSS}} \underset{\operatorname{Sing}_E}{\overset{\operatorname{Real}_E}{\rightleftarrows}} \cellset_{\mathrm{hJoyal}}\]
	between the model structure for complete \(\Theta[\C]\)-Segal spaces and the horizontal Joyal model structure,
	and the left Kan extension of the functor 
	\[d:\Theta[\C] \to \Psh_\Delta(\Theta[\C])\]
	defined by the rule
	\[[n](c_1,\dots,c_n) \mapsto [n](c_1,\dots,c_n) \times \Delta^0\]
	induces a Quillen equivalence
	\[\cellset_{\mathrm{hJoyal}} \underset{d^\ast}{\overset{d_!}{\rightleftarrows}} \Psh_\Delta(\Theta[\C])_{\mathrm{CSS}}.\]
	That is to say, the two model categories are Quillen bi-equivalent.
\end{cor}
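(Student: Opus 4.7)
The plan is to invoke Cisinski's simplicial completion machinery from the appendix, using the two immediately preceding propositions to identify the horizontal Joyal localizer on $\cellset$ with the simplicial completion of Rezk's complete Segal localizer on $\Psh_\Delta(\Theta[\C])$. Once this match of localizers is secured, both Quillen equivalences follow from the general theory of simplicial completion.

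First, I would check that $(d_!, d^\ast)$ and $(\operatorname{Real}_E, \operatorname{Sing}_E)$ are Quillen adjunctions for the underlying minimal Cisinski model structures. For $d_!$ this is immediate: it sends a cellular set to the constant simplicial presheaf on it, and cofibrations on both sides are just monomorphisms. For $\operatorname{Real}_E$, it suffices that $E^\bullet = \mathscr{H}(\operatorname{cosk}_0 \Delta^\bullet)$ be Reedy cofibrant in $\cellset$, which is clear because $\operatorname{cosk}_0 \Delta^\bullet$ is Reedy cofibrant in $\psh{\Delta}$ and $\mathscr{H} = \pi^\ast$, as a left adjoint, preserves monomorphisms and colimits.

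Next, the two preceding propositions furnish the two containments of localizers. The first shows that the discretizations of the generating horizontal anodynes are trivial cofibrations in $\Psh_\Delta(\Theta[\C])_{\mathrm{CSS}}$, and the discretization of the separating interval $\Delta^0 \hookrightarrow E^1$ matches Rezk's completeness generator $\ast \hookrightarrow E$; together these give that the simplicial completion of the horizontal Joyal localizer is contained in Rezk's localizer. The second provides the reverse containment: the Segal generators of Rezk's localizer are horizontal inner anodyne. Since $\Theta[\C]$ is regular skeletal Reedy, Cisinski's bijection between regular localizers on $\cellset$ and Dugger presentations on $\Psh_\Delta(\Theta[\C])$ applies, so the two containments force equality.

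With this matching in hand, the canonical Quillen equivalence coming from simplicial completion is exactly $(d_!, d^\ast)$, and $(\operatorname{Real}_E, \operatorname{Sing}_E)$ is a Quillen equivalence in the reverse direction via the standard cosimplicial resolution criterion. The main obstacle is showing that $E^\bullet$ really is a cosimplicial resolution of the point in the horizontal Joyal model structure: Reedy cofibrancy was noted above, while termwise contractibility amounts to the assertion that each $E^n = \mathscr{H}(\operatorname{cosk}_0 \Delta^n)$ is weakly equivalent to $\Delta^0$, which one can argue by combining the fact that $E^1$ is the separating interval with an iterated application of the fibration characterization from \Cref{isofibrations}.
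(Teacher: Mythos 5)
Your proposal is correct and follows essentially the same route as the paper: the proof given there is a one-line appeal to the two preceding propositions (which identify Rezk's localizer with the simplicial completion of the horizontal Joyal localizer) together with Cisinski's simplicial-completion theorem, cited as Proposition 2.3.27, whose two halves are precisely the Quillen equivalences $d_!$ and $\operatorname{Real}_E$. The only thing you make explicit that the paper leaves implicit is the verification that $E^\bullet$ is a Reedy-cofibrant cosimplicial resolution of the point, which is indeed the hypothesis needed for the $\operatorname{Real}_E$ half.
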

\begin{proof} This is an immediate consequence of the previous proposition together with \cite{cisinski-book}*{Proposition 2.3.27}.
\end{proof}

\subsection{Recognition of horizontal Joyal fibrations}\label{admissible}
In this section, we will prove the anlogue of Joyal's pseudofibration theorem for the horizontal model structure on \(\Theta[\C]\).  We will need to set up some definitions.

\begin{defn}
	Recall from Definition \ref{productintertwiner}, we defined the functor \(H_\mathbf{S}\) for a finite family of simplicial sets \(\mathbf{S}\).  Consider the case of the functor \(H_{S,n}\), where the family is made up of two simplicial sets \(\Delta^n\) and some simplicial set \(S\).  Then we define the functor 
	\[H_{S,n}: \overcat{\psh{\Delta}}{(S\times \Delta^n)} \times  \left(\ssetlab\right)_{n} \to \ssetlab,\]
	to be the restriction of \(H_{S,n}\) to the terminal labeling of \(S\), the unique labeling of \(S\) that sends all edges of \(S\) to the terminal object of \(\psh{\C}\).   Composing \(\square\) with \(H_{S,n}\) is denoted by \(\square_{S,n}\).  For the remainder of this section, we also, by abuse of notation, define 
	\[\square_S:\overcat{\psh{\Delta}}{S} \to \cellset\]
	to be the composite of \(\square\) with the restriction of \(H_S\) to the terminal labeling of \(S\).  
\end{defn}

\begin{obs}\label{joyalsquare1}
	Observe that from \ref{conncolims} the functor 
	\[\square_{S,n}:\overcat{\psh{\Delta}}{(S\times\Delta^n)} \times\psh{\C}^n \to \cellset\]
	preserves connected colimits in each argument and therefore can be corner tensored. By the same argument as \ref{square1}, given \(h:Y\to S\) a map of simplicial sets, we can compute 
	\begin{align*}
		 h \times^\lrcorner \square^\lrcorner_n(f_0,\dots,f_n)  &= \square_S^\lrcorner(h)\times^\lrcorner \square^\lrcorner_n(f_0,\dots,f_n)\\
		&= \square^\lrcorner_{S,n}(h\times^\lrcorner f_0,f_1,\dots,f_n).
	\end{align*}
\end{obs}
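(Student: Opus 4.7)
The plan is to treat this as the direct generalization of \ref{square1} in which the simplex $\Delta^m$ is replaced by an arbitrary simplicial set $S$ carrying its terminal labeling, so that all dependence on $\C$ remains on the $\Delta^n$-side.

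First I will verify that $\square_{S,n}$ preserves connected colimits in each argument, so as to invoke the functoriality of the corner tensor construction. For fixed $\mathbf{A}\in\psh{\C}^n$, the functor $\square_{S,n}(-,\mathbf{A})$ sends $Y\to S\times\Delta^n$ to the intertwiner applied to $Y$ with the pulled-back product labeling; the proof of the opening lemma of \Cref{horizontal}, which relied only on the Cartesianness of the square expressing the intertwiner as a pullback over the simplices of the base, now applies verbatim with $S\times\Delta^n$ in the role of $S$, and in fact gives preservation of all colimits in the first variable. Preservation of connected colimits in each of the $n$ remaining $\psh{\C}$-variables then follows from \ref{conncolims}, since the terminal factors contributed by the $S$-component do not affect the argument there.

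Next I will write down the analogue of the second large commutative diagram at the end of \ref{square1}, with $\Delta^n$ there replaced by $S$ (carrying the terminal labeling) and $\Delta^m$ replaced by $\Delta^n$ with labels $(c_1,\dots,c_n)$ read off from the codomains of $(f_1,\dots,f_n)$. The upper route applies $\square_S\times\square_n$ and then the Cartesian product in $\cellset$; the lower route permutes factors, forms the external product of simplicial sets $P(h,f_0)$ over $S\times\Delta^n$, and applies $\square_{S,n}$. Commutativity of the resulting diagram of ordinary functors is the elementary statement that pulling back a product labeling along a product of maps yields the product of the pulled-back labelings, which is immediate here because the $S$-component labeling is terminal. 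Applying \ref{cornertensorfunctoriality} to this commutative diagram — whose arrows all preserve connected colimits in each argument by the previous paragraph together with universality of colimits in presheaf categories — delivers the second equality of the observation. The first equality is then the tautology that $\square_S^\lrcorner(h)$ agrees with $h$ under the canonical identification via $\square\circ H_S(-,\Omega_{\mathfrak{t}})$, so that corner tensoring with $\square_n^\lrcorner(f_0,\dots,f_n)$ agrees on either side.

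The only real obstacle is diagrammatic bookkeeping — in particular, keeping track of the invariance of the terminal labeling under pullback — but once the analogue diagram is laid out, the application of \ref{cornertensorfunctoriality} is entirely mechanical and proceeds in exact parallel to \ref{square1}.
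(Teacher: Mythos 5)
Your proposal is correct and follows essentially the same route the paper intends: first-variable cocontinuity via the opening lemma of \Cref{horizontal} applied over the base \(S\times\Delta^n\), connected-colimit preservation in the \(\psh{\C}\)-variables via \Cref{conncolims}, and then the transcription of the commutative diagrams of \Cref{square1} (with \(S\) carrying the terminal labeling in place of one simplex factor) followed by an application of \Cref{cornertensorfunctoriality}, with the first equality being the identification \(\mathscr{H}(h)=\square_S^\lrcorner(h)\). The paper compresses all of this into the phrase ``by the same argument as \ref{square1}''; your write-up simply makes that argument explicit.
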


In what follows, we will use a very nice observation of Danny Stevenson \cite{danny}.  Consider the case of the simplicial set \(E^1\) and the map \(e:\Delta^0 \hookrightarrow E^1\).  
\begin{lemma}[\cite{danny}*{Lemma 2.19}]\label{dannythm}
	The map of simplicial sets
	\[e\times^\lrcorner \delta^n: E^1\times \partial\Delta^n \cup \Delta^0 \times \Delta^n \hookrightarrow E\times \Delta^n,\]
	is inner anodyne for all \(n>0\).  
\end{lemma}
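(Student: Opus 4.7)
The plan is to realize $e \times^\lrcorner \delta^n$ as a transfinite composition of pushouts of inner horn inclusions $\Lambda^m_k \hookrightarrow \Delta^m$ (with $0 < k < m$), by attaching the non-degenerate simplices of $E^1 \times \Delta^n$ missing from $A_n := E^1 \times \partial \Delta^n \cup \{0\} \times \Delta^n$ one at a time in a carefully chosen order. Since $E^1 = \operatorname{cosk}_0 \Delta^1$, a non-degenerate $m$-simplex of $E^1 \times \Delta^n$ is a pair $(f, g)$ with $f \colon [m] \to \{0, 1\}$ arbitrary and $g \colon [m] \to [n]$ weakly order-preserving, subject to the non-degeneracy condition that $(f(i), g(i)) \neq (f(i+1), g(i+1))$ for every $0 \leq i < m$. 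The simplices missing from $A_n$ are precisely those with $g$ surjective (hence $m \geq n$) and $f$ not identically zero.

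I would totally order these missing simplices first by ambient dimension $m$ and then within each dimension by a lexicographic ordering on the binary word $f$. For each missing $\sigma = (f, g)$, I would select a distinguished index $k \in \{1, \dots, m - 1\}$ with the property that $\partial_k \sigma$ is the \emph{only} face of $\sigma$ not already present in the filtration at the stage immediately before $\sigma$. The natural candidate is a position lying strictly inside a vertical run of $g$, i.e.\ a $k$ with $g(k-1) = g(k) = g(k+1)$: the outer faces $\partial_0 \sigma$ and $\partial_m \sigma$ either have smaller surjective image (and thus lie in earlier stages by dimension) or are themselves in $A_n$, while every other inner face $\partial_i \sigma$ is either degenerate, lies in $E^1 \times \partial \Delta^n$, or corresponds to an earlier missing simplex in the chosen ordering. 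Attaching $\sigma$ then realizes a pushout of the inner horn inclusion $\Lambda^m_k \hookrightarrow \Delta^m$.

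The main technical obstacle is the base case $m = n$ where $g$ is bijective and no vertical runs exist, so the strategy above degenerates. Here one must exploit non-triviality of $f$ directly: since $f \not\equiv 0$ and the pair $(f,g)$ is non-degenerate with $g$ a bijection, $f$ must take the value $1$ at some position; one picks $k$ to be an interior switch of $f$ and verifies by hand that every other face of $\sigma$ either has $f$ restricting to the constant $0$ (placing it in $\{0\} \times \Delta^n$) or has $g$ restricting to a face map of $\Delta^n$ (placing it in $E^1 \times \partial \Delta^n$). The bulk of the remaining work is combinatorial bookkeeping: for each missing $\sigma$ verifying that every face $\partial_i \sigma$ with $i \neq k$ is genuinely available before $\sigma$ in the ordering, and confirming that the chosen $k$ is always strictly interior. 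An alternative route would be to decompose $E^1$ via its two non-degenerate edges and iteratively apply the anodyne theorem (\Cref{anodynethm}); but because $\Delta^0 \hookrightarrow E^1$ is not itself inner anodyne, any such reduction must crucially use the hypothesis $n > 0$ to produce the inner-horn room from the $\Delta^n$ direction, and the bookkeeping above is where that room is exhibited explicitly.
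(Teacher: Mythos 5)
First, a remark on provenance: the paper does not prove this lemma at all --- it is imported verbatim from Stevenson (\cite{danny}*{Lemma 2.19}), so you are supplying an argument where the paper supplies only a citation. Your setup is correct: the nondegenerate $m$-simplices of $E^1\times\Delta^n$ are pairs $(f,g)$ with $f\colon[m]\to\{0,1\}$ arbitrary, $g$ monotone, and $(f(i),g(i))\neq(f(i+1),g(i+1))$, and the simplices absent from $A_n$ are exactly those with $g$ surjective and $f\not\equiv 0$. An explicit filtration by inner-horn attachments is also the right genre of proof.

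However, the filtration as you have organized it cannot work, and the defect is structural rather than ``combinatorial bookkeeping.'' You order \emph{all} missing simplices by ambient dimension and propose to attach each $\sigma$ of dimension $m$ via an inner horn $\Lambda^m_k\hookrightarrow\Delta^m$ whose distinguished face $\partial_k\sigma$ is ``the only face not already present.'' But every face $\partial_i\sigma$ has dimension $m-1$: it is either already in $A_n$, or degenerate on something of lower dimension, or itself a missing nondegenerate $(m-1)$-simplex --- and in the last case your dimension-first ordering forces it to have been attached \emph{before} $\sigma$. So by the time $\sigma$ is reached, its entire boundary is present and there is no absent face at all; adjoining $\sigma$ is then a pushout of $\partial\Delta^m\hookrightarrow\Delta^m$, not of an inner horn (a genuine pushout of $\Lambda^m_k\hookrightarrow\Delta^m$ adds a \emph{new} $(m-1)$-simplex in position $k$ as well as the top cell). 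This is already visible in your base case $m=n$: there $g$ is a bijection, so every $g\circ\delta^i$ is a proper face of $\Delta^n$ and every face of $\sigma$, including $\partial_k\sigma$, lies in $E^1\times\partial\Delta^n$. The standard repair is to organize the missing simplices into a \emph{matching}: each is designated either ``free'' (adjoined only as the unique absent inner face of a partner one dimension higher) or ``filling'' (adjoined via an inner horn whose absent face is its free partner), one orders only the filling simplices, and each elementary step adds exactly two simplices. Once you do this, your secondary choices also need revisiting: a $k$ with $g(k-1)=g(k)=g(k+1)$ need not exist when $m=n+1$, and the assertion that $\partial_k\sigma$ is the \emph{unique} absent face at the relevant stage is precisely what the matching must be engineered to guarantee --- the lexicographic order on $f$ alone does not deliver it (for $n=1$ the simplex $(f,g)=((1,0,1),(0,1,1))$ has two absent faces until the edge $(1,0)\to(0,1)$ has been swept in by a different $2$-simplex). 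Until the matching is specified and the ``exactly one absent face, and it is inner'' condition is verified for it, the proof is incomplete.
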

\begin{obs}\label{retractjoyal}
	It follows from the small object argument that we can factor \(e\times^\lrcorner \delta^n\) as a composite
	\[
	E^1 \times \partial \Delta^n \coprod_{\Delta^0 \times \partial \Delta^n} \Delta^0 \times \Delta^n \xrightarrow{\iota} E \xrightarrow{\varepsilon} E^1 \times \Delta^n,
	\] 
	where \(\iota\) is a relative cell complex for the inner horn inclusions and where \(\varepsilon\) is an inner fibration between quasicategories. Since \(e\times^\lrcorner \delta^n\) is inner anodyne, it follows that we have a lift \(E^1\times \Delta^n \xrightarrow{\eta} E\) by the lifting property that exhibits \(e\times^\lrcorner \delta^n\) as a retract of \(\iota\).  
\end{obs}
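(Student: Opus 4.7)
The plan is to produce the factorization via the small object argument and then extract the retract from the inner anodyne property of \(e \times^\lrcorner \delta^n\) established in \Cref{dannythm}.

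First I would apply the small object argument, with respect to the set of simplicial inner horn inclusions \(\{\lambda^k_i : 0<i<k\}\), to the map \(e \times^\lrcorner \delta^n\). This yields a factorization \(e \times^\lrcorner \delta^n = \varepsilon \circ \iota\) in which \(\iota\) is a transfinite composite of pushouts of inner horn inclusions (hence a relative cell complex for the inner horns) and \(\varepsilon\) has the right lifting property against every inner horn inclusion, that is, \(\varepsilon\) is an inner fibration.

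Next I would check that \(E\) is a quasicategory. Since \(E^1\) is the nerve of the freestanding isomorphism \(G_2\), it is a Kan complex, so \(E^1 \times \Delta^n\) is a quasicategory. For any inner fibration with quasicategorical target the domain is also a quasicategory: given an inner horn \(\Lambda^k_i \to E\), one first extends its image in \(E^1 \times \Delta^n\) to \(\Delta^k\) using that the target is a quasicategory, and then lifts the resulting simplex back along \(\varepsilon\) rel.\ the horn using the inner fibration property of \(\varepsilon\).

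Finally, \Cref{dannythm} tells us that \(e \times^\lrcorner \delta^n\) is inner anodyne, so it enjoys the left lifting property against the inner fibration \(\varepsilon\). The commutative square with top edge \(\iota\), left edge \(e \times^\lrcorner \delta^n\), bottom edge the identity on \(E^1 \times \Delta^n\), and right edge \(\varepsilon\) (which commutes thanks to the factorization) therefore admits a diagonal filler \(\eta \colon E^1 \times \Delta^n \to E\) satisfying \(\eta \circ (e \times^\lrcorner \delta^n) = \iota\) and \(\varepsilon \circ \eta = \mathrm{id}\). Splicing these two identities into the standard retract diagram exhibits \(e \times^\lrcorner \delta^n\) as a retract of \(\iota\). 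I do not expect any real obstacle here; once the small object argument and \Cref{dannythm} are in hand, the argument is purely formal.
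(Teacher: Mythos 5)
Your proposal is correct and follows exactly the route the paper intends: the small object argument for the inner horn inclusions gives the factorization \(\varepsilon\circ\iota\), and the lifting property of the inner anodyne map \(e\times^\lrcorner\delta^n\) (from \Cref{dannythm}) against the inner fibration \(\varepsilon\) produces the retraction. Your verification that \(E\) is a quasicategory (via \(E^1\times\Delta^n\) being one and inner fibrations reflecting this) is a detail the paper leaves implicit, and it is handled correctly.
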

\begin{obs}\label{joyalsquare2}
	A simplex \((\alpha,\beta):\Delta^r \to E^1\times \Delta^n\) is determined by the destination of its vertices.  We label the vertices by \((a,i)\) and \((b,i)\) for \(0\leq i \leq n\), where \(e\) is the inclusion of \(a \in E^1\).  Given a simplex \((\alpha,\beta):\Delta^r\to E^1\times \Delta^n\), and a family of presheaves \(\mathbf{U}=(U_i)_{i=1}^n\) on \(\C\). 
	\[K_{\beta}:\psh{\C}^n \to \psh{\C}^r\]
	to be the functor sending
	\[\mathbf{U} \mapsto \beta^\ast \mathbf{U}.\]
	Similar to \ref{square2}, we have a diagram
	\begin{center}
		\begin{tikzpicture}
			\matrix (b) [matrix of math nodes, row sep=3em, column sep=4em]
			{
				\overcat{\psh{\Delta}}{\Delta^r} \times \psh{\C}^n & \overcat{\psh{\Delta}}{\Delta^r} \times \psh{\C}^r\\
				\overcat{\psh{\Delta}}{E^1\times \Delta^n} \times \psh{\C}^n & \ssetlab \\
			};
			\path[->]
			(b-1-1) edge node[auto]{\(\scriptstyle{\id \times K_{\beta}}\)} (b-1-2) 
					edge node[auto]{\(\scriptstyle{((\alpha,\beta))\circ(-) \times \id \times \id}\)} (b-2-1)
			(b-2-1) edge node[auto]{\(\scriptstyle{H_{E^1,n}}\)} (b-2-2)
			(b-1-2) edge node[auto]{\(\scriptstyle{H_r}\)} (b-2-2);
		\end{tikzpicture},
	\end{center}
	and the diagram commutes by a direct computation.  
	By abuse of notation, set \(\beta(-1)=0\) and \(\beta(r+1)=n\).  Then we define the family
	\[(t_i=\beta(i) - \beta(i-1))_{i=0}^{r+1}\] and let 
	\[\tau_i:\psh{\C}^n \to \psh{\C}^{t_i}\]
	be the map defined by sending 
	\[\mathbf{U}\mapsto (U_{\beta(i-1)+1},\dots,U_{\beta(i)}).\]
	Then this family of maps defines a map 
	\[\psh{\C}^n \to \prod_{i=1}^r\psh{\C}^{t_i}\]
	which is a permutation and therefore an isomorphism. Then for each \(0<i<r+1\) let
	\[P_i:\psh{\C}^{t_i} \to \psh{\C}\]
	be the map defined by the rule 
	\[(X_1,\dots,X_{t_i})\mapsto X_1\times \dots \times X_{t_i},\]
	and for \(i=0\) or \(i=r+1\), define 
	\[P_i:\psh{C}^{t_i} \to \ast\]
	to be the terminal functor.
	Then these \(P_i\) assemble to a map 
	\[(P_0,\dots,P_{r+1}): \prod_{i=0}^{r+1} \psh{\C}^{t_i} \to \psh{\C}^r,\]
	such that \((P_0,\dots,P_{r+1})\circ \tau = K_\beta\).
	Then we have a commutative diagram
	\begin{center}
		\begin{tikzpicture}
			\matrix (b) [matrix of math nodes, row sep=3em, column sep=4em]
			{
				\overcat{\psh{\Delta}}{\Delta^r} \times \psh{\C}^n & \overcat{\psh{\Delta}}{\Delta^r} \times \prod_{i=0}^{r+1} \psh{\C}^{t_i} & \overcat{\psh{\Delta}}{\Delta^r} \times \psh{\C}^r\\
				\overcat{\psh{\Delta}}{E^1\times \Delta^n} \times \psh{\C}^n & & \ssetlab \\
			};
			\path[->]
			(b-1-1) edge node[auto]{\(\scriptstyle{\id \times \tau}\)} (b-1-2) 
					edge node[auto]{\(\scriptstyle{((\alpha,\beta))\circ(-) \times \id}\)} (b-2-1)
			(b-2-1) edge node[auto]{\(\scriptstyle{H_{E^1,n}}\)} (b-2-3)
			(b-1-2) edge node[auto]{\(\scriptstyle{\id \times (P_i)_{i=0}^{r+1}}\)} (b-1-3) 
			(b-1-3)	edge node[auto]{\(\scriptstyle{H_r}\)} (b-2-3);
		\end{tikzpicture},
	\end{center}
	and therefore, composing the bottom horizontal and right vertical maps with \(\square\), we have another commutative diagram
	\begin{center}
		\begin{tikzpicture}
			\matrix (b) [matrix of math nodes, row sep=3em, column sep=4em]
			{
				\overcat{\psh{\Delta}}{\Delta^r} \times \psh{\C}^n & \overcat{\psh{\Delta}}{\Delta^r} \times \prod_{i=0}^{r+1} \psh{\C}^{t_i} & \overcat{\psh{\Delta}}{\Delta^r} \times \psh{\C}^r\\
				\overcat{\psh{\Delta}}{E^1\times \Delta^n} \times \psh{\C}^n & & \cellset \\
			};
			\path[->]
			(b-1-1) edge node[auto]{\(\scriptstyle{\id \times \tau}\)} (b-1-2) 
					edge node[auto]{\(\scriptstyle{((\alpha,\beta))\circ(-) \times \id}\)} (b-2-1)
			(b-2-1) edge node[auto]{\(\scriptstyle{\square_{E^1,n}}\)} (b-2-3)
			(b-1-2) edge node[auto]{\(\scriptstyle{\id \times (P_i)_{i=0}^{r+1}}\)} (b-1-3) 
			(b-1-3)	edge node[auto]{\(\scriptstyle{\square_r}\)} (b-2-3);
		\end{tikzpicture}.
	\end{center}
	We see that each of the arrows in this picture preserves connected colimits argument-by-argument, so applying the corner tensor functor, we obtain a commutative diagram
	\begin{center}
		\begin{tikzpicture}
			\matrix (b) [matrix of math nodes, row sep=3em, column sep=3.5em]
			{
				\overcat{\psh{\Delta}}{\Delta^r}^{[1]} \times (\psh{\C}^{[1]})^n & \overcat{\psh{\Delta}}{\Delta^r}^{[1]} \times \prod_{i=0}^{r+1} (\psh{\C}^{[1]})^{t_i} & \overcat{\psh{\Delta}}{\Delta^r}^{[1]} \times (\psh{\C}^{[1]})^r\\
				\overcat{\psh{\Delta}}{E^1\times \Delta^n}^{[1]} \times (\psh{\C}^{[1]})^{n} & & \cellset^{[1]} \\
			};
			\path[->]
			(b-1-1) edge node[auto]{\(\scriptstyle{\id \times \tau}\)} (b-1-2) 
					edge node[auto]{\(\scriptstyle{((\alpha,\beta))\circ(-)^\lrcorner \times \id}\)} (b-2-1)
			(b-2-1) edge node[auto]{\(\scriptstyle{\square^\lrcorner_{E^1,n}}\)} (b-2-3)
			(b-1-2) edge node[auto]{\(\scriptstyle{\id \times (P_i^\lrcorner)_{i=0}^{r+1}}\)} (b-1-3) 
			(b-1-3)	edge node[auto]{\(\scriptstyle{\square^\lrcorner_r}\)} (b-2-3);
		\end{tikzpicture}.
	\end{center}
\end{obs}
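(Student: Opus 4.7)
The plan is to establish the three asserted commutative diagrams in turn, mirroring the structure of \Cref{square2}, and then invoke corner-tensor functoriality to produce the final corner-tensored diagram automatically. First I would check that the topmost square commutes by unwinding both composites pointwise: given \(p \colon X \to \Delta^r\), a simplex \(x \colon \Delta^s \to X\), and a tuple \(\mathbf{U} \in \psh{\C}^n\), the down-then-right composite evaluates at \(x\) to \((\beta p x)^\ast \mathbf{U}\), obtained by pulling \(\mathbf{U}\) back first along \(\beta\) and then along \(p \circ x\). The right-then-down composite gives the same answer because the terminal labeling on the \(E^1\) factor contributes only terminal presheaves, so the product defining \(H_{E^1,n}((\alpha,\beta) \circ p, \mathbf{U})\) at \(x\) collapses to \((\beta p x)^\ast \mathbf{U}\), exactly in parallel with the calculation in \Cref{square2}.

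Next I would verify the middle diagram, namely the identity \((P_0,\dots,P_{r+1}) \circ \tau = K_\beta\). By construction \(\tau\) regroups the factors of \(\mathbf{U} = (U_1,\dots,U_n)\) into blocks indexed by the intervals \((\beta(i-1), \beta(i)]\); for \(0 < i < r+1\), the block is multiplied together by \(P_i\) to give the presheaf \(U_{\beta(i-1)+1} \times \cdots \times U_{\beta(i)}\) assigned to the \(i\)-th edge of \(\Delta^r\), while the blocks \(i=0\) and \(i=r+1\) are collapsed to the terminal object (they correspond to the edges of \(\Delta^n\) outside the range of \(\beta\), which do not appear as labels on \(\Delta^r\)). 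The resulting tuple agrees with \(\beta^\ast \mathbf{U} = K_\beta(\mathbf{U})\), so the middle diagram commutes. Post-composing both sides with \(\square\) immediately promotes this to the third commutative diagram.

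Finally, to obtain the fourth (corner-tensored) diagram, I would appeal to \Cref{cornertensorfunctoriality}, which asserts that the corner-tensor construction is a morphism of multicategories on \(\mathrm{Rex}_c\) and thus sends any commuting diagram of morphisms of \(\mathrm{Rex}_c\) to a commuting diagram. To apply this I would verify that every arrow in the third diagram preserves finite connected colimits in each argument: \(\square_r\) and \(\square_{E^1,n}\) do so by \Cref{conncolims}; \(\tau\) is an isomorphism; post-composition with \((\alpha,\beta)\) in the slice category is computed in the domain; and each \(P_i\) preserves colimits in each argument because \(\psh{\C}\) is a topos and hence Cartesian closed. Applying the corner-tensor functor termwise then delivers the claimed diagram.

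The main obstacle is essentially notational bookkeeping: one must handle the virtual outer indices \(i = 0\) and \(i = r+1\) together with the convention \(\beta(-1) = 0\), \(\beta(r+1) = n\) without error. Once one observes that \(P_0\) and \(P_{r+1}\) land in the terminal category, so that \(\prod_{i=0}^{r+1} \psh{\C}^{t_i}\) has effective target \(\psh{\C}^r\) rather than \(\psh{\C}^{r+2}\), the remainder is a direct transcription of the reasoning from \Cref{square2}, with no genuinely new difficulty arising from the presence of \(E^1\) in place of a second simplex.
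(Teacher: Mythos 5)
Your proposal is correct and follows essentially the same route as the paper: a pointwise check of the first square (where the terminal labeling on the \(E^1\) factor collapses the product to \((\beta px)^\ast\mathbf{U}\)), the identity \((P_0,\dots,P_{r+1})\circ\tau = K_\beta\) with the outer blocks \(i=0,r+1\) absorbed by the terminal functors, and then \Cref{cornertensorfunctoriality} applied after verifying connected-colimit preservation exactly as in \Cref{square2}. The only blemish is that you have swapped the names of the two composites in the first square (the \emph{right-then-down} composite is the one pulling back along \(\beta\) and then \(px\)), but both are computed correctly so nothing is affected.
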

\begin{lemma}
	Let \((\alpha,\beta): \Delta^r \to E^1\times \Delta^n\) be a nondegenerate section with \(r\geq 2\), let \[\mathbf{f}=\{f_i:\partial c_i \hookrightarrow c_i\}_{i=1^n}\] be a family of boundary inclusions.  Then for any inner horn inclusion \(\lambda^r_k: \Lambda^r_k\hookrightarrow \Delta^r\), the map
	\[
		\square_{E^1,n}^\lrcorner(\lambda^r_k, \mathbf{f})
	\]
	is horizontal inner anodyne.
\end{lemma}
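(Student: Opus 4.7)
My plan is to transport the argument of Lemma~\ref{anodynelemma} essentially verbatim, using the factorization diagram of Observation~\ref{joyalsquare2} in place of Observation~\ref{square2}. The target is to realize $\square_{E^1,n}^\lrcorner(\lambda^r_k, \mathbf{f})$ as a map lying in the saturated closure of the generating set of horizontal inner anodynes after one application of Lemma~\ref{cornertensorcell}.

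Concretely, applying the lower commutative square displayed at the end of Observation~\ref{joyalsquare2} to the data $(\lambda^r_k, \mathbf{f})$ yields
\[
\square_{E^1,n}^\lrcorner(\lambda^r_k, \mathbf{f}) \;\cong\; \square_r^\lrcorner\bigl(\lambda^r_k,\; (P_0^\lrcorner,\ldots,P_{r+1}^\lrcorner) \circ \tau(\mathbf{f})\bigr).
\]
Unwinding the definitions of $\tau$ and $P_j$, the $j$-th output argument for $1 \leq j \leq r$ is
\[
P_j^\lrcorner \circ \tau_j(\mathbf{f}) \;=\; f_{\beta(j-1)+1} \times^\lrcorner \cdots \times^\lrcorner f_{\beta(j)},
\]
a finite corner product of boundary inclusions in $\psh{\C}$, which by Lemma~\ref{cornertensorcell} already belongs to $\operatorname{Cell}(\mathscr{B})$, where $\mathscr{B}$ denotes the set of boundary inclusions of $\C$. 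The boundary indices $j = 0$ and $j = r+1$ are absorbed: the functors $P_0$ and $P_{r+1}$ take values in the terminal category, so in the product $\ast \times \psh{\C}^r \times \ast \cong \psh{\C}^r$ they contribute no additional input slot to $\square_r^\lrcorner$, and precisely the $r$ remaining factors populate the $r$ input slots of $\square_r^\lrcorner$.

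A second application of Lemma~\ref{cornertensorcell}, this time argument by argument, places $\square_{E^1,n}^\lrcorner(\lambda^r_k, \mathbf{f})$ inside $\operatorname{Cell}\bigl(\square_r^\lrcorner(\lambda^r_k, \mathscr{B}, \ldots, \mathscr{B})\bigr)$. Since $\square_r^\lrcorner(\lambda^r_k, \mathscr{B}, \ldots, \mathscr{B}) \subseteq \mathscr{J}$ by the definition of the generating set of horizontal inner anodynes in \Cref{modelstrucdefn}, this gives the desired conclusion.

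The main obstacle I anticipate is bookkeeping at the degenerate indices $j = 0$ and $j = r+1$: unlike the situation of Lemma~\ref{anodynelemma}, the map $\beta$ is not assumed here to preserve initial and terminal vertices, so the groups of factors at these endpoints can genuinely be empty. The device in Observation~\ref{joyalsquare2} of routing these slots through the terminal category, together with Lemma~\ref{corneridentities} which trivializes corner tensors against identity maps, is exactly what handles this subtlety. Once this is understood, the remainder of the proof is a direct transposition of the reasoning used for Lemma~\ref{anodynelemma}.
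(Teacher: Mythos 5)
Your proposal is correct and is essentially identical to the paper's proof, which simply notes that the argument of \Cref{anodynelemma} transports verbatim with \Cref{joyalsquare2} substituted for \Cref{square2}; your accounting for the endpoint slots \(j=0,\,r+1\) via the terminal functors \(P_0, P_{r+1}\) is the intended reading of that observation. The only quibble is the citation for the claim that \(P_j^\lrcorner\circ\tau_j(\mathbf{f})\) lies in \(\operatorname{Cell}(\mathscr{B})\): this does not follow from \Cref{cornertensorcell}, but rather from the fact that corner products of monomorphisms are monomorphisms and that \(\operatorname{Cell}(\mathscr{B})\) is the class of all monomorphisms of \(\psh{\C}\) because \(\C\) is regular skeletal (the paper's own proof of \Cref{anodynelemma} makes the same assertion without justification).
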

\begin{proof}
	The proof is practically identical to that of \Cref{anodynelemma} using \Cref{joyalsquare2} in place of \Cref{square2}.  
\end{proof}
\begin{thm}\label{joyalisothm}
	Set \(e:\Delta^0\to E^1\). Then for any boundary inclusion 
	\[\square_n^\lrcorner(\delta^n,\delta^{c_1},\dots,\delta^{c_n}),\]
	with \(n>0\), the map 
	\[e \times^\lrcorner \square_n^\lrcorner(\delta^n,\delta^{c_1},\dots,\delta^{c_n})\]
	is a horizontal inner anodyne.
\end{thm}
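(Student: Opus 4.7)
The plan is to reduce the theorem to the preceding lemma by applying Stevenson's decomposition of $e \times^\lrcorner \delta^n$ inside the slice $\overcat{\psh{\Delta}}{(E^1 \times \Delta^n)}$. First, I would use the identity from Observation~\ref{joyalsquare1} (applied with $S = E^1$, $h = e$, and $f_0 = \delta^n$) to rewrite
\[
e \times^\lrcorner \square^\lrcorner_n(\delta^n, \delta^{c_1}, \dots, \delta^{c_n}) \;\cong\; \square^\lrcorner_{E^1, n}\bigl(e \times^\lrcorner \delta^n,\, \delta^{c_1}, \dots, \delta^{c_n}\bigr),
\]
which reduces the problem to showing the right-hand side is horizontal inner anodyne.

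By Stevenson's Lemma~\ref{dannythm}, $e \times^\lrcorner \delta^n$ is inner anodyne in $\overcat{\psh{\Delta}}{(E^1 \times \Delta^n)}$, and by Observation~\ref{retractjoyal}, it is in fact a retract in this slice of a relative cell complex $\iota$ built by pushouts of inner horn inclusions $\lambda^{r_i}_{k_i}: \Lambda^{r_i}_{k_i} \hookrightarrow \Delta^{r_i}$ equipped with attaching maps $(\alpha_i, \beta_i): \Delta^{r_i} \to E^1 \times \Delta^n$. Because $\square^\lrcorner_{E^1, n}(\cdot, \delta^{c_1}, \dots, \delta^{c_n})$ is a functor on $\overcat{\psh{\Delta}}{(E^1 \times \Delta^n)}^{[1]}$, it preserves retracts, so it suffices to prove that $\square^\lrcorner_{E^1, n}(\iota, \delta^{c_1}, \dots, \delta^{c_n})$ is horizontal inner anodyne.

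For this final reduction, I would invoke \Cref{cornertensorcell}, which places the map $\square^\lrcorner_{E^1, n}(\iota, \delta^{c_1}, \dots, \delta^{c_n})$ in $\operatorname{Cell}$ of the collection $\bigl\{\square^\lrcorner_{E^1, n}(\lambda^{r_i}_{k_i}, \delta^{c_1}, \dots, \delta^{c_n})\bigr\}$; each of these is horizontal inner anodyne by the preceding lemma. Since the class of horizontal inner anodynes is closed under $\operatorname{Cell}$ and under retracts, the theorem follows.

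The main obstacle I anticipate is ensuring that the cells in Stevenson's decomposition of $\iota$ can be taken to correspond to nondegenerate sections $(\alpha_i, \beta_i): \Delta^{r_i} \to E^1 \times \Delta^n$ with $r_i \geq 2$, since that is the hypothesis of the preceding lemma. If a particular $(\alpha_i, \beta_i)$ turns out to be degenerate, or if $r_i < 2$, then by \Cref{corneridentities} the corresponding corner intertwiner should either collapse to an identity or factor through a strictly lower-dimensional corner intertwiner that is already known to be anodyne; making this case analysis explicit and compatible with the cell structure on $\iota$ is the only delicate bookkeeping step in the argument.
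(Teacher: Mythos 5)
Your proposal is correct and follows essentially the same route as the paper: rewrite the map via Observation~\ref{joyalsquare1} as \(\square^\lrcorner_{E^1,n}(e\times^\lrcorner\delta^n,\delta^{c_1},\dots,\delta^{c_n})\), use Stevenson's lemma and Observation~\ref{retractjoyal} to exhibit \(e\times^\lrcorner\delta^n\) as a retract of a relative cell complex \(\iota\) of inner horn inclusions, apply \Cref{cornertensorcell} to reduce to the cells, and conclude with the preceding lemma plus closure of horizontal inner anodynes under retracts. The bookkeeping worry you flag at the end (nondegeneracy of the attaching sections \((\alpha_i,\beta_i)\)) is legitimate but is also left implicit in the paper's own proof, so it does not distinguish your argument from theirs.
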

\begin{proof}
	By \ref{retractjoyal} we see that \(e\times^\lrcorner \delta^n\) can be factored as \(\varepsilon\circ \iota\) such that it is a retract of \(\iota\), which is a relative cell complex of inner horn inclusions.  By \Cref{cornertensorcell}, it follows that 
	\[
		\square^\lrcorner_{E^1,n}(\iota, \delta^{c_1},\dots,\delta^{c_n})
	\]
	is transfinite composite of pushouts of inner horn inclusions
	\[
		\square^\lrcorner_{E^1,n}(h_i, \delta^{c_1},\dots,\delta^{c_n}),
	\]
	where the \(h_i:\Lambda^{r_i}_{\ell_i}\to \Delta^{r_i}\) are inner horn inclusions in \(\overcat{\psh{\Delta}}{E^1 \times \Delta^n}\).  By the previous lemma, each of these maps is horizontal inner anodyne, and it follows therefore that the map
	\[
		\square^\lrcorner_{E^1,n}(\iota, \delta^{c_1},\dots,\delta^{c_n})
	\]
	is as well.  
	But the map
	\[
		\square^\lrcorner_{E^1,n}(e\times^\lrcorner \delta^n, \delta^{c_1},\dots,\delta^{c_n})
	\]
	is a retract of 
	\[
		\square^\lrcorner_{E^1,n}(\iota, \delta^{c_1},\dots,\delta^{c_n}),
	\]
	which we have just shown to be horizontal inner anodyne. Then from \ref{joyalsquare1}, we see that 
	\[
		\square^\lrcorner_{E^1,n}(e\times^\lrcorner \delta^n, \delta^{c_1},\dots,\delta^{c_n}) \cong e \times^\lrcorner \square^\lrcorner_n(\delta^n,\delta^{c_1},\dots,\delta^{c_n}),
	\]
	which proves the theorem.
\end{proof}
\begin{cor}
	The formal \(\C\)-quasicategories are the fibrant objects of the horizontal Joyal model structure.
\end{cor}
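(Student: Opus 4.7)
The plan is to combine \Cref{joyalisothm} with Cisinski's recognition principle for fibrant objects. In the framework recalled in the Appendix, the horizontal Joyal model structure, specified by the cellular model \(\mathscr{M}\), the separating interval \(E^1\), and the generating anodynes \(\mathscr{J}\), characterizes its fibrant objects as those admitting the right lifting property against the class of all anodyne extensions. This class is the saturation, under corner products with monomorphisms, of \(\mathscr{J}\) together with the ``cylinder generators'' \(e_{\epsilon}\times^\lrcorner m\) for the two endpoint inclusions \(e_{\epsilon}:\Delta^0\hookrightarrow E^1\) (\(\epsilon=0,1\)) and \(m\in\mathscr{M}\). Since \Cref{anodynethm} shows that \(\operatorname{Cell}(\mathscr{J})\) already absorbs corner products with monomorphisms, it suffices to verify that each cylinder generator belongs to \(\operatorname{Cell}(\mathscr{J})\); then RLP against \(\mathscr{J}\) (being a formal \(\C\)-quasicategory) will automatically imply RLP against the full class of anodynes (being fibrant).

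For the cylinder generator \(e\times^\lrcorner \square_n^\lrcorner(\delta^n,\delta^{c_1},\dots,\delta^{c_n})\) with \(n>0\), this is exactly the statement of \Cref{joyalisothm}. Two residual cases remain: the degenerate case \(n=0\) and the second endpoint. For \(n=0\), the generating cofibration is \(\varnothing\hookrightarrow \Delta^0\) and a direct unwinding collapses \(e\times^\lrcorner(\varnothing\to\Delta^0)\) to \(e:\Delta^0\hookrightarrow E^1\) itself; but this map is split by the unique retraction \(E^1\to\Delta^0\), so every object trivially has RLP against it by extending the given vertex along the retraction, and this case is automatic regardless of fibrancy. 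For the second endpoint \(e':\Delta^0\hookrightarrow E^1\), the freestanding isomorphism \(G_2\) admits the obvious object-swapping automorphism, whose nerve induces an automorphism of \(E^1\) exchanging \(e\) with \(e'\); this makes the corresponding cylinder generators isomorphic in the arrow category, reducing the claim for \(e'\) to the case already handled.

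I expect the main subtlety to be not any further calculation but the bookkeeping step of matching up precisely which maps form Cisinski's minimal generating family for the anodyne class of the triple \((\mathscr{M},E^1,\mathscr{J})\). This matching is what makes \Cref{joyalisothm} the correct theorem to have proved in the preceding subsection; once it is in place, the corollary is essentially immediate from \Cref{anodynethm} and \Cref{joyalisothm} together with the two elementary observations above.
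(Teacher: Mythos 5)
Your proposal is correct and follows essentially the same route as the paper: invoke Cisinski's recognition of fibrant objects, use the anodyne theorem (\Cref{anodynethm}) to collapse the iterated corner-product generators into \(\operatorname{Cell}(\mathscr{J})\), apply \Cref{joyalisothm} for the cylinder generators with \(n>0\), and dispose of the \(n=0\) case via the retraction \(E^1\to\Delta^0\). Your explicit treatment of the second endpoint via the swap automorphism of \(E^1\) is a small point the paper leaves implicit, but the argument is the same.
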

\begin{proof}
	By Cisinski's theorem (see \Cref{cisinskimaintheorem}), an object is fibrant in a Cisinski model structure if it has the right lifting property with respect to all of the anodyne maps generated by pushout products of the generating cofibrations with the inclusion of either endpoint of interval object as well as pushout-products of the generating anodynes with pushout-product powers of the boundary inclusion into the interval object.  By \Cref{anodynethm}, we see that an object has the right lifting property with respect to the second set of maps if and only if it has the right lifting property with respect to the horizontal inner anodynes, since such maps all belong to the saturated class generated only by the horizontal inner horn inclusions, which are the generating anodynes in this situation. 
	
	Since the formal \(\C\)-quasicategories by definition have the right lifting property with respect to all inner anodynes, which are closed under pushout-products with arbitrary monomorphisms, and since the maps \(e\times^\lrcorner  \square_n^\lrcorner(\delta^n,\delta^{c_1},\dots,\delta^{c_n})\) are inner anodyne for all \(n>0\),  it suffices to show that any formal \(\C\)-quasicategory has the right lifting property with respect to the single map \(e\times^\lrcorner \delta^0\), but this map is isomorphic to the map \(e:\Delta^0 \hookrightarrow E^1\), and such a lift always exists by choosing the lift through the retraction \(E^1\to \Delta^0\).
\end{proof}
\begin{cor}
	The fibrations between fibrant objects in the horizontal Joyal model structure are horizontal inner fibrations having the right lifting property with respect to the map \(e:\Delta^0\hookrightarrow E^1\).
\end{cor}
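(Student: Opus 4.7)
The plan is to follow the template of the preceding corollary, but adapted from statements about objects to statements about maps. I would begin by recalling Cisinski's characterization (\Cref{cisinskimaintheorem}) of the anodyne extensions in this model structure: they are generated, under saturation, by two families --- (i) pushout-products of generating cofibrations with the endpoint inclusion $e\colon \Delta^0 \hookrightarrow E^1$, and (ii) pushout-products of the generating anodynes $\mathscr{J}$ with iterated pushout-product powers of the boundary inclusion $\partial E^1 \hookrightarrow E^1$. A fibration is then the same thing as a map with the right lifting property against this entire saturated class, since in a Cisinski model structure the anodyne extensions coincide with the trivial cofibrations.

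The forward direction is immediate: a fibration has the right lifting property against every anodyne map, so in particular against the horizontal inner anodynes (making it a horizontal inner fibration) and against $e$ itself, which appears in family (i) as the pushout-product $e\times^\lrcorner (\varnothing \hookrightarrow \Delta^0)$.

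For the converse, I would take a horizontal inner fibration $f$ between formal $\C$-quasicategories with the right lifting property against $e$, and verify lifting against each of the two families in turn. Family (ii) collapses: by the Anodyne Theorem (\Cref{anodynethm}), horizontal inner anodynes are closed under pushout-products with monomorphisms, so the entire family lies in the saturation of $\mathscr{J}$, and $f$ lifts against it by being a horizontal inner fibration. Family (i) splits into two cases: for $n > 0$, \Cref{joyalisothm} exhibits $e\times^\lrcorner \square_n^\lrcorner(\delta^n,\delta^{c_1},\dots,\delta^{c_n})$ as horizontal inner anodyne, again handled by $f$ being a horizontal inner fibration; the remaining case $n = 0$ reduces to $e$ itself, handled by hypothesis.

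The heavy combinatorial work has already been carried out in the Anodyne Theorem and in \Cref{joyalisothm}, so no substantive new obstacle remains. The only conceptual point worth flagging is the contrast with the preceding corollary: there the lift against $e$ came for free via the retraction $E^1 \to \Delta^0$ (valid for any map into a fibrant object whose target admits such a retraction, in particular for objects viewed as maps to the terminal), while for a general map $f\colon X \to Y$ between fibrant objects no canonical retraction is available, and so the right lifting property against $e$ must be imposed as an explicit hypothesis rather than deduced.
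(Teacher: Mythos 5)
Your proposal is correct and follows essentially the same route as the paper: both directions reduce, via Cisinski's characterization of fibrations with fibrant target, to checking the two generating families, with family (ii) absorbed by the Anodyne Theorem, the $n>0$ part of family (i) absorbed by \Cref{joyalisothm}, and only the $n=0$ case $e$ itself surviving as a genuine hypothesis. The paper's proof is merely a terser rendering of the same argument.
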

\begin{proof} 
	A fibration between fibrant objects must have the right lifting property with respect to all horizontal inner anodynes and all maps of the form \(e\times^\lrcorner  \square_n^\lrcorner(\delta^n,\delta^{c_1},\dots,\delta^{c_n})\).  Since every inner fibration has the right lifting property with respect to all of those maps for \(n>0\), it follows that an inner fibration between fibrant objects need only have the right lifting property with respect to the case where \(n=0\), which is exactly the map \(e\).
\end{proof}

\section{The Coherent Nerve, Horizontal case}
In \cite{htt}, Lurie reintroduces an important adjunction \[\psh{\Delta}  \underset{\mathfrak{N}_\Delta}{\overset{\mathfrak{C}_\Delta}{\rightleftarrows}} \Cat_{\psh{\Delta}},\] coming originally from work of Cordier and Porter, where the left adjoint is called the \emph{coherent realization} and the right adjoint is called the \emph{coherent nerve}.  One of the significant early theorems in \cite{htt} demonstrates that this adjunction is in fact a Quillen equivalence between the Joyal model structure on the one hand and the Bergner model structure on the other.

We find that it is useful to instead consider this adjunction as one of the form \[\psh{\Theta[\ast]}  \underset{\mathfrak{N}_\Delta}{\overset{\mathfrak{C}_\Delta}{\rightleftarrows}} \Cat_{\Psh_\Delta(\ast)},\] where we have obvious isomorphisms \(\Theta[\ast]\cong \Delta\) and \(\Psh_\Delta(\ast)\cong \psh{\Delta}\).  This is suggestive of a generalization to a new case where we replace \(\ast\) with a small regular Cartesian Reedy category \(\C\). We will develop this adjunction throughout the current chapter, and we will demonstrate that an analogous Quillen equivalence indeed holds.
\subsection{The coherent realization for \(\Theta[\C]\)}
The goal of this section is to show that for any small regular Cartesian Reedy category \(\C\), we can construct a new adjunction
\[\cellset \underset{\mathfrak{N}}{\overset{\mathfrak{C}}{\rightleftarrows}} \Cat_{{\spsh}_{\operatorname{inj}}}\]
generalizing the coherent nerve and realization.  We will also give a useful computation of \(\mathfrak{C}\) in some special cases.

\begin{defn} 
	The category of \(\C\)-precategories is the full subcategory of \(\psh{\Delta \times \C}\) spanned by the \emph{precategory objects}, namely those simplicial presheaves \(F\) on \(\C\) such that \(F_0\) is a constant presheaf on \(\C\).  We denote this category by \(\mathbf{PCat}(\C)\). 
\end{defn}

\begin{defn}
	The functor \(k: \Delta \times \C\to \Theta[\C]\) defined by the rule 
	\[
		([n],c) \mapsto [n](c,\dots,c).
	\]
	Induces a colimit-preserving functor \(k^\ast: \cellset \to \psh{\Delta\times\C}\) that lands in \(\mathbf{PCat}(\C)\) called the \emph{associated precategory}.
\end{defn}

\begin{defn}\label{pointwisedefn}
	The \emph{pointwise realization} functor \(\mathfrak{C}_{\Delta,\bullet}: \psh{\Delta\times\C} \to \Cat_{\psh{\Delta}}^{\C^\op}\) defined by the rule 
	\[
		\mathfrak{C}_{\Delta,\bullet}(X)_c \defeq \mathfrak{C}_{\Delta}(X_c)
	\]
		restricts to colimit preserving functor \(\mathfrak{C}_{\Delta,\bullet}: \mathbf{PCat}(\C) \to \Cat_{\spsh}\) in the obvious way, which by abuse of notation, we also refer to as the \emph{pointwise realization}.
\end{defn}

\begin{defn}
	The \emph{coherent realization} \(\mathfrak{C}_{\Theta[\C]}:\cellset \to \Cat_{\spsh},\) also denoted by \(\mathfrak{C}\) by abuse of notation when \(\C\) is fixed is defined as the composite: 
	\[
		\cellset \xrightarrow{k^\ast} \mathbf{PCat}(\C) \xrightarrow{\mathfrak{C}_{\Delta,\bullet}} \Cat_{\spsh}.
	\] 
\end{defn}
\begin{note}
	It is immediate from the cocontinuity of each functor in this composite that the functor \(\mathfrak{C}_{\Theta[\C]}\) is cocontinuous and therefore determined on representables. In what follows, we will give an explicit computation of its values on representables and in fact more generally on cellular sets of the form \(V[n](A_1,\dots,A_n)\).
\end{note}
\begin{rem}
	It is easy to see that if we substitute the terminal category for \(\C\), this specializes precisely to the usual coherent realization \(\mathfrak{C}_{\Delta} = \mathfrak{C}_{\Theta[\ast]}\).  
\end{rem}

We will extensively abuse notation in what follows by identifying a simplicial set with its associated constant simplicial presheaf on \(\C\) and identifying a presheaf on \(\C\) with its associated discrete simplicial presheaf.
\begin{defn}
	We define a construction on objects \[Q:\Delta\int\psh{\C}\to \Cat_{\spsh}.\] Suppose \([n](X_1,\dots, X_n)\) is any object of \(\Delta\int\psh{\C}\). Then we define \(Q([n](X_1,\dots,X_n))\) as follows:
	\begin{itemize}
		\item The objects are the vertices \(\{0,\dots,n\}\)
		\item The Hom-object
		      \[\Hom(i,j)=
			      \begin{cases}
				      \varnothing                                                & \text{ for } i>j \\
				      c\Delta^0                                                & \text{ for } i=j \\
				      X_{i+1} \times \Delta^1 \times X_{i+2}\times \dots \times \Delta^1 \times X_j & \text{ for } i<j
			      \end{cases}
		      \]
		\item The associative composition law, \(\Hom(i,j)\times \Hom(j,k)\to \Hom(i,k)\) which is the inclusion on the bottom face with respect to \(j\):
		      \begin{align*}
			      X_{i+1}\times \Delta^1\times\dots\times\Delta^1\times X_j \times & \{1\}\times X_{j+1} \times \Delta^1\times\dots\times\Delta^1\times X_k    \\
			                                                                       & \downarrow                                                                \\
			      X_{i+1}\times \Delta^1\times\dots\times\Delta^1\times X_j\times  & \Delta^1\times X_{j+1} \times \Delta^1\times\dots\times\Delta^1\times X_k
		      \end{align*}
	\end{itemize}
\end{defn}
\begin{prop} The construction \(Q\) is functorial.
\end{prop}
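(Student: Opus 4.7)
The plan is to extend $Q$ to morphisms and verify the functor axioms. A morphism in $\Delta\int\psh{\C}$ from $[n](X_1,\dots,X_n)$ to $[m](Y_1,\dots,Y_m)$ is a pair $(\alpha,\mathbf{f})$ consisting of a simplicial map $\alpha\colon[n]\to[m]$ and, for each $k\in\{1,\dots,n\}$, a map $f_k\colon X_k\to Y_{\alpha(k-1)+1}\times\cdots\times Y_{\alpha(k)}$ in $\psh{\C}$, with the usual convention that the empty product is the terminal object whenever $\alpha(k-1)=\alpha(k)$. I define $Q(\alpha,\mathbf{f})$ to act on objects by $i\mapsto\alpha(i)$ and on hom-objects by an explicit formula built from $\alpha$ and the $f_k$.

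For the hom-object construction at $i\leq j$, I describe $Q(\alpha,\mathbf{f})_{i,j}\colon\Hom_Q(i,j)\to\Hom_Q(\alpha(i),\alpha(j))$ factor by factor. For each $k\in\{i+1,\dots,j\}$, the source factor $X_k$ is sent into the corresponding target block $Y_{\alpha(k-1)+1}\times\Delta^1\times\cdots\times Y_{\alpha(k)}$ by applying $f_k$ and then inserting a constant vertex of $\Delta^1$ in every interleaved filler slot. For each source $\Delta^1$-factor at position $k\in\{i+1,\dots,j-1\}$, the factor is sent to the target $\Delta^1$-factor at position $\alpha(k)$ as the identity when both $\alpha(k-1)<\alpha(k)$ and $\alpha(k)<\alpha(k+1)$; when $\alpha$ is degenerate around $k$, the factor is collapsed to a constant, consistently with the fact that the corresponding $X$-factor already maps to the terminal object.

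To verify the functor axioms, identity preservation is immediate from this description. For composition, I would exploit the Grothendieck fibration factorization: any $(\alpha,\mathbf{f})$ factors as $(\alpha,\id)\circ(\id,\mathbf{f})$ through the intermediate object $[n](\alpha^*Y_1,\dots,\alpha^*Y_n)$ with $\alpha^*Y_k=Y_{\alpha(k-1)+1}\times\cdots\times Y_{\alpha(k)}$. Functoriality then reduces to two separate checks: the case $\alpha=\id$, where the block structure is unchanged and the action is literally the product of the $f_k$; and the case $\mathbf{f}=\id$, where the action depends only on $\alpha$. The second case can be compared to Lurie's classical functoriality of $\mathfrak{C}_\Delta$ after observing that $Q([n](\ast,\dots,\ast))$ agrees with $\mathfrak{C}_\Delta[\Delta^n]$.

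The main obstacle is the bookkeeping of how the source $\Delta^1$-factors interact with the target blocks when $\alpha$ has degeneracies, and in particular verifying that the constant fillers are chosen to respect the bottom-face convention underlying the composition law of $Q$. After further factoring the simplicial part of the morphism into a sequence of elementary faces and degeneracies, this reduces to a finite check that mirrors the standard identities used to show $\mathfrak{C}_\Delta$ is a functor.
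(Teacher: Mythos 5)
Your overall strategy is the same as the paper's: use the fibration structure of \(\Delta\int\psh{\C}\) to factor \((\alpha,\mathbf{f})\) into a vertical part (\(\alpha=\id\), where the action is just the product of the \(f_k\) with identities on the \(\Delta^1\)-factors) and a Cartesian part determined by \(\alpha\), and then further factor \(\alpha\) into elementary cofaces and codegeneracies, comparing with the classical functoriality of \(\mathfrak{C}_\Delta\) via \(Q([n](\ast,\dots,\ast))\cong\mathfrak{C}_\Delta(\Delta^n)\). Your treatment of the cofaces (apply \(f_k\) blockwise and insert a constant vertex of \(\Delta^1\) in each new filler slot) is also correct, provided the inserted constant is the vertex opposite to the one used in the composition law.

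However, your formula for degeneracies is wrong, and this is precisely the step that carries the content. When \(\alpha(k)=\alpha(k+1)=v\), two source \(\Delta^1\)-factors sit over the single target \(\Delta^1\)-factor at position \(v\), and the correct map merges them via the binary operation \(\min\colon\Delta^1\times\Delta^1\to\Delta^1\) (equivalently \(\max\), depending on orientation), exactly as in \(\mathfrak{C}_\Delta\), where under the identification of \(\mathfrak{C}_\Delta(\Delta^n)(i,j)\) with the nerve of the poset of subsets of \([i,j]\) containing the endpoints, a degeneracy acts by direct image of subsets. Collapsing the factor to a constant cannot reproduce this: the composition law in \(Q\) marks a break at vertex \(j\) by the coordinate \(\{1\}\), so a source morphism that breaks at \(k\) but not at \(k+1\) (or vice versa) must map to a target morphism that breaks at \(v\), while one that breaks at neither must map to one that does not break at \(v\); the target coordinate at \(v\) therefore genuinely depends on both source coordinates, and neither a constant nor a projection onto one of the two factors is compatible with composition (nor with the simplicial identities \(\sigma^i\delta^i=\id\)). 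Your proposed reduction to the classical case would supply the correct formula, but as stated your explicit description contradicts it, so the "finite check" you defer would fail rather than close the argument. The fix is simply to replace "collapsed to a constant" by the \(\min\)-merge of all source \(\Delta^1\)-coordinates lying over a given target position, which is exactly what the paper's proof does for codimension-one codegeneracies.
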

\begin{proof}
	Recall that a map \[[n](X_1,\dots, X_n) \to [m](Y_1,\dots Y_m)\] in \(\Delta \int \psh{\C}\) is given by a pair \((\gamma,	\mathbf{f})\), where \(\gamma:[n]\to [m]\) is a map of simplices together with a family of maps \[\mathbf{f}=\left(f_i: 	X_i \to \prod_{j=\gamma(i-1)}^{\gamma(i)}Y_j\right)_{i=1}^n.\]
	If for \(0<i\leq n\), we have \(\gamma(i-1)=\gamma(i)\), we can see easily that \(\gamma\) factors through the 	codegeneracy map \[[n](X_1,\dots,X_n)\to [n-1](X_1,\dots,\psh{X_i},\dots,X_n).\]  Applying this factorization repeatedly, 	we factor \((\gamma,\mathbf{f})\) as a codegeneracy followed by a map \((\gamma^\prime,f^\prime)\) such that \(\gamma^\prime\) is the inclusion of a coface \([n^\prime]\hookrightarrow [m]\).

	Since \(\Delta\int \psh{\C}\) is fibred over \(\Delta\), we may take the Cartesian lift of \(\gamma^\prime\), which is 	the map \[\overline{\gamma^\prime}=(\gamma^\prime,\mathbf{id}):[n']\left(\prod_{j=\gamma^\prime(0)}^{\gamma^\prime(1)}Y_j,	 \dots, \prod_{j=\gamma^\prime(n^\prime-1)}^{\gamma^\prime(n^\prime)} Y_j \right)\hookrightarrow [m](Y_1,\dots,Y_m).\]  	By Cartesianness, we have a unique factorization of \((\gamma^\prime,\mathbf{f^\prime})\) by this map, yielding a map \[	(\id,\overline{f^\prime}):[n'](X^\prime_1,\dots,X^\prime_{n^\prime})\to [n']\left(\prod_{j=\gamma^\prime(0)}^	{\gamma^\prime(1)}Y_j, \dots, \prod_{j=\gamma^\prime(n^\prime-1)}^{\gamma^\prime(n^\prime)} Y_j \right).\]

	Then to prove the proposition, we need to show functoriality in three cases:
	\begin{itemize}
		\item If the map \((\gamma,\mathbf{f})\) is a codegeneracy of codimension \(1\), suppose \(\gamma=\sigma^i:[n+1]\to 	[n]\) for \(0\leq i\leq n\).  Then \(Q((\sigma^i,\mathbf{id}))_{ab}:\Hom(a,b)\to \Hom(\sigma^i(a),\sigma^i(b))\) is 	defined on the homs as follows:
		      \[Q(\sigma^i)_{ab}=
			      \begin{cases}
				      \id\times \operatorname{min}\circ \tau_{X_{i}} \times \id & \text{ if } a <  i \leq b \\
				      \id                                                       & \text{ otherwise }
			      \end{cases}\]
		      where \(\operatorname{min}:\Delta^1\times \Delta^1\to \Delta^1\) is induced by the map of posets sending \((x,y)	\mapsto \operatorname{min}(x,y)\) and \(\tau_{X_{i}}\to \ast\) is the terminal map.  Specifically, in the case 	where \(a< i \leq b\), the map is given by the composite:
		      \begin{align*}
			      X_{a+1}\times \Delta^1\times\dots\times\Delta^1\times & X_{i} \times \Delta^1 \times 	\dots\times\Delta^1\times X_{b} \\
			                                                            & \downarrow                                          	         \\
			      X_{a+1}\times \Delta^1\times\dots\times\Delta^1\times & \ast \times \Delta^1 \times 	\dots\times\Delta^1\times X_{b}  \\
			                                                            & \parallel                                           	         \\
			      X_{a+1}\times \Delta^1\times\dots\times\Delta^1       & \times \Delta^1 \times \dots\times\Delta^1\times X_	{b}       \\
			                                                            & \downarrow                                          	         \\
			      X_{a+1}\times \Delta^1\times\dots\times               & \Delta^1 \times \dots\times\Delta^1\times X_{b}     	         \\
		      \end{align*}
		      If \(i=0\) or \(i=n\), we consider \(\Delta^0=\ast\) to be \(\{0\}\).
		\item If the map \((\gamma,\mathbf{f})\) is a pure coface of codimension \(1\), we have two subcases: If it is an 	outer coface, the map is just the obvious inclusion.  If it is an inner coface, it has a term that looks like \(X_{i}	\times X_{i+1}\), and this is included in all of the \(\Hom\) objects as \(X_{i}\times \{0\} \times X_{i+1}\)
		\item If the map \((\gamma,\mathbf{f})\) is such that \(\gamma=\id\), since each of the \(\Hom\) objects is given as 	a product of the \(X_i\) with \(\Delta^1\) of the same length, just map them by \(f_a\times\Delta^1\times \dots 	\times\Delta^1 \times f_{b-1}\) using the functoriality of the Cartesian product.
	\end{itemize}
	It is an easy exercise to see that this assignment is functorial and completely analogous to the un-enriched case.
\end{proof}

\begin{note}
	We specify the following abuse of notation: Given an ordered family of sets \(S_1,\dots, S_n\), we denote by
	\[
		[n](S_1,\dots,S_n)
	\]
	the nerve of the free category on the directed graph specified as follows:
	\begin{itemize}
		\item The set of vertices is \(\{0,\dots,n\}\), and
		\item The set of arrows from the vertex \(i-1\) to \(i\) is \(S_i\).
	\end{itemize}
\end{note}

\begin{prop}\label{pointwise}
	The functor \(Q:\Delta\int\psh{\C}\to \Cat_{\spsh}\) factors as the composite
	\[
		\Delta\int\psh{\C} \xrightarrow{V} \cellset \xrightarrow{\mathfrak{C}} \Cat_{\spsh}
	\]
\end{prop}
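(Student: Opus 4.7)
The plan is to establish the claimed factorization by computing both sides on an arbitrary object \([n](X_1,\dots,X_n)\) of \(\Delta\int\psh{\C}\) and matching them pointwise. Both \(Q([n](X_1,\dots,X_n))\) and \(\mathfrak{C}(V[n](X_1,\dots,X_n))\) are \(\spsh\)-enriched categories whose object set is manifestly \(\{0,\dots,n\}\) (for the latter, one reads this off from the pointwise definition \(\mathfrak{C}=\mathfrak{C}_{\Delta,\bullet}\circ k^\ast\) together with the fact that \(\mathfrak{C}_\Delta\) on an \(n\)-simplex has this object set). So it suffices to construct a natural isomorphism on hom-objects that is compatible with identities and composition.

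First, I would unwind \(\mathfrak{C}(V[n](X_1,\dots,X_n))\) by working pointwise. At \(c\in\C\), evaluation of the simplicial presheaf \(k^\ast V[n](X_1,\dots,X_n)\) gives the simplicial set whose \(m\)-simplices are maps \([m](c,\dots,c)\to [n](X_1,\dots,X_n)\) in \(\Delta\int\psh{\C}\); unwinding definitions these are pairs \((\gamma\colon[m]\to[n],\,(f_i)_{i=1}^{m})\) with \(f_i\in \prod_{j=\gamma(i-1)+1}^{\gamma(i)} X_j(c)\). As a simplicial set in \(m\), this is canonically isomorphic to \(V[n](X_1(c),\dots,X_n(c))\), i.e.\ the nerve of the free category on the linear graph with vertex set \(\{0,\dots,n\}\) and edge sets \(X_i(c)\) between consecutive vertices. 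Naturality in \(c\) is immediate from the formula.

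Next, I would apply the classical computation of the Cordier--Lurie coherent realization. Writing \(V[n](A_1,\dots,A_n)\) as the set-indexed iterated pushout of standard simplices along their spines (coproducts of copies of \(\Delta^n\) for each tuple in \(A_1\times\cdots\times A_n\) glued along spine edges), cocontinuity of \(\mathfrak{C}_\Delta\) together with Lurie's formula \(\Hom_{\mathfrak{C}_\Delta(\Delta^n)}(i,j)=(\Delta^1)^{j-i-1}\) yields
\[
\Hom_{\mathfrak{C}_\Delta(V[n](A_1,\dots,A_n))}(i,j)\;\cong\;A_{i+1}\times\Delta^1\times A_{i+2}\times\cdots\times\Delta^1\times A_j,
\]
with composition given by the inclusion on the \(j\)-th bottom face. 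Specializing \(A_i=X_i(c)\) and varying \(c\) produces exactly \(Q([n](X_1,\dots,X_n))\), naturally in \(c\). This gives an isomorphism of \(\spsh\)-enriched categories \(\mathfrak{C}(V[n](X_1,\dots,X_n))\cong Q([n](X_1,\dots,X_n))\).

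Finally, I would verify naturality in morphisms of \(\Delta\int\psh{\C}\). By the factorization of an arbitrary map \((\gamma,\mathbf{f})\) as a codegeneracy, followed by a Cartesian lift of a coface, followed by a fibre morphism (exactly as decomposed in the proof of functoriality of \(Q\) above), it suffices to check each case separately on hom-objects. Each case is a direct unwinding: the definitions of \(Q\) on codegeneracies, cofaces, and fibre maps correspond precisely to what \(\mathfrak{C}_{\Delta,\bullet}\circ k^\ast\) does to the associated map of precategories evaluated at each \(c\). The main obstacle is the pointwise computation of \(\mathfrak{C}_\Delta(V[n](A_1,\dots,A_n))\); once this classical input is secured (either by the inductive colimit argument sketched above or by invoking the necklace calculus of Dugger--Spivak that will be used later in the paper), everything else is routine bookkeeping.
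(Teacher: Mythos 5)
Your proposal follows essentially the same route as the paper: reduce to the pointwise identification \(k^\ast(V[n](X_1,\dots,X_n))_c\cong [n](X_1(c),\dots,X_n(c))\) (the nerve of the free category on the linear graph with edge sets \(X_i(c)\)), and then invoke the exact computation of \(\mathfrak{C}_\Delta\) on such nerves to match hom-objects with those of \(Q\); the paper simply asserts this last identification, whereas you propose to justify it, and you also spell out the morphism-level naturality check that the paper leaves implicit. Two small caveats on your justification of the classical input. First, the decomposition of \([n](A_1,\dots,A_n)\) as ``copies of \(\Delta^n\) glued along spine edges'' is not correct as stated: two tuples agreeing in all but the \(k\)-th coordinate share the two outer faces \(\Delta^{\{0,\dots,k-1\}}\) and \(\Delta^{\{k,\dots,n\}}\) rather than any spine edges, so the gluing data is more involved (though the cocontinuity argument does go through with the correct colimit presentation, e.g.\ via \cite{ds1}*{Proposition 4.3}, which computes \(\mathfrak{C}_\Delta(X)(x,y)\) as an honest colimit over necklaces). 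Second, your fallback of ``invoking the necklace calculus'' in the up-to-homotopy form used later in the paper would only produce a weak equivalence \(\mathfrak{C}(V[n](\mathbf{X}))\simeq Q([n](\mathbf{X}))\), not the isomorphism of functors that the proposition asserts, so that alternative does not suffice as stated.
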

\begin{proof} 
	Since for any \([m](X_1,\dots,X_m) \in \Delta\int\psh{\C}\), the enriched categories \(Q([m](X_1,\dots,X_m))\) and \(\mathfrak{C}(V[m](X_1,\dots,X_m))\) have object sets in natural bijection with the set of vertices of \([m]\) it suffices to produce, for each pair of vertices \(i, j \in [m]\) a natural isomorphism
	\[
		\mathfrak{C}(V[m](X_1,\dots,X_m))(i, j) \to Q([m](X_1,\dots,X_m))(i,j),
	\]
	which amounts to the data of a natural (in \(c\)) isomorphism of simplicial sets
	\[
		\mathfrak{C}(V[m](X_1,\dots,X_m))(x, y)_c \to Q([m](X_1,\dots,X_m))(x,y)_c.
	\]
	First, observe that specifying a simplex
	\[
		\Delta^n \to Q([m](X_1,\dots,X_m))(i, j)_{c}
	\]
	is equivalent to specifying a morphism
	\begin{align*}
		(\Delta^n, c) &\to X_{i+1} \times \Delta^1 \times \dots \times \Delta^1 \times X_j\\ 
		&=(\Delta^1)^{j-i-1} \times \prod_{k=i+1}^j X_k
	\intertext{which is itself equivalent to specifying a simplex}
		\Delta^n &\to (\Delta^1)^{j-i-1} \times \prod_{k=i+1}^j X_k(c),
	\end{align*}
	so in particular, we may make the identification
	\begin{align*}
		Q([m](X_1,\dots,X_m))(i, j)_{c} &\cong \mathfrak{C}_\Delta([m](X_1(c),\dots,X_m(c)))(i, j),
	\intertext{and therefore it follows that we have an isomorphism, natural in \(c\):}
		Q([m](X_1,\dots,X_m))_{c} &\cong \mathfrak{C}_\Delta([m](X_1(c),\dots,X_m(c))).
	\end{align*}
	It therefore suffices to demonstrate a natural isomorphism of simplicial sets
	\[
		k^\ast(V[m](X_1,\dots,X_m))_c\cong [m](X_1(c),\dots,X_m(c)),
	\]
	but it can be observed that specifying a simplex
	\[
		\Delta^n \to k^\ast(V[m](X_1,\dots,X_m))_c
	\]
	corresponds to specifying a map
	\[
		[n](c,\dots,c) \to V[m](X_1,\dots,X_m),
	\]
	which in turn is given by the data of a map
	\[
		\gamma:[n]\to [m]
	\]
	together with maps
	\[
		c\to \prod_{k=\gamma(0)+1}^{\gamma(n)} X_k,
	\]
	or equivalently, a family
	\[
		(f_{\gamma(0)+1}, \dots, f_{\gamma(n)}) \in \prod_{k=\gamma(0)+1}^{\gamma(n)} X_k(c),
	\]
	which specifies a unique simplex
	\[
		\Delta^n \to [m](X_1(c),\dots,X_m(c)),
	\]
	which proves the claim.
\end{proof}

\subsection{The Bergner-Lurie model structure on \(\Cat_{\spsh_{\mathrm{inj}}}\)}
In this section, we cite important results from \cite{htt}*{Appendix A.3} concerning the generalized Bergner model structure for categories enriched in an excellent monoidal model category.  

\begin{defn}
	Let \(\mathbf{S}\) be a monoidal model category, and let \(X\) be an \(\mathbf{S}\)-enriched category.  Then we define the \emph{homotopy category} \(\mathbf{h}X\) to be the ordinary category underlying the the associated \(\mathbf{h}\mathbf{S}\)-enriched category also denoted by \(\mathbf{h}X\).  
\end{defn}

\begin{defn}
	Let \(\mathbf{S}\) be a monoidal model category, and let \(f:X\to Y\) be an \(\mathbf{S}\)-enriched functor of \(\mathbf{S}\)-enriched categories.  
	
	\begin{itemize}
		\item We say that \(f\) is \emph{weakly fully faithful} if for every pair of objects \(x,x^\prime\) of \(X\), the component map 
		\[
			f_{x,x^\prime}: X(x,x^\prime) \to Y(f(x),f(x^\prime))
		\]
		is a weak equivalence of \(\mathbf{S}\).
		\item We say that \(f\) is \emph{weakly essentially surjective} if the induced functor on homotopy categories \(\mathbf{h}X\to \mathbf{h}Y\) is essentially surjective, that is, if for every object \(y\) of \(\mathbf{h}Y\) there exists an object \(x\) of \(\mathbf{h}X\) and an isomorphism \(y\cong f(x)\) in \(\mathbf{h}Y\).
		\item We say that \(f\) is an \emph{\(\mathbf{S}\)-enriched weak equivalence} if it is weakly fully faithful and weakly essentially surjective.
	\end{itemize}
\end{defn}
\begin{defn}
	Let \(\mathbf{S}\) be a monoidal model category. 

	We say that an \(\mathbf{S}\)-enriched category \(X\) is \emph{locally fibrant} if for every pair of objects \(x,x^\prime\) of \(X\), the object of morphisms \(X(x,x^\prime)\) is a fibrant object of \(\mathbf{S}\).
	
	We say that an \(\mathbf{S}\)-enriched functor of \(\mathbf{S}\)-enriched categories \(f:X\to Y\) be an  is a \emph{local fibration} if the following two conditions hold:
	\begin{enumerate}
		\item  For every pair of objects \(x,x^\prime\) of \(X\), the component map 
		\[
			f_{x,x^\prime}: X(x,x^\prime) \to Y(f(x),f(x^\prime))
		\]
		is a fibration.
		\item The induced functor on homotopy categories \(\mathbf{h}X\to \mathbf{h}Y\) is an isofibration of ordinary categories.
	\end{enumerate}
\end{defn}
\begin{defn}
	Let \(\mathbf{S}\) be a monoidal model category.  We will define the \emph{categorical suspension} functor \(\mathbf{2}:\mathbf{S} \to \Cat_{\mathbf{S}}\).  Given an object \(S\) of \(\mathbf{S}\), we define \(\mathbf{2}(S)\) as follows:  
	\begin{itemize}
		\item The set of objects of \(\mathbf{2}(S)\) is precisely the set \(\{0,1\}\).
		\item The object of morphisms is defined by 
		\[
			\mathbf{2}(S)(i,j)=
			\begin{cases}
				\mathbf{1}_\mathbf{S} & \text{if}\quad i=j\\
				S & \text{if}\quad i<j\\
				\varnothing & \text{if} \quad i>j
			\end{cases},
		\]  
	\end{itemize}
	where \(\mathbf{1}_{\mathbf{S}}\) is the unit object of \(\mathbf{S}\). The extension of the definition to morphisms is the obvious one.
	We also define the \(\mathbf{S}\)-enriched category \({[0]}_{\mathbf{S}}\) to be the enriched category with one object whose object of endomorphisms is exactly \(\mathbf{1}_{\mathbf{S}}\).  
\end{defn}
\begin{prop}[\cite{htt}*{Proposition A.3.2.4}]
	Suppose \(\mathbf{S}\) is a monoidal combinatorial model category in which all objects are cofibrant.  Then there exists a left-proper combinatorial model structure on \(\Cat_{\mathbf{S}}\) with weak equivalences the \(\mathbf{S}\)-enriched weak equivalences as defined above and with cofibrations the weakly saturated class generated by the set
	\[
		\{\varnothing \hookrightarrow {[0]}_{\mathbf{S}}\}\cup \{\mathbf{2}(f)\mid f \enskip \text{is a generating cofibration of} \enskip \mathbf{S}\} 
	\]
	This model structure is called the Lurie-Bergner model structure on \(\mathbf{S}\)-enriched categories.
\end{prop}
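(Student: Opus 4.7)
The plan is to apply Jeff Smith's recognition theorem for combinatorial model structures. Let $W$ denote the class of $\mathbf{S}$-enriched weak equivalences and let $I$ denote the generating set exhibited in the statement. It suffices to verify: (i) $W$ is closed under retracts and satisfies two-of-three; (ii) $W$ is accessible and accessibly embedded in the arrow category of $\Cat_{\mathbf{S}}$; (iii) every map with the right lifting property against $I$ lies in $W$; and (iv) the intersection of $W$ with the weakly saturated class generated by $I$ is closed under pushout and transfinite composition.

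For (i), weak essential surjectivity is preserved under composition, retracts, and two-of-three because $\mathbf{h}(-):\Cat_{\mathbf{S}}\to \Cat$ is a genuine functor to ordinary categories, for which the corresponding closure properties of essentially surjective functors are classical. Weak fully faithfulness inherits two-of-three and retract closure directly from the class of weak equivalences in $\mathbf{S}$. For (ii), the functor sending an $\mathbf{S}$-enriched category to the pair consisting of its set of objects and its indexed family of hom-objects preserves sufficiently filtered colimits, and each defining clause of $W$ is individually accessible: weak fully faithfulness because the weak equivalences of the combinatorial $\mathbf{S}$ form an accessible subcategory of $\mathbf{S}^{[1]}$, and weak essential surjectivity because it can be phrased as a fiberwise nonemptiness condition on the filtered-colimit-preserving $\mathbf{h}$.

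Condition (iii) is the first substantive check. A map $f:X\to Y$ in the right-lifting-property class of $I$ is forced to be surjective on objects by lifting against $\varnothing\hookrightarrow {[0]}_{\mathbf{S}}$, and lifting against each $\mathbf{2}(g)$ for $g$ a generating cofibration of $\mathbf{S}$ unwinds to the requirement that every component map $f_{x,x'}:X(x,x')\to Y(f(x),f(x'))$ have the right lifting property against $g$. Hence each component is a trivial fibration in $\mathbf{S}$, in particular a weak equivalence, exhibiting $f$ as weakly fully faithful and essentially surjective.

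The main obstacle is (iv); the hypothesis that every object of $\mathbf{S}$ is cofibrant is used essentially here. Following the strategy in \cite{htt}, one computes a pushout of a generating trivial cofibration $\mathbf{2}(g):\mathbf{2}(A)\hookrightarrow \mathbf{2}(B)$ along a classifying map $\mathbf{2}(A)\to \mathcal{C}$ specifying a morphism $\alpha:x\to x'$: the resulting enriched category $\mathcal{C}'$ has the same object set as $\mathcal{C}$, and each hom-object $\mathcal{C}'(y,z)$ admits a canonical transfinite filtration whose associated graded pieces are built from tensor products of the form $\mathcal{C}(x',z)\otimes g \otimes \mathcal{C}(y,x)$ and their iterated self-tensors along $\alpha$. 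Universal cofibrancy ensures every such tensor factor is cofibrant, so by repeated application of the pushout-product axiom in $\mathbf{S}$, each filtration stage is a trivial cofibration and the total map on homs is a weak equivalence. Pushouts of $\varnothing\hookrightarrow {[0]}_{\mathbf{S}}$ only adjoin isolated objects and leave all existing homs untouched. Closure under transfinite composition then follows from the filtered-colimit analysis of (ii). Left-properness is verified by an analogous hom-by-hom filtration argument, again invoking universal cofibrancy, and combinatoriality is automatic from the set-generated presentation.
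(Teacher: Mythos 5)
The paper offers no proof of this statement --- it is imported directly from \cite{htt}*{Proposition A.3.2.4} --- and your sketch reproduces Lurie's argument there: Smith's recognition theorem, with the substantive input being the hom-by-hom filtration of a pushout along \(\mathbf{2}(f)\) for \(f\) a trivial cofibration, which is exactly where the hypothesis that all objects of \(\mathbf{S}\) are cofibrant is consumed. The one point worth flagging is that your verification of accessibility/perfectness in step (ii) tacitly uses that the weak equivalences of \(\mathbf{S}\) are stable under filtered colimits; this hypothesis appears in Lurie's statement but has been dropped from the transcription above, so it should either be restored or (as happens in all of the paper's applications, where \(\mathbf{S}\) is excellent) verified separately via axiom (A3).
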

\begin{defn}[\cite{htt}*{Definition A.3.2.16}]
	A model category \(\mathbf{S}\) equipped with a monoidal product \(\otimes\) is called \emph{excellent} if the following conditions hold:
	\begin{enumerate}[label=(A\arabic*{})]
		\item The model category \(\mathbf{S}\) is combinatorial.
		\item Every monomorphism of \(\mathbf{S}\) is a cofibration, and cofibrations are stable under products.
		\item The collection of weak equivalences of \(\mathbf{S}\) is stable under filtered colimits
		\item The monoidal structure of \(\mathbf{S}\) is compatible with the model structure.  That is, the tensor product is a left-Quillen bifunctor.
		\item The model category \(\mathbf{S}\) satisfies the invertibility hypothesis.
	\end{enumerate}
\end{defn}
We quickly make use of the following lemma:
\begin{lemma}[\cite{htt}*{Lemma A.3.2.20}]\label{excellentlemma}
	Let \(\mathbf{S}\) be an excellent monoidal model category, and let \(\mathbf{S}^\prime\) be a monoidal model category satisfying axioms (A1-A4).  Then if there exists a monoidal left-Quillen functor \(\mathbf{S}\to \mathbf{S}^\prime,\) it follows that \(\mathbf{S}^\prime\) is also excellent.
\end{lemma}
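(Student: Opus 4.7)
The plan is to verify that $\mathbf{S}^\prime$ satisfies axiom (A5), the invertibility hypothesis, since axioms (A1--A4) are given by assumption. Let $F:\mathbf{S}\to \mathbf{S}^\prime$ denote the given monoidal left-Quillen functor. The key preliminary observation is that a strong monoidal functor induces a change-of-enrichment functor $F_\ast:\Cat_{\mathbf{S}}\to \Cat_{\mathbf{S}^\prime}$, acting as the identity on object sets, acting as $F$ on hom-objects, and satisfying $F_\ast\circ \mathbf{2}_{\mathbf{S}}\cong \mathbf{2}_{\mathbf{S}^\prime}\circ F$ as well as $F_\ast([0]_{\mathbf{S}})=[0]_{\mathbf{S}^\prime}$.

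First I would reduce the invertibility hypothesis in $\mathbf{S}^\prime$ to a concrete diagrammatic statement: one starts from a cofibrant $\mathbf{S}^\prime$-category built by iterated pushouts of the generators $\{\varnothing\hookrightarrow [0]_{\mathbf{S}^\prime}\}$ and $\{\mathbf{2}_{\mathbf{S}^\prime}(f)\}$ with $f$ a generating cofibration of $\mathbf{S}^\prime$, and one must check that adjoining formal inverses to a chosen equivalence yields an $\mathbf{S}^\prime$-enriched weak equivalence. Since these generating cofibrations, together with the composition law, are all in the image of $F_\ast$ (the suspension is compatible with $F$, and the unit of $\mathbf{S}^\prime$ is $F$ of the unit of $\mathbf{S}$), the skeleton of the invertibility diagram in $\mathbf{S}^\prime$ is, up to retract, the image under $F_\ast$ of the corresponding diagram in $\mathbf{S}$. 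Applying $F_\ast$ to the conclusion of (A5) in $\mathbf{S}$ then produces an $\mathbf{S}^\prime$-enriched map that we wish to show is a weak equivalence.

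At this point it suffices to check, separately, weak full faithfulness and weak essential surjectivity of $F_\ast(g)$ whenever $g$ is an $\mathbf{S}$-enriched weak equivalence with cofibrant source. Weak full faithfulness is immediate, because the component maps of $g$ are weak equivalences between cofibrant objects of $\mathbf{S}$, and $F$ preserves such weak equivalences by Ken Brown's lemma applied to the left-Quillen functor $F$. Weak essential surjectivity reduces to the statement that $F$ induces the identity functor on $\pi_0$ of the homotopy category: since $F$ is monoidal and preserves $\otimes$-products and the unit, it sends equivalences in the sense of the enriched homotopy category of $\mathbf{S}$ to equivalences in the enriched homotopy category of $\mathbf{S}^\prime$, so a preimage of any object up to equivalence in $\mathbf{h}F_\ast(C)$ is produced from one in $\mathbf{h}C$.

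The main obstacle will be the bookkeeping in the second paragraph: the construction witnessing the invertibility hypothesis in $\mathbf{S}^\prime$ is not literally in the strict image of $F_\ast$ but rather a pushout of pieces coming from $F_\ast$ along maps built from the unit. The needed patience lies in verifying that every such pushout may be rewritten as $F_\ast$ applied to an analogous pushout in $\Cat_{\mathbf{S}}$, which uses that $F$, being a left adjoint, preserves pushouts and that $F$ is strong (not merely lax) monoidal, so that $F_\ast$ preserves free $\mathbf{S}$-categories on graphs. Once that identification is made, the invertibility hypothesis in $\mathbf{S}$ and Ken Brown's lemma together yield (A5) for $\mathbf{S}^\prime$, completing the verification that $\mathbf{S}^\prime$ is excellent.
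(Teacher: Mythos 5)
The paper does not actually prove this lemma: it is cited verbatim from \cite{htt}*{Lemma A.3.2.20}, so the relevant comparison is with Lurie's argument. Your overall strategy --- verify only (A5), transfer the invertibility hypothesis along the change-of-enrichment functor \(F_\ast:\Cat_{\mathbf{S}}\to\Cat_{\mathbf{S}^\prime}\), and check that \(F_\ast\) preserves enriched weak equivalences between cofibrant enriched categories via Ken Brown's lemma on hom-objects and bijectivity on object sets --- is the right one and matches the shape of Lurie's proof; your final observations about weak full faithfulness and weak essential surjectivity of \(F_\ast(g)\) are sound.

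The gap is in your second paragraph. You assert that the generating cofibrations \(\mathbf{2}_{\mathbf{S}^\prime}(f)\), for \(f\) a generating cofibration of \(\mathbf{S}^\prime\), ``are all in the image of \(F_\ast\),'' and hence that an arbitrary cofibrant \(\mathbf{S}^\prime\)-category together with its localization diagram is, up to retract, \(F_\ast\) of a corresponding diagram in \(\Cat_{\mathbf{S}}\). This is false: \(F\) being left Quillen means it sends cofibrations of \(\mathbf{S}\) to cofibrations of \(\mathbf{S}^\prime\), but it gives no control over the generating cofibrations of \(\mathbf{S}^\prime\), which need not be of the form \(F(g)\) even up to retract. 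In the very application this paper makes of the lemma, \(F=p^\ast:\psh{\Delta}\to\spsh\) is the constant-presheaf functor, and a cofibrant \(\spsh\)-enriched category such as \(\mathbf{2}(h_c)\) for \(c\in\C\) has a hom-object that is not a retract of any constant simplicial presheaf; so the category \(\mathcal{C}\) quantified over in the invertibility hypothesis for \(\mathbf{S}^\prime\) cannot in general be pulled back to \(\mathbf{S}\), and the ``bookkeeping'' you defer to the last paragraph cannot be carried out. The repair is the one Lurie actually uses: the lemmas preceding A.3.2.20 in \cite{htt} reduce the invertibility hypothesis to a single universal assertion about enriched categories built entirely from the unit object (the walking arrow \([1]_{\mathbf{S}^\prime}\), its categorical mapping cylinder, and the walking isomorphism enriched via the unit). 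Those objects, unlike a general \(\mathcal{C}\), genuinely are \(F_\ast\) of their \(\mathbf{S}\)-analogues, because a strong monoidal left adjoint preserves the unit and the relevant colimits; only after that reduction does your transfer-along-\(F_\ast\) argument close the proof.
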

\begin{cor}
	For any small category \(\C\), the model category \(\spsh_{\mathrm{inj}}\) of simplicial presheaves on \(\C\) with the injective model structure and the Cartesian product is excellent.
\end{cor}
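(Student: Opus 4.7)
The plan is to invoke \Cref{excellentlemma} with source the category $\mathbf{sSet}$ of simplicial sets equipped with the Kan--Quillen model structure and the Cartesian product, whose excellence is a standard and well-known fact. The required monoidal left-Quillen functor will be the constant-diagram functor $\mathrm{const}: \mathbf{sSet} \to \spsh_{\mathrm{inj}}$, which is left adjoint to $\lim: \spsh_{\mathrm{inj}} \to \mathbf{sSet}$. This functor is strong monoidal since it preserves finite products, and it is left Quillen because a monomorphism (resp.\ weak equivalence) of simplicial sets gives rise to a pointwise monomorphism (resp.\ pointwise weak equivalence) when viewed as a constant simplicial presheaf, and such maps are exactly the cofibrations (resp.\ weak equivalences) of the injective model structure.

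It then remains to verify axioms (A1)--(A4) for $\spsh_{\mathrm{inj}}$. Axiom (A1) follows from the local presentability of the presheaf category together with the standard construction of the injective model structure on simplicial presheaves as a combinatorial model category. Axiom (A2) is essentially tautological: the cofibrations of the injective model structure are by definition the pointwise monomorphisms, which coincide with the monomorphisms in the presheaf topos $\spsh$; and since Cartesian products in a topos preserve monomorphisms in each variable, their products are again monomorphisms. Axiom (A3) reduces to the pointwise case, as both filtered colimits and weak equivalences of $\spsh_{\mathrm{inj}}$ are computed pointwise, and the corresponding stability statement for simplicial sets is classical.

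The only substantive step is axiom (A4), the pushout-product axiom for the Cartesian product, which I plan to establish by a pointwise reduction. Since pushouts and the Cartesian product are computed pointwise in a presheaf category, the pushout-product of two cofibrations of $\spsh_{\mathrm{inj}}$ is at each object a pushout-product of two monomorphisms of simplicial sets, hence a monomorphism; and the pushout-product of a cofibration with a trivial cofibration is at each object a pushout-product of a cofibration with a trivial cofibration in $\mathbf{sSet}_{\mathrm{Kan}}$, hence a trivial cofibration by the pushout-product axiom for Kan--Quillen. With (A1)--(A4) in place, \Cref{excellentlemma} yields the excellence of $\spsh_{\mathrm{inj}}$. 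Given the purely pointwise nature of all arguments, I do not anticipate a genuine obstacle; the main care required is simply in correctly invoking the excellence of $\mathbf{sSet}_{\mathrm{Kan}}$ (which carries along the invertibility hypothesis (A5) for free).
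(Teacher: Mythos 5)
Your proof is correct and follows essentially the same route as the paper: the only substantive axiom, (A5), is obtained from \Cref{excellentlemma} via a monoidal left Quillen functor from simplicial sets, and your constant-diagram functor is exactly the paper's functor \(p^\ast\) induced by the projection \(\C\times\Delta\to\Delta\). The only cosmetic difference is that you verify (A3) by a direct pointwise argument, whereas the paper cites Cisinski's regularity results (\Cref{injregular} and \Cref{filteredcolims}); both are valid.
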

\begin{proof}
	It is clear that axioms (A1), (A2), and (A4) are satisfied. Axiom (A3) is also satisfied by recalling that the injective model structure is regular by \Cref{injregular} and therefore closed under filtered colimits by \Cref{filteredcolims}. It therefore suffices to demonstrate axiom (A5).  
	
	If we take \(p^\ast: \psh{\Delta} \to \spsh\) to be the functor induced by the projection \(p:\C\times \Delta \to \Delta\), it is clearly monoidal, as it preserves all limits.  Moreover, it is also clear that it sends monomorphisms to monomorphisms and weak equivalences to weak equivalences. Since \(\psh{\Delta}\) is excellent, it follows therefore from \Cref{excellentlemma} that \(\spsh_{\mathrm{inj}}\) is also excellent.
\end{proof}
\begin{cor}
	If \(\setS\) is a set of morphisms of \(\spsh\) such that the left-Bousfield localization of \(\spsh_{\mathrm{inj}}\) at \(\setS\) is again a Cartesian-monoidal model category, then the model category \(\spsh_{\setS}\) obtained from this localization is also excellent.
\end{cor}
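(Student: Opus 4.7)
The plan is to mimic the proof of the previous corollary by invoking \Cref{excellentlemma} applied to the identity functor
\[
	\mathrm{id} : \spsh_{\mathrm{inj}} \to \spsh_{\setS}.
\]
This functor is strong monoidal for the Cartesian product, and it is a left Quillen functor for the localization because left Bousfield localization preserves the class of cofibrations and enlarges the class of weak equivalences (so trivial cofibrations are still sent to trivial cofibrations). By the previous corollary, the source is excellent, so it suffices to verify axioms (A1)--(A4) for \(\spsh_{\setS}\) and then conclude (A5) from \Cref{excellentlemma}.

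For (A1), combinatoriality is preserved under left Bousfield localization of a combinatorial model category at a small set of morphisms by the standard theorem of Smith/Barwick. For (A2), the class of cofibrations in \(\spsh_{\setS}\) coincides with the class of monomorphisms in \(\spsh_{\mathrm{inj}}\), since left Bousfield localization does not alter the cofibrations; in particular, the closure of cofibrations under products is inherited. For (A4), the hypothesis that the localization is Cartesian-monoidal is precisely the statement that \(\times\) is a left Quillen bifunctor for \(\spsh_{\setS}\).

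The only remaining axiom is (A3), stability of the class of weak equivalences under filtered colimits. Here I would argue as follows: the \(\setS\)-local equivalences are characterized as the maps \(f\) such that for every \(\setS\)-local (equivalently, \(\spsh_{\setS}\)-fibrant) object \(Z\), the induced map on derived mapping spaces \(\mathrm{Map}(f, Z)\) is a weak equivalence of simplicial sets. Since \(\spsh_{\setS}\) is combinatorial, there is an accessible fibrant replacement functor, and the underlying injective weak equivalences are closed under filtered colimits (that was the content of (A3) in the previous corollary); filtered colimits commute with these accessible constructions, so a filtered colimit of \(\setS\)-local equivalences is again an \(\setS\)-local equivalence. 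The main subtlety lies in this step, but it is a well-known consequence of the combinatoriality of the localization together with (A3) for the ambient injective model structure.

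With (A1)--(A4) verified for \(\spsh_{\setS}\) and the identity witnessing the monoidal left Quillen functor from the excellent model category \(\spsh_{\mathrm{inj}}\), \Cref{excellentlemma} furnishes (A5) and hence excellence of \(\spsh_{\setS}\), which completes the argument.
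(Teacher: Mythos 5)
Your overall architecture coincides with the paper's: axioms (A1), (A2) are immediate, (A4) is the hypothesis, and (A5) is obtained by feeding the identity functor \(\spsh_{\mathrm{inj}}\to\spsh_{\setS}\) (monoidal, left Quillen) into \Cref{excellentlemma}. The one place where you genuinely diverge is axiom (A3), and that is also the one place where your argument, as written, does not hold up. The paper disposes of (A3) in a single line using the localizer machinery from the appendix: the \(\setS\)-localizer contains the localizer of the injective model structure, which is regular by \Cref{injregular}; hence it is itself regular by \Cref{regularinclusion}; and regular localizers are closed under filtered colimits by \Cref{filteredcolims}.

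Your substitute argument hinges on the sentence ``filtered colimits commute with these accessible constructions, so a filtered colimit of \(\setS\)-local equivalences is again an \(\setS\)-local equivalence.'' Accessibility is not the right lever here: an accessible (fibrant replacement) functor is only guaranteed to preserve \(\kappa\)-filtered colimits for some possibly large regular cardinal \(\kappa\), which gives nothing about ordinary (\(\omega\)-)filtered colimits; and the derived mapping spaces \(\mathrm{Map}(-,Z)\) you invoke send colimits to \emph{limits}, so one cannot simply pass a filtered colimit through them. The statement you want is true, but the honest proof runs differently: first use (A3) for \(\spsh_{\mathrm{inj}}\) to show that filtered colimits there compute homotopy colimits (replace the diagram projectively cofibrantly and compare colimits), then use that \(\mathrm{Map}(-,Z)\) carries homotopy colimits to homotopy limits and that a homotopy limit of weak equivalences of Kan complexes is a weak equivalence. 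That route is more general than the paper's (it needs nothing about regular localizers), but it is several steps longer and none of them is supplied in your write-up; given that \Cref{regularinclusion} and \Cref{filteredcolims} are already available, you should simply cite them as the paper does.
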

\begin{proof}
	As before, axioms (A1) and (A2) are obviously satisfied.  Axiom (A4) is satisfied by hypothesis, and axiom (A3) follows from the fact that any localizer containing a regular localizer is again regular by \Cref{regularinclusion}.  It again suffices to demonstrate axiom (A5).

	Notice now that the identity functor is a monoidal left-Quillen functor \(\spsh_{\mathrm{inj}} \to \spsh_{\setS}\).  Then axiom (A5) again follows from \Cref{excellentlemma}.
\end{proof}
\begin{thm}[\cite{htt}*{Theorem A.3.2.24}]
	Let \(\mathbf{S}\) be an excellent monoidal model category.  Then the following two results hold:
	\begin{enumerate}
		\item An \(\mathbf{S}\)-enriched category \(X\) is Bergner-Lurie fibrant if and only if it is locally fibrant.
		\item An \(\mathbf{S}\)-enriched functor \(f:X\to Y\) is a fibration for the Bergner-Lurie model structure if and only if it is a local fibration.
	\end{enumerate}
\end{thm}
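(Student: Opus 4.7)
The plan is to prove part (2) and derive part (1) by applying (2) to the unique morphism \(X\to *\), where \(*\) is the terminal \(\mathbf{S}\)-enriched category; tautologically \(*\) is locally fibrant, and \(X\to *\) is a local fibration if and only if \(X\) is locally fibrant (the isofibration condition on \(\mathbf{h}X\to \mathbf{h}*\) is automatic). To characterize Bergner-Lurie fibrations, the strategy is to identify a set \(J\) of generating trivial cofibrations of \(\Cat_{\mathbf{S}}\) whose right lifting property exactly recovers the local fibration condition.

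The set \(J\) will split into two families. First, for each generating trivial cofibration \(j:A\to B\) of \(\mathbf{S}\), the enriched suspension \(\mathbf{2}(j):\mathbf{2}(A)\to\mathbf{2}(B)\) is a trivial cofibration of \(\Cat_{\mathbf{S}}\), being bijective on objects and hom-wise either an identity or \(j\). The right lifting property of \(f:X\to Y\) against all such \(\mathbf{2}(j)\), parametrized over ordered pairs of objects of \(X\), is visibly equivalent to the condition that every component \(f_{x,x'}:X(x,x')\to Y(fx,fx')\) has the right lifting property against \(j\), and so is a fibration of \(\mathbf{S}\). Second, one constructs an \(\mathbf{S}\)-enriched category \(\mathbf{E}_{\mathbf{S}}\) with two objects modelling a freestanding homotopy equivalence, obtained as a suitable cofibrant replacement of a strict enriched groupoidification of \(\mathbf{2}(\mathbf{1}_{\mathbf{S}})\); the two inclusions \([0]_{\mathbf{S}}\hookrightarrow \mathbf{E}_{\mathbf{S}}\) are cofibrations, weakly essentially surjective, and weakly fully faithful, hence trivial cofibrations. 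The right lifting property of \(f\) against these inclusions expresses precisely that every equivalence in \(Y\) with one endpoint in the image of \(f\) lifts to an equivalence in \(X\), which is the isofibration condition on \(\mathbf{h}f\).

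The main obstacle is showing that \(J\) actually generates the trivial cofibrations, so that a morphism with the right lifting property against \(J\) really is a Bergner-Lurie fibration rather than merely satisfying the two bullet points defining a local fibration. This requires verifying that \(\mathbf{E}_{\mathbf{S}}\) is well-behaved and, crucially, that the invertibility hypothesis (A5) guarantees the match between the enriched notion of homotopy equivalence and isomorphism in \(\mathbf{h}(-)\); without (A5) one could have isomorphisms in \(\mathbf{h}X\) that do not lift to actual equivalences in \(X\), and the characterization would fail. Granting this, the small object argument factors any trivial cofibration \(X\to X'\) as a relative \(J\)-cell complex \(X\to X''\) followed by a morphism \(X''\to X'\) which, by inspection of the two families of generators, is both locally trivially fibrant and an isofibration on \(\mathbf{h}(-)\); combined with being a weak equivalence, it is a trivial fibration, and so splits against any \(f\) having the right lifting property against \(J\). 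The usual retract argument then promotes such an \(f\) to a fibration, completing the proof of (2) and hence of (1).
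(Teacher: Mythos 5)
The paper offers no proof of this statement; it is quoted verbatim from \cite{htt}*{Theorem A.3.2.24}, so your proposal has to be measured against Lurie's argument. Your architecture --- exhibit a set \(J\) of generating trivial cofibrations consisting of the suspensions \(\mathbf{2}(j)\) of generating trivial cofibrations of \(\mathbf{S}\) together with the inclusion of \({[0]}_{\mathbf{S}}\) into a freestanding equivalence \(\mathbf{E}_{\mathbf{S}}\), then run the small-object and retract arguments --- is the standard template for Bergner-type model structures, and the parts you actually verify (that \(\mathbf{2}(j)\) is a trivial cofibration, that lifting against the first family is exactly the hom-wise fibration condition, and the deduction of (1) from (2) via \(X\to\ast\)) are fine.

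The difficulty is that you grant precisely the step in which the entire content of the theorem is concentrated. Two claims are asserted rather than proved. First, that lifting against \({[0]}_{\mathbf{S}}\hookrightarrow\mathbf{E}_{\mathbf{S}}\) is \emph{equivalent} to the isofibration condition on \(\mathbf{h}f\): both directions of this equivalence require that isomorphisms in the homotopy category be realized by maps out of \(\mathbf{E}_{\mathbf{S}}\), which is exactly where the invertibility hypothesis does its work and which requires the relevant enriched categories to be locally fibrant. This is why \cite{htt} states part (2) only for fibrant targets --- a hypothesis silently dropped in the statement above (the paper's subsequent corollary restores it as ``fibrations with fibrant target''); your argument, if it worked, would prove the unrestricted version. (Even in the simplicial case, Bergner's generators of this second type form a \emph{set} of interval-like categories produced by a cardinality bound, not a single explicit \(\mathbf{E}\).) Second, that a weak equivalence with the right lifting property against \(J\) is a trivial fibration: the hom-wise part is fine, but surjectivity on objects requires realizing an isomorphism of \(\mathbf{h}X'\) by an \(\mathbf{E}_{\mathbf{S}}\)-diagram and then lifting it, and here \(X'\) is the target of an \emph{arbitrary} trivial cofibration, so no fibrancy is available. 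Lurie's proof does not posit such a generating set: it attacks lifting problems against arbitrary trivial cofibrations directly, using the invertibility hypothesis to reduce to the bijective-on-objects case and then to hom-wise lifting. As written, your proposal is a correct reduction of the theorem to its hardest step, not a proof of it.
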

Tying this all together, we obtain the following characterization of the Bergner-Lurie model structure:
\begin{cor}
	For any small category \(\C\) and any set \(\setS\) of morphisms of \(\spsh\), there exists a left-proper combinatorial model structure on \(\Cat_{\spsh}\) characterized by the following classes of maps
	\begin{itemize}
		\item[(C)] 	The cofibrations are exactly the weakly saturated class generated by the set of maps
					\[
						\{\varnothing \hookrightarrow {[0]}_{\spsh}\}\cup \{\mathbf{2}(f)\mid f \enskip \text{is a generating cofibration of} \enskip \mathbf{\spsh}\} 
					\]
		\item[(W)]  The weak equivalences are exactly the \(\spsh_\setS\)-enriched weak equivalences.
		\item[(F)]  The fibrant objects are the \(\spsh\)-enriched categories whose \(\Hom\)-objects are \(\setS\)-local injectively-fibrant simplicial presheaves on \(\C\), and the fibrations with fibrant target are exactly the local fibrations. 
	\end{itemize}
\end{cor}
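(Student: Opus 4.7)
The plan is to instantiate the previous two results of Lurie \cite{htt} at the excellent monoidal model category $\spsh_{\setS}$ produced by the preceding corollary. First observe that $\Cat_{\spsh}$ and $\Cat_{\spsh_{\setS}}$ denote literally the same $1$-category, since a left-Bousfield localization of an underlying category does not change that category itself. Second, observe that in the injective model structure every simplicial presheaf is cofibrant, and this remains true after localization at any set $\setS$, so $\spsh_{\setS}$ is a combinatorial monoidal model category in which all objects are cofibrant.

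With these observations in hand, I apply Proposition A.3.2.4 of \cite{htt} to $\mathbf{S}=\spsh_{\setS}$ to obtain a left-proper combinatorial model structure on $\Cat_{\spsh_{\setS}}=\Cat_{\spsh}$ whose weak equivalences are the $\spsh_{\setS}$-enriched weak equivalences (giving clause (W)) and whose cofibrations are generated by $\{\varnothing\hookrightarrow [0]_{\spsh}\}\cup\{\mathbf{2}(f)\mid f \text{ a generating cofibration of } \spsh_{\setS}\}$. Clause (C) then follows from the standard observation that the generating cofibrations of $\spsh_{\setS}$ agree with those of $\spsh_{\mathrm{inj}}$, since left-Bousfield localization preserves the class of cofibrations.

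For clause (F), I invoke the preceding corollary, which established that $\spsh_{\setS}$ is excellent. Theorem A.3.2.24 of \cite{htt} then applies directly: the fibrant objects of the Bergner--Lurie model structure on $\Cat_{\spsh_{\setS}}$ are the locally fibrant enriched categories, and the fibrations with fibrant target are the local fibrations. Since the fibrant objects of $\spsh_{\setS}$ are precisely the $\setS$-local injectively-fibrant simplicial presheaves on $\C$, this yields the stated description of fibrant objects and of fibrations between fibrant objects.

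There is essentially no obstacle remaining: the genuine content lies in the two preceding corollaries establishing excellence of $\spsh_{\mathrm{inj}}$ and of its Cartesian-closed localizations, and in Lurie's general machinery for enriched categories. The only minor bookkeeping point is the identification of cofibrations across the localization $\spsh_{\mathrm{inj}}\to \spsh_{\setS}$, but this is immediate from the definition of left-Bousfield localization.
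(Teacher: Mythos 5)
Your proposal is correct and is exactly the assembly the paper intends: the corollary is stated with no written proof beyond ``tying this all together,'' and the intended argument is precisely to feed the excellence of \(\spsh_{\setS}\) (from the preceding corollary, which is where the Cartesian-closedness of the localization is actually used) into \cite{htt}*{Proposition A.3.2.4} for existence, (C), and (W), and into \cite{htt}*{Theorem A.3.2.24} for (F). Your remark that the cofibrations, and hence the generating set in (C), are unchanged under left-Bousfield localization is the right bookkeeping point and matches the paper's implicit reasoning.
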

\begin{note}
	We denote the Bergner-Lurie model structure with respect to a set of maps \(\setS\) of \(\spsh\) by \(\Cat_{\spsh_{\setS}}\).  In the special case when \(\setS\) is empty, this reduces to the case where \(\spsh\) is equipped with the injective model structure, and we denote its associated Bergner-Lurie model category by \(\Cat_{\spsh_{\mathrm{inj}}}\).
\end{note}
\begin{rem}
	The absence of hypotheses on \(\C\) in this section is what leads us to believe that there may be a way to drop the hypothesis that \(\C\) is regular Cartesian Reedy, but this goes beyond the scope of this paper.
\end{rem}

\subsection{Necklaces and the coherent realization}
Necklaces were introduced by Dugger and Spivak in \cite{ds1} in order to understand the mapping objects \(\mathfrak{C}_\Delta(X)(x,y)\).  They prove a useful theorem that allows one to compute the coherent realization up to homotopy as a simplicially-enriched category whose hom-objects are the nerves of ordinary categories.  We will demonstrate here how their theory generalizes to our setting. We begin by recalling the definition of a necklace:

\begin{defn}
	A \emph{necklace} is a bi-pointed simplicial set \((T,(\alpha,\omega))\) of the form
	\[
		\Delta^{m_1} \vee \dots \vee \Delta^{m_n},
	\]
	with specified vertices 
	\[(\alpha,\omega):\Delta^0\coprod \Delta^0 \xrightarrow{\{0\}\coprod \{m_n\}} \Delta^{m_1} \coprod \Delta^{m_n}\xrightarrow{\iota_1\coprod \iota_n} T.\]
	By abuse of notation, we will simply refer to necklaces by the name of the simplicial set, suppressing the distinguished vertices \((\alpha,\omega)\).
	We define the category \(\Nec\) to be the full subcategory of bi-pointed simplicial sets spanned by the necklaces.
\end{defn}

Dugger and Spivak construct a functor
\[
	\mathfrak{C}^{\Nec}_\Delta: \psh{\Delta}\to \Cat_{\psh{\Delta}}
\]
whose evaluation on a simplicial set \(X\) is given as follows:
\begin{itemize}
	\item The set of objects objects of \(\mathfrak{C}^{\Nec}_\Delta(X)\) is \(X_0\).
	\item Given any two vertices \(x,x^\prime\in X_0\), the simplicial set of morphisms from \(x\) to \(x^\prime\) is given by the formula 
	\[
		\mathfrak{C}^{\Nec}_\Delta(X)(x,x^\prime)\defeq N\overcat{\Nec}{X_{x,x^\prime}},
	\]
	where \(\overcat{\Nec}{X_{x,x^\prime}}\) denotes the slice over the bi-pointed simplicial set \(X_{x,x^\prime}\) in the category of bi-pointed simplicial sets.  
	\item The composition map is induced by concatenation of necklaces.  That is, given a pair of necklaces \(T\to X_{x,x^\prime}\) and \(T^\prime\to X_{x^\prime,x^{\prime\prime}}\), their composite is given by the necklace 
	\[
		T \bigvee_{\omega_T,\alpha_{T^\prime}} T^\prime \to X_{x,x^{\prime\prime}}.
	\]
\end{itemize}

In order to compare \(\mathfrak{C}^{\Nec}_\Delta\) and \(\mathfrak{C}_\Delta\), Dugger and Spivak introduce an auxiliary functor \(\mathfrak{C}^{\Hoc}_\Delta\) that admits specified natural transformations to both.  This leads to the main theorem:

\begin{thm}[\cite{ds1}*{Theorem 5.2}]\label{dugspivthm}
	There is a specified natural zig-zag of weak equivalences of functors valued in simplicially-enriched categories:
	\[
		\mathfrak{C}^{\Nec}_\Delta \leftarrow \mathfrak{C}^{\Hoc}_\Delta \to \mathfrak{C}_\Delta.
	\]
\end{thm}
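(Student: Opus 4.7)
The plan is to construct the auxiliary functor $\mathfrak{C}^{\Hoc}_\Delta$ as a Bousfield--Kan-style homotopy colimit interpolating between the two target functors, together with canonical comparison maps to each, and then to check that both comparisons are pointwise weak equivalences on mapping simplicial sets. Explicitly, for a simplicial set $X$ with vertices $x,y \in X_0$, I would set
\[
	\mathfrak{C}^{\Hoc}_\Delta(X)(x,y)_n = \coprod_{T_0 \to \cdots \to T_n \to X_{x,y}} \mathfrak{C}_\Delta(T_0)(\alpha_{T_0},\omega_{T_0}),
\]
where the coproduct is indexed over $n$-chains in $\overcat{\Nec}{X_{x,y}}$, with the evident face and degeneracy maps and composition induced by necklace concatenation together with the composition law of $\mathfrak{C}_\Delta$. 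Collapsing each summand $\mathfrak{C}_\Delta(T_0)(\alpha,\omega)$ to a point produces the natural transformation $\mathfrak{C}^{\Hoc}_\Delta \to \mathfrak{C}^{\Nec}_\Delta$; the canonical map from the hocolim to the colim, composed with the structure map into $\mathfrak{C}_\Delta(X)(x,y)$, produces $\mathfrak{C}^{\Hoc}_\Delta \to \mathfrak{C}_\Delta$.

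To verify that $\mathfrak{C}^{\Hoc}_\Delta \to \mathfrak{C}^{\Nec}_\Delta$ is a weak equivalence, I would first compute by induction on the number of beads that for a necklace $T = \Delta^{m_1} \vee \cdots \vee \Delta^{m_k}$ the mapping object $\mathfrak{C}_\Delta(T)(\alpha,\omega)$ is naturally isomorphic to $\prod_i (\Delta^1)^{m_i - 1}$, and in particular is contractible. Since the Bousfield--Kan formula produces a projectively cofibrant diagram and the natural transformation to the constant point-valued functor is a pointwise weak equivalence, it induces a weak equivalence of hocolims. The hocolim of the constant diagram with value a point is precisely $N\overcat{\Nec}{X_{x,y}}$, identifying the target of the map.

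For $\mathfrak{C}^{\Hoc}_\Delta \to \mathfrak{C}_\Delta$, the strategy is to reduce to the case $X = \Delta^n$ using cocontinuity of both sides on the underlying simplicial set. Here both targets can be computed directly: the right-hand side is $(\Delta^1)^{n-1}$, while the hocolim on the left-hand side can be analyzed using a Reedy-cofinal subcategory of $\overcat{\Nec}{\Delta^n_{0,n}}$ consisting of subnecklaces of the maximal one $\Delta^n$ itself, which yields the same answer. The general case then follows by a hocolim comparison along the canonical resolution of $X$ by simplices, combined with the known cocontinuity of $\mathfrak{C}_\Delta$.

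The main obstacle is this second verification, because $\mathfrak{C}_\Delta$ does not preserve colimits at the level of mapping spaces, so one cannot directly commute hocolim with mapping space formation. What rescues the argument is a Reedy-cofinality analysis of the slice $\overcat{\Nec}{X_{x,y}}$: every pair of necklaces over $X_{x,y}$ admits a canonical common refinement, and equipping the slice with the dimension Reedy structure makes the relevant diagram of $\mathfrak{C}_\Delta$-mapping spaces projectively cofibrant, so that the comparison map from hocolim to $\mathfrak{C}_\Delta(X)(x,y)$ becomes a weak equivalence as desired.
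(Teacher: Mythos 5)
A preliminary remark: the paper does not prove this statement at all — it is imported verbatim from Dugger--Spivak (\cite{ds1}) and used as a black box, so the only meaningful comparison is with the proof in \cite{ds1}. Your reconstruction of the overall architecture is faithful to theirs: \(\mathfrak{C}^{\Hoc}_\Delta\) is indeed the Bousfield--Kan simplicial replacement of \(T\mapsto \mathfrak{C}_\Delta(T)(\alpha,\omega)\) over \(\overcat{\Nec}{X_{x,y}}\), the map to \(\mathfrak{C}^{\Nec}_\Delta\) is the collapse to the constant point diagram, and it is a weak equivalence because \(\mathfrak{C}_\Delta(\Delta^{m_1}\vee\dots\vee\Delta^{m_k})(\alpha,\omega)\cong \prod_i (\Delta^1)^{m_i-1}\) is a contractible cube. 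That half is correct.

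The gap is in the second comparison \(\mathfrak{C}^{\Hoc}_\Delta\to\mathfrak{C}_\Delta\), precisely where you locate the difficulty, and your proposed rescue rests on two false claims. First, \(\overcat{\Nec}{X_{x,y}}\) is not directed: two necklaces over \(X_{x,y}\) need not admit a common refinement. If \(X\) has two vertices \(x,y\) and two distinct edges \(e_1,e_2\) from \(x\) to \(y\), then a necklace contains at most one edge from \(\alpha\) to \(\omega\), so no object of the slice receives maps from both \((\Delta^1,e_1)\) and \((\Delta^1,e_2)\); the indexing category is disconnected and the colimit is a genuinely non-filtered one. Second, the diagram \(T\mapsto\mathfrak{C}_\Delta(T)(\alpha,\omega)\) is not projectively cofibrant, and no Reedy structure on the slice is exhibited for which it would be Reedy cofibrant with cofibrant constants; asserting this does not discharge the obligation. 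More fundamentally, your sketch never invokes the one input without which the comparison cannot start, namely the identification \(\mathfrak{C}_\Delta(X)(x,y)\cong \colim_{T\in\overcat{\Nec}{X_{x,y}}}\mathfrak{C}_\Delta(T)(\alpha,\omega)\) (\cite{ds1}, Proposition 4.3 and its corollary). This is a substantive combinatorial theorem, not a formal consequence of cocontinuity, since \(\mathfrak{C}_\Delta\) is cocontinuous only as a functor to simplicial categories and not at the level of mapping spaces — as you yourself observe. The actual argument in \cite{ds1} does not reduce to \(X=\Delta^n\) (where the slice happens to have a terminal object); it works directly with the colimit description for general \(X\) and compares \(\hocolim\) with \(\colim\) degreewise via the category of flagged (totally nondegenerate) necklaces, showing the relevant comparison categories have contractible nerves. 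That analysis is the real content of the theorem and is absent from your proposal.
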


The functors \(\mathfrak{C}_\Delta^{\Nec}\) and \(\mathfrak{C}_\Delta^{\Hoc}\), much like the functor \(\mathfrak{C}_\Delta\), send simplicial sets to simplicially enriched categories with set of objects equal to the set of \(0\)-simplices.  Ergo, they induce useful functors when applied pointwise to precategories.
\begin{defn}
	As in \Cref{pointwisedefn},  the \emph{pointwise necklace realization}
	\[
		\mathfrak{C}^{\Nec}_{\Delta,\bullet}:\mathbf{PCat}(\C) \to \Cat_{\spsh}
	\] 
	is defined by the rule
	\[
		\mathfrak{C}^{\Nec}_{\Delta,\bullet}(X)_c\defeq \mathfrak{C}^{\Nec}_{\Delta}(X_c).
	\]
	The \emph{pointwise homotopy colimit realization} \(\mathfrak{C}^{\Hoc}\) is defined similarly.  
\end{defn}

\begin{defn}
	The \emph{necklace realization} functor \(\mathfrak{C}^{\Nec}:\cellset \to \Cat_{\spsh}\) is the composite
	\[
		\cellset \xrightarrow{k^\ast} \mathbf{PCat}(\C) \xrightarrow{\mathfrak{C}^{\Nec}_{\Delta,\bullet}} \Cat_{\spsh}.
	\]
	The \emph{homotopy colimit realization} functor \(\mathfrak{C}^{\Hoc}\) is defined similarly.
\end{defn}

From these definitions and \Cref{dugspivthm} above, we deduce the following useful corollary.

\begin{cor}\label{necthm}
	There is a specified natural zig-zag of weak equivalences of functors valued in \(\Cat_{\spsh}\)
	\[
		\mathfrak{C}^{\Nec} \leftarrow \mathfrak{C}^{\Hoc} \to \mathfrak{C}
	\]
\end{cor}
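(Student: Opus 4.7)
The plan is to simply apply \Cref{dugspivthm} pointwise in the \(\C\)-direction and then precompose with \(k^{\ast}\). Concretely, for each \(c\in\C\) the Dugger--Spivak zig-zag gives a natural transformation-pair
\[
	\mathfrak{C}^{\Nec}_\Delta(-) \xleftarrow{\varepsilon} \mathfrak{C}^{\Hoc}_\Delta(-) \xrightarrow{\eta} \mathfrak{C}_\Delta(-)
\]
of endofunctors of \(\Cat_{\psh{\Delta}}\) taking a simplicial set \(S\) to a simplicially-enriched category with object set \(S_0\). Applying these componentwise to the \(c\)-evaluation of a precategory object \(X\in\mathbf{PCat}(\C)\) yields, by the definitions of the pointwise realizations, natural transformations
\[
	\mathfrak{C}^{\Nec}_{\Delta,\bullet} \xleftarrow{\varepsilon_{\bullet}} \mathfrak{C}^{\Hoc}_{\Delta,\bullet} \xrightarrow{\eta_{\bullet}} \mathfrak{C}_{\Delta,\bullet}
\]
of functors \(\mathbf{PCat}(\C)\to \Cat_{\spsh}\); precomposing with \(k^{\ast}\) then gives the candidate zig-zag on \(\cellset\).

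What remains is to verify that \(\varepsilon_{\bullet}\) and \(\eta_{\bullet}\) are pointwise weak equivalences for the Bergner-Lurie model structure on \(\Cat_{\spsh_{\operatorname{inj}}}\). First I would observe that since \(X\in\mathbf{PCat}(\C)\) has \(X_0\) a constant presheaf, the \(\spsh\)-enriched categories \(\mathfrak{C}^{\Nec}_{\Delta,\bullet}(X)\), \(\mathfrak{C}^{\Hoc}_{\Delta,\bullet}(X)\), \(\mathfrak{C}_{\Delta,\bullet}(X)\) all share the same underlying set of objects, namely \(X_0\), and the components of \(\varepsilon_{\bullet}\) and \(\eta_{\bullet}\) are the identity on this object set; in particular they are (weakly) essentially surjective. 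It therefore suffices to show that each of these natural transformations is weakly fully faithful.

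For weak fullness and faithfulness, fix a pair of objects \(x,x'\in X_0\); the induced map on hom-presheaves is the natural transformation of simplicial presheaves on \(\C\) whose \(c\)-component is the Dugger--Spivak map
\[
	\mathfrak{C}^{\Hoc}_\Delta(X_c)(x,x') \longrightarrow \mathfrak{C}^{\Nec}_\Delta(X_c)(x,x') \qquad\text{(resp.\ \(\to \mathfrak{C}_\Delta(X_c)(x,x')\))},
\]
which is a weak equivalence of simplicial sets by \Cref{dugspivthm}. Since the weak equivalences of the injective model structure \(\spsh_{\operatorname{inj}}\) are exactly the objectwise weak equivalences, each hom-presheaf component is a weak equivalence in \(\spsh_{\operatorname{inj}}\); combined with identity-on-objects this gives a \(\spsh_{\operatorname{inj}}\)-enriched weak equivalence.

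The main (only) technical point is this last verification that objectwise weak equivalences of hom-presheaves upgrade to \(\spsh_{\operatorname{inj}}\)-weak equivalences, and it is essentially immediate from the definition of weak equivalence in the injective model structure. Composing the whole diagram with the cocontinuous functor \(k^{\ast}:\cellset\to\mathbf{PCat}(\C)\) yields the desired natural zig-zag \(\mathfrak{C}^{\Nec}\leftarrow \mathfrak{C}^{\Hoc}\to\mathfrak{C}\) of weak equivalences.
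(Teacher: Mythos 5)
Your proposal is correct and is exactly the argument the paper intends: the corollary is stated as an immediate consequence of the definitions of the pointwise realizations together with \Cref{dugspivthm}, i.e.\ one applies the Dugger--Spivak zig-zag objectwise in \(c\), notes the maps are identity on objects, and uses that injective weak equivalences of simplicial presheaves are precisely the objectwise ones. Your write-up just makes explicit the routine verification the paper leaves implicit.
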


\subsection{Gadgets}
To prove the equivalence between quasicategories and simplicially-enriched categories, Dugger and Spivak make use of another intermediate construction called a category of Gadgets.  These are subcategories of bi-pointed simplicial sets that generalize necklaces while still retaining many of their useful properties. We begin by recalling the definition of a category of gadgets.

\begin{defn}[\cite{ds1}]
	A \emph{Category of Gadgets} is a subcategory \(\mathcal{G}\) of the category \(\psh{\Delta}_{\ast,\ast}\) satisfying the following properties:
	\begin{itemize}
		\item The category \(\mathcal{G}\) contains \(\Nec\).
		\item For all \(G\in \mathcal{G}\) and all necklaces \(T\), there is an equality 
		\[
			\mathcal{G}(T,G) = \psh{\Delta}(T,G).
		\]
		\item For any \(G \in \mathcal{G}\), the simplicial set \(\mathfrak{C}(G)(\alpha,\omega)\) is contractible.
	\end{itemize}
	The category \(\mathcal{G}\) is moreover said to be \emph{closed under wedges} if
	\begin{itemize}
		\item For any \(G, G^\prime\) in \(\mathcal{G}\), the wedge \(G\vee G^\prime\) is as well.
	\end{itemize}
	For any bi-pointed simplicial set \(X_{x,x^\prime}\) and any category of gadgets \(\mathcal{G}\), we functorially define a simplicial set 
	\[
		\mathfrak{C}^{\mathcal{G}}_\Delta(X)(x,x^\prime)\defeq N\overcat{\mathcal{G}}{X_{x,x^\prime}}.
	\]
	Moreover, if \(\mathcal{G}\) is closed under wedges, the collection of simplicial sets 
	\[
		(\mathfrak{C}^{\mathcal{G}}_\Delta(X)(x,x^\prime))_{x,x^\prime \in X}
	\]
	assembles to a simplicially enriched category \(\mathfrak{C}^{\mathcal{G}}_\Delta(X)\) with composition induced by concatenation of gadgets.
\end{defn}

Dugger and Spivak then prove the following useful proposition:

\begin{prop}[\cite{ds1}*{Proposition 5.5}]
	For any category of gadgets \(\mathcal{G}\), the natural map
	\[\mathfrak{C}^{\Nec}_\Delta(X)(x,x^\prime)\to \mathfrak{C}^{\mathcal{G}}_\Delta(X)(x,x^\prime)\]
	induced by the inclusion \(\Nec \hookrightarrow \mathcal{G}\) is a weak homotopy equivalence.  Moreover, if \(\mathcal{G}\) is closed under wedges, the natural transformation of functors valued in simplicially-enriched categories
	\[\mathfrak{C}^{\Nec}_\Delta\to \mathfrak{C}^{\mathcal{G}}_\Delta\]
	is a weak equivalence.
\end{prop}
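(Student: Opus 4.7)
My plan is to prove the first assertion by Quillen's Theorem A, applied to the inclusion functor
\[
\iota\colon \overcat{\Nec}{X_{x,x^\prime}} \hookrightarrow \overcat{\mathcal{G}}{X_{x,x^\prime}}.
\]
For a fixed object $\phi\colon G\to X_{x,x^\prime}$ of the target, an object of the comma category $\iota\downarrow\phi$ consists of a necklace $T\to X_{x,x^\prime}$ together with a morphism $T\to G$ in $\mathcal{G}$ whose composite with $\phi$ recovers the structural map $T\to X_{x,x^\prime}$. The second axiom in the definition of a category of gadgets — that $\mathcal{G}(T,G)=\psh{\Delta}_{\ast,\ast}(T,G)$ for any necklace $T$ — identifies these data with a single map of bi-pointed simplicial sets $T\to G$, the composite then lying over $X_{x,x^\prime}$ automatically. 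The comma category is therefore naturally isomorphic to the slice $\overcat{\Nec}{G}$ in bi-pointed simplicial sets.

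By definition of $\mathfrak{C}^{\Nec}_\Delta$, the nerve of this slice is exactly
\[
N\overcat{\Nec}{G} \;=\; \mathfrak{C}^{\Nec}_\Delta(G)(\alpha,\omega),
\]
where $(\alpha,\omega)$ is the bi-pointing of $G$. By the natural zig-zag of weak equivalences from \Cref{dugspivthm}, this simplicial set is weakly equivalent to $\mathfrak{C}_\Delta(G)(\alpha,\omega)$, which is contractible by the third axiom of a category of gadgets. Quillen's Theorem A then yields that the map induced on nerves by $\iota$ is a weak homotopy equivalence, which is exactly the statement that $\mathfrak{C}^{\Nec}_\Delta(X)(x,x^\prime)\to \mathfrak{C}^{\mathcal{G}}_\Delta(X)(x,x^\prime)$ is a weak equivalence.

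For the second assertion, when $\mathcal{G}$ is closed under wedges both $\mathfrak{C}^{\Nec}_\Delta(X)$ and $\mathfrak{C}^{\mathcal{G}}_\Delta(X)$ acquire simplicially enriched category structures with composition given by concatenation of bi-pointed simplicial sets, and since the inclusion $\Nec\hookrightarrow\mathcal{G}$ respects wedges the comparison is an enriched functor. It is the identity on the shared object set $X_0$ and a pointwise weak equivalence on hom-objects by the first part, hence trivially weakly essentially surjective and weakly fully faithful, i.e.\ a Dwyer--Kan equivalence of simplicially enriched categories.

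The main obstacle I anticipate is the clean identification of the comma category with $\overcat{\Nec}{G}$: one must track not only objects but morphisms — maps of necklaces over $X_{x,x^\prime}$ that are simultaneously over $G$ — and confirm via the second gadget axiom that no additional constraint arises. Once that identification is secured, everything else is a routine invocation of Theorem A together with the already-established zig-zag \Cref{dugspivthm}.
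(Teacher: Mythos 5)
Your proof is correct, and it is essentially the argument of Dugger--Spivak themselves: the paper states this result as a citation of \cite{ds1}*{Proposition 5.5} without reproving it, and the cited proof is exactly your application of Quillen's Theorem A, using the second gadget axiom to identify the comma category \(\iota\downarrow\phi\) with \(\overcat{\Nec}{G}\), whose nerve \(\mathfrak{C}^{\Nec}_\Delta(G)(\alpha,\omega)\) is contractible by combining the zig-zag of \Cref{dugspivthm} with the third gadget axiom. Note there is no circularity in invoking \Cref{dugspivthm} here, since its proof does not depend on the gadget machinery.
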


As in the previous section, given a category \(\mathcal{G}\) of gadgets, we can extend the realization functor to a functor
\[
	\mathfrak{C}^\mathcal{G}:\cellset_{\ast,\ast} \to \spsh,
\]
and when \(\mathcal{G}\) is closed under wedges these specify a functor
\[
	\mathfrak{C}^\mathcal{G}:\cellset \to \Cat_{\spsh},
\]
from which we obtain the following corollary:
\begin{cor}\label{gadgetlemma}
	The inclusion \(\Nec\hookrightarrow \mathcal{G}\) induces a natural equivalence of functors valued in \(\spsh\)
	\[\mathfrak{C}^{\Nec}(X)(x,x^\prime)\to \mathfrak{C}^{\mathcal{G}}(X)(x,x^\prime),\]
	and when \(\mathcal{G}\) is closed under wedges, these assemble to a natural equivalence of functors valued in \(\Cat_{\spsh}\):
	\[\mathfrak{C}^{\Nec}\to \mathfrak{C}^{\mathcal{G}}.\]
\end{cor}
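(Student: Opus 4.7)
The plan is to deduce the statement directly from its unenriched classical analogue, the Dugger--Spivak Proposition 5.5 cited just above, by exploiting that all constructions in sight are defined pointwise. Recall that by construction both $\mathfrak{C}^{\Nec}$ and $\mathfrak{C}^{\mathcal{G}}$ factor as the associated precategory functor $k^\ast$ followed by the pointwise (necklace or gadget) realization. Concretely, for any $\C$-cellular set $X$, any $c \in \C$, and any pair of objects (equivalently, vertices) $x, x'$, the value at $c$ of the simplicial presheaf $\mathfrak{C}^{\Nec}(X)(x,x')$ is exactly $\mathfrak{C}^{\Nec}_\Delta((k^\ast X)_c)(x,x')$, and analogously for $\mathcal{G}$. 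The map induced by the inclusion $\Nec \hookrightarrow \mathcal{G}$ is therefore, at each $c$, the classical comparison map on the bi-pointed simplicial set $(k^\ast X)_c$.

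For the first assertion I would invoke the fact that weak equivalences in $\spsh_{\mathrm{inj}}$ are detected pointwise as weak equivalences of simplicial sets. Thus it suffices to show that, for every $c \in \C$, the induced comparison
\[
\mathfrak{C}^{\Nec}_\Delta((k^\ast X)_c)(x,x') \longrightarrow \mathfrak{C}^{\mathcal{G}}_\Delta((k^\ast X)_c)(x,x')
\]
is a weak homotopy equivalence of simplicial sets, and this is precisely the content of \cite{ds1}*{Proposition 5.5}. Naturality in $X$ follows from naturality of the classical pointwise map and naturality of $k^\ast$.

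For the second assertion, when $\mathcal{G}$ is closed under wedges, concatenation of gadgets endows $\mathfrak{C}^{\mathcal{G}}(X)$ with the structure of a genuine $\spsh$-enriched category, and the comparison assembles into an $\spsh$-enriched functor $\mathfrak{C}^{\Nec}(X) \to \mathfrak{C}^{\mathcal{G}}(X)$. Both source and target have the same object set, namely $X_0$, and the functor is the identity on objects; by the first part it is a pointwise weak equivalence on every morphism object, hence weakly fully faithful. Being the identity on objects it is trivially weakly essentially surjective, so it is a weak equivalence in the Bergner--Lurie model structure on $\Cat_{\spsh_{\mathrm{inj}}}$. The entire argument is formal: there is no genuine obstacle, the only work being the bookkeeping needed to recognise that the pointwise application of the Dugger--Spivak comparison is precisely the natural map in question.
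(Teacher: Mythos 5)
Your proposal is correct and follows essentially the same route as the paper, which presents this as an immediate corollary of the pointwise definitions of $\mathfrak{C}^{\Nec}$ and $\mathfrak{C}^{\mathcal{G}}$ together with the cited Dugger--Spivak Proposition 5.5, using that injective weak equivalences of simplicial presheaves are detected objectwise. The only elaboration you add beyond the paper's (essentially omitted) argument is the explicit verification that an identity-on-objects, hom-wise weak equivalence is a Bergner--Lurie weak equivalence, which is fine.
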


\subsection{Quillen functoriality}
In this section, we show that the adjunction \[\cellset_{\mathrm{hJoyal}} \underset{\mathfrak{N}}{\overset{\mathfrak{C}}{\rightleftarrows}} \Cat_{\spsh_{\mathrm{inj}}}\] is a Quillen pair. We will extensively use the characterization of \(\mathfrak{C}\) given in \Cref{pointwise}.  We begin with the following observation:

\begin{prop}
	For any \(n>0\), let \(K\subseteq \{1,\dots,n-1\}\) and define
	\[\Lambda^n_K=\bigcup_{i\notin K} \partial_i \Delta^n,\]
	and let
	\[\lambda^n_K:\Lambda^n_K\hookrightarrow \Delta^n\]
	denote the inclusion map.  Then
	\[\mathfrak{C}(\square^\lrcorner_n(\lambda^n_K,\delta^{c_1},\dots,\delta^{c_n}))(i,j)\]
	is an isomorphism whenever \(i\neq 0\) or \(j\neq n\).  Moreover, the map
	\[\mathfrak{C}(\square^\lrcorner_n(\lambda^n_K,\delta^{c_1},\dots,\delta^{c_n}))(0,n)\]
	is exactly
	\[\delta^{c_1}\times^\lrcorner h^1_K \times^\lrcorner \dots \times^\lrcorner h^{n-1}_K \times^\lrcorner \delta^{c_n},\]
	where
	\[
		h^k_K =
		\begin{cases}
			\lambda^1_1 \text{ if } k\in K \\
			\delta^1 \text{ otherwise}
		\end{cases}.
	\]
\end{prop}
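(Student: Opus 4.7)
The plan is to reduce everything to the pointwise formula established in \Cref{pointwise}, namely that $\mathfrak{C}(V[n](c_1,\dots,c_n))(i,j) = c_{i+1}\times\Delta^1\times c_{i+2}\times\cdots\times\Delta^1\times c_j$ for any $i<j$, and to exploit two general properties of $\mathfrak{C}$: it is cocontinuous, being the composite $\mathfrak{C}_{\Delta,\bullet}\circ k^\ast$ of cocontinuous functors, and it preserves monomorphisms on hom-objects, since $k^\ast$ is a two-sided adjoint and $\mathfrak{C}_\Delta$ has the analogous property pointwise. The source of the corner intertwiner is $S = V_{\Lambda^n_K}(c_1,\dots,c_n) \cup \bigcup_{i=1}^n V[n](c_1,\dots,\partial c_i,\dots,c_n)$, and I will identify the image of $\mathfrak{C}(S)(i,j)$ inside $\mathfrak{C}(V[n](c_1,\dots,c_n))(i,j)$.

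For $(i,j)\neq(0,n)$ at least one of $i>0$ or $j<n$ holds; by symmetry suppose $i>0$. Since $0\notin K$, the face $\partial_0\Delta^n$ lies in $\Lambda^n_K$, and the face $\{i,\dots,j\}$ of $\Delta^n$ lies in $\partial_0\Delta^n$; consequently $\mathfrak{C}(V_{\partial_0\Delta^n}(c_1,\dots,c_n))(i,j)$ already coincides with the full target hom. Combined with injectivity on hom-objects, this forces the map to be an isomorphism; the case $j<n$ is symmetric via $\partial_n\Delta^n$.

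For $(i,j)=(0,n)$ I will enumerate three kinds of contribution to $\mathfrak{C}(S)(0,n)$ inside $c_1\times\Delta^1\times\cdots\times\Delta^1\times c_n$. First, each boundary piece $\mathfrak{C}(V[n](c_1,\dots,\partial c_i,\dots,c_n))(0,n)$ contributes $c_1\times\cdots\times\partial c_i\times\cdots\times c_n$. Second, for each $k\notin K$ with $1\leq k\leq n-1$, the inner face $V_{\partial_k\Delta^n}$ contributes $c_1\times\cdots\times c_k\times\{0\}\times c_{k+1}\times\cdots\times c_n$, the $\{0\}$ being forced by the explicit inner-coface formula for $Q$ in the proof of \Cref{pointwise}. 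Third, the outer faces $\partial_0\Delta^n$ and $\partial_n\Delta^n$ always lie in $\Lambda^n_K$ but do not themselves provide a hom $0\to n$; however, in the pushout $\mathfrak{C}(V_{\Lambda^n_K})$ taken in $\Cat_{\spsh}$, composing the full hom $\mathfrak{C}(V_{\partial_n\Delta^n})(0,j)$ with $\mathfrak{C}(V_{\partial_0\Delta^n})(j,n)$ gives a morphism $0\to j\to n$ which, by the composition law of \Cref{pointwise}, lands in $c_1\times\cdots\times c_j\times\{1\}\times c_{j+1}\times\cdots\times c_n$; running $j$ over $\{1,\dots,n-1\}$ produces the $\{1\}$-slab at every internal position. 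Combining the second and third contributions, the image at each $\Delta^1$-position $k$ is $\{0\}\cup\{1\}=\partial\Delta^1$ when $k\notin K$ and $\{1\}=\Lambda^1_1$ when $k\in K$; together with the first contribution, this is exactly the source of $\delta^{c_1}\times^\lrcorner h^1_K\times^\lrcorner\cdots\times^\lrcorner h^{n-1}_K\times^\lrcorner \delta^{c_n}$.

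The main obstacle will be making the third contribution rigorous: the pushout in $\Cat_{\spsh}$ could a priori either create extra identifications or miss some of the $\{1\}$-slabs. The former is ruled out by the monomorphism-preservation on hom-objects, which implies $\mathfrak{C}(V_{\Lambda^n_K})(0,n)\hookrightarrow\mathfrak{C}(V[n](c_1,\dots,c_n))(0,n)$, and the latter is handled by the explicit composition formula of \Cref{pointwise}, which shows that the composite of the full hom $0\to j$ in $\mathfrak{C}(V_{\partial_n\Delta^n})$ with the full hom $j\to n$ in $\mathfrak{C}(V_{\partial_0\Delta^n})$ realizes the entire $\{1\}$-slab at position $j$.
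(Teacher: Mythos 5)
Your proof is correct and follows essentially the same route as the paper's: both compute the hom-objects of the constituent pieces via the explicit formula of \Cref{pointwise}, observe that for \((i,j)\neq(0,n)\) the hom is already exhausted by an outer face, and for \((0,n)\) assemble the \(\partial c_i\)-slabs from the boundary pieces, the \(\{0\}\)-slabs from the inner faces \(\partial_k\) with \(k\notin K\), and the \(\{1\}\)-slabs from composites across the two outer faces. The one assertion you should flesh out is that \(\mathfrak{C}\) is hom-wise injective on subobjects of \(V[n](c_1,\dots,c_n)\) (the paper uses this implicitly as well); it follows from the explicit cube description of \(\mathfrak{C}_\Delta\) on subcomplexes of a simplex rather than formally from the adjointness of \(k^\ast\).
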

\begin{proof}
	Let \(X\) denote the domain of \(\square^\lrcorner_n(\lambda^n_K,\delta^{c_1},\dots,\delta^{c_n})\).  If \(f:T\to [n](c_1,\dots,c_n)_{i,j}\) is a bi-pointed map from a necklace \(T\), with \(i\neq 0\), then \(f\) factors through the inclusion of the subobject \([n-1](c_2,\dots,c_n)\subseteq V_{\Lambda^n_K}(c_1,\dots,c_n)\), so \(\mathfrak{C}(X)(i,j)=\mathfrak{C}([n](c_1,\dots,c_n)\).  The case where \(j\neq n\) follows by symmetry.

	The second part comes from the observation that when \(K=\{1,\dots,n-1\}\),
	\[\mathfrak{C}(V_{\Lambda^n_K}(c_1,\dots,c_n))(0,n)=\bigcup_{i=1}^{n-1} c_1\times \Gamma^1_i \times \dots \times \Gamma^{n-1}_i \times c_n,\]
	where
	\[
		\Gamma^\ell_i=
		\begin{cases}
			\Lambda^1_1 \text{ for } \ell=i \\
			\Delta^1 \text{ otherwise}
		\end{cases}.
	\]  To see this, notice that \(\Lambda^n_K\) is the union of the two outer faces, and attaching them along their common face gives a colimit in \(\Cat_{\spsh}\) where \(\mathfrak{C}(V_{\Lambda^n_K}(c_1,\dots,c_n)(0,n)\) is freely generated by compositions
	\[\mathfrak{C}([n-1](c_1,\dots,c_{n-1}))(0,\ell)\times \{1\} \times \mathfrak{C}([n-1](c_2,\dots,c_{n}))(\ell,n).\]

	For when \(K\) is otherwise, each additional inner face gives the factor
	\[\mathfrak{C}([n-1](c_1,\dots,c_{n-1}))(0,\ell)\times \{0\} \times \mathfrak{C}([n-1](c_2,\dots,c_{n}))(\ell,n),\]
	so in general,
	\[\mathfrak{C}(V_{\Lambda^n_K}(c_1,\dots,c_n))(0,n)=\bigcup_{i=1}^{n-1} c_1\times \Gamma^1_{i,K} \times \dots \times \Gamma^{n-1}_{i,K} \times c_n,\]
	where
	\[
		\Gamma^\ell_{i,K}=
		\begin{cases}
			\partial\Delta^1 \text{ for } \ell=i \text{ and } i\in K \\
			\Lambda^1_1 \text{ for } \ell=i \text{ and } i\notin K   \\
			\Delta^1 \text{ otherwise}
		\end{cases}.
	\]
	Each factor
	\[V[n](c_1,\dots,\partial c_j, \dots, c_n)\]
	contributes
	\[\mathfrak{C}(V[n](c_1,\dots,\partial c_j, \dots, c_n))(0,n)=c_1\times \Delta^1\times\dots \times \Delta^1 \times \partial c_j \times \Delta^1 \times \dots \times \Delta^1 \times c_n,\]
	and taking the union of all of the factors gives exactly the domain of the inclusion
	\[\delta^{c_1}\times^\lrcorner h^1_K \times^\lrcorner \dots \times^\lrcorner h^{n-1}_K \times^\lrcorner \delta^{c_n}.\]
\end{proof}
\begin{prop}\label{quillen1}
	The functor \(\mathfrak{C}\) sends monomorphisms to cofibrations and horizontal inner anodynes to trivial cofibrations.
\end{prop}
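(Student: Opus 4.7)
The plan is to exploit the cocontinuity of $\mathfrak{C}$, which is immediate from its definition as the composite $\cellset \xrightarrow{k^\ast} \mathbf{PCat}(\C) \xrightarrow{\mathfrak{C}_{\Delta,\bullet}} \Cat_{\spsh}$ of two cocontinuous functors. This reduces the statement to checking it on the generating cofibrations $\square^\lrcorner_n(\delta^n,\delta^{c_1},\dots,\delta^{c_n})$, $n\ge 0$, and on the generating horizontal inner anodynes $\square^\lrcorner_n(\lambda^n_k,\delta^{c_1},\dots,\delta^{c_n})$, $n\ge 2$, $0<k<n$. Both families fit the template $\square^\lrcorner_n(\lambda^n_K,\delta^{c_1},\dots,\delta^{c_n})$ of the preceding proposition (with $K=\varnothing$ and $K=\{k\}$ respectively), so I can appeal to its explicit computation.

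The case $n=0$ I would dispose of by hand: the map is $\varnothing\hookrightarrow \Delta^0$, and $\mathfrak{C}$ sends it to $\varnothing\hookrightarrow [0]_{\spsh}$, one of the generating cofibrations of the Bergner-Lurie model structure. For $n\ge 1$, the preceding proposition describes $\mathfrak{C}$ of our generator as an inclusion of $\spsh$-enriched categories that is identity on objects, an isomorphism on each hom-object $(i,j)\ne (0,n)$, and on $(0,n)$ equals a corner product $f_K$ built from the boundary inclusions $\delta^{c_i}$ and maps $h^j_K\in\{\delta^1,\lambda^1_1\}$, landing in the hom-object $c_1\times\Delta^1\times c_2\times\dots\times\Delta^1\times c_n$. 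My next step is to identify this inclusion with the pushout in $\Cat_{\spsh}$ of the suspension $\mathbf{2}(f_K)$ along the obvious map $\mathbf{2}(\operatorname{dom} f_K)\to Y$ (where $Y$ is the source enriched category) picking out the vertices $0$ and $n$ and the old $(0,n)$-hom.

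The subtle point, and the main obstacle I foresee, lies in this pushout identification: in general, freely adjoining morphisms to an enriched category produces new composites in other hom-objects. However, inside $\mathfrak{C}([n](c_1,\dots,c_n))$ the pair $(0,n)$ is extremal --- there are no non-identity morphisms with target $0$ or source $n$ --- so composites formed with the newly adjoined morphisms in $(0,n)$ only use the identities at $0$ and $n$ and cannot propagate elsewhere. Hence the honest pushout in $\Cat_{\spsh}$ agrees on hom-objects with the naive hom-wise pushout, and the identification holds.

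Once this is in place the proof concludes as follows. When $K=\varnothing$, every $h^j_K$ is $\delta^1$, a monomorphism, so $f_K$ is a monomorphism in $\spsh$ and hence a cofibration in $\spsh_{\mathrm{inj}}$; the suspension $\mathbf{2}(f_K)$ is then a cofibration in $\Cat_{\spsh_{\mathrm{inj}}}$ by standard closure properties of $\mathbf{2}$, and its pushout inherits the property. When $K=\{k\}$, the factor $h^k_K=\lambda^1_1$ is a trivial cofibration of $\psh{\Delta}_{\mathrm{KQ}}$ and hence, via the constant-presheaf embedding (which is left Quillen), of $\spsh_{\mathrm{inj}}$; Cartesian-monoidality of $\spsh_{\mathrm{inj}}$ upgrades the corner product $f_K$ to a trivial cofibration. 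Since $\mathbf{2}(f_K)$ is identity on objects and weakly fully faithful whenever $f_K$ is a weak equivalence (its only non-trivial hom-map being $f_K$ itself), it is a trivial cofibration of $\Cat_{\spsh_{\mathrm{inj}}}$, and its pushout preserves this, completing the proof.
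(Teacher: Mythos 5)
Your proof is correct and follows essentially the same route as the paper: reduce to the generating (trivial) cofibrations, use the preceding proposition to identify \(\mathfrak{C}\) of each generator as a pushout of \(\mathbf{2}(f_K)\) for the corner product \(f_K=\delta^{c_1}\times^\lrcorner h^1_K\times^\lrcorner\dots\times^\lrcorner h^{n-1}_K\times^\lrcorner\delta^{c_n}\), and conclude from \(f_K\) being a monomorphism (resp.\ a trivial cofibration, via \(\lambda^1_1\) and Cartesian-monoidality of \(\spsh_{\mathrm{inj}}\)) that the pushout is a cofibration (resp.\ trivial cofibration). Your explicit treatment of the \(n=0\) case and your justification of the pushout identification via the extremality of the pair \((0,n)\) supply details the paper leaves implicit.
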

\begin{proof}
	Let \[\mathbf{2}:\spsh \to \Cat_{\spsh}\] be the functor sending a simplicial presheaf \(X\) to the enriched category with objects \(\{0,1\}\) with \(\mathbf{2}(X)(0,0)=\mathbf{2}(X)(1,1)=\ast\), \(\mathbf{2}(X)(1,0)=\varnothing\), and \(\mathbf{2}(X)(0,1)=X\).

	When \(K=\varnothing\), \(\lambda^n_K=\delta^n\), so the lemma tells us that \[\mathfrak{C}(\square^\lrcorner_n(\delta^n,\delta^{c_1},\dots,\delta^{c_n})\] is a pushout of the map \[\mathbf{2}(\delta^{c_1}\times^\lrcorner \delta^1 \times^\lrcorner \dots \times^\lrcorner \delta^1 \times^\lrcorner \delta^{c_n}),\] which is a cofibration, which proves the claim.

	Similarly, when \(K\) is a singleton, \(\lambda^n_K=\lambda^n_k\) is the inclusion of an inner horn, so \[\mathfrak{C}(\square^\lrcorner_n(\lambda^n_k,\delta^{c_1},\dots,\delta^{c_n})\] is the pushout of the map \[\mathbf{2}(\delta^{c_1}\times^\lrcorner h^1_k \times^\lrcorner \dots \times^\lrcorner h^{n-1}_k \times^\lrcorner \delta^{c_n}),\] where \(h^k_k=\lambda^1_1\).  This is a corner map where one factor is a trivial cofibration (because it is Kan anodyne), and therefore its image under \(\mathbf{2}\) is a trivial cofibration.  Since the pushout of a trivial cofibration is a trivial cofibration, we are done.
\end{proof}

\begin{cor}
	The coherent nerve of a fibrant \(\spsh_{\mathrm{inj}}\)-enriched category is a formal \(\C\)-quasicategory.
\end{cor}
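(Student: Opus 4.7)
The plan is to deduce this corollary from the adjunction $\mathfrak{C} \dashv \mathfrak{N}$ together with the second half of \Cref{quillen1}, which is the substantial piece of work. By definition, $\mathfrak{N}(\mathcal{D})$ is a formal $\C$-quasicategory precisely if the unique map $\mathfrak{N}(\mathcal{D})\to \ast$ has the right lifting property against every generator $j \in \mathscr{J}$.

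First I would invoke the adjunction: for any horizontal inner anodyne generator $j:A\hookrightarrow B$ in $\mathscr{J}$, a lifting problem of $j$ against $\mathfrak{N}(\mathcal{D})\to \ast$ transposes across $\mathfrak{C}\dashv \mathfrak{N}$ to a lifting problem of $\mathfrak{C}(j):\mathfrak{C}(A)\to \mathfrak{C}(B)$ against the terminal map $\mathcal{D}\to \ast$ in $\Cat_{\spsh_{\mathrm{inj}}}$. So it suffices to show the latter always admits a solution.

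Next I would apply \Cref{quillen1}: since $j\in \mathscr{J}$ is a horizontal inner anodyne, $\mathfrak{C}(j)$ is a trivial cofibration in the Bergner--Lurie model structure $\Cat_{\spsh_{\mathrm{inj}}}$. By hypothesis $\mathcal{D}$ is fibrant, so $\mathcal{D}\to \ast$ has the right lifting property against all trivial cofibrations, and in particular against $\mathfrak{C}(j)$. Transposing back, $\mathfrak{N}(\mathcal{D})\to \ast$ has the right lifting property against $j$, and running this argument over all $j\in \mathscr{J}$ yields the claim.

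There is essentially no obstacle here beyond noting that this is a formal consequence of the Quillen pair produced in the previous proposition; all of the genuine combinatorial content (the explicit identification of $\mathfrak{C}$ on corner intertwiners with corner pushout products in $\Cat_{\spsh}$) has already been absorbed into \Cref{quillen1}. In particular, no additional hypothesis on $\C$ or on $\mathcal{D}$ beyond local fibrancy of the hom-presheaves is needed, which matches the characterization of fibrant objects in $\Cat_{\spsh_{\mathrm{inj}}}$ recorded earlier.
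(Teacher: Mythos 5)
Your argument is correct and is exactly the intended one: the paper states this as an immediate corollary of \Cref{quillen1}, relying on precisely the adjunction transposition you spell out, namely that $\mathfrak{C}$ carries the generators $\mathscr{J}$ to trivial cofibrations and a fibrant $\spsh_{\mathrm{inj}}$-enriched category lifts against these, so $\mathfrak{N}(\mathcal{D})$ lifts against $\mathscr{J}$ and is by definition a formal $\C$-quasicategory. No gap; nothing further is needed.
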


\begin{lemma}
	The object \(\mathfrak{C}(E^n)\) is weakly contractible for all \(n\).
\end{lemma}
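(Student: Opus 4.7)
The plan is to reduce to the classical fact that $\mathfrak{C}_\Delta(\operatorname{cosk}_0 \Delta^n)$ is a weakly contractible simplicial category, and then to transport this along the constant-presheaf functor $\psh{\Delta} \to \spsh$.

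First, I would compute $k^\ast(E^n)$. By construction $E^n = \mathscr{H}(\operatorname{cosk}_0 \Delta^n) = \pi^\ast(\operatorname{cosk}_0 \Delta^n)$, and the composite $\pi k \colon \Delta\times\C \to \Delta$ is simply projection onto the first factor, so $k^\ast(E^n) = (\pi k)^\ast(\operatorname{cosk}_0 \Delta^n)$ is the constant simplicial presheaf on $\C$ with value $\operatorname{cosk}_0 \Delta^n$. By the pointwise definition of the realization (\Cref{pointwisedefn}), it follows that for every $c \in \C$ we have $\mathfrak{C}(E^n)_c = \mathfrak{C}_\Delta(\operatorname{cosk}_0 \Delta^n)$; equivalently, $\mathfrak{C}(E^n)$ is obtained from $\mathfrak{C}_\Delta(\operatorname{cosk}_0 \Delta^n)$ by applying hom-wise the constant-presheaf functor $\operatorname{const} \colon \psh{\Delta} \to \spsh$.

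Second, I would invoke the classical fact (cf.\ \cite{htt}) that the map $\mathfrak{C}_\Delta(\operatorname{cosk}_0 \Delta^n) \to {[0]}_{\psh{\Delta}}$ is a weak equivalence in the Bergner model structure. One clean way to see this is to observe that $\operatorname{cosk}_0 \Delta^n$ is the nerve of the indiscrete groupoid on $n+1$ vertices, hence a Kan complex whose homotopy category is equivalent to the terminal category, so $\operatorname{cosk}_0 \Delta^n \to \Delta^0$ is a Joyal weak equivalence between (automatically Joyal-cofibrant) objects; Ken Brown's lemma applied to the Quillen equivalence $\mathfrak{C}_\Delta \dashv \mathfrak{N}_\Delta$ then yields the Bergner weak equivalence.

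Finally, the constant-presheaf functor $\operatorname{const} \colon \psh{\Delta} \to \spsh_{\mathrm{inj}}$ is strongly Cartesian monoidal and preserves both monomorphisms and weak equivalences (because injective weak equivalences of simplicial presheaves are detected sectionwise). It therefore induces a functor $\operatorname{const}_\ast \colon \Cat_{\psh{\Delta}} \to \Cat_{\spsh_{\mathrm{inj}}}$ preserving Bergner--Lurie weak equivalences, and applying it to the map of the previous step gives the desired weak equivalence
\[
\mathfrak{C}(E^n) \;=\; \operatorname{const}_\ast\bigl(\mathfrak{C}_\Delta(\operatorname{cosk}_0 \Delta^n)\bigr) \;\longrightarrow\; \operatorname{const}_\ast({[0]}_{\psh{\Delta}}) \;=\; {[0]}_{\spsh}.
\]
The only subtle point is verifying that $\operatorname{const}_\ast$ does preserve enriched weak equivalences, but this is immediate from the hom-wise plus essentially-surjective characterization of these weak equivalences combined with the sectionwise detection of injective weak equivalences on $\spsh$; this is a routine check rather than a real obstacle.
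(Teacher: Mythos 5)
Your proposal is correct and follows essentially the same route as the paper: both reduce to the classical contractibility of \(\mathfrak{C}_\Delta(\operatorname{cosk}_0\Delta^n)\) by observing that the hom-objects of \(\mathfrak{C}(E^n)\) are constant simplicial presheaves on \(\C\), so that weak contractibility can be checked sectionwise in the injective model structure. You simply spell out the identification \(k^\ast(E^n)_c \cong \operatorname{cosk}_0\Delta^n\) and the classical input in more detail than the paper does.
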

\begin{proof}
	We notice immediately that \(\mathfrak{C}(E^n)(i,j)_\bullet\) is a constant simplicial presheaf for all \(i,j\), so it suffices to show that \(\mathfrak{C}(E^n)(i,j)_\ast\) is contractible for all \(i,j\), but then it follows immediately from the classical case.
\end{proof}

\begin{prop}
	The coherent nerve sends fibrations between fibrant \(\spsh\)-enriched categories to fibrations for the horizontal Joyal model structure.
\end{prop}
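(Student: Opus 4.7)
The plan is to reduce the statement to the characterization of fibrations given in \Cref{isofibrations}: since the coherent nerve of a fibrant \(\spsh_{\mathrm{inj}}\)-enriched category is already known to be a formal \(\C\)-quasicategory, a map \(\mathfrak{N}(f):\mathfrak{N}(X)\to\mathfrak{N}(Y)\) between two such coherent nerves will be a fibration for the horizontal Joyal model structure as soon as it is a horizontal inner fibration having the right lifting property with respect to \(e:\Delta^0\hookrightarrow E^1\). Both conditions will be verified by passing to the left adjoint \(\mathfrak{C}\) and invoking the results of the previous propositions.

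For the first condition, the adjunction \(\mathfrak{C}\dashv \mathfrak{N}\) reduces the problem to showing that \(\mathfrak{C}\) carries every horizontal inner anodyne map to a map having the left lifting property against all fibrations of \(\Cat_{\spsh_{\mathrm{inj}}}\). This is immediate from \Cref{quillen1}, which asserts that \(\mathfrak{C}\) sends horizontal inner anodynes to trivial cofibrations for the Bergner--Lurie model structure; fibrations have the right lifting property against trivial cofibrations, so \(f\) lifts against \(\mathfrak{C}(\mathscr{J})\), and hence \(\mathfrak{N}(f)\) lifts against \(\mathscr{J}\).

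For the second condition, again by adjointness it suffices to show that \(\mathfrak{C}(e):\mathfrak{C}(\Delta^0)\to\mathfrak{C}(E^1)\) is a trivial cofibration in \(\Cat_{\spsh_{\mathrm{inj}}}\). That it is a cofibration follows from \Cref{quillen1}, since \(e\) is a monomorphism. That it is a weak equivalence will follow from the preceding lemma: \(\mathfrak{C}(E^1)\) has two objects and all four of its mapping objects are weakly contractible simplicial presheaves. Weakly full and faithful is then immediate, because \(\mathfrak{C}(\Delta^0)=[0]_{\spsh}\) has a single object whose endomorphism object is the unit, and this is sent by \(\mathfrak{C}(e)\) to the weakly contractible endomorphism object of the image vertex in \(\mathfrak{C}(E^1)\). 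Weakly essentially surjective follows from the contractibility of the cross terms: the homotopy category \(\mathbf{h}\mathfrak{C}(E^1)\) contains morphisms in both directions whose composites lie in a weakly contractible endomorphism space, hence are homotopic to identities, so the two objects of \(\mathfrak{C}(E^1)\) are isomorphic in the homotopy category and \(\mathfrak{C}(e)\) hits one of them.

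The main potential obstacle is the verification that \(\mathfrak{C}(e)\) is weakly essentially surjective, but the preceding lemma already supplies the contractibility of all mapping spaces of \(\mathfrak{C}(E^1)\), so this reduces to a routine check that weak invertibility in a locally fibrant enriched category can be read off from contractibility of hom-spaces. With both conditions of \Cref{isofibrations} verified, the theorem follows.
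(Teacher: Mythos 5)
Your proof is correct and follows essentially the same route as the paper: reduce via \Cref{isofibrations} to checking that \(\mathfrak{N}(p)\) is a horizontal inner fibration between formal \(\C\)-quasicategories with the right lifting property against \(e:\Delta^0\hookrightarrow E^1\), and verify both by adjunction using \Cref{quillen1} together with the contractibility of \(\mathfrak{C}(E^1)\). The only difference is that you spell out in more detail why contractibility of the hom-objects makes \(\mathfrak{C}(e)\) a weak equivalence, which the paper leaves implicit.
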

\begin{proof}
	Given a fibration between two fibrant \(\spsh\)-enriched categories, \(p:\mathcal{D}\to \mathcal{D}^\prime\), we see immediately that the coherent nerve takes this fibration to a horizontal inner fibration between formal \(\C\)-quasicategories by \Cref{quillen1}.  To show that it is a fibration for the horizontal Joyal model structure, it suffices by \Cref{isofibrations} to show that it has the right lifting property with respect to the inclusion \(e:\Delta^0 \hookrightarrow E^1\). By \Cref{quillen1}, we see that \(\mathfrak{C}\) takes the monomorphism \(e\) to a cofibration, and by the previous lemma, we see that \(\mathfrak{C}(e)\) is a weak equivalence.  It follows that \(\mathfrak{N}(p)\) is a fibration for the horizontal Joyal model structure.
\end{proof}

\begin{cor}\label{horizquillen}
	The adjunction 
	\[\cellset_{\mathrm{hJoyal}} \underset{\mathfrak{N}}{\overset{\mathfrak{C}}{\rightleftarrows}} \Cat_{\spsh_{\mathrm{inj}}}\]
	is a Quillen pair.
\end{cor}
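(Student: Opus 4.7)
The plan is to verify directly that $\mathfrak{C}$ is a left Quillen functor by checking preservation of cofibrations and trivial cofibrations on generators; by the cocontinuity of $\mathfrak{C}$, closure of the corresponding classes under retracts, transfinite composites, and pushouts then gives the full preservation statements.

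Preservation of cofibrations is the first assertion of \Cref{quillen1}, since cofibrations in $\cellset_{\mathrm{hJoyal}}$ are exactly the monomorphisms and these are generated by $\mathscr{M}$. For trivial cofibrations, I will invoke Cisinski's description of the anodyne class recalled in the proof of the preceding corollary: it is generated as a saturated class by pushout-products of $\mathscr{J}$ with certain monomorphisms, all of which remain horizontal inner anodyne by \Cref{anodynethm}, together with the maps $e \times^\lrcorner \square_n^\lrcorner(\delta^n, \delta^{c_1},\dots,\delta^{c_n})$ for $e: \Delta^0 \hookrightarrow E^1$ the endpoint inclusion and $n \geq 0$. By \Cref{joyalisothm}, every such $e \times^\lrcorner \square_n^\lrcorner(\delta^n,\delta^{c_1},\dots,\delta^{c_n})$ with $n > 0$ is itself horizontal inner anodyne; the only remaining case is $n = 0$, which degenerates (after corner-product with $\varnothing \hookrightarrow \Delta^0$) to $e$ itself.

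Thus it suffices to show that $\mathfrak{C}$ takes $\mathscr{J}$ and $e$ to trivial cofibrations. The former is the second assertion of \Cref{quillen1}. For the latter, \Cref{quillen1} gives that $\mathfrak{C}(e)$ is a cofibration, and the lemma that $\mathfrak{C}(E^n)$ is weakly contractible for all $n$ makes $\mathfrak{C}(e)$ a map between weakly contractible enriched categories (since $\mathfrak{C}(\Delta^0)$ is the terminal $\spsh$-enriched category), hence a weak equivalence. No genuine obstacle remains: this corollary is essentially a consolidation of the preceding computations, with the only mild delicacy being the correct reduction of the extra anodyne generators via \Cref{anodynethm} and \Cref{joyalisothm} down to a verification on $\mathscr{J}$ and the single map $e$.
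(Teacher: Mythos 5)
Your computations are essentially the ones the paper makes, but the logical frame you hang them on does not work. In a Cisinski model structure, the weakly saturated class generated by the anodyne generators (here, $\mathscr{J}$ together with the maps $e_i\times^\lrcorner m$ for $m\in\mathscr{M}$ and their iterated corner products with $\delta^{E^1}$) is the class of \emph{anodynes}, which is in general a proper subclass of the trivial cofibrations: \Cref{cisinskimaintheorem} identifies $\operatorname{rlp}(\mathbf{An}_I(S))$ with the fibrations only over \emph{fibrant} targets, and dually the anodynes do not exhaust the trivial cofibrations (already for $\C=\ast$, i.e.\ the Joyal model structure, the trivial cofibrations are not the saturation of the inner anodynes and the equivalence-extension maps, and no explicit generating set for them is known). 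So showing that $\mathfrak{C}$ carries the anodyne generators to trivial cofibrations establishes only that $\mathfrak{C}(\mathbf{An}_{E^1}(\mathscr{J}))$ consists of trivial cofibrations, not that $\mathfrak{C}$ preserves all trivial cofibrations; the closure argument in your first sentence is applied to the wrong generating set.

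The repair is exactly the paper's route: invoke the criterion that a left adjoint preserving cofibrations whose right adjoint preserves fibrations between fibrant objects is left Quillen. Your verifications are precisely adjoint to what that criterion needs. That $\mathfrak{C}$ sends monomorphisms to cofibrations gives that $\mathfrak{N}$ preserves trivial fibrations, and that $\mathfrak{C}$ sends the anodyne generators to trivial cofibrations gives, by adjunction, that $\mathfrak{N}$ sends fibrations to naive fibrations (maps with the right lifting property against the anodynes) and fibrant objects to formal $\C$-quasicategories; between fibrant objects, naive fibrations are genuine fibrations by \Cref{isofibrations}. So no computation is missing from your proposal, but the final inference must pass through this criterion (as the paper does, via the proposition that $\mathfrak{N}$ preserves fibrations between fibrant objects) rather than through a purported generation of the trivial cofibrations by the anodynes.
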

\begin{proof}
	If \(\mathfrak{C}\) takes cofibrations to cofibrations, and \(\mathfrak{N}\) takes fibrations between fibrant objects to fibrations between fibrant objects, then the adjunction is a Quillen pair, but this is exactly what we proved in this section.
\end{proof}

The following lemma will be useful for later.
\begin{lemma}\label{precatanodyne}
	If \(A\hookrightarrow B\) is an inner anodyne map of simplicial sets, the induced map
	\[k_!(A\times c) \hookrightarrow k_!(B\times c)\] is horizontal inner anodyne. Consequently, if \(X\) is a horizontal \(\C\)-quasicategory, each simplicial set \(k^\ast(X)_c\) is a quasicategory. 
\end{lemma}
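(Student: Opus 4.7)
The plan is to reduce to the case of a single inner horn inclusion via saturation, and then identify the resulting map with a corner-intertwiner of the kind already handled by the anodyne theorem.

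First, \(k_!(-\times c)\colon\psh{\Delta}\to\cellset\) is left adjoint to the evaluation functor \(k^\ast(-)_c\), and in particular cocontinuous. Since horizontal inner anodynes form a saturated class and \(k_!(-\times c)\) preserves pushouts, retracts, and transfinite composites, it suffices to verify the statement for a single generating inner horn inclusion \(\lambda^n_k\colon\Lambda^n_k\hookrightarrow\Delta^n\) with \(0<k<n\). The intertwiner \(V_S(c,\ldots,c)\) is cocontinuous in \(S\) and carries \(\Delta^m\) to \([m](c,\ldots,c) = k_!(\Delta^m\times c)\), so \(k_!(\lambda^n_k\times c)\) is naturally identified with the inclusion \(V_{\Lambda^n_k}(c,\ldots,c)\hookrightarrow V[n](c,\ldots,c)\).

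The key step is to identify this inclusion with the corner-intertwiner \(\square^\lrcorner_n(\lambda^n_k, e^c, \ldots, e^c)\), where \(e^c\colon\varnothing\hookrightarrow c\) is the (monic) empty inclusion in \(\psh{\C}\). Once this identification is established, horizontal inner anodyneness follows from the proposition proved in \Cref{horizontal} that the corner-intertwiner of any inner anodyne of \(\psh{\Delta}\) with monomorphisms of \(\psh{\C}\) is horizontal inner anodyne. To verify the identification, I would unwind the source of the corner tensor as the colimit over the cube \([1]^{n+1}\) with its terminal vertex removed. The vertex \((0,1,\ldots,1)\) contributes exactly \(V_{\Lambda^n_k}(c,\ldots,c)\). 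Any vertex with a \(0\) in the \(i\)-th slot (for some \(1\le i\le n\)) contributes an intertwiner carrying \(\varnothing\) in position \(i\); by the splitting property used in the proof of \Cref{conncolims}, this is isomorphic to \(V[i-1](c,\ldots,c)\coprod V[n-i](c,\ldots,c)\), embedded in \(V[n](c,\ldots,c)\) as the initial face on \(\{0,\ldots,i-1\}\) together with the terminal face on \(\{i,\ldots,n\}\). Because \(0<k<n\), these factor through \(\partial_n\Delta^n\) and \(\partial_0\Delta^n\) respectively, both of which lie in \(\Lambda^n_k\); hence every contribution from a non-apex vertex of the cube is already contained in \(V_{\Lambda^n_k}(c,\ldots,c)\), and the corner-tensor source equals \(V_{\Lambda^n_k}(c,\ldots,c)\) on the nose.

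For the consequence, the adjunction \(k_!(-\times c)\dashv k^\ast(-)_c\) transposes any lifting problem of \(\lambda^n_k\) against \(k^\ast(X)_c\) into a lifting problem of the horizontal inner anodyne \(k_!(\lambda^n_k\times c)\) against \(X\), which is solved by the fibrancy hypothesis on \(X\). Hence \(k^\ast(X)_c\) is a quasicategory. The principal technical obstacle is the combinatorial verification of the corner-tensor identification: one must track that the coproduct summands introduced by the \(\varnothing\)-labels remain inside the horn, which is precisely where the hypothesis \(0<k<n\) is consumed.
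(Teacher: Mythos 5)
Your reduction to a single generating inner horn, your cube computation identifying the source of \(\square^\lrcorner_n(\lambda^n_k,e^c,\dots,e^c)\) with \(V_{\Lambda^n_k}(c,\dots,c)\), and your adjunction argument for the consequence are all correct, and your overall route is the one the paper takes. The gap is the identification \(k_!(\lambda^n_k\times c)\cong\bigl(V_{\Lambda^n_k}(c,\dots,c)\hookrightarrow V[n](c,\dots,c)\bigr)\). The two cocontinuous functors you compare do not agree on the relevant diagram: the pullback functor \(S\mapsto V_S(c,\dots,c)\) on \(\overcat{\psh{\Delta}}{\Delta^n}\) carries a face \(\sigma\colon\Delta^m\to\Delta^n\) not to \([m](c,\dots,c)\) but to \([m](\sigma^\ast(c,\dots,c))\), whose \(i\)-th label is the product \(\prod_{j=\sigma(i-1)+1}^{\sigma(i)}c\); this equals \([m](c,\dots,c)\) only when \(\sigma\) skips no vertices. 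By contrast, \(k_!\) sends the simplex \(\sigma\) of \(\Lambda^n_k\) to \([m](c,\dots,c)\) mapping into \([n](c,\dots,c)\) through the partial diagonals \(c\to\prod c\). Concretely, take \(n=3\), \(k=1\): the face \(\{0,1,3\}\) lies in \(\Lambda^3_1\), so \(V_{\Lambda^3_1}(c,c,c)\) contains the entire cell \(V_{\partial_2\Delta^3}(c,c,c)=[2](c,c\times c)\), in particular every section \(d\to c\times c\times c\) over the long edge \(0\to3\); but \(k_!(\Lambda^3_1\times c)\) contains over that face only the image of \([2](c,c)\) under the map whose second label is the diagonal \(c\to c\times c\), i.e.\ only the sections of the form \((x,y,y)\). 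Whenever some \(\Hom_{\psh{\C}}(d,c)\) has more than one element these two subobjects of \([3](c,c,c)\) differ, so the identification fails for all \(n\geq 3\).

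What your argument does establish is that \(V_{\Lambda^n_k}(c,\dots,c)\hookrightarrow[n](c,\dots,c)\) is horizontal inner anodyne, and \(k_!(\lambda^n_k\times c)\) factors through it via the canonical comparison \(\kappa\colon k_!(\Lambda^n_k\times c)\to V_{\Lambda^n_k}(c,\dots,c)\); what remains is to show that \(\kappa\) itself is a trivial cofibration (or at least that formal \(\C\)-quasicategories lift against it). This looks repairable — for instance, \(k_!(\Lambda^n_k\times c)\) does contain the full spine of each face \(V_{\partial_\ell}\), since each spine edge of \(\partial_\ell\) acquires its full product label from some other face \(\partial_j\) of the horn whose omitted vertex \(j\) lies outside that edge, so one can hope to compare both sides of \(\kappa\) to a common spine and apply cancellation as in \Cref{precatproperty} — but this is an argument that must actually be supplied; it is not a formal consequence of cocontinuity. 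You should know that the paper's own one-line proof asserts exactly the same identification (and cites \Cref{precatproperty}, which concerns the Rezk localizer rather than horizontal inner anodynes), so the gap is inherited from the source; but it is a genuine gap in both.
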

\begin{proof}
	This follows directly by \Cref{precatproperty} applied to the maps
	\[V_{\Lambda^n_i}(c,\dots,c) \hookrightarrow V_{\Delta^n}(c,\dots,c),\] as these maps are exactly the maps
	\[k_!(\Lambda^n_i\times c) \hookrightarrow k_!(\Delta^n \times c).\]
\end{proof}

\subsection{The \(\Hom\) by cosimplicial resolutions for \(\C\)-precategories}
In this section, we show that if \(X\) is a \(\C\)-precategory such that \(X_c\) is a quasicategory for all \(c \in \C\), the simplicial presheaves \(\mathfrak{C}_{\Delta,\bullet}(X)(x,x^\prime)\) can be computed by resolutions.  To show this, we will make use of some helpful results in \cite{ds2}.

We define the following four cosimplicial bi-pointed simplicial sets:
\[C^\bullet_{\mathrm{cyl}}\defeq \colim \left( \partial\Delta^1 \leftarrow \Delta^\bullet \times \partial\Delta^1 \hookrightarrow \Delta^\bullet \times \Delta^1\right)\]
\[C^\bullet_{E}\defeq \colim \left( \partial\Delta^1 \leftarrow \Delta^\bullet \times \partial\Delta^1 \hookrightarrow \Delta^\bullet \times E^1\right)\]
\[C^\bullet_{R}\defeq \colim \left( \ast \leftarrow \Delta^\bullet \xrightarrow{\partial^{\bullet+1}} \Delta^{\bullet+1}\right)\]
\[C^\bullet_{L}\defeq \colim \left( \ast \leftarrow \Delta^\bullet \xrightarrow{\partial^{0}} \Delta^{\bullet+1}\right).\]

These cosimplicial bi-pointed simplicial sets fit in a natural diagram
\begin{center}
	\begin{tikzpicture}
		\matrix (b) [matrix of math nodes, row sep=3em, column sep=3em]
		{
			C^\bullet_R &                             &                \\
			               & C^\bullet_{\mathrm{cyl}} & C^\bullet_E \\
			C^\bullet_L &                             &                \\
		};
		\path[->]
		(b-1-1) edge (b-2-2)
		(b-3-1) edge (b-2-2)
		(b-2-2) edge (b-2-3);
	\end{tikzpicture}.
\end{center}
As these are cosimplicial objects in a cocomplete category, they induce adjunctions admit right adjoints
\[
	\operatorname{Map}^{(-)}: \psh{\Delta}_{\ast,\ast} \to \psh{\Delta},
\]
where
\[
	\operatorname{Map}^{(-)}_X(x,x^\prime)_n\defeq \Hom(C^n_{(-)},X_{x,x^\prime})
\]
\begin{lemma}[\cite{ds2}*{Proposition 9.4}]
	Each of the cosimplicial bi-pointed simplicial sets above is a Reedy-cofibrant cosimplicial resolution of \(\Delta^1\) with respect to the bi-pointed Joyal model structure. 
\end{lemma}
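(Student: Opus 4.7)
The plan is to verify, for each of the four cosimplicial bi-pointed simplicial sets $C^\bullet$, two properties: (i) that $C^\bullet$ is Reedy cofibrant in the bi-pointed Joyal model structure, and (ii) that each coface map is a Joyal weak equivalence, which given (i) is equivalent to the structure map $C^n \to C^0$ induced by the unique simplicial map $[n] \to [0]$ being a Joyal weak equivalence for every $n$.

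For (i), I will compute the latching object at each level directly. Since the latching functor is itself a colimit, it commutes with the pushouts defining each $C^\bullet$. Using that $\Delta^\bullet$ is Reedy cofibrant with $L^n \Delta^\bullet = \partial\Delta^n$, that the cosimplicial product $\Delta^\bullet \times K$ with any fixed simplicial set $K$ has latching $\partial\Delta^n \times K$, and that the shifted cosimplicial object $\Delta^{\bullet+1} \colon [n] \mapsto [n] \star [0]$ has latching $\Lambda^{n+1}_{n+1}$ at level $n$ (dually $\Lambda^{n+1}_0$ for $C^\bullet_L$), I can verify that in each of the four cases the latching map $L^n C \hookrightarrow C^n$ is a bi-pointed monomorphism, and hence a cofibration in the bi-pointed Joyal model structure.

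For (ii) in the cases $C^\bullet_{\mathrm{cyl}}$ and $C^\bullet_E$, the plan is to apply the gluing lemma in the left-proper Joyal model structure to the map of pushout diagrams obtained by applying the terminal map $\Delta^n \to \Delta^0$ termwise to
\[
	\partial\Delta^1 \leftarrow \Delta^n \times \partial\Delta^1 \hookrightarrow \Delta^n \times I
\]
with $I = \Delta^1$ or $E^1$. The right leg is a cofibration, so each pushout is a Joyal homotopy pushout, and each vertical in the comparison is a Joyal weak equivalence by the Cartesian closure of the Joyal model structure combined with the fact that $\Delta^n \to \Delta^0$ is a Joyal equivalence. The induced map $C^n \to C^0$ is therefore a Joyal weak equivalence.

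The main obstacle is case (ii) for $C^\bullet_R$ and (dually) $C^\bullet_L$, because the naive gluing argument fails: the induced comparison map $\Delta^{n+1} \to \Delta^1$ collapsing $n+1$ vertices to a single point is not a Joyal equivalence. My plan instead is to show directly that the section $\Delta^1 \hookrightarrow C^n_R$ arising from the edge $[n, n+1] \subset \Delta^{n+1}$ (after collapse of $\partial^{n+1}\Delta^n$) is a relative cell complex of inner horn inclusions. I will construct a filtration $F_0 = \Delta^1 \subseteq F_1 \subseteq \cdots \subseteq F_n = C^n_R$, with $F_k$ spanned by the non-degenerate simplices $[i_0, \ldots, i_r, n+1]$ having $i_0 \ge n-k$, attaching at each stage the new simplices in order of decreasing codimension. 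The key combinatorial observation that ensures each attachment is an inner horn (rather than outer) is that any sub-face of such a simplex supported entirely within $\{0, \ldots, n\}$ collapses to the degenerate edge $\id_p$ on the new vertex, keeping the missing face $d_j$ in an interior position $0 < j < m$. Two-of-three applied to the identity factorization $\Delta^1 \hookrightarrow C^n_R \to C^0_R = \Delta^1$ then yields that the structure map $C^n_R \to C^0_R$ is a Joyal weak equivalence.
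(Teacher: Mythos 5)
Note first that the paper itself does not prove this lemma --- it is imported verbatim from Dugger--Spivak --- but it does prove the cellular analogue later (the proposition on the resolutions \(C^\bullet_{(-)}(c)\)), and that proof proceeds by an inner-anodyne filtration, not by gluing. Your part (i) is fine. Both halves of your part (ii), however, have genuine gaps.

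For \(C^\bullet_{\mathrm{cyl}}\), the gluing argument fails for precisely the reason you yourself flag in the \(C^\bullet_R\) case: \(\Delta^n\to\Delta^0\) is \emph{not} a Joyal weak equivalence (it is the nerve of \([n]\to[0]\), which is not an equivalence of categories), so the vertical maps \(\Delta^n\times\partial\Delta^1\to\partial\Delta^1\) and \(\Delta^n\times\Delta^1\to\Delta^1\) in your comparison of pushout squares are not weak equivalences and the gluing lemma gives nothing. The cylinder case needs the same kind of filtration/retract argument as \(C^\bullet_R\). (For \(C^\bullet_E\) the displayed formula \(\Delta^\bullet\times E^1\) is evidently a typo for \(E^\bullet\times\Delta^1\) --- compare the cellular version \(E^\bullet\times[1](c)\) defined later, and note that with the literal reading one would get \(C^0_E=E^1\), which is not Joyal-equivalent to \(\Delta^1\) at all. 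With the corrected reading your gluing argument \emph{does} work for \(C^\bullet_E\), since \(E^n\to\Delta^0\) is a Joyal equivalence.)

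For \(C^\bullet_R\), the filtration is reasonable but the attachment scheme is wrong. If at stage \(k\) you attach the new simplices in order of increasing dimension, then by the time you reach \(\sigma=[n-k,s_1,\dots,s_{m-1},n+1]\) \emph{every} face of \(\sigma\) is already present: \(d_0\sigma\) has first vertex \(>n-k\) and lies in \(F_{k-1}\); \(d_m\sigma\) is supported in \(\{0,\dots,n\}\) and is collapsed; and each inner face \(d_j\sigma\) is a stage-\(k\) simplex of strictly smaller dimension, hence already attached. So each attachment is along the full boundary \(\partial\Delta^m\) --- a cofibration, not an inner anodyne --- and no weak-equivalence conclusion follows. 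The repair is the pairing used in the paper's proof of the cellular analogue: designate a marker vertex (e.g.\ attach each new simplex containing \(n-k+1\) along the inner horn \(\Lambda^m_1\) omitting the face \(d_1\) that forgets the marker, so that the simplex and that single face are adjoined simultaneously; all other faces are accounted for exactly as in your ``key combinatorial observation''). Equivalently, one can filter \(C^n_R\) by the simplices containing both \(n\) and \(n+1\) and attach along \(\Lambda^{m}_{m-1}\), which is the mirror image of the filtration the paper writes down for \(C^\bullet_L(c)\).
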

The following corollary is immediate from the fact that quasicategories are the fibrant objects of the Joyal model structure together with the general theory of cosimplicial resolutions:
\begin{cor}
	If \(X_{x,x^\prime}\) is a bi-pointed quasicategory, then there is a natural isomorphism in the homotopy category
	\[
		\operatorname{Map}^{(-)}_X(x,x^\prime)\cong \operatorname{hMap}_X(x,x^\prime),
	\]
	where the \((-)\) on the lefthand side can take any of the values \(\mathrm{cyl},\) \(E\), \(R\) or \(L\), and where the righthand side denotes the homotopy function complex of maps of bi-pointed simplicial sets
	\[
		\Delta^1 \to X_{x,x^\prime}.
	\]
\end{cor}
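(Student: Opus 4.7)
The plan is to reduce the corollary to a direct application of the standard framework for cosimplicial resolutions and homotopy function complexes in a model category, as presented in Hirschhorn's \emph{Model Categories and Their Localizations}. Two inputs are required, and both are essentially in hand. The first is the content of the preceding lemma: each of $C^\bullet_{\mathrm{cyl}}$, $C^\bullet_E$, $C^\bullet_R$, and $C^\bullet_L$ is a Reedy-cofibrant cosimplicial resolution of $\Delta^1$ in the bi-pointed Joyal model structure. The second input is that a bi-pointed simplicial set $X_{x,x'}$ whose underlying simplicial set is a quasicategory is fibrant in that bi-pointed Joyal model structure; this is immediate because the bi-pointed Joyal model structure may be presented as the model structure on the slice $\partial\Delta^1 \downarrow \psh{\Delta}$ inherited from the Joyal model structure, with its fibrations and fibrant objects created by the forgetful functor to $\psh{\Delta}$.

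With these two inputs in place, the standard result delivers the corollary. Namely, for any Reedy-cofibrant cosimplicial resolution $C^\bullet \to A$ and any fibrant object $Y$ in a model category, the simplicial set $\Hom(C^\bullet, Y)$ is a model for the derived mapping space $\mathrm{hMap}(A, Y)$, and any two Reedy-cofibrant cosimplicial resolutions of the same object produce canonically weakly equivalent homotopy function complexes via the standard zig-zag construction (obtained by comparing each resolution to a common Reedy trivial fibration and applying SM7 relative to the Reedy structure on $\Delta$). Applied to $A = \Delta^1$ and $Y = X_{x,x'}$ for each of the four choices $(-)\in\{\mathrm{cyl},E,R,L\}$, this yields the claimed natural isomorphism in the homotopy category. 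The compatibility of the four isomorphisms with the natural diagram connecting $C^\bullet_R$, $C^\bullet_L$, $C^\bullet_{\mathrm{cyl}}$, and $C^\bullet_E$ is automatic, as the arrows in that diagram are maps of Reedy-cofibrant cosimplicial resolutions of the same object and hence induce weak equivalences after applying $\Hom(-, X_{x,x'})$.

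The only bookkeeping required is the verification that the bi-pointed Joyal model structure exists and transports fibrancy as claimed, but this is an entirely standard under-category construction. I therefore do not anticipate any genuine obstacle; the real content has already been carried out in the preceding lemma, and the present statement is a formal consequence.
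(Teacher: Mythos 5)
Your proposal is correct and follows the same route as the paper, which likewise derives the corollary as an immediate consequence of the preceding lemma (the four cosimplicial objects are Reedy-cofibrant resolutions of \(\Delta^1\) in the bi-pointed Joyal model structure) combined with the fibrancy of quasicategories and the general theory of cosimplicial resolutions and homotopy function complexes. The only cosmetic remark is that \(\partial\Delta^1 \downarrow \psh{\Delta}\) is an under-category rather than a slice, but your use of it — fibrations and fibrant objects created by the forgetful functor — is exactly right.
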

Since the choice of resolution doesn't matter, we abuse notation and let \(\Map\) and \(C^\bullet\) denote whichever choice of resolution and adjoint that is convenient.
The following theorem is a key result in \cite{ds2}.  
\begin{thm}[\cite{ds2}*{Corollary 5.3}]
	There is a zig-zag, natural in \(X_{x,x^\prime}\)
	\[
		\mathfrak{C}_\Delta(X)(x,x^\prime) \leftrightsquigarrow \Map_X(x,x^\prime),
	\]
	which becomes a zig-zag of natural weak homotopy equivalences upon restriction to bi-pointed quasi-categories.
\end{thm}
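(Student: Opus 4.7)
The plan is to factor the comparison through the necklace realization. First, I would apply Corollary \ref{necthm} to obtain the natural zig-zag
\[
\mathfrak{C}_\Delta(X)(x,x^\prime) \xleftarrow{\sim} \mathfrak{C}^{\Hoc}_\Delta(X)(x,x^\prime) \xrightarrow{\sim} \mathfrak{C}^{\Nec}_\Delta(X)(x,x^\prime),
\]
whose arrows are weak equivalences whenever \(X_{x,x^\prime}\) is a bi-pointed quasi-category. It then suffices to produce a natural zig-zag of weak equivalences between \(\mathfrak{C}^{\Nec}_\Delta(X)(x,x^\prime)\) and \(\Map_X(x,x^\prime)\), which I propose to obtain via the gadget machinery.

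For each of the four cosimplicial resolutions \(C^\bullet\), let \(\mathcal{G} = \mathcal{G}_C\) be the full subcategory of \(\psh{\Delta}_{\ast,\ast}\) spanned by all necklaces together with all of the bi-pointed simplicial sets \(C^n\). Two of the three defining conditions for \(\mathcal{G}_C\) to be a category of gadgets are immediate from having taken a full subcategory containing \(\Nec\). For the third, I need to show that \(\mathfrak{C}_\Delta(C^n)(\alpha,\omega)\) is contractible; this follows because each \(C^\bullet\) is a Reedy-cofibrant cosimplicial resolution of \(\Delta^1\) in the bi-pointed Joyal model structure, so the canonical bi-pointed weak equivalence \(C^n \xrightarrow{\sim} \Delta^1\) is sent by the left Quillen functor \(\mathfrak{C}_\Delta\) (of the classical Lurie equivalence) to a weak equivalence onto \(\mathfrak{C}_\Delta(\Delta^1)(\alpha,\omega) = \ast\).

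Applying the gadget lemma (Corollary \ref{gadgetlemma}) yields a natural weak equivalence
\[
\mathfrak{C}^{\Nec}_\Delta(X)(x,x^\prime) \xrightarrow{\sim} N\overcat{\mathcal{G}_C}{X_{x,x^\prime}}.
\]
Next, define the functor \(F:\overcat{\Delta}{\Map_X(x,x^\prime)} \to \overcat{\mathcal{G}_C}{X_{x,x^\prime}}\) sending an \(n\)-simplex \(f:C^n \to X_{x,x^\prime}\) of \(\Map_X(x,x^\prime)\) to the pair \((C^n, f)\). The last-vertex map provides a standard weak equivalence \(N\overcat{\Delta}{\Map_X(x,x^\prime)} \xrightarrow{\sim} \Map_X(x,x^\prime)\), so it remains to show that \(NF\) is a weak equivalence, thereby completing the zig-zag.

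This last step is the principal obstacle. I would verify it via Quillen's Theorem A: for each gadget \(G\) equipped with a bi-pointed map \(G \to X_{x,x^\prime}\), one must show that the comma category of factorizations \(G \to C^n \to X_{x,x^\prime}\) is contractible. Concretely, this reduces to producing, for every necklace \(T = \Delta^{m_1} \vee \cdots \vee \Delta^{m_k}\), a cofinal filtered family of bi-pointed embeddings \(T \hookrightarrow C^n\) compatible with the map down to \(X_{x,x^\prime}\); for instance, in the case of \(C^\bullet_{\mathrm{cyl}}\), a necklace embeds into a long-enough cylinder by sending its beads along a zig-zag path through non-degenerate simplices of \(\Delta^n \times \Delta^1\). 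This combinatorial matching between necklaces and the cosimplicial resolutions, which must be verified somewhat differently for each of the four resolutions, is precisely the technical heart of Dugger-Spivak's argument in \cite{ds2}.
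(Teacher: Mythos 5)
First, be aware that the paper contains no proof of this statement: it is imported verbatim as \cite{ds2}*{Corollary 5.3} and used as a black box, so there is no internal argument to measure yours against. Your outline does correctly reconstruct the soft portion of Dugger--Spivak's architecture: the reduction to \(\mathfrak{C}^{\Nec}_\Delta\) via \(\mathfrak{C}^{\Hoc}_\Delta\) (the relevant citation is the \(\Delta\)-level \Cref{dugspivthm} rather than \Cref{necthm}, which concerns the \(\C\)-enriched functors, but this is cosmetic); the verification that the full subcategory spanned by \(\Nec\) and the objects \(C^n\) is a category of gadgets, with contractibility of \(\mathfrak{C}_\Delta(C^n)(\alpha,\omega)\) obtained from Ken Brown's lemma applied to the weak categorical equivalence \(C^n\to\Delta^1\); and the identification of \(\Map_X(x,x^\prime)\) with the nerve of its category of simplices via the last-vertex map.

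The genuine gap is the decisive step, namely that \(NF\colon N\overcat{\Delta}{\Map_X(x,x^\prime)}\to N\overcat{\mathcal{G}_C}{X_{x,x^\prime}}\) is a weak equivalence; this is precisely the content of \cite{ds2}*{Proposition 5.2}, and your concrete plan for it fails as stated. Each of \(C^n_R\), \(C^n_L\), \(C^n_{\mathrm{cyl}}\) and \(C^n_E\) has exactly two vertices, since the relevant outer face, respectively the two ends of the cylinder, are collapsed to points; hence a necklace with more than two vertices admits no bi-pointed embedding into any \(C^n\), and the comma categories \((G,g)\downarrow F\) must be analyzed using arbitrary bi-pointed maps \(G\to C^n\) over \(X_{x,x^\prime}\). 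These categories are not filtered, so the cofinality-by-embeddings argument you propose is unavailable. Moreover, the hypothesis that \(X\) be a quasi-category --- the source of the restriction in the statement --- must enter exactly at this step (already the nonemptiness of these comma categories requires inner horn filling in \(X\)), and your sketch never indicates where or how it is used; every step you do carry out holds for arbitrary bi-pointed simplicial sets, which would prove a false unconditional statement. Deferring the remaining verification back to \cite{ds2} is legitimate as a citation, as the paper itself does, but as a proof it leaves the theorem's essential content unestablished.
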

From the naturality of this result, we obtain the following corollary:
\begin{cor}
	If \(X_{x,x^\prime}\) is a bi-pointed \(\C\)-precategory such that each \(X_c\) is a quasicategory, the component of the natural zig-zag of maps of simplicial presheaves on \(\C\) at \(X_{x,x^\prime}\) is a zig-zag of weak equivalences in \(\spsh\):
	\[
		\mathfrak{C}_{\Delta,\bullet}(X)(x,x^\prime)_c \leftrightsquigarrow \Map_{X_c}(x,x^\prime)
	\]
\end{cor}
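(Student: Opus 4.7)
The plan is to apply the preceding theorem pointwise in $c \in \C$ and then appeal to the objectwise character of weak equivalences in $\spsh_{\mathrm{inj}}$. By the construction in \Cref{pointwisedefn}, we have $\mathfrak{C}_{\Delta,\bullet}(X)(x,x^\prime)_c = \mathfrak{C}_\Delta(X_c)(x,x^\prime)$, and since each cosimplicial object $C^\bullet_{(-)}$ is built from bi-pointed simplicial sets regarded as constant simplicial presheaves on $\C$, the associated right adjoint $\Map$ commutes with evaluation at $c$, yielding $\Map_X(x,x^\prime)_c \cong \Map_{X_c}(x,x^\prime)$.

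Next, I would exploit the naturality of the zig-zag from the preceding theorem in its bi-pointed simplicial set argument. Applied to the family of bi-pointed simplicial sets $\{(X_c)_{x,x^\prime}\}_{c\in \C}$, naturality in $c$ produces a zig-zag of morphisms of bi-pointed simplicial presheaves on $\C$, whose evaluation at any object $c \in \C$ agrees with the Dugger-Spivak zig-zag for $X_c$. This is the zig-zag asserted in the statement.

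Finally, since every $X_c$ is a quasicategory by hypothesis, the preceding theorem ensures that every pointwise component of this zig-zag is a weak homotopy equivalence of simplicial sets. Since weak equivalences in $\spsh_{\mathrm{inj}}$ are precisely the objectwise weak equivalences (cf.\ the characterization of the injective model structure used earlier in the paper), the zig-zag is a zig-zag of weak equivalences in $\spsh$, as required. I do not anticipate any significant obstacle; the only verification warranting care is the commutation of the $\Map$-construction with evaluation, which is immediate from the fact that the resolutions $C^\bullet$ are constant in the $\C$-direction and the fact that evaluation at $c$ is a left adjoint in $\spsh$.
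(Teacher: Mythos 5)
Your proposal is correct and is essentially the argument the paper intends: the paper deduces this corollary directly from the naturality of the Dugger--Spivak zig-zag, applied pointwise in \(c\), together with the fact that weak equivalences in \(\spsh_{\mathrm{inj}}\) are detected objectwise. The only detail you spell out that the paper leaves implicit is the identification \(\Map_X(x,x^\prime)_c \cong \Map_{X_c}(x,x^\prime)\), which holds for exactly the reason you give (the resolutions are constant in the \(\C\)-direction).
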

Combining this corollary with \Cref{precatanodyne}, we obtain the following:
\begin{cor}
	Upon restriction to bi-pointed formal \(\C\)-quasicategories, we have a natural zig-zag of weak equivalences in \(\spsh\)
	\[
		\mathfrak{C}(X)(x,x^\prime)_c \leftrightsquigarrow \Map_{k^*(X)_c}(x,x^\prime).
	\]
\end{cor}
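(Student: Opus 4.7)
The plan is to assemble the corollary directly from two earlier results by unwinding the definition of the coherent realization. By construction \(\mathfrak{C} = \mathfrak{C}_{\Delta,\bullet}\circ k^*\), so for any bi-pointed \(\C\)-cellular set \(X_{x,x'}\) and any \(c\in\C\) there is a tautological identification
\[
\mathfrak{C}(X)(x,x')_c \;=\; \mathfrak{C}_{\Delta,\bullet}(k^*(X))(x,x')_c \;=\; \mathfrak{C}_\Delta\bigl(k^*(X)_c\bigr)(x,x').
\]

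First I would invoke \Cref{precatanodyne} to reduce to the precategory setting: if \(X\) is a formal \(\C\)-quasicategory, then each \(k^*(X)_c\) is a quasicategory, and hence the bi-pointed simplicial set \((k^*(X)_c)_{x,x'}\) is a bi-pointed quasicategory in the classical sense. This shows that \(k^*(X)\) is a bi-pointed \(\C\)-precategory satisfying the hypothesis of the immediately preceding corollary.

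Next I would apply that corollary at the precategory \(k^*(X)\) to obtain, for every \(c\in\C\), a zig-zag of weak equivalences
\[
\mathfrak{C}_{\Delta,\bullet}(k^*(X))(x,x')_c \;\leftrightsquigarrow\; \Map_{k^*(X)_c}(x,x'),
\]
which via the tautological identification above is exactly the desired zig-zag in \(\spsh\). Naturality in \(X_{x,x'}\) is inherited from the naturality at the level of \(\C\)-precategories through functoriality of \(k^*\), while naturality in \(c\) is built into the pointwise definition of \(\mathfrak{C}_{\Delta,\bullet}\) and of the cosimplicial resolution \(C^\bullet\).

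There is no genuine obstacle here: all substantive homotopical content is already provided by the Dugger--Spivak resolution theorem cited from \cite{ds2} and by the fibrancy statement of \Cref{precatanodyne}. The only mild care needed is to check that the choice of bi-pointing transfers cleanly through \(k^*\), but this is immediate since \(k^*\) is a right adjoint and hence preserves the terminal object and the coproduct inclusions used to specify the distinguished vertices.
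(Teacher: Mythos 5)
Your proposal is correct and matches the paper's argument exactly: the paper also obtains this corollary by combining the preceding corollary on \(\C\)-precategories with \Cref{precatanodyne}, using the factorization \(\mathfrak{C}=\mathfrak{C}_{\Delta,\bullet}\circ k^\ast\) to identify \(\mathfrak{C}(X)(x,x^\prime)_c\) with \(\mathfrak{C}_\Delta(k^\ast(X)_c)(x,x^\prime)\). No further comment is needed.
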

Unwinding the definitions, we note the following corollary:
\begin{cor}\label{mapspacecor}
	If \(X_{x,x^\prime}\) is a bi-pointed formal \(\C\)-quasicategory, we have zig-zags of weak equivalences in \(\spsh\)
	\begin{align*}
		\mathfrak{C}(X)(x,x^\prime)_c \leftrightsquigarrow \Hom(k_!(C^\bullet_R\times c), X_{x,x^\prime}),
		\intertext{and}
		\mathfrak{C}(X)(x,x^\prime)_c \leftrightsquigarrow \Hom(k_!(C^\bullet_L\times c), X_{x,x^\prime}).
	\end{align*}
\end{cor}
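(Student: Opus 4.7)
The claim is presented as an ``unwinding'' of the previous corollary, so the plan is simply to trace the adjunctions through the definitions and inherit the zig-zag of weak equivalences that was already established. Concretely, starting from the zig-zag
\[
    \mathfrak{C}(X)(x,x^\prime)_c \leftrightsquigarrow \Map_{k^*(X)_c}(x,x^\prime)
\]
of the preceding corollary, I would identify the right-hand side, level by level, with the enriched-hom expression $\Hom(k_!(C^\bullet_R \times c), X_{x,x'})$ and similarly for $C^\bullet_L$, using the adjunction $k_! \dashv k^*$. The naturality of the isomorphism in the cosimplicial argument means the level-wise identifications assemble into an isomorphism of simplicial presheaves on $\C$, after which the zig-zag of the preceding corollary transports automatically.

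The key computation is the following chain of natural isomorphisms, applied for each $n \geq 0$. Using the definition of $\Map$ via the resolution $C^\bullet$ (writing $C^n$ for either $C^n_R$ or $C^n_L$) and then unfolding evaluation at $c \in \C$:
\begin{align*}
    \Map_{k^*(X)_c}(x,x^\prime)_n
        &= \Hom_{\psh{\Delta}_{*,*}}\bigl(C^n,\ (k^*X)_c \text{ (bi-pointed at } x,x'\text{)}\bigr)\\
        &\cong \Hom_{\psh{\Delta\times\C}_{*,*}}\bigl(C^n \times c,\ k^*X\bigr)\\
        &\cong \Hom_{\cellset_{*,*}}\bigl(k_!(C^n \times c),\ X_{x,x^\prime}\bigr).
\end{align*}
The first isomorphism is just the defining adjunction between evaluation at $c$ and tensor with the representable $h_c$, viewed as a constant simplicial presheaf. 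The second is the $(k_! \dashv k^*)$ adjunction. In both steps the bi-pointing is preserved because the bi-pointing of $C^n$ is carried by the canonical map $\partial\Delta^1 \hookrightarrow C^n$, and $k$ is the identity on objects $[0] \in \Delta \subseteq \Theta[\C]$, so the images of the two vertices of $C^n$ in $k_!(C^n \times c)$ are precisely the two vertices that must be sent to $x$ and $x'$.

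The routine but necessary bookkeeping step is verifying that these isomorphisms are natural in the cosimplicial variable, so that they assemble into an honest isomorphism of simplicial objects rather than a mere levelwise identification; this is immediate from the fact that each adjunction is functorial and the coface and codegeneracy maps of $C^\bullet$ are morphisms of bi-pointed simplicial sets. Once that is in hand, composing the above isomorphism with the zig-zag of the previous corollary yields the desired zig-zag in $\spsh$.

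The main (minor) obstacle is precisely the care with bi-pointings: one needs to check that the adjunction $k_! \dashv k^*$ restricts correctly to the bi-pointed slice categories, and that the identifications respect the two distinguished vertices on both sides. This is handled by observing that $k_!$ preserves coproducts and sends the constant simplicial presheaf $\Delta^0 \times c$ to the representable on $[0] \in \Theta[\C]$, so that $k_!(\partial \Delta^1 \times c) = \partial\Delta^1$ as a bi-pointed cellular set, compatibly with the bi-pointing of $X_{x,x^\prime}$. No other nontrivial input is needed; everything else is a direct appeal to the previous corollary.
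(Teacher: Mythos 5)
Your proposal is correct and is exactly the "unwinding of definitions" the paper intends (the paper offers no written proof beyond that phrase): the identification $\Map_{k^*(X)_c}(x,x^\prime)\cong \Hom(k_!(C^\bullet\times c), X_{x,x^\prime})$ via the evaluation/external-product adjunction followed by $k_!\dashv k^*$, with the bi-pointings matched because $k_!(\Delta^0\times c)$ is the vertex $[0]$. Nothing further is needed.
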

We also make note of one more useful fact:
\begin{lemma}\label{resolutionlemma}
	The cosimplicial bi-pointed \(\C\)-cellular sets \(k_!(C^\bullet_R\times c)\) and \(k_!(C^\bullet_L\times c)\) are Reedy-cofibrant cosimplicial resolutions of \([1](c)=k_!(\Delta^1\times c)\) in \(\cellset_{\ast,\ast}\).  
\end{lemma}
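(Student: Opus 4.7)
The plan is to realize \(F := k_{!}(-\times c) : \psh{\Delta}_{\ast,\ast} \to \cellset_{\ast,\ast}\) as a left Quillen functor from the bi-pointed Joyal model structure to the bi-pointed horizontal Joyal model structure, after which both conclusions follow formally. Since \(F\) is cocontinuous, it commutes with the formation of latching objects, so the latching maps of \(F(C^\bullet_R)\) and \(F(C^\bullet_L)\) are the images under \(F\) of the monic latching maps of the resolutions supplied by the preceding lemma; by left Quillen-ness these images remain monic, yielding Reedy cofibrancy. By Ken Brown's lemma, the augmentation equivalences \(C^n_R\to\Delta^1\) and \(C^n_L\to\Delta^1\), which are weak equivalences between cofibrant objects in the bi-pointed Joyal structure, are sent to weak equivalences \(F(C^n_?)\to F(\Delta^1)=k_{!}(\Delta^1\times c)=[1](c)\), which is exactly the augmentation property required of a cosimplicial resolution.

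To set up and verify the Quillen adjunction, the right adjoint of \(F\) is \(G(X):=(k^\ast X)_c\) bi-pointed by the distinguished vertices of \(X\), as follows from the chain of natural isomorphisms
\[
\Hom_{\cellset}(k_{!}(S\times c),X)\cong\Hom_{\psh{\Delta\times\C}}(S\times c,k^\ast X)\cong\Hom_{\psh{\Delta}}(S,(k^\ast X)_c).
\]
For the left Quillen condition I check that \(F\) preserves monomorphisms and sends a generating set of trivial cofibrations to weak equivalences. The functor \(-\times c\) preserves monomorphisms by representability of \(c\); the functor \(k_{!}\) then preserves monomorphisms on its image because cells of \(k_{!}(S\times c)\) correspond bijectively to pairs consisting of a simplex of \(S\) together with a Cartesian section into the appropriate power of \(c\), so distinct simplices of \(S\) contribute disjoint cells. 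For generating trivial cofibrations, inner anodyne inclusions are handled by \Cref{precatanodyne}, while the interval generator \(\{0\}\hookrightarrow E\) is handled by observing that \(F(E)=k_{!}(N(G_2)\times c)\) retracts onto \([0]()\) compatibly with the bi-pointing, and then invoking \Cref{isofibrations} together with \Cref{horizquillen} to upgrade the retraction to a trivial cofibration in the horizontal Joyal model structure.

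The main obstacle is verifying preservation of monomorphisms by \(k_{!}\) on presheaves of the form \(S\times c\), since \(k:\Delta\times\C\to\Theta[\C]\) is not fully faithful (every pair \(([0],d)\) collapses to the single object \([0]()\)). The argument proceeds by analyzing the coend formula for \(k_{!}\): any morphism \([m](d_1,\ldots,d_m)\to [n](c,\ldots,c)\) in \(\Theta[\C]\) is parametrized by a simplicial map \(\gamma:[m]\to[n]\) together with sections \(d_i\to c^{\gamma(i)-\gamma(i-1)}\), and this explicit parametrization, combined with the observation that the labels appearing are all powers of the single representable \(c\), reduces the coend computing \(k_{!}(S\times c)\) to an explicit formula showing that distinct nondegenerate simplices of \(S\) give disjoint contributions, so no extraneous identifications are introduced by \(k_{!}\).
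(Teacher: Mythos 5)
Your strategy — upgrade \(F=k_!(-\times c)\) to a left Quillen functor from the bi-pointed Joyal model structure and then deduce both Reedy cofibrancy and the objectwise equivalences formally — is a genuinely different and more ambitious route than the one the paper takes, and most of it is sound: the identification of the right adjoint as \((k^\ast(-))_c\) is correct, the latching-object argument for Reedy cofibrancy is fine granted that \(F\) preserves monomorphisms, and your reduction of mono-preservation to the disjointness of the cells contributed by distinct nondegenerate simplices is the right bookkeeping (it amounts to checking that \(k_!(\partial\Delta^n\times c)\to [n](c,\dots,c)\) is monic, which works because the inner faces include via the split-monic diagonals \(c\to c\times c\)). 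The paper dismisses this point with ``Reedy cofibrancy is clear,'' so you are not behind it there.

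The genuine gap is in the left Quillen verification, specifically the interval generator. First, a minor imprecision: the trivial cofibrations of the Joyal model structure are not generated by inner anodynes together with \(\{0\}\hookrightarrow E\); you must either use the full pseudo-generating family of Cisinski's theorem (which includes corner products such as \(\{0\}\times^\lrcorner\delta^n\)) or argue via ``\(F\) preserves cofibrations and the right adjoint preserves fibrations between fibrant objects.'' More seriously, your treatment of \(F(\{0\}\hookrightarrow E)\) does not work as stated: the existence of a retraction of \(k_!(E\times c)\) onto \([0]()\) does not make the section \(\ast\to k_!(E\times c)\) a weak equivalence (compare \(\{0\}\to\Delta^1\), which retracts onto a point but is not a Joyal equivalence), and neither \Cref{isofibrations} nor \Cref{horizquillen} supplies the missing input. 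Note also that \(k_!(E\times c)\) is \emph{not} the interval \(E^1=\mathscr{H}(E)\): its mapping objects contain copies of \(c\), so the contractibility results the paper proves for \(E^1\) (e.g.\ \Cref{dannythm}, \Cref{joyalisothm}) do not apply directly; proving \(\ast\to k_!(E\times c)\) is a trivial cofibration is essentially a \(c\)-labelled analogue of Joyal's special-horn theorem and would need its own argument. The good news is that the lemma does not require left Quillen-ness at all: by \cite{ds2}*{Proposition 9.4} the augmentations \(C^n_R\to\Delta^1\) and \(C^n_L\to\Delta^1\) are not just weak equivalences between cofibrant objects but retractions of \emph{inner anodyne} inclusions \(\Delta^1\hookrightarrow C^n_R\), so \Cref{precatanodyne} applied to those specific inclusions, plus \(3\)-for-\(2\), already yields the objectwise equivalences \(k_!(C^n_R\times c)\to[1](c)\) without ever touching the \(E\)-generator. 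That is the paper's argument, and restricting your Ken Brown step to this special case closes the gap.
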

\begin{proof}
	From \cite{ds2}*{Proposition 9.4}, we see that the maps
	\[C^n_R\to \Delta^1\]
	and
	\[C^n_L\to \Delta^1\]
	are retracts of the inner anodynes inclusions 
	\[\Delta^1\hookrightarrow C^n_R\]
	and
	\[\Delta^1\hookrightarrow C^n_L\] respectively.  It then follows immediately from \Cref{precatanodyne} that the maps
	\[k_!(C^n_R\times c) \to k_!(\Delta^1\times c)=[1](c)\]
	and 
	\[k_!(C^n_L\times c) \to k_!(\Delta^1\times c)=[1](c)\]
	are retracts of horizontal inner anodynes and ergo equivalences.  Reedy cofibrancy is clear.
\end{proof}

\subsection{The \(\Hom\) by cosimplicial resolution}
For every object \([1](c)\) in \(\Theta[\C]\), we introduce four canonical cosimplicial resolutions, which we can use to define simplicial presheaves that represent the mapping space between two vertices of a \(\Theta[\C]\)-set.

First, we define the functor
\[(\bullet)^{\triangleright}(c),\text{ resp. }(\bullet)^{\triangleleft}(c): \Delta \to \Theta[\C],\]
which sends
\[[n]\mapsto [n+1](\ast,\dots,\ast,c), \text{ resp. } [n]\mapsto [n+1](c,\ast,\dots,\ast).\]
We see immediately that there are natural embeddings
\[([n])^{\triangleright,c} \hookrightarrow \Delta^n \times [1](c)  \hookleftarrow ([n])^{\triangleleft,c}\]
where each map embeds along the respective outer shuffle. We also have an obvious natural embedding
\[\mathscr{H}(\Delta^n)\times [1](c) \hookrightarrow E^n\times [1](c).\]

Then we define the following four cosimplicial objects:
\[C^\bullet_{\mathrm{cyl}}(c)\defeq \colim \left( V[1](\varnothing) \leftarrow \Delta^\bullet \times V[1](\varnothing) \hookrightarrow \Delta^\bullet\times [1](c)\right)\]
\[C^\bullet_{E}(c)\defeq \colim \left( V[1](\varnothing) \leftarrow E^\bullet \times V[1](\varnothing) \hookrightarrow E^\bullet\times [1](c)\right)\]
\[C^\bullet_{R}(c)\defeq \colim \left( \ast \leftarrow \Delta^\bullet \hookrightarrow (\bullet)^\triangleright(c)\right)\]
\[C^\bullet_{L}(c)\defeq \colim \left( \ast \leftarrow \Delta^\bullet \hookrightarrow (\bullet)^\triangleleft(c)\right).\]

These cosimplicial objects fit in a natural diagram
\begin{center}
	\begin{tikzpicture}
		\matrix (b) [matrix of math nodes, row sep=3em, column sep=3em]
		{
			C^\bullet_R(c) &                             &                \\
			               & C^\bullet_{\mathrm{cyl}}(c) & C^\bullet_E(c) \\
			C^\bullet_L(c) &                             &                \\
		};
		\path[->]
		(b-1-1) edge (b-2-2)
		(b-3-1) edge (b-2-2)
		(b-2-2) edge (b-2-3);
	\end{tikzpicture}.
\end{center}
induced by the inclusions we described above.
\begin{prop}
	Each of the cosimplicial objects described above is a Reedy-cofibrant object of bipointed presheaves on \(\Theta[\C]\), and each is objectwise horizontal-Joyal equivalent to the constant cosimplicial object \([1](c)\).  That is to say, each of these cosimplicial objects is a cosimplicial resolution of \([1](c)\) in \(\cellset_{\ast,\ast}\).
\end{prop}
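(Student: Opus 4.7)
The plan is, for each of the four cosimplicial objects $C^\bullet_{(-)}(c)$, to verify both Reedy cofibrancy in $\cellset_{\ast,\ast}$ and that each augmentation $C^n_{(-)}(c) \to [1](c)$ is a horizontal Joyal equivalence. In every case, the augmentation is retracted by a natural section $[1](c) \hookrightarrow C^n_{(-)}(c)$ coming from the distinguished $c$-labelled edge in the construction, so by two-out-of-three it suffices to prove this section is a weak equivalence; we shall show the stronger statement that each such section is in fact horizontal inner anodyne.

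Reedy cofibrancy will follow from a direct combinatorial inspection: each $C^n_{(-)}(c)$ is obtained as a pushout along a monomorphism from one of the auxiliary cosimplicial objects $\Delta^\bullet \times [1](c)$, $E^\bullet \times [1](c)$, $(\bullet)^\triangleright(c)$, or $(\bullet)^\triangleleft(c)$, and each of these has monomorphic latching maps by direct computation from the regular skeletal Reedy structure on $\Theta[\C]$ established in \Cref{reedy} together with the behaviour of products in $\cellset$. Since latching objects commute with pushouts, and monomorphisms are closed under pushout in the presheaf topos, Reedy cofibrancy of the $C^n_{(-)}(c)$ follows immediately.

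For the $R$ and $L$ cases (the second being symmetric to the first), the strategy is to exhibit the section $[1](c) \hookrightarrow C^n_R(c)$ as the pushout, along the collapse $[n](\ast,\dots,\ast) \to \ast$, of the inclusion $[n](\ast,\dots,\ast) \vee [1](c) \hookrightarrow [n+1](\ast,\dots,\ast,c)$. This latter inclusion is horizontal inner anodyne because it can itself be exhibited as the pushout of the spine inclusion $\operatorname{Sp}[n+1](\ast,\dots,\ast,c) \hookrightarrow [n+1](\ast,\dots,\ast,c)$ (horizontal inner anodyne by the Segal proposition in \Cref{rezkcomparison}) along the cofibration $\operatorname{Sp}[n+1](\ast,\dots,\ast,c) = \operatorname{Sp}[n](\ast,\dots,\ast) \vee [1](c) \hookrightarrow [n](\ast,\dots,\ast) \vee [1](c)$. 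Since horizontal inner anodynes are closed under pushout, the conclusion follows.

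For the cylinder and $E$ cases the natural strategy is to apply the gluing lemma in the left-proper horizontal Joyal model structure. The key input for the $E$ case is that $E^n$ is weakly contractible in $\cellset_{\mathrm{hJoyal}}$; this holds because $\mathscr{H}$ is left Quillen from the Joyal model structure on $\psh{\Delta}$ (it carries the generating Joyal anodynes to horizontal inner anodynes by \Cref{precatproperty}) and because $\operatorname{cosk}_0\Delta^n \to \Delta^0$ is a Joyal weak equivalence. Cartesian closedness (\Cref{horizontal}) then makes both projections $E^n \times V[1](\varnothing) \to V[1](\varnothing)$ and $E^n \times [1](c) \to [1](c)$ into weak equivalences, and the gluing lemma applied to the evident map of spans yields $C^n_E(c) \to [1](c)$ as a weak equivalence. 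The cylinder case is the main obstacle, since $\mathscr{H}(\Delta^n) = [n](\ast,\dots,\ast)$ is not weakly contractible in hJoyal, so the gluing-lemma strategy fails outright. Instead, we plan to show $[1](c) \hookrightarrow C^n_{\mathrm{cyl}}(c)$ is horizontal inner anodyne directly, by induction on $n$ using the Eilenberg--Zilber shuffle decomposition of $[n](\ast,\dots,\ast) \times [1](c)$ into labelled simplices and filling each shuffle cell by horizontal inner horn inclusions via the Anodyne Theorem (\Cref{anodynethm}), in a manner parallel to the shuffle strategy employed in \Cref{joyalisothm}.
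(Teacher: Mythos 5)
Your reduction for the \(R\) and \(L\) cases contains one genuine error. You claim that the inclusion \([n](\ast,\dots,\ast)\vee[1](c)\hookrightarrow[n+1](\ast,\dots,\ast,c)\) is itself a pushout of the spine inclusion \(\operatorname{Sp}[n+1](\ast,\dots,\ast,c)\hookrightarrow[n+1](\ast,\dots,\ast,c)\) along \(\operatorname{Sp}[n+1](\ast,\dots,\ast,c)\hookrightarrow[n](\ast,\dots,\ast)\vee[1](c)\). It is not: that pushout is \([n+1](\ast,\dots,\ast,c)\amalg_{\operatorname{Sp}[n+1](\ast,\dots,\ast,c)}\bigl([n](\ast,\dots,\ast)\vee[1](c)\bigr)\), which for \(n\geq 2\) contains two distinct copies of the nondegenerate cell \([n](\ast,\dots,\ast)\) and therefore surjects onto, but is not isomorphic to, \([n+1](\ast,\dots,\ast,c)\). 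The repair is cheap. The spine inclusion of \([n+1](\ast,\dots,\ast,c)\) factors as \(\operatorname{Sp}[n](\ast,\dots,\ast)\vee[1](c)\hookrightarrow[n](\ast,\dots,\ast)\vee[1](c)\hookrightarrow[n+1](\ast,\dots,\ast,c)\); the first factor genuinely \emph{is} a pushout of \(\operatorname{Sp}[n](\ast,\dots,\ast)\hookrightarrow[n](\ast,\dots,\ast)\), hence horizontal inner anodyne, and the composite is horizontal inner anodyne by the Segal proposition of \Cref{rezkcomparison}, so by two-out-of-three the second factor is a trivial cofibration (though not visibly inner anodyne). Trivial cofibrations are still closed under pushout, so your presentation of \([1](c)\hookrightarrow C^n_R(c)\) as a pushout of this map along the collapse of \([n](\ast,\dots,\ast)\) — which is correct — still yields a trivial cofibration, and that is all the proposition needs. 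A smaller slip: \Cref{precatproperty} lives in Rezk's CSS localizer and says nothing about horizontal inner anodynes; the fact you want, that \(\mathscr{H}\) carries inner horn inclusions to horizontal inner anodynes, is the corner-intertwiner statement \(\square^\lrcorner_n(\iota,f_1,\dots,f_n)\) proven in \Cref{horizontal}.

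With those repairs your route is sound and genuinely different from the paper's. The paper handles \(C^\bullet_L(c)\) by an explicit filtration \([1](c)=X_0\subseteq X_1\subseteq\dots\subseteq X_{n-1}=C^n_L(c)\) by unions of faces containing the edge \(\{0,1\}\), each step attached along the horns \(\square^\lrcorner_\ell(\lambda^\ell_1,e^{c},e^\ast,\dots,e^\ast)\); this yields the stronger conclusion that the section is horizontal inner anodyne, where your argument buys a shorter proof by quoting the already-established Segal-map proposition at the cost of concluding only "trivial cofibration." Your gluing-lemma treatment of \(C^\bullet_E(c)\), and your diagnosis that the same strategy fails for \(C^\bullet_{\mathrm{cyl}}(c)\) because \([n](\ast,\dots,\ast)\) is not weakly contractible, are both correct; the paper dismisses the \(E\) case as obvious and defers the cylinder case to the scheme of \cite{ds2}*{Proposition 9.4} since that resolution is never used, so your unexecuted plan there is no worse than what the paper itself provides.
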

\begin{proof}
	That they are Reedy-cofibrant is obvious, and it is also obvious that \(C^\bullet_E(c)\) is objectwise horizontal-Joyal equivalent to \([1](c)\). 	To treat the case of \(C^\bullet_L(c)\), we follow the proof of \cite{ds2}*{Lemma 9.3}. We begin by defining a filtration on \(C^n_L(c)\) as follows: 
	\begin{align*}
		X_0 &= [0,1](c), \\
		X_1 &= \bigcup_{1< i <\leq n} [0,1,i](c,\ast), \\
		X_2 &= \bigcup_{1< i < j \leq n} [0,1,i,j](c,\ast,\ast),  
	\end{align*}
	and so on.  We see that \(X_{n-1}=C^n_L(c)\), so it suffices to show that each inclusion \(X_{k-1} \subseteq X_{k}\) is obtained by horizontal inner anodyne attachments.  Notice first of all that 
	\[h^\ell_1:\square_\ell(\Lambda^\ell_1,c,\ast,\dots,\ast)\hookrightarrow \square_\ell(\Delta^\ell,c,\ast,\dots,\ast)\] is horizontal inner anodyne, since it is exactly \[\square^\lrcorner_k(\lambda^\ell_1,e^{c},e^\ast,\dots,e^\ast),\] where \(e^x\) denotes the empty map \(\varnothing \to x\).
	Then choose \(1< i_1 < \dots < i_k \leq n\) in \(X_k\).  Then \(X_{k-1}\) contains all of the faces \([0,1,i_1,\dots, \hat{i}_{k_j},\dots,i_k](c,\ast,\dots,\ast)\), but it also contains the degenerate face \([1,i_1,\dots,i_k](\ast,\dots,\ast)\), so therefore the map \(X_{k-1} \subseteq X_k\) is obtained by pushing out along copies of \(h^{k+1}_1\), and therefore it is horizontal inner anodyne.  Therefore, the composite of the filtration \([1](c)=X_0 \subseteq \dots \subseteq X_{n-1}\) is horizontal inner anodyne, and by \(3\)-for-\(2\), we see that the retraction \(C^n_L(c) \to [1](c)\) is a horizontal Joyal equivalence.  The proof for \(C^\bullet_R\) follows by symmetry. 
	The proof for \(C^\bullet_{\mathrm{cyl}}(c)\) is similar and can be obtained by applying the scheme for \(C^\bullet_L(c)\) to the proof of that case in \cite{ds2}*{Proposition 9.4}. We omit it because we do not make use of that particular resolution.
\end{proof}

\begin{defn}
	Given a bipointed \(\C\)-cellular set \(X_{x,y}\) we define the \emph{mapping object} from \(x\) to \(y\) to the simplicial presheaf obtained by taking homotopy function complexes
	\[\Map_X(x,y)_c\defeq h\cellset_{\ast,\ast}([1](c), X).\]
\end{defn}
\begin{rem}
	It is a general fact of abstract homotopy theory that if \(X\) is a fibrant object, we can compute the homotopy function complex with any Reedy-cofibrant cosimplicial resolution of \([1](c)\).  Any two Reedy-cofibrant cosimplicial resolutions are related by a natural zig-zag of weak equivalences, so any one will do.
\end{rem}

\begin{prop}\label{mapspacecomparison}
  For any formal \(\C\)-quasicategory \(X\) and any pair of vertices \(x,y\), there is a natural zig-zag of weak equivalences between \(\Map_X(x,y)\) and \(\mathfrak{C}(X)(x,y)\).
\end{prop}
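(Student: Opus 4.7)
The plan is to observe that both $\Map_X(x,y)$ and $\mathfrak{C}(X)(x,y)$ are, naturally up to weak equivalence, computed by the same expression $\Hom_{\cellset_{\ast,\ast}}(k_!(C^\bullet_R\times c), X_{x,y})$, with the agreement for each side supplied respectively by \Cref{resolutionlemma} and \Cref{mapspacecor}. The proof is essentially an assembly of the work already done.

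First I would verify that since $X$ is a formal $\C$-quasicategory, hence fibrant in $\cellset_{\mathrm{hJoyal}}$, the bi-pointed object $X_{x,y}$ is fibrant in the induced bi-pointed model structure on $\cellset_{\ast,\ast}$; this is a routine fact about slice-type model structures. By \Cref{resolutionlemma}, the cosimplicial object $c \mapsto k_!(C^\bullet_R \times c)$ is a Reedy-cofibrant cosimplicial resolution of $[1](c)$ in $\cellset_{\ast,\ast}$, so the general theory of homotopy function complexes gives a canonical weak equivalence of simplicial sets
\[
	\Map_X(x,y)_c = h\cellset_{\ast,\ast}([1](c), X_{x,y}) \simeq \Hom_{\cellset_{\ast,\ast}}(k_!(C^\bullet_R \times c), X_{x,y}),
\]
which assembles naturally in $c$ into a weak equivalence of simplicial presheaves on $\C$, since every ingredient is functorial in $c$.

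Next I would invoke \Cref{mapspacecor}, which directly supplies a natural zig-zag of weak equivalences in $\spsh$
\[
	\mathfrak{C}(X)(x,y)_c \leftrightsquigarrow \Hom_{\cellset_{\ast,\ast}}(k_!(C^\bullet_R \times c), X_{x,y}).
\]
Concatenating this zig-zag with the weak equivalence of the previous paragraph produces the desired natural zig-zag between $\Map_X(x,y)$ and $\mathfrak{C}(X)(x,y)$. Naturality in $x$, $y$, and $X$ is automatic, as every constituent equivalence is natural in those variables. The only genuinely nontrivial point is the identification of the abstract homotopy function complex with the strict $\Hom$-object against the chosen Reedy-cofibrant resolution, which is the place where fibrancy of $X$ and Reedy cofibrancy of $k_!(C^\bullet_R \times c)$ are genuinely used; everything else is bookkeeping of the previously established equivalences.
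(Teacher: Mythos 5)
Your proposal is correct and follows exactly the paper's argument: both proofs combine \Cref{resolutionlemma} (to compute \(\Map_X(x,y)\) via the resolution \(k_!(C^\bullet_R\times c)\)) with \Cref{mapspacecor} (for the zig-zag to \(\mathfrak{C}(X)(x,y)_c\)). You simply spell out the fibrancy and naturality bookkeeping that the paper leaves implicit.
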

\begin{proof}
	By \Cref{resolutionlemma}, we see that we can equivalently compute \(\Map_X(x,y)\) using the cosimplicial resolutions \(k_!(C^\bullet_R\times c)\) of \([1](c)\), from which the result follows by \Cref{mapspacecor}.
\end{proof}

\begin{lemma}\label{dssquare1}
	For any bi-pointed formal \(\C\)-quasicategory \(X_{x,y}\), the functor
	\[\overcat{\Delta}{\cellset_{\ast,\ast}(C^\bullet_R(c), X_{x,y})} \to \overcat{\Delta}{\cellset_{\ast,\ast}(k_!(C^\bullet_R\times c), X_{x,y})}\]
	induced by the map of cosimplicial resolutions of \([1](c)\)
	\[k_!(C^\bullet_R\times c) \to C^\bullet_R(c)\]
	induces a weak homotopy equivalence on nerves.  Moreover, as we have a natural isomorphism
	\[\overcat{\Delta}{\cellset_{\ast,\ast}(k_!(C^\bullet_R\times c), X_{x,y})}\cong \overcat{\Delta}{\psh{\Delta}_{\ast,\ast}(C^\bullet_R, k^\ast(X_{x,y})_c)},\]
	we have a commutative diagram in which the specified maps are weak homotopy equivalences
	\begin{center}
		\begin{tikzpicture}
			\matrix (b) [matrix of math nodes, row sep=3em,
				column sep=2em]
			{
				\coliml\limits_{[n],C^n_R\to k^\ast(X_{x,y})_c}\!\mathfrak{C}_\Delta(C^n_R)(\alpha,\omega)     & \coliml\limits_{[n],C^n_R(c)\to X_{x,y}}\!\mathfrak{C}_\Delta(C^n_R)(\alpha,\omega)                   \\
				\hocoliml\limits_{[n],C^n_R\to k^\ast(X_{x,y})_c}\!\mathfrak{C}_\Delta(C^n_R)(\alpha,\omega)     & \hocoliml\limits_{[n],C^n_R(c)\to X_{x,y}}\!\mathfrak{C}_\Delta(C^n_R)(\alpha,\omega)                   \\
				\hocoliml\limits_{[n],C^n_R\to k^\ast(X_{x,y})_c} \ast     & \hocoliml\limits_{[n],C^n_R(c)\to X_{x,y}} \ast   \\};
			\path[->]
			(b-1-2) edge (b-1-1)
			(b-3-2) edge node[auto]{\(\scriptstyle{\sim}\)} (b-3-1)
			(b-2-2) edge node[auto]{\(\scriptstyle{\sim}\)} (b-2-1) edge node[auto]{\(\scriptstyle{\sim}\)} (b-3-2) edge (b-1-2)
			(b-2-1) edge node[auto]{\(\scriptstyle{\sim}\)} (b-3-1) edge (b-1-1);
		\end{tikzpicture},
	\end{center}
\end{lemma}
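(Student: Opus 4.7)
The plan is to handle the natural isomorphism first and then establish the four asserted weak equivalences (bottom row, both vertical middle-to-bottom maps, and middle row) in a convenient order, with the middle row following from the others by two-out-of-three. The natural isomorphism $\cellset_{\ast,\ast}(k_!(C^n_R\times c), X_{x,y}) \cong \psh{\Delta}_{\ast,\ast}(C^n_R, k^\ast(X_{x,y})_c)$ is immediate from the $k_!\dashv k^\ast$ adjunction together with the fact that $k_!$ preserves the bi-pointed structure; passing to categories of simplices and then to their nerves is a formality.

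For the bottom row, I would use the standard identification $\hocoliml_{[n],\,T\to S}\ast\simeq N\overcat{\Delta}{S}$ together with the classical Thomason-type equivalence $N\overcat{\Delta}{S}\simeq S$, which reduces the claim to showing that the induced map of mapping simplicial sets
\[
\cellset_{\ast,\ast}(C^\bullet_R(c), X_{x,y}) \to \cellset_{\ast,\ast}(k_!(C^\bullet_R\times c), X_{x,y})
\]
is a weak equivalence of simplicial sets. By the propositions earlier in this section, both $C^\bullet_R(c)$ and $k_!(C^\bullet_R\times c)$ are Reedy-cofibrant cosimplicial resolutions of $[1](c)$ in $\cellset_{\ast,\ast}$, and the specified comparison map between them, being a map of resolutions over the common augmentation, is a levelwise weak equivalence by two-out-of-three. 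Since $X_{x,y}$ is fibrant in the bi-pointed model structure, mapping out yields the desired weak equivalence by standard model-categorical arguments.

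The two vertical middle-to-bottom maps will reduce to showing that each component $\mathfrak{C}_\Delta(C^n_R)(\alpha,\omega)$ is weakly contractible, for then the natural transformation $\mathfrak{C}_\Delta(C^\bullet_R)(\alpha,\omega)\to\ast$ is a levelwise weak equivalence of diagrams on the Reedy slice categories, which is preserved by the homotopy colimit. By \cite{ds2}*{Proposition 9.4}, $C^\bullet_R$ is a Reedy-cofibrant cosimplicial resolution of $\Delta^1$ in the bi-pointed Joyal model structure, and since Lurie's $\mathfrak{C}_\Delta$ is a left Quillen functor to bi-pointed simplicially-enriched categories, it follows that $\mathfrak{C}_\Delta(C^n_R)(\alpha,\omega)$ is weakly equivalent to $\mathfrak{C}_\Delta(\Delta^1)(\alpha,\omega) = \ast$. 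The middle row is then a weak equivalence by two-out-of-three applied to the bottom square.

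The main obstacle I anticipate is the bookkeeping around Reedy-cofibrancy and the passage between colimits and homotopy colimits over the indexing slice categories — in particular, ensuring that the slices are themselves Reedy categories for which the relevant comparison between ordinary and homotopy colimits is well-behaved — rather than any conceptually novel step; the rest of the argument is a straightforward application of techniques from \cite{ds1} and \cite{ds2}.
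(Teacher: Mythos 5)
Your proposal is correct and follows essentially the same route as the paper: the bottom row is obtained by comparing the two Reedy-cofibrant cosimplicial resolutions of \([1](c)\) and mapping into the fibrant object \(X_{x,y}\) (together with the identification of \(\hocoliml \ast\) with the nerve of the slice category), the vertical maps come from the weak contractibility of each \(\mathfrak{C}_\Delta(C^n_R)(\alpha,\omega)\), and the middle row follows by \(3\)-for-\(2\). The only superfluous worry is your final paragraph about comparing ordinary and homotopy colimits over the slices — the lemma never asserts that the top vertical maps are equivalences, so no such comparison is needed.
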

\begin{proof}
	Any weak homotopy equivalence of simplicial sets induces a weak homotopy equivalence on nerves of their categories of elements.  As the map \(k_!(C^\bullet_R\times c)\to C^\bullet_R(c)\) is a map between Reedy-cofibrant resolutions of \([1](c)\), it follows that the induced map 
	\[
		\cellset_{\ast,\ast}(C^\bullet_R(c), X_{x,y}) \to \cellset_{\ast,\ast}(k_!(C^\bullet_R\times c), X_{x,y})
	\]
	is a weak homotopy equivalence whenever \(X_{x,y}\) is fibrant.  Taken together, these observations prove the first claim.  To obtain the diagram in the statement, notice that the vertical maps are equivalences as each of the simplicial sets \(\mathfrak{C}_\Delta(C^n_R)(\alpha,\omega)\) is weakly contractible and that the bottom horizontal map is an equivalence, since each of these homotopy colimits is naturally weakly equivalent to the nerve of the diagram category.  The fact that the middle horizontal map is a weak homotopy equivalence follows by \(3\)-for-\(2\).
\end{proof}
\begin{note}
	In what follows, we will make use of a special category of gadgets denoted by \(\mathcal{Y}\).  It is the full subcategory of \(\psh{\Delta}_{\ast,\ast}\) spanned by those bi-pointed simplicial sets \(Y_{\alpha,\omega}\) such that \(\mathfrak{C}_\Delta(Y)(\alpha,\omega)\) is contractible.  The properties of this category of gadgets are spelled out in \cite{ds2}*{Section 5}.
\end{note}
\begin{lemma}\label{dssquare2}
	Given a \(\spsh\)-enriched category \(\mathcal{D}\) and any pair of objects \(x,y\in \mathcal{D}\), there is a natural commutative diagram
	\begin{center}
		\begin{tikzpicture}
			\matrix (b) [matrix of math nodes, row sep=3em,
				column sep=2em]
			{
				\mathfrak{C}(\mathfrak{N}\mathcal{D})(x,y)_c     & \mathcal{D}(x,y)_c                  \\
				\coliml\limits_{T\in \overcat{\Nec}{k^\ast(\mathfrak{N}\mathcal{D}_{x,y})_c}}\!\mathfrak{C}_\Delta(T)(\alpha,\omega) & \coliml\limits_{Y\in \overcat{\mathcal{Y}}{k^\ast(\mathfrak{N}\mathcal{D}_{x,y})_c}}\!\mathfrak{C}_\Delta(Y)(\alpha,\omega)  \\};
			\path[->]
			(b-1-1) edge (b-1-2)
			(b-2-1) edge node[auto]{\(\scriptstyle{\cong}\)} (b-1-1) edge  (b-2-2)
			(b-2-2) edge (b-1-2);
		\end{tikzpicture},
	\end{center}
	in which the specified map is an isomorphism.
\end{lemma}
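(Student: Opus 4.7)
The plan is to assemble the diagram from three ingredients: the pointwise formula for $\mathfrak{C}$ from \Cref{pointwise}, the Dugger--Spivak presentation of the mapping object $\mathfrak{C}_\Delta(S)(\alpha,\omega)$ as a colimit indexed by the category of necklaces over $S$ \cite{ds1}, and the counit of the adjunction $\mathfrak{C}\dashv \mathfrak{N}$ of \Cref{horizquillen}.

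First, by the pointwise formula there is a canonical identification $\mathfrak{C}(\mathfrak{N}\mathcal{D})(x,y)_c \cong \mathfrak{C}_\Delta(k^\ast(\mathfrak{N}\mathcal{D}_{x,y})_c)(\alpha,\omega)$, and the Dugger--Spivak colimit formula applied to the bi-pointed simplicial set $k^\ast(\mathfrak{N}\mathcal{D}_{x,y})_c$ rewrites the right-hand side as $\coliml_{T}\mathfrak{C}_\Delta(T)(\alpha,\omega)$ indexed by $\overcat{\Nec}{k^\ast(\mathfrak{N}\mathcal{D}_{x,y})_c}$. Composing these two identifications produces the left vertical arrow and exhibits it as an isomorphism. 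I take the top horizontal arrow to be the $(x,y,c)$-component of the counit $\varepsilon:\mathfrak{C}\mathfrak{N}\mathcal{D}\to \mathcal{D}$, and the bottom horizontal arrow to be the map induced on colimits by the inclusion of indexing categories $\Nec \hookrightarrow \mathcal{Y}$ over $k^\ast(\mathfrak{N}\mathcal{D}_{x,y})_c$.

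It remains to define the right vertical arrow and verify commutativity. To a gadget $\gamma:Y \to k^\ast(\mathfrak{N}\mathcal{D}_{x,y})_c$ I associate the composite
\[
\mathfrak{C}_\Delta(Y)(\alpha,\omega) \xrightarrow{\mathfrak{C}_\Delta(\gamma)} \mathfrak{C}_\Delta(k^\ast(\mathfrak{N}\mathcal{D}_{x,y})_c)(\alpha,\omega) \xrightarrow{\varepsilon_{x,y,c}} \mathcal{D}(x,y)_c,
\]
which is natural in $(Y,\gamma)$ by functoriality of $\mathfrak{C}_\Delta$ and naturality of $\varepsilon$, and so descends to a well-defined map out of the colimit. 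On the component indexed by a necklace $T$, unwinding the constructions shows that both routes from the bottom-left vertex to the top-right coincide with the composite displayed above (taken with $Y=T$), so commutativity is automatic. I do not expect any real obstacle here: the lemma is essentially bookkeeping between \Cref{pointwise} and \cite{ds1}, and its payoff is to furnish a more flexible gadget-based model for the mapping object than the bare necklace colimit, which will be used in the comparison arguments to follow.
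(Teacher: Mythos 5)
Your proposal is correct and follows essentially the same route as the paper: the right vertical arrow is defined gadget-by-gadget via the transposed counit composite \(\mathfrak{C}_\Delta(Y)(\alpha,\omega)\to \mathfrak{C}(\mathfrak{N}\mathcal{D})(x,y)_c\to \mathcal{D}(x,y)_c\), the left vertical arrow is its restriction to necklaces, and the isomorphism claim is exactly \cite{ds1}*{Proposition 4.3} applied to \(k^\ast(\mathfrak{N}\mathcal{D}_{x,y})_c\). Commutativity is then automatic by construction, just as you observe.
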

\begin{proof}
Recall that \(\mathfrak{N}=k_\ast \circ \mathfrak{N}_{\Delta,\bullet}\), so given a map \[f:Y\to k^\ast(\mathfrak{N}\mathcal{D}_{x,y})_c,\] we obtain the map \(\mathfrak{C}(Y)\to \mathcal{D}_c\) as the transpose of the composite of \(f\) with the whiskered counit of the adjunction \(k^\ast k_\ast \to \id\).   In particular, for each such map, we obtain a universal factorization \[\mathfrak{C}_\Delta(Y) \to \mathfrak{C}\mathfrak{N}\mathcal{D}_c\to \mathcal{D}_c.\] This determines the righthand vertical map after taking colimits.
Upon restriction to \[\overcat{\Nec}{k^\ast(\mathfrak{N}\mathcal{D}_{x,y})_c},\] we see that this factorization gives rise to the lefthand vertical map, which is an isomorphism by \cite{ds1}*{Proposition 4.3}.  
\end{proof}
\begin{thm}\label{counitthm}
  For any fibrant \(\spsh_{\mathrm{inj}}\)-enriched category \(\mathcal{D}\), the counit map
	\[\epsilon_\mathcal{D}:\mathfrak{C}(\mathfrak{N}\mathcal{D})\to \mathcal{D}\]
	is a weak equivalence of \(\spsh_{\mathrm{inj}}\)-enriched categories.
\end{thm}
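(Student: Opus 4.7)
The plan is to verify the two conditions defining a weak equivalence of $\spsh_{\mathrm{inj}}$-enriched categories. Essential surjectivity is immediate, since $\epsilon_\mathcal{D}$ is the identity on objects. For weak full faithfulness, since weak equivalences in $\spsh_{\mathrm{inj}}$ are detected pointwise, it suffices to fix $x, y \in \mathcal{D}$ and a representable $c \in \C$ and show that the induced map $\mathfrak{C}(\mathfrak{N}\mathcal{D})(x,y)_c \to \mathcal{D}(x,y)_c$ is a weak homotopy equivalence of simplicial sets.

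The strategy is to slot this map into the commutative square of \Cref{dssquare2} and prove that the remaining three arrows are weak equivalences; commutativity together with the left vertical isomorphism then forces the counit to be a weak equivalence as well. The bottom horizontal arrow is a weak equivalence by \Cref{gadgetlemma} applied to $\mathcal{G}=\mathcal{Y}$, once one uses the contractibility of $\mathfrak{C}_\Delta(T)(\alpha,\omega)$ for $T$ a necklace and of $\mathfrak{C}_\Delta(Y)(\alpha,\omega)$ for $Y$ a $\mathcal{Y}$-gadget to identify each colimit with the nerve of its indexing slice category. For the right vertical arrow, I would invoke \Cref{dssquare1} to rewrite the bottom right colimit as the simplicial set $\cellset_{\ast,\ast}(k_!(C^\bullet_R\times c),\mathfrak{N}\mathcal{D}_{x,y})$, which by \Cref{resolutionlemma} is a model for $\Map_{\mathfrak{N}\mathcal{D}}(x,y)_c$. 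By the $\mathfrak{C}\dashv\mathfrak{N}$ adjunction together with the calculation $\mathfrak{C}([1](c))(\alpha,\omega) = c$ read off from \Cref{pointwise}, this simplicial set is naturally isomorphic to $\Hom_{\spsh}(\mathfrak{C}(k_!(C^\bullet_R\times c))(\alpha,\omega), \mathcal{D}(x,y))$. Since $\mathfrak{C}$ is left Quillen by \Cref{horizquillen}, the cosimplicial object $\mathfrak{C}(k_!(C^\bullet_R\times c))(\alpha,\omega)$ is a Reedy-cofibrant cosimplicial resolution of $c$ in $\spsh_{\mathrm{inj}}$; as $\mathcal{D}(x,y)$ is injectively fibrant, the resulting simplicial set computes the homotopy function complex from $c$ to $\mathcal{D}(x,y)$ in $\spsh_{\mathrm{inj}}$. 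Using the Cartesian simplicial enrichment of $\spsh$ and the identification $\Hom_{\spsh}(c\times \Delta^n,\mathcal{D}(x,y)) = \psh{\Delta}(\Delta^n,\mathcal{D}(x,y)_c)$, this function complex is naturally weakly equivalent to $\mathcal{D}(x,y)_c$.

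The main obstacle is the naturality check: one must verify that the composite identification of $\coliml_{Y\in\overcat{\mathcal{Y}}{k^\ast(\mathfrak{N}\mathcal{D}_{x,y})_c}}\mathfrak{C}_\Delta(Y)(\alpha,\omega)$ with $\mathcal{D}(x,y)_c$ produced via the adjunction and cosimplicial resolution argument agrees, up to homotopy, with the right vertical arrow constructed in \Cref{dssquare2}. This amounts to a diagram chase following a vertex of $\Hom_{\spsh}(\mathfrak{C}(k_!(C^n_R\times c))(\alpha,\omega), \mathcal{D}(x,y))$ through the adjunction back to a bi-pointed morphism $C^n_R\to k^\ast(\mathfrak{N}\mathcal{D}_{x,y})_c$ and checking that its image in $\mathcal{D}(x,y)_c$ is the one constructed from the gadget $C^n_R$ in the proof of \Cref{dssquare2}.
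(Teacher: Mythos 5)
Your overall skeleton --- reduce to the hom-components at each \(c\) and exploit the square of \Cref{dssquare2} --- is a reasonable starting point, but there is a genuine gap in how you treat the bottom row of that square. Both the bottom horizontal arrow and the right vertical arrow have as (co)domains the \emph{strict} colimits \(\coliml_T \mathfrak{C}_\Delta(T)(\alpha,\omega)\) and \(\coliml_Y\mathfrak{C}_\Delta(Y)(\alpha,\omega)\), and your argument silently replaces them by homotopy colimits: the step ``use contractibility of the terms to identify each colimit with the nerve of its indexing slice category'' is precisely the assertion that \(\hocolim\to\colim\) is a weak equivalence for these diagrams. For the necklace-indexed diagram this is the content of \Cref{dugspivthm}, a substantial theorem of Dugger--Spivak; for the \(\mathcal{Y}\)-indexed diagram it is not available, and \Cref{gadgetlemma} does not supply it, because \(\mathfrak{C}^{\mathcal{Y}}\) is \emph{defined} as the nerve of the slice category, not as the colimit of mapping spaces appearing in \Cref{dssquare2}. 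The same conflation recurs in your treatment of the right vertical arrow: the resolution/adjunction computation identifies the simplicial mapping set \(\cellset_{\ast,\ast}(k_!(C^\bullet_R\times c),\mathfrak{N}\mathcal{D}_{x,y})\) (a homotopy function complex, hence weakly equivalent to \(\mathcal{D}(x,y)_c\)) with the nerve of the indexing category of the \(C^\bullet_R\)-column --- that is, with a homotopy colimit of points --- not with the strict \(\mathcal{Y}\)-indexed colimit that is the actual source of the arrow you need to control. So the two edges you propose to verify are not delivered by the cited lemmas, and the ``naturality check'' you flag is not the real obstacle.

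The paper's proof is architected precisely to avoid asserting \(\hocolim\simeq\colim\) for the \(\mathcal{Y}\)- and \(C^n\)-columns: it splices the squares of \Cref{dssquare1} and \Cref{dssquare2} onto the three-column diagram of \cite{ds2}*{Proposition 5.2}, works at the level of homotopy colimits throughout, descends to strict colimits only in the necklace column (where \Cref{dugspivthm} applies and where the strict colimit is literally \(\mathfrak{C}(\mathfrak{N}\mathcal{D})(x,y)_c\) by \cite{ds1}*{Proposition 4.3}), and reduces by \(3\)-for-\(2\) to showing that the single composite
\[
\gamma_c:\hocoliml_{[n],\,C^n(c)\to\mathfrak{N}\mathcal{D}}\mathfrak{C}_\Delta(C^n)(\alpha,\omega)\longrightarrow\mathcal{D}(x,y)_c
\]
is a weak equivalence. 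That last step is where your resolution idea belongs: the adjunction gives \(\cellset_{\ast,\ast}(C^n(c),\mathfrak{N}\mathcal{D}_{x,y})\cong\psh{\Delta}(Q^n,\mathcal{D}(x,y)_c)\) with \(Q^n=\mathfrak{C}_\Delta(C^n)(\alpha,\omega)\), using the computation \(\mathfrak{C}(C^n(c))(\alpha,\omega)\cong Q^n\times c\), so that \(\gamma_c\) becomes the map \(\hocoliml_{Q^n\to Z}Q^n\to Z\) for \(Z=\mathcal{D}(x,y)_c\), which is an equivalence by \cite{ds2}*{Lemma 5.9}. Your ingredients are essentially the right ones, but they must be applied to the homotopy-colimit row and to the \(C^n\)-column, not to the strict \(\mathcal{Y}\)-colimit.
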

\begin{proof}
	Let \(C^\bullet=C^\bullet_R\).  Then consider the following commutative diagram:
	\begin{center}
		\begin{tikzpicture}
			\matrix (b) [matrix of math nodes, row sep=3em,
				column sep=2em]
			{
				\mathfrak{C}(\mathfrak{N}\mathcal{D})(x,y)_c                                                                & \mathcal{D}(x,y)_c                                                                &                                                                                            \\
				\coliml\limits_{T\to k^\ast(\mathfrak{N}\mathcal{D})_c}\! \mathfrak{C}_\Delta(T)(\alpha,\omega)   & \coliml\limits_{Y\to k^\ast(\mathfrak{N}\mathcal{D})_c}\!\mathfrak{C}_\Delta(Y)(\alpha,\omega)   & \coliml\limits_{[n],C^n(c)\to \mathfrak{N}\mathcal{D}}\!\mathfrak{C}_\Delta(C^n)(\alpha,\omega)   \\
				\hocoliml\limits_{T\to k^\ast(\mathfrak{N}\mathcal{D})_c}\!\mathfrak{C}_\Delta(T)(\alpha,\omega) & \hocoliml\limits_{Y\to k^\ast(\mathfrak{N}\mathcal{D})_c}\!\mathfrak{C}_\Delta(Y)(\alpha,\omega) & \hocoliml\limits_{[n],C^n(c)\to \mathfrak{N}\mathcal{D}}\!\mathfrak{C}_\Delta(C^n)(\alpha,\omega) \\
				\hocoliml\limits_{T\to k^\ast(\mathfrak{N}\mathcal{D})_c}\!\ast &
				\hocoliml\limits_{Y\to k^\ast(\mathfrak{N}\mathcal{D})_c}\!\ast &
				\hocoliml\limits_{[n],C^n(c)\to \mathfrak{N}\mathcal{D}}\!\ast   \\
			};
			\path[->]
			(b-1-1) edge (b-1-2)
			(b-2-1) edge node[auto]{\(\scriptstyle{\cong}\)} (b-1-1) edge (b-2-2)
			(b-2-2) edge (b-1-2)
			(b-2-3) edge (b-2-2) edge (b-1-2)
			(b-3-1) edge node[auto]{\(\scriptstyle{\sim}\)} (b-2-1) edge (b-3-2) edge node[auto]{\(\scriptstyle{\sim}\)} (b-4-1)
			(b-3-2) edge (b-2-2) edge node[auto]{\(\scriptstyle{\sim}\)} (b-4-2)
			(b-3-3) edge (b-2-3) edge (b-3-2) edge node[auto]{\(\scriptstyle{\sim}\)} (b-4-3)
			(b-4-1) edge node[auto]{\(\scriptstyle{\sim}\)} (b-4-2)
			(b-4-3) edge node[auto,swap]{\(\scriptstyle{\sim}\)} (b-4-2);
		\end{tikzpicture}.
	\end{center}
	This diagram is obtained by composing the diagram from \cite{ds2}*{Proposition 5.2} reproduced below with the diagrams appearing in \Cref{dssquare1} and \Cref{dssquare2}
	\begin{center}
		\begin{tikzpicture}
			\matrix (b) [matrix of math nodes, row sep=3em,
				column sep=2em]
			{
				\coliml\limits_{T\to k^\ast(\mathfrak{N}\mathcal{D})_c}\! \mathfrak{C}_\Delta(T)(\alpha,\omega)   & \coliml\limits_{Y\to k^\ast(\mathfrak{N}\mathcal{D})_c}\!\mathfrak{C}_\Delta(Y)(\alpha,\omega)   & \coliml\limits_{[n],C^n\to k^\ast(\mathfrak{N}\mathcal{D})_c}\!\mathfrak{C}_\Delta(C^n)(\alpha,\omega)   \\
				\hocoliml\limits_{T\to k^\ast(\mathfrak{N}\mathcal{D})_c}\!\mathfrak{C}_\Delta(T)(\alpha,\omega) & \hocoliml\limits_{Y\to k^\ast(\mathfrak{N}\mathcal{D})_c}\!\mathfrak{C}_\Delta(Y)(\alpha,\omega) & \hocoliml\limits_{[n],C^n\to k^\ast(\mathfrak{N}\mathcal{D})_c}\!\mathfrak{C}_\Delta(C^n)(\alpha,\omega) \\
				\hocoliml\limits_{T\to k^\ast(\mathfrak{N}\mathcal{D})_c}\!\ast &
				\hocoliml\limits_{Y\to k^\ast(\mathfrak{N}\mathcal{D})_c}\!\ast &
				\hocoliml\limits_{[n],C^n\to k^\ast(\mathfrak{N}\mathcal{D})_c}\!\ast   \\
			};
			\path[->]
			(b-1-1) edge (b-1-2)
			(b-1-3) edge (b-1-2)
			(b-2-1) edge node[auto]{\(\scriptstyle{\sim}\)} (b-1-1) edge (b-2-2) edge node[auto]{\(\scriptstyle{\sim}\)} (b-3-1)
			(b-2-2) edge (b-1-2) edge node[auto]{\(\scriptstyle{\sim}\)} (b-3-2)
			(b-2-3) edge (b-1-3) edge (b-2-2) edge node[auto]{\(\scriptstyle{\sim}\)} (b-3-3)
			(b-3-1) edge node[auto]{\(\scriptstyle{\sim}\)} (b-3-2)
			(b-3-3) edge node[auto,swap]{\(\scriptstyle{\sim}\)} (b-3-2);
		\end{tikzpicture}.
	\end{center}
	The colimits in the left column are indexed over \(\overcat{\Nec}{k^\ast(\mathfrak{N}\mathcal{D}_{x,y})_c},\) the colimits in the middle column are indxed by \(\overcat{\mathcal{Y}}{k^\ast(\mathfrak{N}\mathcal{D}_{x,y})_c},\) and the colimits in the righthand column are indexed by \(\overcat{\Delta}{\psh{\Delta}_{\ast,\ast}(C^\bullet_R, k^\ast(X_{x,y})_c)}\).  
	The two horizontal maps in the bottom row are weak equivalences by \cite{ds2}*{Proposition 5.2}, as are the downward-oriented vertical maps, using the fact that \(k^\ast(\mathfrak{N}\mathcal{D})_c\) is a quasicategory. The indicated upward-oriented vertical map is a weak equivalence by \cite{ds1}*{Theorem 5.2}.  By \(3\)-for-\(2\), it follows that the horizontal maps in the third row are all weak equivalences.  We therefore reduce the composite diagram to a smaller diagram:
	\begin{center}
		\begin{tikzpicture}
			\matrix (b) [matrix of math nodes, row sep=3em,
				column sep=2em]
			{
				\mathfrak{C}(\mathfrak{N}\mathcal{D})(x,y)_c     & \mathcal{D}(x,y)_c     &               \\
				\hocoliml\limits_{T\to k^\ast(\mathfrak{N}\mathcal{D})_c}\!\mathfrak{C}_\Delta(T)(\alpha,\omega) & \hocoliml\limits_{Y\to k^\ast(\mathfrak{N}\mathcal{D})_c}\!\mathfrak{C}_\Delta(Y)(\alpha,\omega) & \hocoliml\limits_{[n],C^n(c)\to \mathfrak{N}\mathcal{D}}\!\mathfrak{C}_\Delta(C^n)(\alpha,\omega) \\};
			\path[->]
			(b-1-1) edge (b-1-2)
			(b-2-1) edge node[auto]{\(\scriptstyle{\sim}\)} (b-1-1) edge node[auto]{\(\scriptstyle{\sim}\)} (b-2-2)
			(b-2-2) edge (b-1-2)
			(b-2-3) edge node[auto,swap]{\(\scriptstyle{\gamma_c}\)} (b-1-2) edge node[auto,swap]{\(\scriptstyle{\sim}\)} (b-2-2);
		\end{tikzpicture},
	\end{center}
	By \(3\)-for-\(2\), we can see that it suffices to show that the map
	\[\gamma_c: \hocoliml_{[n],C^n(c)\to \mathfrak{N}\mathcal{D}} \mathfrak{C}_\Delta(C^n)(\alpha,\omega) \to \coliml_{[n],C^n(c)\to \mathfrak{N}\mathcal{D}} \mathfrak{C}_\Delta(\C^n)(\alpha,\omega)\to \mathcal{D}(x,y),c\]
	is a weak equivalence.
	To do this, notice that
	\begin{align*}
		\cellset_{\ast,\ast}(C^n(c), \mathfrak{N}\mathcal{D}_{x,y}) & \cong \overcat{\partial[1]}{\Cat_{\spsh}}(\mathfrak{C}(C^n(c))_{\alpha,\omega},\mathcal{D}_{x,y}) \\
		& \cong \spsh(\mathfrak{C}(C^n(c))(\alpha,\omega), \mathcal{D}(x,y)).
	\end{align*}

	As in the proof of \cite{ds2}*{Proposition 5.8}, we define the cosimplicial simplicial set \(Q^\bullet = \mathfrak{C}_\Delta (C^\bullet)(\alpha,\omega)\), which is obviously isomorphic to the \(Q^\bullet\) defined in \cite{ds2}. Moreover, by direct computation, we see that
	\[\mathfrak{C}(C^\bullet(c))(\alpha,\omega)\cong Q^\bullet \times c.\]
	Then we see immediately that
	\[\spsh(\mathfrak{C}(C^n(c))(\alpha,\omega), \mathcal{D}(x,y)) \cong \spsh(Q^n \times c, \mathcal{D}(x,y))\cong \psh{\Delta}(Q^n, \mathcal{D}(x,y)_c),\]
	so \(\gamma_c\) is precisely the map obtained by composing
	\[\left(\hocoliml_{[n],Q^n\to \mathcal{D}(x,y)_c} Q^n\right) \to \left(\coliml_{[n],Q^n\to \mathcal{D}(x,y)_c} Q^n\right) \to \mathcal{D}(x,y)_c.\]
	The result then follows immediately by application of \cite{ds2}*{Lemma 5.9}.
\end{proof}

\begin{rem}
	This result is even stronger than it first appears, because it implies that the counit map is a weak equivalence for fibrant categories enriched in any Cartesian-closed left-Bousfield localization of \(\spsh_{inj}\).  It reduces proving comparison theorems for such localizations to showing that \(\mathfrak{C}\) is a left-Quillen functor (something we already know for the horizontal Joyal model structure by \Cref{horizquillen}) and reflects weak equivalences.
\end{rem}

\subsection{The horizontal comparison theorem}\label{horizcomparison}
Dugger and Spivak introduce a definition of a Dwyer-Kan equivalence as a stepping stone to proving the comparison theorem.  They use the definition of DK-equivalence as an intermediate step to proving that \(\mathfrak{C}_\Delta\) is homotopy-conservative.  We give an analogous definition as follows:

\begin{defn}
  A map \(f:X\to Y\) of presheaves on \(\Theta[\C]\) is called a \emph{horizontal Dwyer-Kan equivalence} if the following two properties hold:
	\begin{itemize}
		\item The induced map
		      \[f_*:\operatorname{Ho}(\cellset_{\mathrm{hJoyal}}(\ast,X) \to \operatorname{Ho}(\cellset_{\mathrm{hJoyal}}(\ast,Y)\]
		      is bijective, and
		\item For any two vertices \(x,x^\prime\in X_0\), the induced map
		      \[\Map_X(x,x^\prime) \to \Map_Y(f(x),f(x^\prime))\]
		      is a weak equivalence of simplicial presheaves on \(\C\).
	\end{itemize}
\end{defn}

\begin{prop} A map \(f:X\to Y\) of presheaves on \(\Theta[\C]\) is a horizontal weak equivalence if and only if it is a horizontal Dwyer-Kan equivalence.
\end{prop}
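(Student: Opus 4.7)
The plan is to pass through the coherent realization adjunction $\mathfrak{C}\dashv\mathfrak{N}$, using \Cref{mapspacecomparison} to translate between the mapping-object conditions on the two sides and \Cref{counitthm} to bootstrap the hard direction. First, I would reduce to the case where $X$ and $Y$ are both formal $\C$-quasicategories by taking functorial fibrant replacements in the horizontal Joyal model structure: both classes of maps satisfy two-out-of-three, and both conditions defining a horizontal Dwyer-Kan equivalence are invariants of horizontal weak equivalence, since $\Map_X(x,y)$ is a homotopy function complex and $\operatorname{Ho}(\cellset_{\mathrm{hJoyal}}(\ast,-))$ is a set of homotopy classes of maps.

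The central translation step I would next establish is that, for $X,Y$ formal $\C$-quasicategories, $f$ is a horizontal Dwyer-Kan equivalence if and only if $\mathfrak{C}(f)\colon\mathfrak{C}(X)\to\mathfrak{C}(Y)$ is a weak equivalence in $\Cat_{\spsh_{\mathrm{inj}}}$. The mapping-object condition matches weak full faithfulness of $\mathfrak{C}(f)$ directly via \Cref{mapspacecomparison}. The bijection on $\operatorname{Ho}(\ast,-)$ matches essential surjectivity of $\mathfrak{C}(f)$ on homotopy categories, once one identifies $\operatorname{Ho}(\cellset_{\mathrm{hJoyal}}(\ast,X))$ with the set of isomorphism classes of objects of $\operatorname{ho}\mathfrak{C}(X)$; this identification uses $E^1$ as the cylinder object and the weak contractibility of $\mathfrak{C}(E^1)$ to match $E^1$-equivalence of vertices of $X$ with invertibility in the enriched homotopy category.

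With the translation in hand, the forward direction is immediate: cofibrations in the horizontal Joyal model structure are precisely the monomorphisms, so every $\C$-cellular set is cofibrant and the left Quillen functor $\mathfrak{C}$ preserves all weak equivalences; hence $f$ a horizontal weak equivalence yields $\mathfrak{C}(f)$ a weak equivalence in $\Cat_{\spsh_{\mathrm{inj}}}$, and the translation converts this into $f$ being a horizontal Dwyer-Kan equivalence.

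The reverse direction is the main obstacle. From $f$ a horizontal DK equivalence the translation produces $\mathfrak{C}(f)$ a weak equivalence in $\Cat_{\spsh_{\mathrm{inj}}}$; after choosing a locally-fibrant replacement $\mathfrak{C}(X)\to R\mathfrak{C}(X)$ that does not enlarge the set of objects, the right Quillen functor $\mathfrak{N}$ sends the induced map to a horizontal weak equivalence $\mathfrak{N}R\mathfrak{C}(f)$ between fibrant objects. By 2-out-of-3 applied to the naturality square of the derived unit $\eta\colon\id\to\mathfrak{N}R\mathfrak{C}$, it suffices to show that $\eta_X\colon X\to\mathfrak{N}R\mathfrak{C}(X)$ is a horizontal weak equivalence for every formal $\C$-quasicategory $X$. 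Here \Cref{counitthm} is the decisive input: using the triangle identity and naturality of the counit, the composite $\mathfrak{C}(X)\xrightarrow{\mathfrak{C}(\eta_X)}\mathfrak{C}\mathfrak{N}R\mathfrak{C}(X)\xrightarrow{\epsilon_{R\mathfrak{C}(X)}}R\mathfrak{C}(X)$ coincides with the fibrant-replacement map, and since $\epsilon_{R\mathfrak{C}(X)}$ is a weak equivalence by \Cref{counitthm}, two-out-of-three forces $\mathfrak{C}(\eta_X)$ to be a weak equivalence in $\Cat_{\spsh_{\mathrm{inj}}}$. By our translation, $\eta_X$ is then a horizontal DK equivalence which is the identity on vertices. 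The hardest part of the argument is therefore to upgrade such a DK equivalence to a horizontal weak equivalence; I expect to do this either by appealing to the Quillen bi-equivalence of \Cref{rezkcomparison} so as to reduce to Rezk's analogous characterization of weak equivalences between complete $\Theta[\C]$-Segal spaces, or by a direct verification of the right lifting property against a suitable cofinal family of horizontal Joyal trivial cofibrations using that $\eta_X$ is bijective on vertices and induces weak equivalences on all $\Map$-objects.
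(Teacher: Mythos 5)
Your reduction to the fibrant case is the same as the paper's and is fine, but the reverse direction has a genuine gap in the form of a circularity. You reduce ``every horizontal DK equivalence between formal \(\C\)-quasicategories is a horizontal weak equivalence'' to ``the derived unit \(\eta_X\) is a horizontal weak equivalence,'' and then, via \Cref{counitthm} and your translation, to ``\(\eta_X\) is a horizontal DK equivalence that is bijective on vertices, hence a horizontal weak equivalence.'' That last implication is a special case of the statement you are proving, and being bijective on vertices does not make it materially easier: the essential difficulty is still to relate weak equivalence of the mapping objects \(\Map_X(x,y)\) to the weak equivalences of the horizontal Joyal model structure, which are defined abstractly as a localizer and admit no explicit generating set of trivial cofibrations against which your proposed ``direct verification of the right lifting property'' could be run. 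Your other proposed resolution --- passing through the Quillen bi-equivalence of \Cref{rezkcomparison} and Rezk's characterization of weak equivalences between complete \(\Theta[\C]\)-Segal spaces as DK equivalences --- is exactly the paper's entire proof: one applies the right adjoint \(\mathcal{Q}\) of that Quillen equivalence directly to \(f\) between fibrant objects, notes that the DK conditions transfer, and invokes Rezk's theorem. Once you are willing to use that input, the whole detour through \(\mathfrak{C}\), \(\mathfrak{N}\), the counit theorem, and the derived unit is superfluous.

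There is a second, structural problem. This proposition is the input to \Cref{conservativity}, which in turn is the input to \Cref{mainthm1} (the statement that the derived unit is an equivalence). Your argument for the reverse direction presupposes, in effect, that the derived unit is a horizontal weak equivalence --- you reduce the proposition to precisely that claim --- so adopting your route as written would invert the paper's logical order without supplying an independent proof of the unit statement. The forward direction of your translation (DK \(\Rightarrow\) \(\mathfrak{C}(f)\) is an enriched equivalence) is fine and is essentially what the paper later proves as half of \Cref{conservativity}, but it cannot carry the weight of the converse here.
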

\begin{proof}
	It is clear that any horizontal Joyal equivalence is automatically horizontally Dwyer-Kan since our constructions are all homotopy-invariant, so we prove that all horizontal Dwyer-Kan equivalences are horizontal Joyal equivalences.  We notice immediately that if \(X\) and \(Y\) are fibrant, the horizontal Dwyer-Kan condition implies that the associated map \(\mathcal{Q}(f):\mathcal{Q}(X)\to \mathcal{Q}(Y)\) between complete \(\Theta[\C]\)-Segal spaces is an equivalence, where
	\[\mathcal{Q}:\cellset \to \psh{\Theta[\C]\times \Delta}\]
	is defined by the rule
	\[\mathcal{Q}(X)_{[n](c_1,\dots,c_n),m} \defeq\Hom([n](c_1,\dots,c_n)\times E^m,X).\]
	Since \(\mathcal{Q}\) is the right adjoint of a Quillen equivalence by \Cref{rezkcomparison}, a map \(f\) between fibrant objects is a weak equivalence if and only if its image under \(\mathcal{Q}\) is. Since Dwyer-Kan equivalences between complete \(\Theta[\C]\)-Segal spaces are exactly the weak equivalences by \cite{rezk-theta-n-spaces}, the claim holds for \(X\) and \(Y\) fibrant.

	In general, given a horizontal Dwyer-Kan equivalence \(f:X\to Y\) where \(X\) and \(Y\) are no longer assumed to be fibrant, we can take a fibrant replacement \(\tilde{Y}\) of \(Y\) such that \(Y\to Y^\prime\) is a trivial cofibration for the horizontal Joyal model structure.  Then we can also factor \(X\to Y\to \tilde{Y}\) into a trivial Joyal cofibration \(X\to \tilde{X}\) followed by a fibration \(\tilde{X}\to \tilde{Y}\).  But notice now that the condition of being horizontally DK-equivalent is homotopy invariant, so the map \(\tilde{X}\to \tilde{Y}\) is also a horizontal DK-equivalence.  Since \(\tilde{Y}\) is fibrant and \(\tilde{X}\to \tilde{Y}\) is a horizontal Joyal fibration, this is a horizontal Joyal equivalence.  Then by \(3\)-for-\(2\)  we see that \(f\) is also a horizontal Joyal equivalence, which concludes the proof.
\end{proof}

\begin{prop}\label{conservativity}
	A map \(f:X\to Y\) of presheaves on \(\Theta[\C]\) is a horizontal Joyal equivalence if and only if \(\mathfrak{C}(f)\) is a weak equivalence of \(\Psh_\Delta (\C)_{\mathrm{inj}}\)-enriched categories.
\end{prop}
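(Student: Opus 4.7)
The forward implication follows from Ken Brown's lemma: since $\cellset_{\mathrm{hJoyal}}$ is a Cisinski model structure every object is cofibrant, and by \Cref{horizquillen} the functor $\mathfrak{C}$ is left Quillen, so it preserves all weak equivalences.

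For the converse, suppose $\mathfrak{C}(f)$ is a weak equivalence of $\spsh_{\mathrm{inj}}$-enriched categories; by the previous proposition it suffices to show $f$ is a horizontal Dwyer--Kan equivalence. First I would pass to fibrant replacements and factorize to obtain $\tilde f\colon \tilde X \to \tilde Y$, noting that $\mathfrak{C}(\tilde f)$ remains a weak equivalence by Ken Brown and two-out-of-three and that both horizontal DK conditions are invariant under fibrant replacement; thus we may assume $X$ and $Y$ are fibrant. The mapping-object condition then follows directly from \Cref{mapspacecomparison}, whose natural zig-zag of weak equivalences $\Map_X(x,x') \simeq \mathfrak{C}(X)(x,x')$ (and similarly for $Y$) translates the weakly-fully-faithful part of the hypothesis into the weak equivalences $\Map_X(x,x') \xrightarrow{\sim} \Map_Y(f(x),f(x'))$ demanded of a horizontal DK map.

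The remaining task -- and the main obstacle -- is the bijectivity of $\operatorname{Ho}(\cellset_{\mathrm{hJoyal}})(\ast,f)$. My plan is to establish, for fibrant $X$, a natural bijection $\operatorname{Ho}(\cellset_{\mathrm{hJoyal}})(\ast,X) \cong \operatorname{iso}(\mathbf{h}\mathfrak{C}(X))$; combined with the fact that a weak equivalence of enriched categories induces a bijection on isomorphism classes of the underlying homotopy categories, this yields the desired bijectivity. Surjectivity of the obvious map $X_0 \to \operatorname{iso}(\mathbf{h}\mathfrak{C}(X))$ is immediate since the objects of $\mathfrak{C}(X)$ are the vertices of $X$, and the lemma that $\mathfrak{C}(E^n)$ is weakly contractible (established en route to \Cref{horizquillen}) implies that any $E^1$-homotopy of vertices descends to an isomorphism in $\mathbf{h}\mathfrak{C}(X)$, giving well-definedness through the $E^1$-homotopy relation.

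The hard direction is injectivity: given $x,x' \in X_0$ isomorphic in $\mathbf{h}\mathfrak{C}(X)$, I would realize an inverse pair via \Cref{mapspacecomparison} as an edge $\xi\colon [1](\ast) \to X$ whose class in $\mathbf{h}\mathfrak{C}(X)$ is invertible, and then lift this homotopy-invertibility to an extension $E^1 \to X$ sending the endpoints to $x$ and $x'$. This ``isomorphism recognition'' step is the analog of Joyal's classical theorem for quasicategories, and in our setting it follows from the characterization of horizontal Joyal fibrations between fibrant objects given in \Cref{isofibrations} together with the anodyne-extension result \Cref{joyalisothm}, which supply precisely the lifts needed to turn a homotopy-invertible edge into a map from $E^1$ by a standard inductive cell-attaching argument. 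Once this bijection is in place, essential surjectivity of $\mathbf{h}\mathfrak{C}(f)$ transfers immediately to the bijectivity of $\operatorname{Ho}(\cellset_{\mathrm{hJoyal}})(\ast,f)$, completing the verification of the two horizontal DK conditions.
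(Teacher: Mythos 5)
Your forward direction, the reduction to fibrant objects, and the treatment of the mapping-object condition via \Cref{mapspacecomparison} all match the paper's argument. The divergence is in the component-bijectivity step, and there your proposal has a real gap. The ``isomorphism recognition'' statement you need --- that an edge \([1](\ast)\to X\) which becomes invertible in \(\mathbf{h}\mathfrak{C}(X)\) extends to a map \(E^1\to X\) --- is the enriched analogue of Joyal's theorem on equivalences in quasicategories, and it does not follow from \Cref{isofibrations} and \Cref{joyalisothm} by ``a standard inductive cell-attaching argument.'' Those results characterize fibrations between fibrant objects and show that certain corner products with \(e:\Delta^0\hookrightarrow E^1\) are anodyne; neither produces the extension of a homotopy-invertible edge along \(\Delta^1\hookrightarrow E^1\), which even in the classical case requires the genuinely difficult special-outer-horn argument. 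As written, your injectivity step assumes the hardest part of the claim.

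The paper sidesteps this entirely. Since \(E^n=\mathscr{H}(\operatorname{cosk}_0\Delta^n)\) and \(\mathscr{H}\dashv\mathscr{N}\), one has \([\ast,X]_{E^1}\cong[\Delta^0,\mathscr{N}X]_{E^1}\), so the question about iso-components of \(X\) becomes a question about the underlying quasicategory \(\mathscr{N}X\). Moreover, the data witnessing an equivalence in \(\mathfrak{C}(X)\) factor through the evaluation of the hom-objects at the terminal object of \(\C\), and \(\mathfrak{C}(X)_{\ast_\C}\cong\mathfrak{C}_\Delta(\mathscr{N}X)\); hence the bijection \(\operatorname{Ho}(\cellset_{\mathrm{hJoyal}})(\ast,X)\cong\operatorname{iso}(\mathbf{h}\mathfrak{C}(X))\) you want follows from the \emph{classical} comparison for the quasicategory \(\mathscr{N}X\), where Joyal's theorem is already available. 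If you want to keep your structure, replace the appeal to \Cref{isofibrations} and \Cref{joyalisothm} with this adjunction reduction; otherwise you would need to first prove the enriched extension theorem, which the paper never does.
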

\begin{proof}
	We only need to check one direction, since the other direction is immediate by the fact that \(\mathfrak{C}\) is left-Quillen.  Assume \(f:X\to Y\) has the property that \(\mathfrak{C}(f)\) is an equivalence. Then as in the previous proposition, we can reduce to the case where \(X\) and \(Y\) are fibrant, but in this case, we know from \Cref{mapspacecomparison} that \(\mathfrak{C}(X)(x,x^\prime)\) is connected by a natural zig-zag of weak equivalences to \(\Map_X(x,y)\), so if the map  \(\mathfrak{C}(X)(x,x^\prime)\to \mathfrak{C}(Y)(f(x),f(x^\prime))\) is a weak equivalence, it follows that the map \(\Map_X(x,x^\prime)\to \Map_Y(f(x),f(x^\prime))\) is also a weak equivalence.

	Then it suffices to show that when \(\mathfrak{C}(f)\) is an equivalence, the induced map on sets of homotopy classes
	\[[\ast,X]_{E^1} \to [\ast,Y]_{E^1}\]
	is a bijection.  Notice that
	\[[\ast,X]_{E^1} \cong \pi_0 \cellset(E^\bullet,X),\]
	and since \(E^n=\mathscr{H}\operatorname{cosk}_0 \Delta^n\). By abuse of notation, we also denote the simplicial set \(\operatorname{cosk}_0 \Delta^n\) by \(E^n\).  We noticed earlier that \(\mathscr{H}\) has a right adjoint, which we now denote by \(\mathscr{N}\). Using this, we can rewrite the question as asking for the induced map to give a bijection
	\[\pi_0 \psh{\Delta}(E^n,\mathscr{N}X) \to \pi_0 \psh{\Delta}(E^n,\mathscr{N}X),\]
	which is the same as giving a bijection
	\[[\Delta^0,\mathscr{N}X]_{E^1}\to [\Delta^0,\mathscr{N}X]_{E^1}\].

	Notice also that the data classifying an equivalence in \(\mathfrak{C}(X)\) all factor through the simplicial category \(\mathfrak{C}(X)_{\ast_\C}\) obtained by evaluating each of the \(\Hom\) objects at the terminal object \(\ast_{\C}\) of \(\C\).  It is an easy exercise to see that
	\[\mathfrak{C}(X)_{\ast_\C} \cong \mathfrak{C}_\Delta (\mathscr{N}X),\]
	but since \(\mathscr{N}X\) is quite clearly a quasicategory, the claim follows immediately from the ordinary case.  This implies that \(f\) is a horizontal Dwyer-Kan equivalence, and therefore by the previous proposition, a horizontal Joyal equivalence, which concludes the proof.
\end{proof}

\begin{thm}\label{mainthm1}
	The Quillen pair
	\[\cellset_{\mathrm{hJoyal}} \underset{\mathfrak{N}}{\overset{\mathfrak{C}}{\rightleftarrows}} \Cat_{\spsh_{\mathrm{inj}}}\]
	is a Quillen equivalence.
\end{thm}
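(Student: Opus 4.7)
The plan is to apply the standard recognition criterion for a Quillen pair to be a Quillen equivalence, exploiting the crucial fact that in the Cisinski model structure on \(\cellset_{\mathrm{hJoyal}}\) every object is cofibrant (the cofibrations are the monomorphisms). Concretely, it suffices to verify that for every cofibrant \(X \in \cellset\) and every fibrant \(\mathcal{D} \in \Cat_{\spsh_{\mathrm{inj}}}\), a morphism \(\phi\colon \mathfrak{C}(X) \to \mathcal{D}\) is a weak equivalence if and only if its adjunct \(\phi^\flat\colon X \to \mathfrak{N}\mathcal{D}\) is a horizontal Joyal equivalence. That the adjunction is a Quillen pair in the first place has already been established in \Cref{horizquillen}, so no further left-properness or functoriality checks are needed.

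First I would record the triangle identity, which gives the factorization
\[
\phi \;=\; \epsilon_{\mathcal{D}} \circ \mathfrak{C}(\phi^\flat)\colon\; \mathfrak{C}(X) \xrightarrow{\;\mathfrak{C}(\phi^\flat)\;} \mathfrak{C}\mathfrak{N}\mathcal{D} \xrightarrow{\;\epsilon_{\mathcal{D}}\;} \mathcal{D}.
\]
By \Cref{counitthm}, the counit \(\epsilon_{\mathcal{D}}\) is a weak equivalence of \(\spsh_{\mathrm{inj}}\)-enriched categories whenever \(\mathcal{D}\) is fibrant; this is the main analytic input and has already been extracted in the previous subsections via the gadget and necklace calculus together with the computation of mapping spaces by cosimplicial resolutions.

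For the ``if'' direction, suppose \(\phi^\flat\) is a horizontal Joyal equivalence. Since \(\mathfrak{C}\) is left Quillen and every object of \(\cellset\) is cofibrant, \(\mathfrak{C}\) sends all weak equivalences to weak equivalences (by Ken Brown's lemma applied in the universally-cofibrant setting); hence \(\mathfrak{C}(\phi^\flat)\) is a weak equivalence, and composing with the weak equivalence \(\epsilon_{\mathcal{D}}\) shows by two-out-of-three that \(\phi\) is a weak equivalence. For the ``only if'' direction, suppose \(\phi\) is a weak equivalence; then the same factorization together with two-out-of-three forces \(\mathfrak{C}(\phi^\flat)\) to be a weak equivalence, and at this point I would invoke the conservativity result \Cref{conservativity} to conclude that \(\phi^\flat\) itself is a horizontal Joyal equivalence.

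I expect no genuine obstacle at this stage: once \Cref{counitthm} and \Cref{conservativity} are available, the proof is a purely formal diagram chase along the unit--counit triangle. The substantial difficulty was concentrated in the earlier work, namely in pushing the Dugger--Spivak necklace and gadget arguments through pointwise on \(\C\) to prove that \(\epsilon_{\mathcal{D}}\) is a weak equivalence, and in using the comparison with Rezk's complete \(\Theta[\C]\)-Segal spaces from \Cref{rezkcomparison} to identify horizontal Dwyer--Kan equivalences with horizontal Joyal equivalences in the proof of \Cref{conservativity}. With both of those in hand, the theorem follows immediately.
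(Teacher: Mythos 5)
Your proposal is correct and follows essentially the same route as the paper: both arguments reduce the theorem to the two inputs \Cref{counitthm} (the derived counit is an equivalence on fibrant targets) and \Cref{conservativity} (homotopy-conservativity of \(\mathfrak{C}\)), and then conclude by a formal chase along the unit--counit triangle identity, using that every object of \(\cellset\) is cofibrant. The only cosmetic difference is that you phrase the recognition criterion in terms of adjunct morphisms while the paper phrases it in terms of the derived unit; these are interchangeable.
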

\begin{proof}
	As we have proven that the derived counit is always an equivalence in \Cref{counitthm}, all we have left to show is that the derived unit transformation
	\[X\to \mathfrak{N}\mathfrak{C}(X) \to \mathfrak{N}\mathcal{D}\]
	is a weak equivalence for all presheaves \(X\) on \(\Theta[\C]\), where \(\mathfrak{C}(X) \to \mathcal{D}\) is a weak equivalence and \(\mathcal{D}\) is fibrant.  However, by the previous proposition, we see that it suffices to show that the map
	\[\mathfrak{C}(X)\to \mathfrak{C}\mathfrak{N}\mathfrak{C}(X) \to \mathfrak{C}\mathfrak{N}\mathcal{D}\]
	is a weak equivalence.  We obtain a naturality square from the counit
	\begin{center}
		\begin{tikzpicture}
			\matrix (b) [matrix of math nodes, row sep=3em, column sep=2em]
			{
				\mathfrak{C}\mathfrak{N}\mathfrak{C}(X) & \mathfrak{C}\mathfrak{N}\mathcal{D} \\
				\mathfrak{C}(X)                                               & \mathcal{D}                                               \\
			};
			\path[->]
			(b-1-1) edge (b-1-2) edge (b-2-1)
			(b-1-2) edge node[auto]{\(\scriptstyle{\sim}\)} (b-2-2)
			(b-2-1) edge node[auto]{\(\scriptstyle{\sim}\)} (b-2-2);
		\end{tikzpicture},
	\end{center}
	in which the indicated arrows are equivalences (for the bottom horizontal, this was by choice, and for the righthand vertical, it comes from \Cref{counitthm}).  But if we precompose with the unit map \(\mathfrak{C}\eta_X:\mathfrak{C}(X)\to \mathfrak{C}\mathfrak{N}\mathfrak{C}(X)\), the lefthand arrow becomes the identity by the triangle identities, which proves the claim by applying \(3\)-for-\(2\) to the commutative diagram
	\begin{center}
		\begin{tikzpicture}
			\matrix (b) [matrix of math nodes, row sep=3em, column sep=2em]
			{
				\mathfrak{C}(X) &\mathfrak{C}\mathfrak{N}\mathfrak{C}(X) & \mathfrak{C}\mathfrak{N}\mathcal{D} \\
				&\mathfrak{C}(X)                                               & \mathcal{D}                                               \\
			};
			\path[->]
			(b-1-1) edge (b-1-2) edge[-,double] (b-2-2)
			(b-1-2) edge (b-1-3) edge (b-2-2)
			(b-1-3) edge node[auto]{\(\scriptstyle{\sim}\)} (b-2-3)
			(b-2-2) edge node[auto]{\(\scriptstyle{\sim}\)} (b-2-3);
		\end{tikzpicture}.
	\end{center}
\end{proof}

\section{The Coherent Nerve, Local case}
\subsection{The \((\C,\setS)\)-enriched model structure}\label{rezkvert}
While our presentation of the horizontal Joyal model structure comes mainly from David Oury's thesis \cite{oury}, what follows is independent, making use of the resolution technology we developed in the previous section to give a simple and satisfying story. Suppose \(\M=(\C,\setS)\) is a Cartesian presentation in the sense of Rezk, where \(\setS\) is a set of monomorphisms of \(\spsh\) such that the left-Bousfield localization of \(\spsh_\mathrm{inj}\) at \(\setS\) is a Cartesian model category.  

Recall that we had a number of functorial cosimplicial objects \[C_{(-)}^\bullet(\bullet):\Delta \times \C \to \cellset_{\ast,\ast},\] such that \(C_{(-)}^\bullet(\bullet)\) was a cosimplicial resolution of \([1](\bullet)\), which is a Reedy cofibrant diagram \(\C\to \cellset_{\ast,\ast}\).  Since \(\cellset_{\ast,\ast}\) is cocomplete, \(C_{(-)}^\bullet(\bullet)\) extends to a cocontinuous functor \[\Sigma:\spsh\to \cellset_{\ast,\ast}.\]  

\begin{prop} The functor \(\Sigma_{(-)}\) is left-Quillen when \(\cellset_{\ast,\ast}\) is equipped with the horizontal Joyal model structure.
\end{prop}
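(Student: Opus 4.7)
The plan is to verify the two conditions for \(\Sigma\) to be left-Quillen: preservation of cofibrations and of trivial cofibrations. Both \(\spsh_{\mathrm{inj}}\) and \(\cellset_{\ast,\ast}\) endowed with the horizontal Joyal model structure are Cisinski model structures on Grothendieck toposes, so in both the cofibrations are exactly the monomorphisms and every object is cofibrant.

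First I would handle cofibrations. Equip \(\Delta \times \C\) with the product Reedy structure, which is again regular skeletal Reedy since both factors are. By \cite{cisinski-book}*{Proposition 8.1.37} the monomorphisms of \(\spsh\) are generated under saturation by the boundary inclusions of the cells \(([n], c)\), which by the Cartesian Reedy hypothesis decompose as exterior corner products \(\delta^n \times^\lrcorner \delta^c\). Since \(\Sigma\) is cocontinuous, its image on such a corner product is precisely the bivariate latching map of the cosimplicial--\(\C\)-indexed diagram \(C^\bullet_?(\bullet)\) at the bidegree \(([n], c)\). Reedy cofibrancy in the cosimplicial variable for each fixed \(c\) is already implicit in the proof that \(C^\bullet_?(c)\) is a cosimplicial resolution of \([1](c)\), and a parallel filtration argument run along \(\partial c \hookrightarrow c\) instead of along the cosimplicial boundary gives the analogous cofibrancy in the \(\C\)-variable, uniformly in cosimplicial degree. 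Combining these via the pushout-product decomposition of the product latching then shows that \(\Sigma(\delta^n \times^\lrcorner \delta^c)\) is a monomorphism in \(\cellset_{\ast,\ast}\), i.e.\ a cofibration.

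For trivial cofibrations, by Cisinski's framework applied to \(\spsh_{\mathrm{inj}}\) with cylinder \(\Delta^1\) in the \(\Delta\)-direction, a generating set for the trivial cofibrations is given by the pushout-product closure of the monomorphisms of \(\spsh\) with the Kan horn inclusions \(\lambda^m_k : \Lambda^m_k \hookrightarrow \Delta^m\). By cocontinuity, \(\Sigma\) takes the map \(\Lambda^m_k \times c \hookrightarrow \Delta^m \times c\) to the inclusion of the subcomplex \(\colim_{\Delta^n \to \Lambda^m_k}\! C^n_?(c) \hookrightarrow C^m_?(c)\), and a filtration argument modelled on the one already used to prove that \(C^\bullet_?(c)\) is a cosimplicial resolution of \([1](c)\) exhibits this inclusion as a transfinite composite of pushouts of horizontal inner horn inclusions, hence as a horizontal inner anodyne map. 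The bivariate latching analysis from the previous paragraph then extends this pointwise statement to the pushout-products with boundary inclusions in \(\C\).

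The primary technical obstacle is the bivariate latching computation: one must check that the product latching map at \(([n], c)\), assembled from the individual latching maps in each variable, does not acquire any non-monomorphic contributions, and further that the attachments produced by the filtration argument are exactly of the shape needed to be horizontal inner anodyne. Both points hinge on the regular Cartesian Reedy hypothesis on \(\C\), which ensures the requisite combinatorial compatibility between the \(\Delta\) and \(\C\) directions in the explicit description of \(C^\bullet_?(\bullet)\).
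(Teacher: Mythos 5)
Your cofibration half is fine in substance and is what the paper dismisses with ``it clearly preserves cofibrations'': \(\Sigma\) is the cocontinuous extension of a diagram on \(\Delta\times\C\) that is Reedy cofibrant in both variables, so it takes the generating boundary inclusions \(\delta^n\times^\lrcorner\delta^c\) to (relative) latching maps, which are monomorphisms. The gap is in the second half. The trivial cofibrations of \(\spsh_{\mathrm{inj}}\) are \emph{not} generated by pushout-products of monomorphisms with the horn inclusions \(\lambda^m_k\) in the \(\Delta\)-direction. The saturation of that set is the class of anodynes of the \emph{free homotopy theory} on \(\C\) in the sense of the Appendix, whose fibrant objects are merely objectwise Kan; the paper's own remark there stresses that this homotopy theory is ``radically different'' from the injective one (e.g.\ for \(\C\) the one-object groupoid on a group), the injective localizer being its regular completion and hence strictly larger in general. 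So showing that \(\Sigma\) carries your proposed generators into the horizontal inner anodynes does not show that it carries every injective trivial cofibration to a horizontal Joyal trivial cofibration, and no explicit generating set for the injective trivial cofibrations is available to run your argument on. A secondary problem: for outer horns \(k\in\{0,m\}\) the inclusion \(\colim_{\Delta^n\to\Lambda^m_k}C^n(c)\hookrightarrow C^m(c)\) cannot be built from \emph{inner} anodyne attachments alone, so even the pointwise claim would need a separate argument.

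The way around this --- and the route the paper takes --- is the dual criterion: a cofibration-preserving left adjoint is left Quillen as soon as its right adjoint preserves fibrations between fibrant objects. Here the right adjoint of \(\Sigma\) sends a bi-pointed formal \(\C\)-quasicategory \(X_{x,y}\) to \(\Map_X(x,y)\), and the general theory of Reedy-cofibrant cosimplicial resolutions (already in place from the mapping-object sections) shows that this carries horizontal Joyal fibrations between fibrant objects to injective fibrations of simplicial presheaves on \(\C\). If you want to keep a direct verification on trivial cofibrations, you must first identify a set that genuinely generates the injective trivial cofibrations, which is exactly what your argument is missing.
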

\begin{proof} 
  It clearly preserves cofibrations, so it suffices to show that its right adjoint preserves fibrations between fibrant objects.  However, this is clear, since the right adjoint sends a bi-pointed formal \(\C\) quasicategory \(X_{x,y}\) to \(\Map_X(x,y)\), which we saw sends horizontal Joyal fibrations to injective fibrations of simplicial presheaves on \(\C\).
\end{proof}
\begin{cor}
  The functor \(\Sigma_{(-)}\) is independent up-to-homotopy of choice of resolution \(C_{(-)}^\bullet(\bullet)\).
\end{cor}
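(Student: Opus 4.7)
The plan is to exhibit, between any two cocontinuous extensions $\Sigma, \tilde\Sigma : \spsh \to \cellset_{\ast,\ast}$ coming from two resolutions $C_{(-)}^\bullet(\bullet)$ and $\tilde C_{(-)}^\bullet(\bullet)$, a zig-zag of natural transformations which is objectwise a horizontal Joyal equivalence. First, I would invoke the standard uniqueness result for Reedy cofibrant cosimplicial resolutions: any two Reedy cofibrant resolutions of a common Reedy cofibrant diagram---here $[1](\bullet) : \C \to \cellset_{\ast,\ast}$, Reedy cofibrant by virtue of each $[1](c)$ being representable---are connected by a zig-zag of natural transformations that are objectwise horizontal Joyal equivalences. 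This reduces to the fact that the homotopy category of Reedy cofibrant cosimplicial resolutions of a fixed object is contractible, which is just the Reedy-diagrammatic enhancement of the same fact for a single object.

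Next, applying the coend formula for left Kan extension,
\[
\Sigma(X) \;=\; \int^{[n] \in \Delta,\; c \in \C} X_{n,c} \cdot C_{(-)}^n(c),
\]
and similarly for $\tilde\Sigma$, the functoriality of the coend promotes any natural transformation between the resolutions to a natural transformation between the corresponding cocontinuous functors $\spsh \to \cellset_{\ast,\ast}$. The zig-zag produced in the previous step therefore lifts to a zig-zag $\Sigma \leftrightsquigarrow \tilde\Sigma$ of natural transformations.

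Finally, to verify that this zig-zag is objectwise a horizontal Joyal equivalence, I would observe that every simplicial presheaf is cofibrant in the injective model structure and invoke a Reedy-theoretic filtration argument: induct on the skeletal filtration of $X$ in $\spsh = \psh{\Delta \times \C}$, and on the Reedy degree in $\Delta \times \C$, reducing to the representable case where the natural transformations between resolutions are weak equivalences by construction. The main obstacle will be this last step---verifying that the coend of a Reedy weak equivalence between Reedy cofibrant cosimplicial $\C$-diagrams with an injectively cofibrant simplicial presheaf yields a weak equivalence in the horizontal Joyal model structure---but this is a standard manifestation of the Quillen bifunctor structure relating $\spsh_{\mathrm{inj}}$, the Reedy model structure on $(\cellset_{\ast,\ast})^{\Delta \times \C}$, and the horizontal Joyal model structure on $\cellset_{\ast,\ast}$, which is available precisely because $\Sigma_{(-)}$ was shown to be left Quillen in the preceding proposition.
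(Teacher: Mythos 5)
Your argument is correct and is essentially the paper's own proof: both reduce to the representable objects $\Delta^n\times c$, where the values are the $C^n_{(-)}(c)$ themselves and the zig-zag of equivalences between resolutions is given by construction, and both use the left-Quillen property of $\Sigma_{(-)}$ from the preceding proposition to propagate this to all of $\spsh$. The only cosmetic difference is that the paper compresses your skeletal-filtration induction into the single statement that every simplicial presheaf is canonically the homotopy colimit of its representables and that left Quillen functors preserve homotopy colimits.
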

\begin{proof}
  Since simplicial presheaves are always canonically the homotopy-colimit of their representables, and since left-Quillen functors send homotopy-colimits to homotopy-colimits, it suffices to show that \(\Sigma_{(-)}(\Delta^n\times c)\) is independent up-to-homotopy.  But this is clear since all \(C_{(-)}^\bullet(\bullet)\) are connected by natural zig-zags of natural weak equivalences, since they are all cosimplicial resolutions of the same functor \([1](\bullet)\).
\end{proof}
We can therefore, without any worry, denote \(\Sigma_{(-)}\) simply by \(\Sigma\).  Then we define the following model structure:
\begin{defn}
  If \(\M=(\C,\setS)\) is a Cartesian presentation, we define the model category \(\cellset_{\M}\) to be the left-Bousfield localization of \(\cellset_\mathrm{hJoyal}\) at the set \(\Sigma(\setS)\), where we call the fibrant objects \emph{\(\M\)-enriched quasicategories} or simply \emph{\(\M\)-quasicategories}.
\end{defn}

\begin{prop} Let \(\mathscr{B}\) denote the set of simplicial boundary inclusions.  Then a formal \(\C\)-quasicategory is an \(\M\)-quasicategory if and only if it has the right lifting property with respect to \[\Sigma(\mathscr{B}\times^\lrcorner \setS).\]
\end{prop}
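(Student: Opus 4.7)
The plan is to reduce the proposition to the standard characterization of $\setS$-local injectively fibrant simplicial presheaves inside the Cartesian-closed Bousfield localization $\spsh_\M$, using the Quillen adjunction $\Sigma \dashv \Map_{(-)}$ established above.

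First, I would unpack the meaning of $\M$-locality. By definition $\cellset_\M$ is the left Bousfield localization of $\cellset_{\mathrm{hJoyal}}$ at $\Sigma(\setS)$, so a formal $\C$-quasicategory $X$ is an $\M$-quasicategory if and only if for every $s\in\setS$ and every pair of vertices $(x,y)$ of $X$ the derived mapping space of bi-pointed cellular sets $\operatorname{map}_{\cellset_{\ast,\ast}}(\Sigma(s), X_{x,y})$ is a weak equivalence. Since each element of $\setS$ is a monomorphism (hence a cofibration, since every object of $\spsh_{\mathrm{inj}}$ is cofibrant) and $\Map_X(x,y)$ is injectively fibrant (as the image under the right Quillen adjoint of $\Sigma$ of the bi-pointed formal $\C$-quasicategory $X_{x,y}$), the Quillen adjunction $\Sigma \dashv \Map_{(-)}$ yields a natural weak equivalence of derived mapping spaces
\[
\operatorname{map}_{\cellset_{\ast,\ast}}(\Sigma(s), X_{x,y}) \simeq \operatorname{map}_{\spsh}(s, \Map_X(x,y)).
\]
Hence $X$ is $\M$-local if and only if $\Map_X(x,y)$ is $\setS$-local in $\spsh_\M$ for every pair of vertices $(x,y)$.

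Next, I would invoke the standard characterization of $\setS$-local fibrant objects in a Cartesian-closed left Bousfield localization (an instance of Cisinski's theory recalled in the Appendix). Because $\spsh_\M$ is Cartesian-monoidal, an injectively fibrant simplicial presheaf $Y$ is $\setS$-local if and only if for each $s:A\to B$ in $\setS$ the map of simplicial mapping spaces $\Map_{\spsh}(B,Y) \to \Map_{\spsh}(A,Y)$ is a trivial Kan fibration; since trivial fibrations of simplicial sets are detected by right-lifting against $\mathscr{B}$, this in turn is equivalent to $Y$ having the right lifting property against every pushout-product in $\mathscr{B} \times^\lrcorner \setS$. Applied to $Y=\Map_X(x,y)$, this translates $\setS$-locality into an ordinary right lifting condition.

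Finally, the $\Sigma \dashv \Map_{(-)}$ adjunction together with the cocontinuity of $\Sigma$ identifies the right lifting property of $\Map_X(x,y)$ against $\mathscr{B}\times^\lrcorner \setS$ with the right lifting property of the bi-pointed $X_{x,y}$ against $\Sigma(\mathscr{B} \times^\lrcorner \setS)$; allowing $(x,y)$ to range over all pairs of vertices recovers exactly the unpointed right lifting property of $X$ against $\Sigma(\mathscr{B} \times^\lrcorner \setS)$, completing the chain of equivalences. The main obstacle I anticipate is verifying the Cartesian-closed characterization of local objects in precisely the enriched form needed and checking the compatibility of $\Sigma$ with pushout-products and bi-pointings; both reduce to formal consequences of the adjunction and cocontinuity of $\Sigma$, but some care is required because $\Sigma$ is defined only on $\spsh$ and lands in bi-pointed, rather than pointed or unpointed, cellular sets.
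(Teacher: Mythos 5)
Your proposal is correct and follows essentially the same route as the paper: both reduce $\M$-locality of a formal $\C$-quasicategory $X$ to $\setS$-locality of the mapping presheaves $\Map_X(x,y)$ via the adjunction between $\Sigma$ and the bi-pointed mapping-object construction, and then translate $\setS$-locality into the right lifting property against $\mathscr{B}\times^\lrcorner\setS$, which transports back along $\Sigma$. The one step where the paper is more careful is the passage from the unpointed locality condition to the vertex-pairwise bi-pointed one, which it justifies by showing the relevant maps $X^{\Sigma(f)}$ are horizontal Joyal fibrations whose Kan cores are Kan fibrations detected as trivial on their fibres; you assert this reduction as holding \emph{by definition}, but it does require that fibration-and-fibres argument.
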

\begin{proof}
	Let \(E:\psh{\Delta} \to \cellset\) be the left Kan extension of the cosimplicial object \(E^\bullet\).  We know by the construction of the left-Bousfield localization of Cisinski model categories that a formal \(\C\)-quasicategory \(X\) is \(\Sigma(\setS)\)-local if and only if it has the right-lifting property with respect to \(E(\mathscr{B})\times^\lrcorner \Sigma(\setS)\), but this occurs only when for every \(s:A\to B\) in \(\setS\), the map \(X^{\Sigma(f)}:X^{\Sigma(B)} \to X^{\Sigma(A)}\) has the right lifting property with respect to \(E(\mathscr{B})\).  
	
	By we claim that by adjunction, this happens if and only if \(\mathscr{G}(X^{\Sigma(f)})\) is a trivial fibration, where \(\mathscr{G}\) is the functor sending a formal \(\C\)-quasicategory \(X\) to the Kan core of the underlying quasicategory \(\mathscr{N}(X)\).  To see this, notice that since all of the maps \(f \in \setS\) are monic, and since \(\Sigma\) preserves cofibrations, it follows that the maps \(X^{\Sigma(f)}\) are all horizontal Joyal fibrations because the horizontal Joyal model structure is Cartesian.  Then, since \(\mathscr{N}\) sends horizontal Joyal fibrations between formal \(\C\)-quasicategories to Joyal fibrations of quasicategories, and since the Kan core of a Joyal fibration between quasicategories is a Kan fibration of Kan complexes, it suffices to show that the Kan fibration \(\mathscr{G}(X^f)\) is a trivial fibration as we claimed.
  
	Notice that \(\mathscr{G}(X^{\bullet})\) is exactly the simplicial mapping space \(\operatorname{hMap}^\Delta(\bullet,X)\).  For any two vertices \(x,y\) of \(X\), it suffices therefore to show that \(\operatorname{hMap}^\Delta_{\ast,\ast}(\Sigma(f),X_{x,y})\) is a trivial fibration, since a Kan fibration is a trivial fibration if and only if it has contractible fibres.  But \(\operatorname{hMap}_{\ast,\ast}(\Sigma(f),X_{x,y})\) is exactly \(\operatorname{hMap}^\Delta(f,\Map_X(x,y))\), which is a trivial fibration if and only if \(\Map_X(x,y)\) has the right lifting property with respect to \(b \times^\lrcorner f\) where \(b\) is a simplicial boundary inclusion, which proves the proposition.
\end{proof}
\begin{cor} A formal \(\C\)-quasicategory \(X\) is an \(\M\)-quasicategory if and only if \(\Map_X(x,y)\) is \(\setS\)-local for all pairs of vertices \(x,y\) in \(X\).  
\end{cor}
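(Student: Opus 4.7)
The plan is to combine the preceding proposition with the adjunction $\Sigma\dashv \Map_{(-)}$ and the standard characterization of $\setS$-local fibrant simplicial presheaves in a Cartesian presentation. By the preceding proposition, a formal $\C$-quasicategory $X$ is an $\M$-quasicategory if and only if it has the right lifting property against every map $\Sigma(\beta\times^\lrcorner s)$ for $\beta\in\mathscr{B}$ and $s\in\setS$. Any such lifting problem sends the two basepoints of its bi-pointed domain and codomain to a pair of vertices $x,y$ of $X$, and, since $\Sigma$ is left adjoint to the bi-pointed mapping-space functor sending $X_{x,y}$ to $\Map_X(x,y)$, it is equivalent by transposition to a lifting problem for $\beta\times^\lrcorner s$ against $\Map_X(x,y)\to\ast$ in $\spsh$.

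Since $X$ is a formal $\C$-quasicategory, the simplicial presheaf $\Map_X(x,y)$ is injectively fibrant for every pair of vertices $x,y$: this is part of the content of \Cref{rezkvert}, where $\Sigma$ was shown to be left Quillen from $\spsh_{\mathrm{inj}}$ into the horizontal Joyal model structure with right adjoint $\Map_{(-)}(x,y)$ on bi-pointed formal $\C$-quasicategories. It therefore remains only to invoke the standard characterization of $\setS$-local objects in the Cartesian left-Bousfield localization $\spsh_\setS$: an injectively-fibrant simplicial presheaf is $\setS$-local if and only if it has the right lifting property against $\mathscr{B}\times^\lrcorner \setS$. This is a direct consequence of the recognition theorem for fibrant objects of a Cartesian-monoidal Cisinski model category recalled in the appendix, together with the fact that, for a Cartesian presentation, the generating anodynes of $\spsh_\setS$ over $\spsh_{\mathrm{inj}}$ are precisely the pushout-products of elements of $\setS$ with boundary inclusions.

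Chaining these two equivalences, $X$ has the right lifting property against $\Sigma(\mathscr{B}\times^\lrcorner\setS)$ if and only if, for each pair of vertices $x,y$ of $X$, the simplicial presheaf $\Map_X(x,y)$ is $\setS$-local, which is the claim. The one point requiring some care is the bi-pointed bookkeeping inside the adjunction $\Sigma\dashv \Map_{(-)}$, namely that a lifting problem against $\Sigma(\beta\times^\lrcorner s)$ is faithfully transposed into a lifting problem against $\beta\times^\lrcorner s$ after fixing the pair $(x,y)$ through which the basepoints factor; once this is made precise the argument is essentially formal, so I do not anticipate any serious obstacle.
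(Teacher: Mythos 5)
Your argument is correct and matches the paper's: the corollary carries no separate proof there because the proof of the preceding proposition already establishes exactly the chain you write out, namely that the lifting condition against \(\Sigma(\mathscr{B}\times^\lrcorner\setS)\) transposes, for each pair of vertices \((x,y)\), to \(\Map_X(x,y)\) having the right lifting property against \(\mathscr{B}\times^\lrcorner\setS\), which is the standard characterization of \(\setS\)-locality for an injectively fibrant presheaf. Your bi-pointed bookkeeping and the identification of the right adjoint of \(\Sigma\) with \(\Map_{(-)}(x,y)\) on fibrant objects are both handled correctly.
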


\begin{thm} For any Cartesian presentation \(\M=(\C,\setS)\), the model category \(\cellset_{\M}\) is Cartesian-closed.  
\end{thm}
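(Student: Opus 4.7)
The plan is to apply the standard criterion for left-Bousfield localizations of Cartesian-closed model categories: since $\cellset_\mathrm{hJoyal}$ is Cartesian-closed (by \Cref{horizontal}) and $\cellset_\M$ is its left-Bousfield localization at the set $\Sigma(\setS)$ of monomorphisms, the localized structure is again Cartesian-closed if and only if for each $s\in\setS$ and each representable $T=[n](c_1,\ldots,c_n)\in\Theta[\C]$, the map $\Sigma(s)\times T$ belongs to the localizer generated by $\Sigma(\setS)$. Appealing to universality of colimits to reduce a cofibration $\times$ localizing-map check to a representable $\times$ localizing-map check, this produces a single class of generating conditions to verify.

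To discharge these generating conditions, the plan is to leverage the Quillen bi-equivalence with Rezk's complete $\Theta[\C]$-Segal space model structure established in \Cref{rezkcomparison}. The key observation is that, by the adjunction defining $\Sigma$ (i.e.\ its right adjoint is the $\Map(\cdot,\cdot)$ construction), a formal $\C$-quasicategory $X$ is $\Sigma(\setS)$-local if and only if each mapping object $\Map_X(x,y)$ is $\setS$-local, which is precisely the characterization above. Analogously, Rezk shows in \cite{rezk-theta-n-spaces} that his $\setS$-local complete Segal $\Theta[\C]$-spaces are those whose mapping spaces are $\setS$-local. Matching these two characterizations allows one to conclude that the Quillen bi-equivalence descends to a Quillen bi-equivalence between $\cellset_\M$ and Rezk's $\setS$-local model structure on $\Psh_\Delta(\Theta[\C])$.

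Having this descended bi-equivalence, the Cartesian-closedness of $\cellset_\M$ follows by transfer from Rezk's theorem that his $\setS$-localized model structure is Cartesian-closed whenever $(\C,\setS)$ is a Cartesian presentation. Concretely, given $s\in\setS$ and a representable $T$, one transports $\Sigma(s)\times T$ across the bi-equivalence, recognizes it as a weak equivalence in Rezk's $\setS$-local structure (via his Cartesianness theorem applied to a suitable Rezk-side analogue), and transports back.

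The main obstacle I expect is carefully verifying the descent of the Quillen bi-equivalence to the localized model structures, including compatibility with the Cartesian monoidal structures. The descent of the localizers is essentially formal from the matching mapping-object characterization, but transporting the Cartesian property rather than merely the weak equivalences requires that both functors in the bi-equivalence interact appropriately with products. This is clean for $d_!$, which preserves Cartesian products of representables on the nose, but $\operatorname{Real}_E$ only preserves products up to homotopy on cofibrant objects; since what we actually need is a weak equivalence statement, this homotopical compatibility is sufficient, but the verification demands careful bookkeeping of the cosimplicial resolutions $C^\bullet_{(-)}(c)$ used on our side together with the simplicial completion machinery underlying Rezk's side.
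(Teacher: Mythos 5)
Your proposal is correct and follows essentially the same route as the paper: the paper's entire proof is the citation of \cite{rezk-theta-n-spaces}*{Proposition 8.5}, with the transfer across the Quillen bi-equivalence of \Cref{rezkcomparison} left implicit. What you write out — the reduction to generating conditions, the matching of the mapping-object characterizations of local objects on both sides, and the care about how $d_!$ and $\operatorname{Real}_E$ interact with products — is exactly the bookkeeping the paper elides.
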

\begin{proof}
  This is exactly \cite{rezk-theta-n-spaces}*{Proposition 8.5}.
\end{proof}

In what follows, let \(\Sigma\) be \(\Sigma_R\).  

\begin{prop} The pair \(\cellset_\M \underset{\mathfrak{N}}{\overset{\mathfrak{C}}{\rightleftarrows}} \Cat_{\spsh_{\setS}}\) is a Quillen pair.
\end{prop}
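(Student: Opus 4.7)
The plan is to deduce this from the horizontal Joyal Quillen pair of Corollary~\ref{horizquillen} via the universal property of left Bousfield localization. Since left Bousfield localization does not alter cofibrations, the cofibrations of $\cellset_\M$ and $\Cat_{\spsh_\setS}$ coincide with those of $\cellset_{\mathrm{hJoyal}}$ and $\Cat_{\spsh_{\mathrm{inj}}}$ respectively, so the fact that $\mathfrak{C}$ preserves cofibrations transfers immediately. By the standard descent criterion for Quillen pairs through left Bousfield localizations, it then suffices to check that for every $f\in\setS$, the image $\mathfrak{C}(\Sigma(f))$ is a weak equivalence in $\Cat_{\spsh_\setS}$; all objects of $\cellset$ are cofibrant, so no separate cofibrant replacement is needed.

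To analyze $\mathfrak{C}(\Sigma(f))$, recall that $\Sigma(f)\colon\Sigma(A)\to\Sigma(B)$ is a map of bi-pointed $\C$-cellular sets. Its image under $\mathfrak{C}$ is then a morphism between two-object $\spsh$-enriched categories on the common object set $\{\alpha,\omega\}$ which is the identity on objects, and whose only nontrivial component is the map on the hom-object from $\alpha$ to $\omega$. Such a map is automatically weakly essentially surjective in the sense of the Bergner--Lurie structure, so I only need to verify that its $(\alpha,\omega)$-component is an $\setS$-local weak equivalence of $\spsh$.

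The key computation is to identify $\mathfrak{C}(\Sigma(A))(\alpha,\omega)$ homotopically with $A$ itself, naturally in $A\in\spsh$. Since both $\mathfrak{C}$ and $\Sigma=\Sigma_R$ are cocontinuous, and since the hom-object at $(\alpha,\omega)$ out of a colimit of two-object categories with common object set is the corresponding colimit of hom-objects, it is enough to make the identification on representables $\Delta^n\times c$. Here Proposition~\ref{pointwise} together with the explicit calculation in the proof of Theorem~\ref{counitthm} gives $\mathfrak{C}(\Sigma(\Delta^n\times c))(\alpha,\omega)=\mathfrak{C}(C^n_R(c))(\alpha,\omega)\cong Q^n\times c$, where $Q^\bullet$ is the cosimplicial resolution of the point from \cite{ds2}. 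Taking (homotopy) colimits over the category of elements of $A$, together with the objectwise acyclicity of $Q^\bullet\to\ast$, yields a natural zig-zag of weak equivalences $\mathfrak{C}(\Sigma(A))(\alpha,\omega)\simeq A$ in $\spsh_{\mathrm{inj}}$, hence also in $\spsh_\setS$.

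Combining these observations, the hom-object component of $\mathfrak{C}(\Sigma(f))$ fits into a commutative square with $f\colon A\to B$ and two natural weak equivalences, so $3$-for-$2$ in $\spsh_\setS$ forces it to be an $\setS$-local equivalence; together with the identity-on-objects property, this is exactly the condition for $\mathfrak{C}(\Sigma(f))$ to be a weak equivalence in $\Cat_{\spsh_\setS}$. The main technical obstacle is the homotopical identification $\mathfrak{C}\circ\Sigma\simeq\mathrm{id}$ on hom-objects, since it requires us to replace ordinary colimits by homotopy colimits on both sides; the cleanest way to handle this is to observe that $\Sigma$ is already left Quillen (as established before the statement) and $\mathfrak{C}$ is left Quillen for the horizontal Joyal structure, so their composite sends trivial cofibrations to trivial cofibrations, allowing us to bypass an explicit Reedy-cofibrancy check by comparing the two sides directly on the resolution $C^\bullet_R(c)$.
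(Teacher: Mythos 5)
Your overall strategy coincides with the paper's: reduce, via the universal property of left Bousfield localization, to showing that \(\mathfrak{C}(\Sigma(-))\) carries the localizing maps to weak equivalences, and do this by identifying \(\mathfrak{C}(\Sigma(A))\) with \(\mathbf{2}(A)\) up to natural weak equivalence. (The paper runs the reduction through the right adjoint instead, checking that \(\mathfrak{N}\) preserves fibrant objects via the lifting property against \(\Sigma(\mathscr{B}\times^\lrcorner\setS)\); the two reductions are equivalent and yours is legitimate. Your passage from a colimit of two-object categories to the colimit of \((\alpha,\omega)\)-homs is also fine, but only because these categories lie in the image of the left adjoint \(\mathbf{2}\) --- for a general two-object enriched category with nontrivial endomorphism objects the claim would fail, so you should note that \(\mathfrak{C}(\Sigma(\Delta^n\times c))\cong\mathbf{2}(Q^n\times c)\) as a category, not merely hom-wise.)

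The genuine gap is in the globalization step. On representables you correctly get \(\mathfrak{C}(\Sigma(\Delta^n\times c))(\alpha,\omega)\cong Q^n\times c\xrightarrow{\sim}\Delta^n\times c\), and cocontinuity identifies \(\mathfrak{C}(\Sigma(A))(\alpha,\omega)\) with the \emph{strict} colimit \(\colim_{\El(A)}Q^n\times c\). But to conclude that this maps to \(A=\colim_{\El(A)}\Delta^n\times c\) by a weak equivalence you must show that both strict colimits compute the corresponding homotopy colimits, and neither "objectwise acyclicity of \(Q^\bullet\to\ast\)" nor "\(\mathfrak{C}\circ\Sigma\) sends trivial cofibrations to trivial cofibrations" delivers this: preservation of trivial cofibrations says nothing about whether a strict colimit of an objectwise-equivalent pair of diagrams is an equivalence. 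Your proposed fix --- "comparing the two sides directly on the resolution \(C^\bullet_R(c)\)" --- only re-proves the representable case. The paper's proof spends essentially all of its length on exactly this point: it defines \(\realiz{\cdot}_Q\) by left Kan extension, observes that the class of \(A\) for which \(\realiz{A}_Q\to A\) is an injective equivalence is closed under filtered colimits (so one may assume finitely many nondegenerate cells), and then inducts on Reedy dimension and cell count, using that \(\realiz{\cdot}_Q\) preserves monomorphisms so that the cell-attachment pushouts are homotopy pushouts on both sides. Some such argument (or an appeal to regularity of the injective localizer together with an explicit cofibrancy check on the diagram \(\El(A)\to\spsh\), \((\Delta^n\times c)\mapsto Q^n\times c\)) is required to close your proof.
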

\begin{proof}
  It suffices to show that \(\mathfrak{N}\) preserves fibrant objects by the properties of the left-Bousfield localization.  Since the coherent nerve of any fibrant \(\spsh\)-enriched category \(\mathcal{D}\) is already a formal \(\C\)-quasicategory, it suffices to show that \(\mathfrak{N}\mathcal{D}\) has the right-lifting property with respect to \(\Sigma(\mathscr{B}\times^\lrcorner \setS)\).  This will be true so long as the maps \(\mathfrak{C}(\Sigma(\mathscr{B}\times^\lrcorner \setS))\) are all weak equivalences.  To see this, let \(\mathbf{2}(A)\) for any simplicial presheaf \(A\) on \(\C\) denote the \(\spsh\)-enriched category whose objects are \(\{0,1\}\) and where 
  \[
    \mathbf{2}(A)(x,y)=
    \begin{cases}
      \ast \text{ if } x=y\\
      A \text{ if } x<y\\
      \varnothing \text{ otherwise}
    \end{cases}. 
  \]
  For all \(n \geq 0\) and \(c \in \C\), there is a natural weak equivalence 
  \[\mathfrak{C}(\Sigma(\Delta^n \times c))(\alpha,\omega) \cong Q^n \times c \xrightarrow{\sim} \Delta^n \times c \cong \mathbf{2}(c\times \Delta^n)(0,1).\] 
  Following \cite{htt}*{Proposition 2.2.2.7}, We define a realization 
  \[\realiz{\bullet}_{Q}:\spsh\to \spsh\] 
  by left Kan extension of the functor \(\Delta^n\times c \mapsto Q^n \times c\) along the Yoneda embedding. Let \(\mathcal{A}\) denote the class of simplicial presheaves \(A\) on \(\C\) such that the map
  \[\realiz{A}_Q \to A\]
  is an injective equivalence.  This class is closed under filtered colimits, since injective weak equivalences are closed under filtered colimits, so it suffices to consider the case where \(A\) has finitely many nondegenerate representable cells \([n] \times c\).  Since \(\Delta\) and \(\C\) are regular Cartesian Reedy, so is their product by \cite{cisinski-book}*{8.2.7}, and the boundary of a representable cell is given by the formula 
  \[\partial(\Delta^n\times c)=\partial\Delta^n \times c \cup \Delta^n \times \partial c.\]
  We work by induction on Reedy dimension and number of cells.
  If \(A=\varnothing\), we are done, since the map in question is the identity.
  Otherwise, suppose
  \[A=A^\prime \coprod_{\partial(\Delta^n\times c)}  \Delta^n\times c.\]
  This is a homotopy pushout since \(\partial(\Delta^n \times c) \to \Delta^n\times c\) is an injective cofibration. Similarly, 
  \[\realiz{A}_Q=\realiz{A^\prime} \coprod_{\realiz{\partial(\Delta^n\times c)}_Q}  \realiz{\Delta^n\times c}\]
  is also a homotopy-pushout since \(\realiz{\bullet}_Q\) preserves monomorphisms.  Then we see that the map 
  \[\realiz{\Delta^n \times c}_Q=Q^n \times c \to \Delta^n\times c\] is already a weak equivalence since \(Q^n\to \Delta^n\) is a weak equivalence and the injective model structure is Cartesian.  The map 
  \[\realiz{\partial(\Delta^n \times c)}_Q \to \partial(\Delta^n \times c)\]
  is a weak equivalence by the induction hypothesis, since the Reedy dimension of \(\partial(\Delta^n \times c)\) is less than the dimension of \(\Delta^n \times c\). Finally, we see that 
  \[\realiz{A^\prime}_Q \to A^\prime\]
  is a weak equivalence since \(A^\prime\) has one fewer nondegenerate cell than \(A\) and is therefore also covered in the induction hypothesis.

  Therefore, the natural map
  \[\mathfrak{C}(\Sigma(A))\cong \mathbf{2}(\realiz{A}_Q) \xrightarrow{\sim} \mathbf{2}(A)\]
  is a weak equivalence in \(\Cat_{\Psh_{\Delta}(C)_{\mathrm{inj}}}\) for all simplicial presheaves \(A\) on \(\C\).
  From this, it follows that since \(\mathbf{2}(b\times^\lrcorner f)\) is an \(\M\)-equivalence for any \(f\in \setS\), and since we have a natural equivalence of arrows \[\mathfrak{C}(\Sigma(b\times^\lrcorner f))\xrightarrow{\sim} \mathbf{2}(b\times^\lrcorner f),\] then by \(3\)-for-\(2\), \(\mathfrak{C}(\Sigma(b\times^\lrcorner f))\) is a weak equivalence, which proves left-Quillen functoriality.
\end{proof}
\begin{thm}\label{maintheorem2}
  The Quillen pair \(\cellset_\M \underset{\mathfrak{N}}{\overset{\mathfrak{C}}{\rightleftarrows}} \Cat_{\Psh_{\Delta}(\C)_{\setS}}\) is a Quillen equivalence.
\end{thm}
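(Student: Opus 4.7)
The plan is to replay the proof of \Cref{mainthm1} in the localized setting. We already have a Quillen pair by the preceding proposition, and for the derived counit the remark following \Cref{counitthm} gives that for every fibrant \(\mathcal{D}\) in \(\Cat_{\spsh_\setS}\) the counit \(\epsilon_\mathcal{D}\colon\mathfrak{C}(\mathfrak{N}\mathcal{D})\to\mathcal{D}\) is an \(\spsh_{\mathrm{inj}}\)-enriched, and hence a fortiori an \(\spsh_\setS\)-enriched, weak equivalence. So the whole question reduces to showing that the derived unit is an \(\M\)-equivalence, and following the template of \Cref{mainthm1} it suffices to establish an \(\M\)-enriched analogue of \Cref{conservativity}: if \(\mathfrak{C}(f)\) is an \(\spsh_\setS\)-enriched equivalence, then \(f\) is an \(\M\)-equivalence.

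\textbf{\(\M\)-conservativity.} Given such an \(f\colon X\to Y\), first \(\M\)-fibrantly replace both ends to obtain \(\tilde f\colon\tilde X\to\tilde Y\) between \(\M\)-quasicategories; since the replacement maps are \(\M\)-equivalences and \(\mathfrak{C}\) is left Quillen for the localized structures, \(\mathfrak{C}(\tilde f)\) remains an \(\spsh_\setS\)-enriched equivalence. By general left-Bousfield theory, \(\M\)-equivalences between \(\M\)-quasicategories coincide with horizontal Joyal equivalences, so by \Cref{conservativity} it is enough to upgrade \(\mathfrak{C}(\tilde f)\) from an \(\spsh_\setS\)-enriched equivalence to an \(\spsh_{\mathrm{inj}}\)-enriched one. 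For the hom components, \Cref{mapspacecomparison} exhibits each \(\mathfrak{C}(\tilde X)(x,x^\prime)\) as \(\spsh_{\mathrm{inj}}\)-equivalent to the \(\setS\)-local object \(\Map_{\tilde X}(x,x^\prime)\), and a map whose source and target are each equivalent to \(\setS\)-local objects is an \(\setS\)-local equivalence precisely when it is an \(\spsh_{\mathrm{inj}}\)-equivalence. Essential surjectivity transfers by the same argument as in \Cref{conservativity}: it depends only on \(\pi_0\) of mapping objects, which is unaffected by passing from \(\spsh_{\mathrm{inj}}\) to \(\spsh_\setS\) on presheaves that are already \(\setS\)-local, and the reduction to the classical case via \(\mathscr{G}\mathscr{N}\) and evaluation at the terminal object \(\ast_\C\) of \(\C\) goes through verbatim.

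\textbf{The unit.} Now let \(X\) be any \(\C\)-cellular set and pick a fibrant replacement \(\mathfrak{C}(X)\to\mathcal{D}\) in \(\Cat_{\spsh_\setS}\). By the \(\M\)-conservativity just established, the derived unit \(X\to\mathfrak{N}\mathcal{D}\) is an \(\M\)-equivalence as soon as the composite \(\mathfrak{C}(X)\to\mathfrak{C}\mathfrak{N}\mathcal{D}\to\mathcal{D}\) is an \(\spsh_\setS\)-enriched equivalence; but the naturality-plus-triangle-identity diagram from the end of the proof of \Cref{mainthm1} identifies this composite with the fibrant replacement \(\mathfrak{C}(X)\to\mathcal{D}\) up to the counit \(\mathfrak{C}\mathfrak{N}\mathcal{D}\to\mathcal{D}\) (already an equivalence), and we conclude by \(3\)-for-\(2\). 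The hard part is the \(\M\)-conservativity step, where one must carefully track the difference between being \emph{literally} \(\setS\)-local and being merely equivalent to an \(\setS\)-local object: what saves us is the characterization given in this section that \(\M\)-quasicategories are precisely the formal \(\C\)-quasicategories whose \(\Map\)-objects are genuinely \(\setS\)-local, which is exactly what lets the hom-wise upgrade of \(\mathfrak{C}(\tilde f)\) to an \(\spsh_{\mathrm{inj}}\)-equivalence go through.
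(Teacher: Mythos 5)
Your proposal is correct and follows essentially the same route as the paper: reduce to homotopy-conservativity of \(\mathfrak{C}\) via the localized counit statement, fibrantly replace to get a map between \(\M\)-quasicategories, and then use \Cref{mapspacecomparison} together with the \(\setS\)-locality of the \(\Map\)-objects of \(\M\)-quasicategories to upgrade the \(\spsh_\setS\)-enriched equivalence to an \(\spsh_{\mathrm{inj}}\)-enriched one, with essential surjectivity handled exactly as in \Cref{conservativity}. The only cosmetic difference is that you phrase the conclusion as an application of \Cref{conservativity} while the paper verifies the horizontal Dwyer-Kan conditions directly, which amounts to the same argument.
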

\begin{proof} It suffices to show that \(\mathfrak{C}\) is homotopy-conservative, so let \(f:X\to Y\) be a map in \(\cellset\) such that \(\mathfrak{C}(f)\) is an equivalence in \(\Cat_{\spsh_{\M}}\). Using the same argument as in \Cref{horizcomparison}, we reduce to the case where \(f:X\to Y\) is a map between \(\M\)-quasicategories.  Since \(\M\)-quasicategories are also formal \(\C\)-quasicategories, we can apply \Cref{mapspacecomparison} to obtain a natural zig-zag of weak equivalences between \(\Map_X(x,y)\) and \(\mathfrak{C}(X)(x,y)\) for any pair of vertices \(x,y\) of \(X\).  By \(3\)-for-\(2\) and since \[\mathfrak{C}(X)(x,y) \to \mathfrak{C}(Y)(fx,fy)\] was assumed to be an \(\M\)-equivalence, we see that the map \(\Map_X(x,y)\to \Map_Y(fx,fy)\) must also be an \(\M\)-equivalence.  In fact, since both \(\Map_X(x,y)\) and \(\Map_Y(fx,fy)\) are local, this map is actually an equivalence for \(\Cat_{\spsh_{\mathrm{inj}}}\).  The argument showing that \(f\) is bijective on iso-components is the same as in the proof of \Cref{conservativity} by passing to the underlying quasicategory.  Therefore, it follows that \(f\) is a horizontal Dwyer-Kan equivalence, which concludes the proof.
\end{proof}
\subsection{The Yoneda embedding and Yoneda's lemma}
We need the following easy lemma:
\begin{lemma} There is a natural isomorphism \(\mathfrak{C}(X^\op)\cong \mathfrak{C}(X)^\op\).  
\end{lemma}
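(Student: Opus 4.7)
The plan is to use the explicit description of $\mathfrak{C}$ on labeled simplices given by Proposition \ref{pointwise} together with a cocontinuity argument. The opposite functor on $\cellset$ is induced by the involution $\sigma: \Theta[\C] \to \Theta[\C]$ sending $[n](c_1,\ldots,c_n) \mapsto [n](c_n,\ldots,c_1)$, so $(-)^\op$ on $\cellset$ is itself a left adjoint (an involution of a presheaf category) and hence cocontinuous; the opposite on $\Cat_{\spsh}$ is likewise a strict involution and preserves all colimits. Since $\mathfrak{C}$ is cocontinuous by construction, both composites $\mathfrak{C}((-)^\op)$ and $(-)^\op \circ \mathfrak{C}$ are cocontinuous functors $\cellset \to \Cat_{\spsh}$, and it therefore suffices to exhibit the natural isomorphism on the full subcategory of labeled simplices $V[n](A_1,\ldots,A_n) \in \Delta \int \psh{\C}$; the general case follows by left Kan extension.

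By Proposition \ref{pointwise}, both $\mathfrak{C}(V[n](A_1,\ldots,A_n))^\op$ and $\mathfrak{C}(V[n](A_n,\ldots,A_1))$ have object set $\{0,1,\ldots,n\}$. I would define the candidate comparison $\Phi$ by sending $i \mapsto n-i$ on objects, and, for $i > j$, by the canonical isomorphism reversing the order of the tensor factors,
\[
  A_{j+1} \times \Delta^1 \times \cdots \times \Delta^1 \times A_{i}
  \;\xrightarrow{\;\cong\;}\;
  A_{i} \times \Delta^1 \times \cdots \times \Delta^1 \times A_{j+1},
\]
which identifies the corresponding hom-objects on the two sides by direct substitution into the explicit formulas of Proposition \ref{pointwise}.

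The remaining work is to verify that $\Phi$ respects the composition law and is natural in maps of $\Delta \int \psh{\C}$. Both reduce to inspection of the three classes of generating morphisms (codegeneracies, cofaces, and Cartesian lifts) enumerated in the proof that $Q$ is functorial; in each case precomposition with $\sigma$ on the source matches the opposite-category action on $Q$ under the relabelling $i \mapsto n-i$ paired with the tensor-factor reversal. The main obstacle is purely bookkeeping: one must track the indexing through the composition law of $Q$ (inclusion on the bottom face with respect to the cutting vertex) and the action of $Q$ on face maps. No deep input beyond Proposition \ref{pointwise} is required, and extending to all of $\cellset$ via the cocontinuity observation completes the argument.
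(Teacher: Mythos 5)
Your proposal is correct and follows exactly the route the paper takes: the paper's proof simply says it suffices to check on representables and leaves the verification as an exercise, which is precisely the cocontinuity reduction plus the explicit hom-object comparison via \Cref{pointwise} that you carry out. The only thing to note is that you have supplied the details the paper omits (the object relabelling \(i\mapsto n-i\), the factor-reversal isomorphism, and the compatibility with composition and the generating morphisms), all of which check out.
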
 
\begin{proof}
	It suffices to check on representables, and this is left as an easy exercise to the reader.
\end{proof}
Before we give a construction of the Yoneda embedding and a proof of Yoneda's lemma for \(\M\)-quasicategories, we need two lemmas from \cite{htt}.   We fix a Cartesian presentation \(\mathcal{M}=(\C,\setS)\) for the remainder of this section.

\begin{prop}\cite{htt}*{4.2.4.4}\label{lurieprop1}
	Let \(X\in \cellset\) be a cellular \(\C\)-set, \(\mathcal{D}\) a small \(\spsh\)-enriched category, and let \(\mathfrak{C}(X)\to \mathcal{D}\) be an equivalence of \(\spsh_{\setS}\)-enriched categories. Suppose \(\mathbf{A}\) is a \(\spsh_{\setS}\)-enriched model category, and let \(\mathcal{U}\) be \(\mathcal{D}\)-chunk (see \cite{htt}*{A.3.4.9} for the definition).  Then the induced map
	\[\mathfrak{N}((\mathcal{U}^\mathcal{D})^\circ)\to \mathfrak{N}(\mathcal{U}^\circ)^X\] is an equivalence of \(\M\)-quasicategories.
\end{prop}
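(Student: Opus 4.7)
The plan is to reduce, via the Quillen equivalence of Theorem~\ref{maintheorem2} and the universal properties of the constructions involved, to a statement asserting that \(\mathfrak{C}\) respects Cartesian products up to weak equivalence, then to prove that homotopy-monoidality by cellular induction.

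First, I would verify that both \(\mathfrak{N}((\mathcal{U}^\mathcal{D})^\circ)\) and \(\mathfrak{N}(\mathcal{U}^\circ)^X\) are fibrant \(\mathcal{M}\)-quasicategories: the former because \((\mathcal{U}^\mathcal{D})^\circ\) is a fibrant \(\spsh_\setS\)-enriched category (by standard facts about chunks of enriched model categories, every object of \(\mathcal{U}^\circ\) being fibrant-cofibrant) and \(\mathfrak{N}\) preserves fibrant objects by the right-Quillen half of Theorem~\ref{maintheorem2}; the latter because \(\cellset_{\mathcal{M}}\) is Cartesian closed and every object is cofibrant.

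Next, by the Quillen equivalence of Theorem~\ref{maintheorem2} --- together with the horizontal Dwyer--Kan characterization of weak equivalences between fibrant objects established in its proof --- it suffices to show that the map induces weak equivalences on derived mapping spaces. For any \(\C\)-cellular set \(Y\), combining the derived \(\mathfrak{C}\dashv\mathfrak{N}\) adjunction with the Cartesian closed structure on \(\cellset_{\mathcal{M}}\) yields
\begin{align*}
\operatorname{hMap}(Y,\mathfrak{N}((\mathcal{U}^\mathcal{D})^\circ)) &\simeq \operatorname{hMap}_{\Cat_{\spsh_\setS}}(\mathfrak{C}(Y)\otimes \mathcal{D},\mathcal{U}^\circ),\\
\operatorname{hMap}(Y,\mathfrak{N}(\mathcal{U}^\circ)^X) &\simeq \operatorname{hMap}_{\Cat_{\spsh_\setS}}(\mathfrak{C}(Y\times X),\mathcal{U}^\circ).
\end{align*}
Substituting the hypothesized equivalence \(\mathfrak{C}(X)\xrightarrow{\sim}\mathcal{D}\) on the first line (using that the Cartesian product with the cofibrant object \(\mathfrak{C}(Y)\) is homotopically well-behaved by excellence of \(\spsh_\setS\)), the comparison reduces to showing that the canonical comparison map
\[\mathfrak{C}(Y\times X)\to \mathfrak{C}(Y)\otimes \mathfrak{C}(X)\]
is a weak equivalence in \(\Cat_{\spsh_\setS}\) for all \(\C\)-cellular sets \(Y\); here \(\otimes\) denotes the Cartesian product of \(\spsh_\setS\)-enriched categories, which serves as the tensor making this category Cartesian closed.

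The hard part is this last assertion --- the homotopy-monoidality of \(\mathfrak{C}\). I would prove it by cellular induction on \(Y\) (and in turn on \(X\)), using that \(\mathfrak{C}\) is cocontinuous, that Cartesian product in \(\cellset\) commutes with colimits in each variable, and that the Cartesian product on \(\Cat_{\spsh_\setS}\) is a left Quillen bifunctor, which lets one glue along skeletal filtrations. The base case reduces to representables \(T=[n](c_1,\dots,c_n)\) and \(T'=[m](d_1,\dots,d_m)\), where both sides can be computed explicitly using Proposition~\ref{pointwise}: the product \(T\times T'\) decomposes via the Eilenberg--Zilber shuffle decomposition guaranteed by axiom (CR2), and each shuffle contributes a representable summand to \(\mathfrak{C}(T\times T')\) whose mapping objects are cubical products of the labels; these assemble to cover the cubical mapping objects of \(\mathfrak{C}(T)\otimes \mathfrak{C}(T')\) with the correct face identifications. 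The main obstacle is precisely this combinatorial comparison --- one must verify that the union of cubes arising from shuffles exactly realizes the product cube with the right gluing pattern, and this is the step where the regular Cartesian Reedy hypothesis on \(\C\) (specifically, the dimension bound (CR2)) is used essentially to ensure that no spurious non-degenerate cells appear on either side.
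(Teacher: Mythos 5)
The skeleton of your argument --- fibrancy of both sides, reduction via the derived \(\mathfrak{C}\dashv\mathfrak{N}\) adjunction and Cartesian closure to the statement that \(\mathfrak{C}(Y\times X)\to\mathfrak{C}(Y)\times\mathfrak{C}(X)\) is a weak equivalence --- is exactly the shape of Lurie's proof of \cite{htt}*{4.2.4.4}, which is what the paper's own (one-line) proof defers to: the paper simply observes that the only input to Lurie's argument not already available for a general excellent monoidal model category is the Quillen equivalence \cite{htt}*{2.2.5.1}, whose analogue is \Cref{maintheorem2}. So your identification of the key remaining ingredient is correct.

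The gap is in how you propose to prove that ingredient. First, the Cartesian product on \(\Cat_{\spsh_{\setS}}\) is \emph{not} a left Quillen bifunctor --- already the pushout-product of the generating cofibration \(\varnothing\to[0]_{\spsh}\) with itself is the codiagonal \([0]_{\spsh}\sqcup[0]_{\spsh}\to[0]_{\spsh}\), which is not injective on objects and hence not a cofibration --- so the skeletal gluing step as you describe it does not go through; commuting \(-\times\mathfrak{C}(X)\) past the homotopy pushouts arising from cell attachments needs a separate justification. Second, the base case is not a routine verification: the assertion that the shuffle decomposition of \([n](\mathbf{c})\times[m](\mathbf{d})\) realizes the product of cubical mapping objects ``with the correct gluing pattern'' is precisely the hard combinatorial content, and you only assert it. Both problems evaporate if you instead deduce product-compatibility formally from \Cref{maintheorem2}: the derived functor of \(\mathfrak{C}\) is an equivalence of homotopy theories and therefore preserves homotopy products; strict products compute homotopy products in \(\cellset_{\M}\) (Cartesian model structure, all objects cofibrant) and in \(\Cat_{\spsh_{\setS}}\) (weak equivalences are the enriched DK-equivalences, which are closed under finite products since homs of a product are products of homs and \(\spsh_{\setS}\) is Cartesian with all objects cofibrant); and \(\mathfrak{N}\) preserves strict products as a right adjoint. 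Chasing the unit and counit of the derived adjunction then gives that \(\mathfrak{C}(Y\times X)\to\mathfrak{C}(Y)\times\mathfrak{C}(X)\) is a weak equivalence with no cell-by-cell combinatorics. I would recommend either making that formal deduction explicit or, as the paper does, citing Lurie's proof and flagging \Cref{maintheorem2} as the substitute for \cite{htt}*{2.2.5.1}.
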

\begin{proof} Although we have altered the statement slightly, the only result used in the proof in \cite{htt} that doesn't hold for all excellent monoidal model categories is \cite{htt}*{2.2.5.1}, but the analogue of this is exactly \Cref{maintheorem2}.
\end{proof}
\begin{prop}\cite{htt}*{4.2.4.7}\label{lurieprop2}
	Let \(\mathcal{I}\) be a fibrant \(\spsh_{\setS}\)-enriched category, \(X\) an object of \(\cellset\), and \(p:\mathfrak{N}\to X\) be any map.  Then we can find the following:
	\begin{itemize}
		\item A fibrant \(\spsh_{\setS}\)-enriched category \(\mathcal{D}\).
		\item An enriched functor \(P:\mathcal{I}\to \mathcal{D}\).
		\item A map \(j:X\to \mathfrak{N}(\mathcal{D})\) that is a weak equivalence in \(\cellset_\mathcal{M}\).
		\item An equivalence between \(j\circ p\) and \(\mathfrak{N}(P)\) as objects of the \(\M\)-quasicategory \(\mathfrak{N}(\mathcal{D})^{\mathfrak{N}(\mathcal{I})}\).
	\end{itemize}
\end{prop}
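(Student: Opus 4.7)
The plan is to adapt Lurie's proof of \cite{htt}*{Proposition 4.2.4.7} to our setting, exploiting the Quillen equivalence $\mathfrak{C}\dashv\mathfrak{N}$ of \Cref{maintheorem2} together with the fact (\Cref{counitthm} and the remark following it, extending the conclusion to Cartesian localizations $\spsh_\setS$) that the counit $\epsilon_\mathcal{I}\colon\mathfrak{C}(\mathfrak{N}(\mathcal{I}))\to\mathcal{I}$ is a weak equivalence in $\Cat_{\spsh_\setS}$ whenever $\mathcal{I}$ is fibrant.

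Passing $p$ across the adjunction, I would factor $\mathfrak{C}(p)\colon\mathfrak{C}(\mathfrak{N}(\mathcal{I}))\to\mathfrak{C}(X)$ via the small object argument as a cofibration followed by a trivial fibration,
\[
\mathfrak{C}(\mathfrak{N}(\mathcal{I}))\xhookrightarrow{\iota_0}\mathcal{E}\xrightarrow{q}\mathfrak{C}(X),
\]
and form the pushout
\[
\mathcal{D}_0\;=\;\mathcal{I}\cup_{\mathfrak{C}(\mathfrak{N}(\mathcal{I}))}\mathcal{E}
\]
in $\Cat_{\spsh_\setS}$ along $\epsilon_\mathcal{I}$ and $\iota_0$. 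Left-properness of $\Cat_{\spsh_\setS}$ (\cite{htt}*{Proposition A.3.2.4}) ensures that the pushout of the weak equivalence $\epsilon_\mathcal{I}$ along the cofibration $\iota_0$ is again a weak equivalence $\mathcal{E}\xrightarrow{\sim}\mathcal{D}_0$. A fibrant replacement $\mathcal{D}_0\hookrightarrow\mathcal{D}$ then provides the fibrant enriched category $\mathcal{D}$ together with the enriched functor $P\colon\mathcal{I}\to\mathcal{D}_0\hookrightarrow\mathcal{D}$ given by the left leg of the pushout.

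For the map $j$, observe that $X$ is cofibrant in $\cellset_\M$ (cofibrations are the monomorphisms for a Cisinski model structure), so $\mathfrak{C}(X)$ is cofibrant in $\Cat_{\spsh_\setS}$; consequently the trivial fibration $q$ admits a section $s\colon\mathfrak{C}(X)\to\mathcal{E}$, itself a weak equivalence. The composite $\iota\colon\mathfrak{C}(X)\xrightarrow{s}\mathcal{E}\to\mathcal{D}_0\hookrightarrow\mathcal{D}$ is then a weak equivalence from a cofibrant object to a fibrant one, so its adjoint $j\colon X\to\mathfrak{N}(\mathcal{D})$ is a weak equivalence in $\cellset_\M$ by the Quillen equivalence. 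The equivalence of $j\circ p$ and $\mathfrak{N}(P)$ in the mapping $\M$-quasicategory $\mathfrak{N}(\mathcal{D})^{\mathfrak{N}(\mathcal{I})}$ is verified by transposing both sides across $\mathfrak{C}\dashv\mathfrak{N}$: using naturality of the counit one computes that the transpose of $\mathfrak{N}(P)$ is $P\circ\epsilon_\mathcal{I}$, which by commutativity of the pushout square equals the canonical map $\mathfrak{C}(\mathfrak{N}(\mathcal{I}))\xrightarrow{\iota_0}\mathcal{E}\to\mathcal{D}_0\hookrightarrow\mathcal{D}$, while the transpose of $j\circ p$ is $\iota\circ\mathfrak{C}(p)=\bigl(\mathfrak{C}(\mathfrak{N}(\mathcal{I}))\xrightarrow{\iota_0}\mathcal{E}\xrightarrow{sq}\mathcal{E}\to\mathcal{D}_0\hookrightarrow\mathcal{D}\bigr)$. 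Since $sq$ and $\mathrm{id}_\mathcal{E}$ are two endomorphisms of the cofibrant object $\mathcal{E}$ that both lift $q$ along the trivial fibration $q$, they are homotopic, and consequently the two transposes agree in the homotopy category of enriched functors $\mathfrak{C}(\mathfrak{N}(\mathcal{I}))\to\mathcal{D}$. Applying \Cref{lurieprop1} to identify $\mathfrak{N}(\mathcal{D})^{\mathfrak{N}(\mathcal{I})}$ with the coherent nerve of the fibrant enriched functor category then promotes this identification to the desired equivalence in the mapping $\M$-quasicategory.

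The main conceptual input is really just left-properness of $\Cat_{\spsh_\setS}$ combined with the homotopy-uniqueness of lifts against a trivial fibration out of a cofibrant domain; everything else is a routine transposition argument that relies on the Quillen equivalence of \Cref{maintheorem2} and on \Cref{lurieprop1}. The regularity hypotheses on $\C$ do not enter in an essential way, as the entire argument is carried out within the model-categorical machinery of $\Cat_{\spsh_\setS}$.
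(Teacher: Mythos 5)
The paper's proof of this proposition is simply the remark that Lurie's argument for \cite{htt}*{4.2.4.7} carries over without change, and your write-up is precisely that argument transported to the enriched setting: factor \(\mathfrak{C}(p)\) as a cofibration followed by a trivial fibration, push out along the counit equivalence using left-properness, section the trivial fibration off the cofibrant object \(\mathfrak{C}(X)\), and compare transposes up to the homotopy-uniqueness of lifts, concluding via \Cref{lurieprop1}. So the proposal is correct and takes essentially the same route as the paper, just spelled out in full.
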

\begin{proof} No change to the proof of \cite{htt}*{4.2.4.7} is needed.
\end{proof}

We begin by constructing the Yoneda embedding:
\begin{defn}
	Let \(X\) be a \(\C\)-cellular set, and let \(\Phi:\mathfrak{C}(X) \xrightarrow{\sim} \mathcal{D}\) be an \(\M\)-enriched fibrant replacement.  Since \(\mathcal{D}\) is fibrant, the functor
	\[\Hom_\mathcal{D}: \mathcal{D}\times \mathcal{D}^\op \to \spsh\]
	factors through the full subcategory \(\spsh^\circ\).  By \cite{htt}*{Corollary A.3.4.14}, this gives rise up to homotopy to a universal map
	\[J_\mathcal{D}:\mathcal{D}\to \left(\spsh^{\mathcal{D}^\op}\right)_\mathrm{proj}^\circ\]
	that is fully faithful up to homotopy, since it is homotopic to the enriched Yoneda embedding. 
	Then we have a map
	\[\mathfrak{C}(X\times X^\op) \xrightarrow{\mathfrak{C}(p_1) \times \mathfrak{C}(p_2)} \mathfrak{C}(X) \times \mathfrak{C}(X)^\op \xrightarrow{\Phi \times \Phi^\op} \mathcal{D}\times \mathcal{D}^\op \xrightarrow{\Hom_{\mathcal{D}}} \spsh_\setS^\circ\]
	which yields by adjunction
	\[X \to \mathfrak{N}(\spsh_\setS^\circ)^{X^\op}.\]
	We denote \(\mathfrak{N}(\spsh_\setS^\circ)\) simply by \(\M\), and finally, we obtain \emph{the Yoneda embedding} for \(\M\)-quasicategories:
	\[j:X\to \M^{X^\op}.\]
\end{defn}

Let \(\Pre(X)\) denote the large \(\M\)-quasicategory \(\M^{X^\op}\), and let \(\M^+\) denote the coherent nerve of the huge enriched category of not-necessarily-small \(\setS\)-local injectively fibrant simplicial presheaves on \(\C\).  

\begin{defn} We say that a functor \(F:X^\op \to \M\) is \emph{representable} if the object it classifies belongs to the essential image of the Yoneda embedding \(j:X\to \Pre(X)\).  If \(x:\ast\to X\) is a vertex of \(X\), we denote the associated representable functor by \(h_x\).  
\end{defn}

\begin{prop}[Yoneda embedding]\cite{htt}*{5.1.3.1}
	The Yoneda embedding is fully faithful.
\end{prop}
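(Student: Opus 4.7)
The plan is to transport the fully faithfulness of the classical enriched Yoneda embedding across the Quillen equivalence $\mathfrak{C}\dashv\mathfrak{N}$ of \Cref{maintheorem2}. To prove that $j:X\to \M^{X^\op}$ is fully faithful, by \Cref{mapspacecomparison} it suffices to show that for every pair of vertices $x,y\in X_0$ the induced map
\[
\Map_X(x,y) \longrightarrow \Map_{\M^{X^\op}}\!\bigl(j(x),j(y)\bigr)
\]
is an $\setS$-local equivalence in $\spsh$; moreover, both sides are related by natural zig-zags of weak equivalences to the enriched hom objects of $\mathfrak{C}(X)$ and $\mathfrak{C}(\M^{X^\op})$ respectively. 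Hence it is enough to check that the map
\[
\mathfrak{C}(X)(x,y) \longrightarrow \mathfrak{C}(\M^{X^\op})\bigl(j(x),j(y)\bigr)
\]
is a weak equivalence in $\spsh_\setS$ for all vertices $x,y$.

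Let $\Phi:\mathfrak{C}(X)\xrightarrow{\sim}\mathcal{D}$ denote the fibrant replacement chosen in the construction of $j$. The first step is to apply \Cref{lurieprop1} with $\mathbf{A}=\mathcal{U}=\spsh_\setS$ to obtain an equivalence of large $\M$-quasicategories
\[
\mathfrak{N}\bigl((\spsh_\setS^{\mathcal{D}^\op})^\circ\bigr) \xrightarrow{\;\sim\;} \M^{X^\op}.
\]
The second step is to identify $j$, via this equivalence together with the unit $X\xrightarrow{\sim}\mathfrak{N}(\mathcal{D})$ of the Quillen equivalence, with $\mathfrak{N}(J_\mathcal{D})$, where $J_\mathcal{D}:\mathcal{D}\to(\spsh_\setS^{\mathcal{D}^\op})^\circ$ is the classical $\spsh_\setS$-enriched Yoneda embedding. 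This identification follows by unfolding the definition of $j$: under the adjunction $\mathfrak{C}\dashv\mathfrak{N}$, both $j$ and the composite $\mathfrak{N}(J_\mathcal{D})\circ\eta_X$ are obtained by transposing the enriched bifunctor $\Hom_\mathcal{D}:\mathcal{D}^\op\times\mathcal{D}\to\spsh_\setS^\circ$, so they must agree up to the coherent homotopy witnessing the universal property of $J_\mathcal{D}$.

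Granting this identification, fully faithfulness of $j$ reduces to fully faithfulness of $\mathfrak{N}(J_\mathcal{D})$. Applying \Cref{mapspacecomparison} once more, this in turn reduces to the weak fully faithfulness of the underlying enriched functor $J_\mathcal{D}$. The latter is the classical enriched Yoneda lemma for the excellent monoidal model category $\spsh_\setS$: the maps
\[
\mathcal{D}(a,b) \longrightarrow (\spsh_\setS^{\mathcal{D}^\op})\bigl(J_\mathcal{D}(a), J_\mathcal{D}(b)\bigr)
\]
are isomorphisms, and in particular $\setS$-local equivalences, for every $a,b\in\mathcal{D}$. The main obstacle will be the bookkeeping in the second step: carefully identifying $j$ with $\mathfrak{N}(J_\mathcal{D})$ up to coherent homotopy requires threading through the adjunctions and the projective-model replacement underlying \Cref{lurieprop1}, an argument closely parallel to the one in \cite{htt}*{\S 5.1.3.1}.
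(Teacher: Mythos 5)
Your proposal is correct and follows essentially the same route as the paper's proof: invoke \Cref{lurieprop1} to identify $\Pre(X)$ up to equivalence with the coherent nerve of the diagram category $(\spsh_\setS^{\mathcal{D}^\op})^\circ_{\mathrm{proj}}$, lift $j$ across this equivalence, and observe that its adjunct under $\mathfrak{C}\dashv\mathfrak{N}$ is, up to an equivalence of enriched categories, the map $J_{\mathcal{D}}$ of \cite{htt}*{Corollary A.3.4.14}, whose weak fully faithfulness gives the result. The only cosmetic differences are that the paper works with a fibrant replacement of $\mathfrak{C}(X^\op)$ rather than $\mathfrak{C}(X)$ (equivalent via $\mathfrak{C}(X^\op)\cong\mathfrak{C}(X)^\op$), and that $J_\mathcal{D}$ is only fully faithful up to homotopy (weak equivalences on hom objects, not isomorphisms as you state), which suffices.
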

\begin{proof} 
	First, let \(\Phi:\mathfrak{C}(X^\op)\xrightarrow{\sim} \mathcal{D}\) be a fibrant replacement.  We have an equivalence by \Cref{lurieprop1} 
	\[j^{\prime\prime}: \mathfrak{N}\left(\left((\spsh_\setS)^\mathcal{D}_\mathrm{proj}\right)^\circ\right) \xrightarrow{\sim} \M^{\mathfrak{N}(\mathcal{D})}, \]
	and since \(\M\) is fibrant and \(\Psi:X^\op \to \mathfrak{N}\mathcal{D}\), the adjunct of \(\Phi\) is an equivalence between cofibrant objects, it follows that the induced map \(\M^{\mathfrak{N}\mathcal{D}}\to \Pre(X)\) is an equivalence between fibrant objects, since the model structure is Cartesian.  Since \(X\) is cofibrant, it follows that there is a map \(h:X\to \M^{\mathfrak{N}\mathcal{D}}\) such that the composite
	\[X \xrightarrow{h} \M^{\mathfrak{N}\mathcal{D}} \xrightarrow{\M^{\Psi}} \Pre(X)\]
	is equivalent to \(j\), and again, since \(\mathfrak{N}\left(\left((\spsh_\setS)^\mathcal{D}_\mathrm{proj}\right)^\circ\right)\) is fibrant and \(j^{\prime\prime}\) is a weak equivalence between fibrant objects, we can find a map 
	\[j^\prime:X\to \mathfrak{N}\left(\left((\spsh_\setS)^\mathcal{D}_\mathrm{proj}\right)^\circ\right)\]
	such that the composite 
	\[X\xrightarrow{j^\prime} \mathfrak{N}\left(\left((\spsh_\setS)^\mathcal{D}_\mathrm{proj}\right)^\circ\right) \xrightarrow{j^{\prime\prime}} \M^{\mathfrak{N}\mathcal{D}} \xrightarrow{\sim} \Pre(X)\]
	is equivalent to \(j\).
	It suffices to show that the map \(j^\prime\) is fully faithful.  Let 
	\[J:\mathfrak{C}(X) \to \left((\spsh_\setS)^\mathcal{D}_\mathrm{proj}\right)^\circ\]
	be the adjunct of \(j^\prime\).  Then we see that \(J\) is equivalent to the composite \(J_{\mathcal{D^\op}}\circ \Phi^\op\), where \(J_{\mathcal{D^\op}}\) is fully-faithful and \(\Phi^\op\) is an equivalence, which concludes the proof.	
\end{proof}

\begin{prop}[Yoneda's Lemma]\cite{htt}*{5.5.2.1}
	Let \(X\) be a small \(\C\)-cellular set, and let \(f: X^\op \to \M\) be an object of \(\Pre(X)\).  Then let \(F:\Pre(X)^\op \to \M^+\) be the functor represented by \(f\).  Then the composite \[X^\op\xrightarrow{j^\op_X} \Pre(X)^\op \xrightarrow{F} \M^+\] is equivalent to \(f\).
\end{prop}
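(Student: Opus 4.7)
The plan is to transport the statement across the Quillen equivalence of \Cref{maintheorem2} and reduce to the classical enriched Yoneda lemma for $\spsh_\setS$-enriched categories, mirroring the strategy of \cite{htt}*{5.5.2.1}.

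First, by \Cref{lurieprop2} I would replace $X$ up to weak equivalence in $\cellset_\M$ by the coherent nerve $\mathfrak{N}(\mathcal{D})$ of a fibrant $\spsh_\setS$-enriched category $\mathcal{D}$ equipped with an equivalence $\Phi:\mathfrak{C}(X) \xrightarrow{\sim} \mathcal{D}$. Since $\Pre(X) = \M^{X^\op}$ is formed in the Cartesian-closed model category $\cellset_\M$, weak equivalences out of $X$ induce weak equivalences on $\Pre(X)$, so the presheaf $f$, the embedding $j_X$, and the represented functor $F$ are all preserved up to equivalence by such a replacement. This reduces the lemma to the case $X = \mathfrak{N}(\mathcal{D})$.

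Next, I would unwind the construction of both $j_X$ and $F$ via \Cref{lurieprop1}. By the $(\mathfrak{C},\mathfrak{N})$-adjunction the object $f \in \Pre(X)_0$ corresponds, after composing with $\Phi^\op$, to an enriched functor $\tilde f: \mathcal{D}^\op \to \spsh_\setS^\circ$. Applying \Cref{lurieprop1} to the $\mathcal{D}^\op$-chunk $\spsh_\setS^\circ$ yields an equivalence
\[
\mathfrak{N}\!\left(\left((\spsh_\setS)^{\mathcal{D}^\op}_{\mathrm{proj}}\right)^\circ\right) \xrightarrow{\sim} \M^{\mathfrak{N}(\mathcal{D}^\op)} \simeq \Pre(X),
\]
and, chasing through the definition of $j$, the Yoneda embedding $j_X$ is identified under this equivalence with the image under $\mathfrak{N}$ of the strict enriched Yoneda embedding $J_\mathcal{D}: \mathcal{D} \to \left(\spsh_\setS^{\mathcal{D}^\op}\right)^\circ$. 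In the same way, the representing functor $F$ is identified with the coherent nerve of the enriched representable $\Map_{(\spsh_\setS)^{\mathcal{D}^\op}}(-, \tilde f)$, now viewed as a functor into the huge enriched category of $\setS$-local simplicial presheaves whose coherent nerve is $\M^+$.

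The composite $F \circ j_X^\op$ is then identified with the coherent nerve of the enriched functor $d \mapsto \Map_{(\spsh_\setS)^{\mathcal{D}^\op}}(J_\mathcal{D}(d), \tilde f)$ on $\mathcal{D}^\op$. By the classical enriched Yoneda lemma, applied after a projective-fibrant replacement of $\tilde f$ to get on-the-nose mapping-object values, this is naturally weakly equivalent to $\tilde f$ itself, whose transpose under the adjunction is $f$. Applying $\mathfrak{N}$ and invoking homotopy-invariance for hom-objects of fibrant $\spsh_\setS$-enriched categories gives the desired equivalence of $F \circ j_X^\op$ with $f$ in $\M^+$. The hard part will be the identifications in the second step: one must carefully track $j_X$ through the chain of equivalences of \Cref{lurieprop1} and through the fibrant replacement $\Phi$, and verify that the natural identification with $\mathfrak{N}(J_\mathcal{D})$ holds up to canonical equivalence---this is the analogue of the delicate bookkeeping in Steps 3--4 of \cite{htt}*{5.5.2.1}, and relies essentially on the triangle identities for the $(\mathfrak{C},\mathfrak{N})$-adjunction together with the fact that fibrant replacement in $\Cat_{\spsh_\setS}$ only changes an enriched functor by a natural equivalence.
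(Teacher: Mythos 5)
Your proposal is correct and follows essentially the same route as the paper: reduce via \Cref{lurieprop2} to the case where \(X^\op\) is equivalent to the coherent nerve of a fibrant enriched category \(\mathcal{D}\), use \Cref{lurieprop1} to identify \(\Pre(X)\) with \(\mathfrak{N}\bigl(\bigl((\spsh_\setS)^{\mathcal{D}}_{\mathrm{proj}}\bigr)^\circ\bigr)\), match \(j_X\) with the coherent nerve of the strict enriched Yoneda embedding and \(F\) with the nerve of the strict represented functor, and conclude by the strict enriched Yoneda lemma. The ``delicate bookkeeping'' you flag is exactly the step the paper handles by observing that \(j\) factors through \(\Psi\) as the adjunct of \(\mathfrak{C}(X)\xrightarrow{\Phi^\op}\mathcal{D}^\op\hookrightarrow\bigl((\spsh_\setS)^{\mathcal{D}}_{\mathrm{proj}}\bigr)^\circ\).
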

\begin{proof}
	By \Cref{lurieprop2}, we can choose a small fibrant \(\M\)-enriched category \(\mathcal{D}\) and an equivalence \(\Phi:X^\op\to \mathfrak{N}(\mathcal{D})\) such that \(f\sim \mathfrak{N}(f^\prime) \circ \Phi\) for some \(f^\prime:\mathcal{D} \to \spsh_\setS^\circ\).  Without loss of generality, we can assume that \(f^\prime\) is a projectively cofibrant diagram.  Using \Cref{lurieprop1}, we have an equivalence of \(\M\)-quasicategories 
	\[\Psi:\mathfrak{N}\left(\left((\spsh_\setS)_\mathrm{proj}^\mathcal{D}\right)^\circ\right) \xrightarrow{\sim} \Pre(X).\] 
	We observe that \(F\circ \Psi^\op\) can be identified with the coherent nerve of the the map 
	\[G: \left(\left((\spsh_\setS)_\mathrm{proj}^\mathcal{D}\right)^\circ\right)^\op \to \left(\spsh^{+}_\setS\right)^\circ\] represented by \(f^\prime\).  The Yoneda embedding factors through \(\Psi\) by the adjunct of the composite 
	\[j^\prime:\mathfrak{C}(X)\xrightarrow{\Phi^\op} \mathcal{D}^\op \hookrightarrow \left((\spsh_\setS)_\mathrm{proj}^\mathcal{D}\right)^\circ,\]
	so it follows that \(F\circ j^\op\) can be identified with the adjunct of 
	\[\mathfrak{C}(X)^\op \xrightarrow{(j^\prime)^\op} \left(\left((\spsh_\setS)_\mathrm{proj}^\mathcal{D}\right)^\circ\right)^\op \xrightarrow{G} \left(\spsh^{+}_\setS\right)^\circ.\]
	This composite is equal to the \(f^\prime\) we started with, so its coherent nerve is equivalent to \(f\).
\end{proof}

\subsection{Weighted limits and colimits}
We fix a Cartesian presentation \(\M=(\C,\setS)\).  Our presentation here follows the one given in \cite{nlabwlim}.

\begin{defn}
	Let \(D\) be a small \(\C\)-cellular set, that is, an object of \(\cellset\), and suppose we have a diagram \(f:D\to \M\) and a \emph{weight} \(W:D\to \M\).  If \(X\) is an \(\M\)-quasicategory and \(f:D\to X\) is a diagram, we define the map \(h_f: X^\op \to M^D\) to be the adjunct of the composite \[D\to X\xrightarrow{j} \M^{X^\op},\] and we define \(h^W:\M^D \to \M^+\) to be the map co-representing \(W\) as an object of \(\M^D\).  Then if the composite \(h^W\circ h_f:X^\op \to \M^+\) is representable, we call its representing object \emph{the limit of \(f\) weighted by \(W\)}, denoted  by \(\limw^W f\).

	Dually, coweights for colimits are diagrams \(D^\op \to \M\), and their coweighting functors are their associated corepresentable functors \(\M^{D^\op}\to \M^+\).  Given a diagram \(f:D\to X\), the \emph{colimit of \(f\) weighted by \(W\)} is defined to be an object of \(X\) that corepresents the weighted limit of \(f^\op\), which is denoted by \(\colim^W f\).  
\end{defn}

\begin{prop} The large \(\M\)-quasicategory \(\M\) has all small weighted limits and colimits.
\end{prop}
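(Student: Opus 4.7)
The plan is to reduce the existence of small weighted (co)limits in the $\M$-quasicategory $\M = \mathfrak{N}(\spsh_\setS^\circ)$ to the existence of enriched weighted (co)limits in the underlying Cartesian-monoidal model category $\spsh_\setS$, which are available because $\spsh_\setS$ is complete, cocomplete, and self-enriched, tensored, and cotensored as an excellent Cartesian monoidal model category. Given a small $\C$-cellular set $D$, a diagram $f : D \to \M$, and a weight $W : D \to \M$, I would first apply \Cref{lurieprop2} (possibly twice, enlarging the common target after the first application) to produce a small fibrant $\spsh_\setS$-enriched category $\mathcal{E}$, a weak equivalence $j : D \xrightarrow{\sim} \mathfrak{N}(\mathcal{E})$ in $\cellset_\M$, and enriched functors $F, G : \mathcal{E} \to \spsh_\setS^\circ$ whose coherent nerves pulled back along $j$ are equivalent to $f$ and $W$ respectively. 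After projective cofibrant replacement in the diagram model category $(\spsh_\setS)^\mathcal{E}_{\mathrm{proj}}$, I may assume that $G$ is projectively cofibrant without changing any of the equivalence classes involved.

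Next, I would form the strict enriched end
\[
L \;:=\; \int_{d \in \mathcal{E}} F(d)^{G(d)}
\]
in $\spsh_\setS$, where $F(d)^{G(d)}$ denotes the cotensor using the Cartesian-closed structure. Since $F$ takes values in fibrant objects and $G$ is projectively cofibrant, and since the Cartesian-closed model structure on $\spsh_\setS$ interacts well with ends, standard homotopical arguments ensure that $L$ is fibrant, hence a vertex of $\M$. To verify that $L$ corepresents the composite $h^W \circ h_f : X^\op \to \M^+$ in the sense of the definition above, I would use \Cref{lurieprop1} to identify $\M^D$ with $\mathfrak{N}((\spsh_\setS)^\mathcal{E}_\mathrm{proj})^\circ$ and then translate the universal property on mapping spaces: for each $X \in \M$, unwinding definitions gives a natural equivalence
\[
\Map_\M(X, L) \;\simeq\; \Map_{\M^D}\bigl(W,\, \M(X, f(-))\bigr),
\]
since both sides compute, under the identifications provided by \Cref{maintheorem2}, the enriched end of the $\mathcal{E}$-indexed cotensor. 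The colimit case is dual: construct $\colim^W f$ as the enriched coend $\int^{d \in \mathcal{E}} G(d) \times F(d)$ with projectively cofibrant $G$ and verify corepresentability in exactly the same way.

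The main technical obstacle will be the careful bookkeeping in this last identification step, which threads enriched end/coend universal properties through the chain of Quillen equivalences of \Cref{maintheorem2}, through the rectification provided by \Cref{lurieprop1} and \Cref{lurieprop2}, and through the projective cofibrant replacement. None of the individual ingredients is difficult once the Quillen equivalences are in place, but interleaving strict enrichment, homotopy-coherent enrichment via $\mathfrak{N}$, and cofibrant replacement while tracking natural equivalences requires care. An alternative, perhaps cleaner route would be to prove a general abstract result in the spirit of \cite{htt}*{4.2.4.1}, asserting that for any excellent monoidal model category $\mathbf{S}$ equipped with a Quillen equivalence $\mathfrak{C} : \mathbf{E} \rightleftarrows \Cat_\mathbf{S} : \mathfrak{N}$ of the type we have constructed, the coherent nerve $\mathfrak{N}(\mathbf{S}^\circ)$ admits all small weighted (co)limits, with the formulas above as explicit witnesses; the present statement would then follow as a special case.
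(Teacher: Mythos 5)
Your proposal is correct and follows essentially the same route as the paper: the paper simply asserts that the weighted limit is represented by \(\Map_{\M^D}(W,f)\) "by unwinding the definitions" and defers the colimit case to "general facts about weighted homotopy colimits in enriched model categories," and your rectified enriched end \(\int_{d}F(d)^{G(d)}\) (resp.\ coend) is exactly that mapping object made explicit via \Cref{lurieprop1}, \Cref{lurieprop2}, and \Cref{maintheorem2}. You have in effect supplied the details the paper omits rather than a different argument.
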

\begin{proof}  Given \(f:D\to \M\) and a weight \(W:D\to \M\), it is straightforward to see that a representing object for the weighted limit is exactly \[\Map_{\M^D}(W,f),\] by unwinding the definitions.  The existence of weighted colimits is subtler, but follows from general facts about weighted homotopy colimits in enriched model categories.
\end{proof}

For now, we don't have much to say for this particular application.  If other definitions are proposed for weighted limits and colimits, they should be equivalent to these.  

\subsection{Examples}
The only examples we really care about are the cases where \(\C=\Theta_n\) for \(0\leq n\leq \omega\) and where \(\setS\) is the set of generating anodynes for the model structure on weak \(n\)-categories.  We invite the reader to consider other applications.  We expect that a simple application would be to consider the left-Bousfield localization of spaces at homology equivalences, but we aren't certain if this is a Cartesian model structure.

Also note that our definitions of weighted limits and colimits do not work for computing lax and oplax weighted limits and colimits in weak \(\omega\)-categories (taking \(\C\) to be \(\Theta=\Theta_\omega\) and \(\setS\) to be the union of all of the generating weak equivalences for the Rezk model structure).  The problem is that the lax Gray tensor product has not yet been shown to be homotopy-invariant (in particular, a left-Quillen bifunctor), so the function complexes of lax and oplax natural transformations are themselves not yet known to be homotopy invariant.  This is why we encourage people in the future to refer to weighted limits and colimits in these cases as \emph{weighted pseudolimits} and \emph{weighted pseudocolimits} respectively.

\appendix
\section{Appendix: Recollections on Cisinski Theory}
We recall in the appendix a number of important technical facts from Cisinski theory, which comprises a large body of results on the construction of extremely tame model structures on presheaf categories.  This is the big machine backing up \Cref{modelstrucdefn} as well as our reduction of \Cref{rezkcomparison} to checking properties of generating anodynes.
\subsection{Cisinski model structures and localizers}
In what follows, we will work with a fixed small category \(\mathcal{A}\). These are stated in more generality in \cite{cisinski-book}*{Chapter 1}, but we will specialize to the case of a Cartesian cylinder functor, that is, a cylinder functor determined by taking the Cartesian product with an interval object.
\begin{defn}
	A \emph{separating interval object} of \(\psh{\mathcal{A}}\) is an object \(I\) together with two monic arrows \(\partial_0,\partial_1:\ast \to I\) such that the pullback \(\ast \times_{I} \ast=\varnothing\).  We call the induced map \(\delta^I:\ast \coprod \ast \xrightarrow{(\partial_0,\partial_1)} I\) the \emph{boundary map}
\end{defn}
\begin{defn}
	A cellular model \(\mathscr{M}\) for \(\psh{\mathcal{A}}\) is a small set of monomorphisms such that \(\operatorname{llp}(\operatorname{rlp}(\mathscr{M}))\) is exactly the class of monomorphisms of \(\psh{\mathcal{A}}\).   
\end{defn}
\begin{prop}[\cite{cisinski-book}*{Proposition 1.2.27}]
	Every category of presheaves on a small category \(\mathcal{A}\) admits a cellular model in which the target of each map is a quotient of a representable.
\end{prop}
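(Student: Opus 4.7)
The plan is to produce, up to isomorphism, a small set $\mathscr{M}$ whose members are monomorphisms with target a quotient of a representable, and then to verify by a Zorn/small-object argument that every monomorphism lies in $\operatorname{llp}(\operatorname{rlp}(\mathscr{M}))$. For the cardinality bookkeeping: each representable $h_a$ is a small presheaf, so its set of subobjects is small and its set of quotients (indexed by congruences on $h_a$) is small; since $\mathcal{A}$ itself is small, choosing one representative $Q$ from each isomorphism class of quotient of a representable, and then one representative for each subobject of each such $Q$, produces a small set. Take $\mathscr{M}$ to be the collection of inclusions $X \hookrightarrow Q$ arising this way.

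To show $\operatorname{llp}(\operatorname{rlp}(\mathscr{M}))$ contains all monomorphisms, let $p : E \to B$ have the right lifting property with respect to $\mathscr{M}$, and consider a square with left edge an arbitrary monomorphism $f : X \hookrightarrow Y$. Apply Zorn's lemma to the poset of pairs $(X', g')$ with $X \subseteq X' \subseteq Y$ and $g' : X' \to E$ a partial lift compatible with the square; the poset is non-empty and chain-closed because colimits in $\psh{\mathcal{A}}$ are computed pointwise and monomorphisms are stable under filtered colimits. Let $(X_{\max}, g_{\max})$ be maximal. If $X_{\max} \neq Y$, pick an element of $Y \setminus X_{\max}$, that is, a map $h_a \to Y$ whose image $Z$ is not contained in $X_{\max}$; the image factorization exhibits $Z$ as a quotient of the representable $h_a$, and $Z \cap X_{\max} \hookrightarrow Z$ is therefore (isomorphic to) a member of $\mathscr{M}$.

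The restriction of $g_{\max}$ to $Z \cap X_{\max}$ together with the composite $Z \hookrightarrow Y \to B$ forms a square to which the RLP of $p$ with respect to $\mathscr{M}$ applies, producing a map $Z \to E$ extending $g_{\max}|_{Z \cap X_{\max}}$ and covering $Z \to B$. Using that the square
\[
\begin{tikzpicture}[baseline=(m.center)]
\matrix (m) [matrix of math nodes, row sep=2em, column sep=3em]{
Z \cap X_{\max} & X_{\max} \\
Z & X_{\max} \cup Z \\
};
\path[->]
(m-1-1) edge (m-1-2) edge (m-2-1)
(m-1-2) edge (m-2-2)
(m-2-1) edge (m-2-2);
\end{tikzpicture}
\]
is a pushout in $\psh{\mathcal{A}}$ (monomorphisms are stable under pushout and pullback pointwise in a topos, and unions of subobjects compute as such pushouts), the maps $g_{\max}$ and $Z \to E$ glue to a lift $X_{\max} \cup Z \to E$ extending $g_{\max}$, contradicting maximality.

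Hence $X_{\max} = Y$, the square admits a lift, and $f \in \operatorname{llp}(\operatorname{rlp}(\mathscr{M}))$. The only step with any real content is the Zorn extension argument; the potential obstacle is verifying that the chain-closure hypothesis holds, but this is immediate because filtered colimits of monomorphisms of presheaves are monomorphisms (computed pointwise in $\mathbf{Set}$) and because a compatible family of partial lifts over a chain assembles to a partial lift over the union.
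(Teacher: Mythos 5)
Your proof is correct and is essentially the argument of the cited result (the paper itself only quotes \cite{cisinski-book}*{Proposition 1.2.27} without reproducing a proof): take all subobject inclusions into quotients of representables and run the Zorn extension argument, using that the image of a non-lifted section $h_a\to Y$ is such a quotient and that the union $X_{\max}\cup Z$ is the pushout over $Z\cap X_{\max}$. The only point left implicit is the reverse inclusion $\operatorname{llp}(\operatorname{rlp}(\mathscr{M}))\subseteq\{\text{monomorphisms}\}$, which is immediate because monomorphisms of presheaves form a saturated class (being stable pointwise in $\mathbf{Set}$ under pushout, transfinite composition, and retract).
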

\begin{defn}
	A \emph{class of anodynes} \(\mathbf{An}\) for a separating interval \(I\) is a class of monomorphisms that satisfies the following properties:
	\begin{itemize}
		\item \(\mathbf{An}\) is generated by a set \(S\), that is, there exists a set of monomorphisms \(S\) such that \(\mathbf{An}=\operatorname{llp}(\operatorname{rlp}(S))\).
		\item For any monomorphism \(g\), the corner maps \(\partial_i \times^\lrcorner g\in \mathbf{An}\) for \(i\in \{0,1\}\).  
		\item For any map \(f\in \mathbf{An}\), the map \(\delta^I \times^\lrcorner f \in \mathbf{An}\).  
	\end{itemize} 
\end{defn}
\begin{prop}[\cite{cisinski-book}*{Proposition 1.3.13}] Given any set \(S\) of monomorphisms and any separating interval object \(I\), there exists a smallest class of anodynes \(\mathbf{An}_I(S)\) for \(I\).  In particular, this class is generated by the set of maps \(\Lambda_I(S,\mathscr{M})\) where \(\mathscr{M}\) is a cellular model for \(\mathcal{A}\) defined as follows:
	\begin{itemize}
		\item We define the set \(\Lambda_I^0(S,\mathscr{M})=S \cup \partial_0 \times^\lrcorner \mathscr{M} \cup \partial_1 \times^\lrcorner \mathscr{M}\)
		\item Then we define for any set of maps \(T\) the set \(\Lambda_I(T)=\delta^I\times^\lrcorner T\).
		\item Then we define \(\Lambda_I(S,\mathscr{M})=\bigcup_i^\infty \Lambda_I^i(\Lambda_I^0(S,\mathscr{M}))\).  
	\end{itemize}
\end{prop}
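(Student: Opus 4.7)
The plan is to define $\mathbf{An}_I(S) := \operatorname{llp}(\operatorname{rlp}(\Lambda_I(S,\mathscr{M})))$ and verify in order that it (a) is generated by a set, (b) contains $S$, (c) satisfies the two corner-product closure conditions in the definition of a class of anodynes, and (d) is minimal among such classes. Items (a) and (b) are essentially tautological: (a) holds by definition since $\Lambda_I(S,\mathscr{M})$ is a set, and (b) follows since $S \subseteq \Lambda_I^0(S,\mathscr{M}) \subseteq \Lambda_I(S,\mathscr{M})$.

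For (d), minimality, I would argue by induction on the index $i$ appearing in the construction of $\Lambda_I(S,\mathscr{M})$. Let $\mathbf{An}'$ be any class of anodynes containing $S$. Then $\mathbf{An}'$ contains $S$ by hypothesis, and it contains $\partial_\varepsilon \times^\lrcorner \mathscr{M}$ for $\varepsilon \in \{0,1\}$ by the corner-product axiom applied to monomorphisms in $\mathscr{M}$; hence $\Lambda_I^0(S,\mathscr{M}) \subseteq \mathbf{An}'$. The inductive step, $\Lambda_I^{i+1} \subseteq \mathbf{An}'$ given $\Lambda_I^i \subseteq \mathbf{An}'$, is immediate from the second closure axiom $\delta^I \times^\lrcorner \mathbf{An}' \subseteq \mathbf{An}'$. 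Since $\mathbf{An}'$ is saturated (being defined by a lifting condition), it contains $\mathbf{An}_I(S)$.

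The hard part is (c), verifying the closure conditions for $\mathbf{An}_I(S)$ itself. For the first condition, one must show that $\partial_\varepsilon \times^\lrcorner g$ lies in $\mathbf{An}_I(S)$ for every monomorphism $g$, not merely for $g \in \mathscr{M}$. This will follow from a corner-product-versus-cellular-closure lemma of the same flavour as \Cref{cornertensorcell}: since the class of monomorphisms is $\operatorname{Cell}(\mathscr{M})$, one establishes that corner-producting a fixed map $\partial_\varepsilon$ with a cellular map yields a cellular map with respect to $\partial_\varepsilon \times^\lrcorner \mathscr{M} \subseteq \Lambda_I(S,\mathscr{M})$, which therefore lies in $\mathbf{An}_I(S)$. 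For the second condition, one similarly shows that $\delta^I \times^\lrcorner f \in \mathbf{An}_I(S)$ for any $f \in \mathbf{An}_I(S)$: since $\mathbf{An}_I(S) = \operatorname{Cell}(\Lambda_I(S,\mathscr{M}))$ by the small object argument, the same corner-tensor-preserves-cellular-closure principle reduces the verification to the generating maps, i.e. to showing $\delta^I \times^\lrcorner \Lambda_I(S,\mathscr{M}) \subseteq \mathbf{An}_I(S)$, which holds by the very construction of the filtration since $\delta^I \times^\lrcorner \Lambda_I^i \subseteq \Lambda_I^{i+1} \subseteq \Lambda_I(S,\mathscr{M})$.

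The main obstacle is that one needs the corner-tensor preservation of saturated (cellular) closure in its full generality; it is essentially a special case of the functoriality observations that appear as \Cref{cornertensorcell} and its corollary in the body of the paper, but one must take care that the partially evaluated corner-product functors $\partial_\varepsilon \times^\lrcorner (-)$ and $\delta^I \times^\lrcorner (-)$ preserve the relevant colimits so that the cellular-closure transfer applies. Once this is in place, all four items (a)--(d) combine to give the proposition, with the explicit generating set being exactly $\Lambda_I(S,\mathscr{M})$ as stated.
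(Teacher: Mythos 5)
This proposition is stated in the appendix as a recollection from \cite{cisinski-book} and the paper supplies no proof of its own, so there is nothing internal to compare against; your argument is, however, essentially the standard one and it is correct in outline. Setting \(\mathbf{An}_I(S)=\operatorname{llp}(\operatorname{rlp}(\Lambda_I(S,\mathscr{M})))\) and checking generation, containment of \(S\), the two corner-product closure axioms, and minimality is exactly the right decomposition, and your reduction of the closure axioms to the generating set via a \Cref{cornertensorcell}-type transfer of \(\operatorname{Cell}\)-closure under the corner product (which applies because the Cartesian product in a presheaf topos preserves colimits in each variable) is the correct mechanism. One imprecision to repair: neither the class of monomorphisms nor \(\mathbf{An}_I(S)\) equals \(\operatorname{Cell}(\mathscr{M})\) resp.\ \(\operatorname{Cell}(\Lambda_I(S,\mathscr{M}))\) on the nose --- the small object argument only exhibits \(\operatorname{llp}(\operatorname{rlp}(X))\) as the \emph{retract closure} of \(\operatorname{Cell}(X)\). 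This is harmless for your argument, since \(\partial_\varepsilon\times^\lrcorner(-)\) and \(\delta^I\times^\lrcorner(-)\) are functors on the arrow category and hence preserve retracts, while the target class \(\operatorname{llp}(\operatorname{rlp}(\Lambda_I(S,\mathscr{M})))\) is closed under retracts; but the extra sentence should be said. With that caveat, the minimality argument by induction on the filtration \(\Lambda_I^i\) and the verification of the closure axioms both go through.
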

\begin{thm}[\cite{cisinski-book}*{1.3.22}]\label{cisinskimaintheorem}
	Given a set of monomorphisms \(S\) and a separating interval \(I\), there exists a model structure on \(\psh{A}\) in which the cofibrations are the monomorphisms, the fibrant objects are the objects \(A\) such that the terminal map \(A\to \ast\) belongs to the class \(\operatorname{rlp}(\mathbf{An}_I(S))\), and in which a map \(f:A\to A^\prime\) with \(A^\prime\) fibrant is a fibration if and only if \(f\) belongs to \(\operatorname{rlp}(\mathbf{An}_I(S))\).
\end{thm}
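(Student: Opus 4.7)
The plan is to follow the standard construction of a Cisinski model structure from anodyne data, which proceeds by building two weak factorization systems via the small object argument and then identifying the class of weak equivalences using $I$-homotopy. First I would apply the small object argument to $\mathscr{M}$, producing the (monomorphism, naive trivial fibration) weak factorization system where the right class is $\operatorname{rlp}(\mathscr{M})$. Then, applying the small object argument to the generating set $\Lambda_I(S, \mathscr{M})$ yields a second weak factorization system (anodyne, naive fibration) where naive fibrations are the maps in $\operatorname{rlp}(\mathbf{An}_I(S))$.

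Next I would define $I$-homotopy between morphisms $f, g : X \to Y$ by the existence of a map $H : X \times I \to Y$ with appropriate restrictions along $\partial_0, \partial_1$. The interval axioms ($\partial_i \times^\lrcorner g$ anodyne for monic $g$, and $\delta^I \times^\lrcorner f$ anodyne for anodyne $f$) are exactly what is needed to show that $I$-homotopy is an equivalence relation on $\operatorname{Hom}(X, Y)$ whenever $Y$ is fibrant (i.e.\ $Y \to \ast$ is a naive fibration), and moreover that $I$-homotopy is preserved by precomposition with cofibrations and postcomposition with naive fibrations between fibrant objects. Define weak equivalences $\mathscr{W}$ as those maps $f : X \to Y$ such that for every fibrant $Z$, the induced map $[Y, Z]_I \to [X, Z]_I$ on $I$-homotopy classes is a bijection. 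Two-out-of-three for $\mathscr{W}$ is immediate from this definition.

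The remaining axioms to verify are: (i) every naive trivial fibration is a weak equivalence, which follows because any such map admits a section via lifting against $\varnothing \hookrightarrow X$, and any two sections are $I$-homotopic via lifting against $\delta^I \times^\lrcorner \id_X$; (ii) every anodyne is a weak equivalence, proved by constructing an $I$-deformation retract using the anodyne lifting property against fibrant targets, with the interval axiom ensuring the homotopy extends correctly; and (iii) the lifting axiom matching trivial cofibrations to naive fibrations with fibrant targets. Factorizations are then supplied by the two weak factorization systems together with the retract argument.

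The main obstacle is (iii): showing that every cofibration that is a weak equivalence actually has the left lifting property with respect to every naive fibration between fibrant objects. The standard argument factors such a trivial cofibration $i : A \hookrightarrow B$ as an anodyne $A \hookrightarrow B'$ followed by a naive fibration $p : B' \to B$; since $i$ is a weak equivalence and the anodyne is too, $p$ is a weak equivalence between fibrant objects, and one must upgrade this to the fact that $p$ is a naive trivial fibration so that $i$ appears as a retract of the anodyne. This upgrade is the technical heart of the theorem and requires a careful path-object construction using the interval $I$: one builds a fibrant path object $Y^I$ for $Y$ (whose factorization uses precisely the anodyne class), and uses the $I$-homotopy equivalence provided by the weak equivalence hypothesis to produce a section of $p$ up to homotopy, which the anodyne lifting property then rectifies into an honest section exhibiting $p \in \operatorname{rlp}(\mathscr{M})$. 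Once this is established, the retract argument closes the proof, and the characterization of fibrations with fibrant target as elements of $\operatorname{rlp}(\mathbf{An}_I(S))$ follows immediately from the same path-object analysis.
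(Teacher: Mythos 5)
The paper does not actually prove this statement—it is recalled verbatim from \cite{cisinski-book} (Th\'eor\`eme 1.3.22)—so your sketch has to be measured against Cisinski's argument, whose overall architecture you reproduce correctly through the two weak factorization systems, the definition of the weak equivalences via \(I\)-homotopy classes into fibrant objects, and points (i) and (ii). The genuine gap is in (iii). When you factor a trivial cofibration \(i\colon A\hookrightarrow B\) as an anodyne \(A\hookrightarrow B'\) followed by a naive fibration \(p\colon B'\to B\), you assert that \(p\) is ``a weak equivalence between fibrant objects''; but \(B\) is the codomain of an arbitrary trivial cofibration and need not be fibrant, hence neither is \(B'\), and the key lemma (naive fibration \(+\) weak equivalence \(+\) fibrant source and target \(\Rightarrow\) trivial fibration) does not apply. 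If your retract argument worked for arbitrary \(B\), it would show that every trivial cofibration is a retract of an anodyne and hence anodyne, which is false in general for Cisinski model structures—this failure is precisely why the theorem is careful to characterize fibrations only over fibrant targets. As written, your argument establishes only that trivial cofibrations with \emph{fibrant} codomain are anodyne.

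Relatedly, the closing claim that ``factorizations are then supplied by the two weak factorization systems together with the retract argument'' does not hold: the factorization of an arbitrary map into a trivial cofibration followed by a fibration is not produced by the (anodyne, naive fibration) system, because a naive fibration with non-fibrant codomain need not have the right lifting property against all trivial cofibrations. Cisinski closes this hole with a separate set-theoretic accessibility argument producing a small set of generators for the class of trivial cofibrations (so that the small object argument can be applied to that class directly), and he obtains the lifting of a trivial cofibration \(A\hookrightarrow B\) against a naive fibration between fibrant objects by a homotopy-lifting-and-rectification argument using path objects built from \(I\), which does not require \(B\) to be fibrant. Both of these ingredients are missing from your sketch, and they constitute the actual technical heart of the theorem.
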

\begin{defn}
	A \emph{Cisinski model structure} is any model structure constructed using \Cref{cisinskimaintheorem}.
\end{defn}
\begin{cor}
	Taking \(S=\varnothing\) and \(I\) to be the subobject classifier \(\mathfrak{L}\) of \(\psh{A}\) with the two canonical sections \(\varnothing,\id:\ast \to \mathfrak{L}\), we obtain the minimal Cisinski model structure.  More generally, we can replace \(\mathfrak{L}\) with any injective separating interval.  
\end{cor}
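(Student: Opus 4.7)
The plan is to break the statement into three parts and prove each in turn: (i) verify that the subobject classifier equipped with the two canonical sections is an injective separating interval, (ii) invoke \Cref{cisinskimaintheorem} with \(S=\varnothing\) to obtain a Cisinski model structure, and (iii) establish both minimality among Cisinski model structures on \(\psh{\mathcal{A}}\) and independence of the choice of injective separating interval.

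First I would verify that \(\mathfrak{L}\), equipped with the classifying maps \(\varnothing,\id\colon \ast\to \mathfrak{L}\) of the empty and maximal subobjects of the point, is a separating interval: both sections are monic (they classify distinct subobjects of \(\ast\)), and their pullback corresponds to the intersection of the empty and maximal subobject, which is empty. Injectivity of \(\mathfrak{L}\) follows directly from the defining universal property of the subobject classifier, since lifts of maps \(Y\to \mathfrak{L}\) along monomorphisms \(Y\hookrightarrow Y'\) correspond to extensions of subobjects \(Y_0\hookrightarrow Y\) to subobjects of \(Y'\), which always exist by taking the image. Once this is established, \Cref{cisinskimaintheorem} immediately produces a Cisinski model structure with cofibrations the monomorphisms, fibrant objects those satisfying the right lifting property against \(\mathbf{An}_\mathfrak{L}(\varnothing)\), and fibrations (over fibrant targets) the maps with the right lifting property against the same.

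The crucial step is minimality. I would argue that the class of weak equivalences of this Cisinski model structure is contained in that of every other Cisinski model structure on \(\psh{\mathcal{A}}\). The key observation is that \(\mathbf{An}_\mathfrak{L}(\varnothing)\) is, by its very inductive construction in \cite{cisinski-book}*{Proposition 1.3.13}, the smallest class of anodynes relative to the interval \(\mathfrak{L}\): it is generated only by the forced maps \(\partial_i\times^\lrcorner\mathscr{M}\) and their closure under \(\delta^\mathfrak{L}\times^\lrcorner(-)\), with no additional generators from \(S\). Any other Cisinski structure's weak equivalences form a localizer containing the trivial cofibrations, and an analysis of fibrant replacements (or equivalently, passage to the level of localizers as in \cite{cisinski-book}*{Chapter 2}) shows that weak equivalences of the \((\varnothing,\mathfrak{L})\)-structure are weak equivalences of every Cisinski model structure.

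The main obstacle will be the generalization to an arbitrary injective separating interval \(I\). The plan is to compare via the product: \(I\times \mathfrak{L}\) is again an injective separating interval (injectivity is closed under products and the diagonal sections remain separating), and the two projections \(I\leftarrow I\times \mathfrak{L}\to \mathfrak{L}\) induce comparison morphisms between the generated anodyne classes. Using the fact that both \(I\) and \(\mathfrak{L}\) are fibrant retracts of \(I\times \mathfrak{L}\) in the minimal structure, I would conclude that the saturated classes \(\mathbf{An}_I(\varnothing)\) and \(\mathbf{An}_\mathfrak{L}(\varnothing)\) generate the same localizer, so the Cisinski structures coincide. The technical heart of this step is making the retract argument precise enough to transport anodynes across the comparison, which is where the injectivity hypothesis on \(I\) is genuinely used.
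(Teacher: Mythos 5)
The paper states this corollary without proof, as a recollection of results from \cite{cisinski-book}, so your sketch has to stand on its own; and while its overall architecture (verify the interval axioms, apply \Cref{cisinskimaintheorem}, then prove minimality and interval-independence) is the right one, the central step is asserted rather than proven. Your minimality paragraph argues that \(\mathbf{An}_{\mathfrak{L}}(\varnothing)\) is the smallest class of anodynes \emph{relative to the interval} \(\mathfrak{L}\); but that says nothing about a Cisinski model structure built from a different separating interval \(I'\) and a nonempty \(S'\), which is exactly what minimality must be checked against. The sentence ``an analysis of fibrant replacements \dots{} shows that weak equivalences of the \((\varnothing,\mathfrak{L})\)-structure are weak equivalences of every Cisinski model structure'' is the entire content of the corollary, restated rather than derived.

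The missing idea is that injectivity of the interval is precisely the statement that \(I\to\ast\) has the right lifting property against all monomorphisms, i.e.\ is a \emph{trivial fibration} for every Cisinski model structure (since all of them have cofibrations the monomorphisms). Consequently, in an arbitrary Cisinski model structure: the pullbacks \(X\times I\to X\) are trivial fibrations, the sections \(\partial_i\colon\ast\to I\) are trivial cofibrations by \(3\)-for-\(2\), the generators \(\partial_i\times^\lrcorner\mathscr{M}\) and \(\delta^I\times^\lrcorner(-)\) are trivial cofibrations by the pushout/\(3\)-for-\(2\) argument applied to \(L\to I\times K\cup_K L\to I\times L\), and \(X\sqcup X\rightarrowtail X\times I\to X\) is a genuine cylinder object. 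From this one gets both halves at once: every fibrant object of the other structure is \((\varnothing,I)\)-fibrant, and \(I\)-homotopy classes into such objects compute homotopy classes in the other structure, so every \((\varnothing,I)\)-weak equivalence is a weak equivalence there. Since this argument is uniform in the injective separating interval \(I\), the independence of the choice of \(I\) falls out immediately, and your product-and-retract comparison through \(I\times\mathfrak{L}\) (whose ``transport of anodynes'' step you yourself flag as unresolved) becomes unnecessary. You correctly locate where injectivity is ``genuinely used,'' but in the final comparison step rather than in the minimality argument, which is where it actually does all the work.
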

\begin{defn}
	An \(\mathcal{A}\)-localizer \(\mathsf{W}\) is a class of maps of \(\psh{A}\) satisfying the following axioms
	\begin{itemize}
		\item The class \(\mathsf{W}\) satisfies \(3\)-for-\(2\).
		\item Every trivial fibration belongs to \(\mathsf{W}\).
		\item The class of monomorphisms in \(\mathsf{W}\) is closed under pushout and transfinite composition.
	\end{itemize}
	If \(S\) is a set of morphisms of \(\psh{\mathcal{A}}\), there exists a minimal localizer containing \(S\), which we call the \emph{localizer generated by S} and denote by \(\mathsf{W}(S)\).  We say that a localizer \(\mathsf{W}\) is \emph{accessible} if it generated by a set of morphisms.
\end{defn}
\begin{thm}[\cite{cisinski-book}*{Theorem 1.4.3}]
	Given any set of morphisms \(S\) of \(\psh{\mathcal{A}}\), the localizer \(\mathsf{W}(S)\) is the class of weak equivalences for a Cisinski model structure.  Moreover, this model structure is the left-Bousfield localization of the minimal Cisinski model structure at the set \(S\).
\end{thm}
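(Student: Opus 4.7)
The plan is to reduce the theorem to a careful application of \Cref{cisinskimaintheorem}. First I would fix a cellular model \(\mathscr{M}\) for \(\psh{\mathcal{A}}\) together with an injective separating interval \(I\) (for instance, the subobject classifier with its canonical sections), and use these together with the given set \(S\) to invoke \Cref{cisinskimaintheorem}. This produces a Cisinski model structure \(\M_S\) on \(\psh{\mathcal{A}}\) whose cofibrations are exactly the monomorphisms and whose generating trivial cofibrations are \(\Lambda_I(S,\mathscr{M})\), saturating to \(\mathbf{An}_I(S)\). Let \(\mathsf{W}_S\) denote the class of weak equivalences of \(\M_S\); the goal is to show \(\mathsf{W}_S = \mathsf{W}(S)\).

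The inclusion \(\mathsf{W}(S) \subseteq \mathsf{W}_S\) is essentially formal. Indeed \(\mathsf{W}_S\) satisfies three-for-two and contains every trivial fibration by the model category axioms; the monomorphisms inside \(\mathsf{W}_S\) are precisely the trivial cofibrations of \(\M_S\) and hence closed under pushout and transfinite composition; and \(S \subseteq \mathbf{An}_I(S) \subseteq \mathsf{W}_S\) by construction. Hence \(\mathsf{W}_S\) is a localizer containing \(S\), and the minimality of \(\mathsf{W}(S)\) gives the inclusion.

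The reverse inclusion \(\mathsf{W}_S \subseteq \mathsf{W}(S)\) is the crux of the argument. I would first show that every monomorphism of \(\mathsf{W}_S\) lies in \(\mathsf{W}(S)\). Since such a monomorphism is a transfinite composite of pushouts of maps in \(\mathbf{An}_I(S)\), and since \(\mathbf{An}_I(S)\) is assembled by iteratively applying the corner operations \(\partial_i \times^\lrcorner (-)\) to elements of \(\mathscr{M}\) and \(\delta^I \times^\lrcorner (-)\) to previously constructed anodynes starting from \(S\), it suffices to check that these two corner operations preserve membership in \(\mathsf{W}(S)\) among monomorphisms. This follows from the stability of monomorphic elements of \(\mathsf{W}(S)\) under pushout and transfinite composition together with the fact that \(\partial_0, \partial_1\) and \(\delta^I\) are monic and already belong to \(\mathsf{W}(S)\) (the \(\partial_i\) are trivial cofibrations of the minimal Cisinski model structure, whose weak equivalences are contained in \(\mathsf{W}(S)\)). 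For an arbitrary \(f \in \mathsf{W}_S\), I would factor \(f\) as a trivial cofibration of \(\M_S\) followed by a trivial fibration; the first factor lies in \(\mathsf{W}(S)\) by the preceding, and the second lies there because every trivial fibration belongs to every localizer. Three-for-two then places \(f\) in \(\mathsf{W}(S)\). The main obstacle throughout is the bookkeeping required to show inductively that the anodyne closure construction stays inside \(\mathsf{W}(S)\).

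For the second assertion, note that both the minimal Cisinski model structure and \(\M_S\) have the monomorphisms as their cofibrations, while \(\M_S\) has strictly more weak equivalences, all of which contain \(S\). Since the left-Bousfield localization of the minimal Cisinski model structure at \(S\) is characterized (when it exists) as the unique model structure with the same cofibrations whose class of weak equivalences is the smallest localizer containing both the minimal weak equivalences and \(S\), and since this smallest localizer is exactly \(\mathsf{W}(S)\), the model structure \(\M_S\) must coincide with the left-Bousfield localization at \(S\), completing the proof.
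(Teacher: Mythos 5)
The paper offers no proof of this statement—it is quoted directly from Cisinski's book—so I can only assess your outline on its own terms. The overall strategy (apply \Cref{cisinskimaintheorem} to \(S\) with an injective interval, then show the resulting class of weak equivalences \(\mathsf{W}_S\) equals \(\mathsf{W}(S)\)) is the right one, and the inclusion \(\mathsf{W}(S)\subseteq \mathsf{W}_S\) is correct as you argue it. The reverse inclusion, however, contains two genuine gaps. The first is your assertion that every monomorphism in \(\mathsf{W}_S\) is a transfinite composite of pushouts of maps in \(\mathbf{An}_I(S)\). \Cref{cisinskimaintheorem} does not say this: it only identifies the fibrant objects and the fibrations with fibrant codomain via lifting against the anodynes, and in general the trivial cofibrations of the resulting model structure form a strictly larger saturated class than \(\mathbf{An}_I(S)\). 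Your closing factorization step inherits this problem: if you factor \(f\) by the small object argument on \(\Lambda_I(S,\mathscr{M})\), the second factor is only a naive fibration (RLP against anodynes), not a trivial fibration; if you instead use the model structure's (trivial cofibration, fibration) factorization, the first factor is a trivial cofibration that you have not shown to lie in \(\mathsf{W}(S)\). The standard repair is to pass to fibrant replacements: \(X\to RX\) and \(Y\to RY\) are anodyne (hence in \(\mathsf{W}(S)\) once the anodynes are handled), and the induced map \(RX\to RY\) is an \(I\)-homotopy equivalence between fibrant objects, which lies in every localizer because the projections \(I\times Z\to Z\) are trivial fibrations when \(I\) is injective; then conclude by \(3\)-for-\(2\).

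The second gap is in your justification that the anodyne closure stays inside \(\mathsf{W}(S)\): you invoke \(\delta^I\in\mathsf{W}(S)\), which is false. The boundary map \(\delta^I:\ast\sqcup\ast\to I\) is the analogue of \(\partial\Delta^1\hookrightarrow\Delta^1\) and is essentially never a weak equivalence. The correct reason that \(\delta^I\times^\lrcorner f\) lies in \(\mathsf{W}(S)\) for a monic \(f\in\mathsf{W}(S)\) is that \(I\times f\) lies in \(\mathsf{W}(S)\) (by \(3\)-for-\(2\) against the trivial fibrations \(I\times X\to X\), again using injectivity of \(I\)), while the inclusion of \(I\times X\) into the corner-domain is a pushout of the monic weak equivalence \(f\sqcup f\); a parallel \(3\)-for-\(2\) argument, not a formal closure property of localizers, handles \(\partial_0\times^\lrcorner m\) and \(\partial_1\times^\lrcorner m\). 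Finally, since \(\mathbf{An}_I(S)=\operatorname{llp}(\operatorname{rlp}(\Lambda_I(S,\mathscr{M})))\) contains retracts of relative cell complexes, you would also need stability of \(\mathsf{W}(S)\) under retracts (a true but nontrivial fact) or else restrict throughout to \(\operatorname{Cell}(\Lambda_I(S,\mathscr{M}))\).
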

\subsection{Simplicial Completion}
The localizers on \(\mathcal{A}\) have a non-free component, namely that the class of trivial fibrations must always belong to \(\mathsf{W}\).  The theory of simplicial completions allows us to embed the class of \(\mathcal{A}\)-localizers into a larger class of localizers that doesn't suffer from this defect.  These are models for \emph{free homotopy theories} modeled on \(\mathcal{A}\).  The idea here is to replace the interval object with an external interval object.
\begin{defn}
	We define the \emph{free homotopy theory} on \(\mathcal{A}\) generated by \(S\) to be \(\mathcal{A}\times \Delta\)-localizer generated the interval object \(\Delta^1 \defeq \ast \square \Delta^1,\) where \(\square\) denotes the external product.  
\end{defn}
\begin{rem}[\cite{cisinski-book}*{3.4.50}]
	By well-known combinatorial arguments, it can be seen that taking \(\Delta^1\) to be the separating interval object forces all objects \(\Delta^n=\ast \square \Delta^n\) to be weakly contractible. The free homotopy theory construction therefore adds new representables but homotopically nullifies all of them.  We can therefore view it as a way to present a homotopy theory for presheaves on \(\mathcal{A}\) without automatically forcing all of the trivial fibrations to be weak equivalences. 

	The free homotopy theory on \(\mathcal{A}\) is in general radically different from the homotopy theory given by the injective model structure on simplicial presheaves, which we will see later is its regular completion.  Cisinski gives the example where \(\mathcal{A}=B\mathbb{G}\) for a group \(\mathbb{G}\).  The difference between the free homotopy theory and its regular completion in this case is the difference between equivariant homotopy theory and higher Galois theory.  That is, the free homotopy theory presents ordinary equivariant homotopy theory, while the regular completion of the free homotopy theory on \(B\mathbb{G}\) models non-abelian \(\mathbb{G}\)-representations. 
\end{rem}
\begin{defn}
	Given an \(\mathcal{A}\)-localizer \(\mathsf{W}\), we define the \emph{simplicial completion} of \(\mathsf{W}\) to be the \(\mathcal{A}\times \Delta\) localizer \(\mathsf{W}_\Delta\) generated by the class of maps of simplicial objects \(X\to X^\prime\) such that \(X_n \to X^\prime_n\) belongs to \(\mathsf{W}\) for each \(i\geq 0\) together with the projection maps \(X\times \Delta^1\) for all simplicial presheaves \(X\) on \(A\).  We say that an \(\mathcal{A}\times \Delta\)-localizer is \emph{discrete} if it is the simplicial completion of a localizer on \(\mathcal{A}\).  
\end{defn}
\begin{prop}
	If the localizer \(\mathsf{W}\) is accessible, so is \(\mathsf{W}_\Delta\).  
\end{prop}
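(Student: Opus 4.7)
The plan is to produce an explicit small generating set for $\mathsf{W}_\Delta$ from one for $\mathsf{W}$. Fix a small generating set $S\subseteq \operatorname{Arr}(\psh{\mathcal{A}})$ for the accessible localizer $\mathsf{W}$, and let $c\colon \psh{\mathcal{A}} \to \psh{\mathcal{A}\times\Delta}$ denote the constant-simplicial-diagram functor (equivalently, external product with $\Delta^0$). I propose the small set
\[
  T \;=\; c(S) \;\cup\; \bigl\{\,p\colon \ast\square\Delta^1 \to \ast\square\Delta^0\,\bigr\},
\]
and claim $\mathsf{W}(T) = \mathsf{W}_\Delta$. The inclusion $\mathsf{W}(T)\subseteq \mathsf{W}_\Delta$ is immediate: each $c(s)$ for $s\in S$ is a degreewise $\mathsf{W}$-equivalence in the sense of the defining class of generators, and $p$ is the $X=\ast$ instance of the projection generators.

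For the reverse inclusion I would split into two substeps. First, show that every projection $q_X\colon X\times\Delta^1\to X$ lies in $\mathsf{W}(T)$ by letting $\mathcal{C}=\{X : q_X\in\mathsf{W}(T)\}$, checking that $\mathcal{C}$ contains the terminal object by hypothesis, and then verifying that $\mathcal{C}$ is closed under the monic-pushout and transfinite-composition operations available in a localizer. The key ingredient is that $(-)\times\Delta^1$ preserves colimits and monomorphisms, so every cellular attachment in $X$ lifts to a compatible pushout decomposition of the associated $q_X$; by the standard cellular presentation of presheaves this forces $\mathcal{C}$ to exhaust $\psh{\mathcal{A}\times\Delta}$. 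Second, show that every degreewise $\mathsf{W}$-equivalence $f\colon X\to X'$ lies in $\mathsf{W}(T)$ by resolving $f$ into a Reedy tower of latching attachments along cells $A\square\partial\Delta^n \hookrightarrow A'\square\Delta^n$, where each $A\to A'$ is built from $S$ by localizer closure in $\psh{\mathcal{A}}$, and then transporting the cellular decomposition into $\psh{\mathcal{A}\times\Delta}$ via the external product.

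The main obstacle is the second substep: the axioms of a localizer only provide closure under monic pushout, transfinite composition, and $3$-for-$2$, so weaving a compatible simplicial structure around the $S$-cell decomposition of each level $X_n\to X'_n$ in a way that actually exhibits the full map $f$ as generated by $c(S)$ is delicate. The cleanest bypass is to invoke Jeff Smith's recognition theorem together with Cisinski's \cite{cisinski-book}*{1.4.3}: the set $T$ being small, $\mathsf{W}(T)$ underlies a Cisinski (hence combinatorial) model structure on $\psh{\mathcal{A}\times\Delta}$, so one merely has to check that its class of weak equivalences contains both defining classes of generators of $\mathsf{W}_\Delta$. By minimality of $\mathsf{W}_\Delta$ this yields $\mathsf{W}_\Delta\subseteq \mathsf{W}(T)$ and closes the argument, with accessibility automatic from the smallness of $T$.
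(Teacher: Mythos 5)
The paper offers no proof of this proposition at all --- it appears in the appendix of recollections and is simply imported from \cite{cisinski-book} --- so your proposal has to stand on its own, and as written it has a genuine gap in the first substep. The class $\mathcal{C}=\{X : q_X\in\mathsf{W}(T)\}$ cannot be propagated by cellular induction from the single seed $X=\ast$: the induction (a gluing/cube argument, which is fine since all objects are cofibrant and the model structure attached to $\mathsf{W}(T)$ is left proper) needs as base case that $q_{A\square\Delta^n}\in\mathsf{W}(T)$ for \emph{every} representable of $\psh{\mathcal{A}\times\Delta}$, and these maps are neither instances of your generator $p$ nor degreewise $\mathsf{W}$-equivalences --- in simplicial degree $m$ the map $A\square(\Delta^n\times\Delta^1)\to A\square\Delta^n$ is a fold-type map $\coprod_J A\to\coprod_I A$ along a surjection of finite sets, which is not in $\mathsf{W}$ in general. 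Deducing $q_{A\square\Delta^n}$ from $p$ would require $\mathsf{W}(T)$ to be stable under $-\times X$, which is not among the closure properties of a localizer and cannot be assumed. The standard repair is to enlarge $T$ by the generating anodynes of the free homotopy theory, i.e.\ the corner maps $(\{\epsilon\}\hookrightarrow \ast\square\Delta^1)\times^\lrcorner m$ for $m$ in a cellular model of $\psh{\mathcal{A}\times\Delta}$ (together with their $\Lambda$-closure as in the appendix); the monomorphism-closure axioms then put $X\times\{\epsilon\}\hookrightarrow X\times(\ast\square\Delta^1)$ into $\mathsf{W}(T)$ for \emph{all} $X$ by cellular induction, and $q_X$ follows by $3$-for-$2$ against the identity.

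The closing ``bypass'' does not rescue the second substep either: invoking \cite{cisinski-book}*{1.4.3} to get a Cisinski model structure with weak equivalences $\mathsf{W}(T)$ and then ``merely checking that this class contains both defining classes of generators of $\mathsf{W}_\Delta$'' is precisely the reverse inclusion you set out to prove, restated --- and for the proper class of degreewise $\mathsf{W}$-equivalences it is the substantive point, since your Reedy-tower sketch is not carried out and the localizer axioms alone do not let you weave the level-by-level $S$-cell decompositions into a decomposition of the simplicial map $f$. The argument that actually closes this half (and is the one behind Cisinski's statement) is different in kind: with the corrected generating set one first shows that every object of $\psh{\mathcal{A}\times\Delta}$ is naturally $\mathsf{W}(T)$-equivalent to the realization/homotopy colimit of its levels $c(X_n)$, and that $c$ carries $\mathsf{W}$ into $\mathsf{W}(T)$; a degreewise $\mathsf{W}$-equivalence is then a homotopy colimit of maps in $\mathsf{W}(T)$ and hence lies in $\mathsf{W}(T)$. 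So the skeleton of your plan (exhibit a small generating set; prove two inclusions) is the right one, but both the generating set and the two key verifications need to be repaired along these lines.
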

\begin{prop}
	If the localizer \(\mathsf{W}\) is the minimal \(\mathcal{A}\)-localizer, then \(\mathsf{W}_\Delta\) is the localization of the free homotopy theory on \(\mathcal{A}\) at the set of maps \(\Lambda_I(\varnothing,\mathscr{M})\) for any injective separating interval object \(I\) and any cellular model \(\mathscr{M}\).  
\end{prop}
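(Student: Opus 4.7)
The plan is to show \(\mathsf{W}_\Delta = \mathsf{V}\), where \(\mathsf{V}\) denotes the localizer on \(\psh{\mathcal{A}\times\Delta}\) generated by the free homotopy theory on \(\mathcal{A}\) together with the set \(\Lambda_I(\varnothing,\mathscr{M})\). Write \(p\colon \mathcal{A}\times\Delta\to\mathcal{A}\) for the projection, so that \(p^\ast\) is the constant simplicial presheaf functor, and view each element of \(\Lambda_I(\varnothing,\mathscr{M})\) in \(\psh{\mathcal{A}\times\Delta}\) via \(p^\ast\). Independence from the choice of \(I\) and \(\mathscr{M}\) is automatic once the equality is established for a single such pair, since \(\mathsf{W}_\Delta\) does not depend on these choices. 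Both \(\mathsf{V}\) and \(\mathsf{W}_\Delta\) are accessible — the latter by the preceding proposition — so the strategy is mutual containment on generating sets.

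For \(\mathsf{V}\subseteq\mathsf{W}_\Delta\): the interval generators \(X\times\Delta^1\to X\) of the free homotopy theory lie in \(\mathsf{W}_\Delta\) by the very definition of the simplicial completion. For the generators from \(p^\ast(\Lambda_I(\varnothing,\mathscr{M}))\), \Cref{cisinskimaintheorem} applied with \(S=\varnothing\) shows these are trivial cofibrations in the minimal Cisinski model structure on \(\psh{\mathcal{A}}\), hence members of \(\mathsf{W}\); their \(p^\ast\)-images are then levelwise \(\mathsf{W}\)-equivalences and so belong to \(\mathsf{W}_\Delta\).

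For the reverse inclusion, the generating class of \(\mathsf{W}_\Delta\) consists of the projection maps (which lie in \(\mathsf{V}\) by construction) together with the levelwise \(\mathsf{W}\)-equivalences, and it remains to show the latter all lie in \(\mathsf{V}\). I would first reduce, via \(3\)-for-\(2\) and the fact that trivial fibrations automatically belong to any localizer, to the case of a monomorphic levelwise \(\mathsf{W}\)-equivalence \(f\colon X\to Y\) of simplicial presheaves. Then I would filter \(f\) by its Reedy skeleton: at each stage \(n\), the attaching map is obtained by pushing out a latching inclusion whose underlying map in \(\psh{\mathcal{A}}\) is a monomorphic element of \(\mathsf{W}\), hence up to retract a transfinite composition of pushouts of elements of \(\Lambda_I(\varnothing,\mathscr{M})\). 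Applying \(p^\ast\) (which preserves monomorphisms, pushouts, and transfinite compositions) and using that \(\mathsf{V}\) is closed under pushout and transfinite composition of monomorphisms then places each Reedy stage, and hence \(f\) itself, in \(\mathsf{V}\).

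The main obstacle is the Reedy bookkeeping: one must track how latching inclusions of Reedy-cofibrant simplicial presheaves interact with the \(p^\ast\)-image of a chosen cellular presentation of each \(\mathsf{W}\)-trivial cofibration, in a way that preserves membership in \(\mathsf{V}\). The natural cellular model for \(\psh{\mathcal{A}\times\Delta}\) is the corner-product of \(\mathscr{M}\) with the boundary inclusions of \(\Delta\), which makes this combinatorics manageable but not immediate. A cleaner route, which I would prefer to carry out, is to characterize fibrant objects of both Cisinski model structures directly: a simplicial presheaf \(Y\) on \(\mathcal{A}\) is fibrant for either structure precisely when each \(Y_n\) is fibrant for the minimal Cisinski model structure on \(\psh{\mathcal{A}}\) and the projection \(Y\to p^\ast(Y_0)\) is a weak equivalence. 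Since an accessible Cisinski localizer is determined by its class of local objects, matching fibrant objects immediately yields \(\mathsf{W}_\Delta=\mathsf{V}\).
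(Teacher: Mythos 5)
The paper states this proposition without proof --- the appendix is explicitly a recollection of results from \cite{cisinski-book}, where the statement and its proof appear in the material on simplicial completion --- so there is no in-paper argument to compare against and your proposal must stand on its own. The easy inclusion is fine: the projections lie in \(\mathsf{W}_\Delta\) by definition, and each element of \(\Lambda_I(\varnothing,\mathscr{M})\) is anodyne, hence a trivial cofibration, for the minimal Cisinski structure on \(\psh{\mathcal{A}}\), so its constant simplicial extension is a levelwise \(\mathsf{W}\)-equivalence.

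The reverse inclusion is where the content lives, and your argument has a genuine gap there. The Reedy reduction itself is sound as far as it goes: a monomorphism of \(\psh{\mathcal{A}\times\Delta}\) that is a levelwise \(\mathsf{W}\)-equivalence is a Reedy trivial cofibration of simplicial objects in \((\psh{\mathcal{A}},\mathsf{W})\), so its relative latching maps \(\ell_n\colon X_n\cup L_nY\to Y_n\) are indeed monomorphic elements of \(\mathsf{W}\), and the map is a countable composite of pushouts of the corner products \(\ell_n\square^\lrcorner\delta^{\Delta^n}\). But the next step --- ``hence up to retract a transfinite composition of pushouts of elements of \(\Lambda_I(\varnothing,\mathscr{M})\)'' --- is false: that set generates the \emph{anodyne extensions} of the minimal structure, which form a proper subclass of its trivial cofibrations in general (the two classes agree only on maps with fibrant codomain). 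What you actually need, and do not prove, is (a) that \(p^\ast\) carries every monomorphic element of \(\mathsf{W}\) into \(\mathsf{V}\); since \(\mathsf{W}\) is the minimal localizer, the sticking point is showing that \(p^\ast\) of a trivial fibration of \(\psh{\mathcal{A}}\) lies in \(\mathsf{V}\) --- it is \emph{not} a trivial fibration of \(\psh{\mathcal{A}\times\Delta}\), so this requires the \(I\)-homotopy-equivalence argument and the compatibility of \(\mathsf{V}\) with the constant interval \(p^\ast I\); and (b) that such maps remain in \(\mathsf{V}\) after corner product with \(\delta^{\Delta^n}\), a pushout-product statement for the anodynes of the \((\Delta^1,\,p^\ast\Lambda_I(\varnothing,\mathscr{M}))\)-structure that itself needs Gabriel--Zisman-type combinatorics. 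Your fallback via fibrant objects does not rescue this: the stated criterion is incorrect as written (the natural comparison map runs \(p^\ast Y_0\to Y\), not the other way, and homotopy constancy is a locality condition, not the whole of fibrancy), and, more fundamentally, identifying the fibrant objects of \(\mathsf{W}_\Delta\) amounts to exhibiting a generating set of anodynes for it, which is precisely the content of the proposition.
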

\begin{prop}[\cite{cisinski-book}*{Proposition 2.3.27}]
	If \(\mathsf{W}\) is any accessible localizer, the functor \(p^\ast:\psh{\mathcal{A}} \to \psh{\mathcal{A}\times \Delta}\) induced by the projection \(\mathcal{A}\times \Delta \to \mathcal{A}\) is a left Quillen equivalence.  Also, by choosing a Reedy-cofibrant cosimplicial resolution \(D^\bullet\) of the terminal object \(\ast\) of \(A\) with respect to the minimal localizer \(\mathsf{W}_{\mathrm{min}}\), the functor 
	\[
		\operatorname{Real}_D: \psh{\mathcal{A}\times \Delta} \to \psh{\mathcal{A}}
	\]
	induced by left Kan extension of the functor defined by the rule
	\[
		(A,[n])\mapsto A\times D^n
	\]
	is also a left Quillen equivalence.  
\end{prop}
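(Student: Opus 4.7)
The plan is to treat the two Quillen equivalences separately, and to deduce the second from the first.  I would first show that \(p^*\) is left Quillen: it preserves cofibrations since precomposition along \(p^\op\) preserves monomorphisms, and it preserves weak equivalences since the simplicial completion \(\mathsf{W}_\Delta\) was constructed to contain all morphisms which are levelwise in \(\mathsf{W}\), and \(p^*(f)\) is precisely such a morphism.  In particular, \(p^*\) preserves \emph{all} weak equivalences, so \(Lp^* = p^*\) with no derivation needed on the left.

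Now the unit \(F \to p_* p^* F\) is an isomorphism, because \((p_* p^* F)(a) = \lim_{[n] \in \Delta^\op} F(a) = F(a)\) as \(\Delta^\op\) is connected, so \(p^*\) is fully faithful.  If \(F\) is fibrant in \(\mathsf{W}\), I would verify that \(p^* F\) is fibrant in \(\mathsf{W}_\Delta\): lifting against generating anodynes arising from levelwise trivial cofibrations reduces to lifting against \(F\) itself, while those arising from the projections \(A \times \Delta^1 \to A\) present trivial lifting problems since \(p^* F\) does not depend on the simplicial direction.  Hence on an arbitrary \(F\) the derived unit is simply the image under \(p_*\) of the fibrant replacement of \(p^* F\), which is a weak equivalence.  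For the derived counit \(p^* p_* X \to X\) on fibrant \(X\), I would exploit the structural fact that each inclusion \(A \square \Delta^0 \hookrightarrow A \square \Delta^n\) is a trivial cofibration (iterated from the case \(n=1\), which follows from the projection \(A \square \Delta^1 \to A\) being in \(\mathsf{W}_\Delta\) by construction).  Testing fibrant \(X\) against these inclusions forces \(X(a,[n]) \cong X(a,[0])\) with all face and degeneracy maps bijections, so \(X\) is literally the constant simplicial presheaf on \(p_* X\), and the counit is a natural isomorphism on fibrant objects.

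For the adjunction \(\operatorname{Real}_D \dashv \operatorname{Sing}_D\), I would verify \(\operatorname{Real}_D\) is left Quillen using Reedy cofibrancy of \(D^\bullet\): the generating cofibrations of \(\psh{\mathcal{A}\times\Delta}\) are Leibniz products \(\delta^a \times^\lrcorner \delta^n\) of boundary inclusions, which map under \(\operatorname{Real}_D\) to \(\delta^a \times^\lrcorner (L^n D \hookrightarrow D^n)\), a cofibration since the latching inclusion \(L^n D \hookrightarrow D^n\) is one by Reedy cofibrancy.  Preservation of trivial cofibrations follows because each \(D^n \to \ast\) lies in \(\mathsf{W}_{\min} \subseteq \mathsf{W}\).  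For the Quillen equivalence, choose the resolution so that \(D^0 = \ast\); then \(\operatorname{Real}_D(p^* F) = F \times D^0 = F\) on the nose, giving \(L\operatorname{Real}_D \circ Lp^* = \id\).  Combined with the fact that \(Lp^*\) is an equivalence, this forces \(L\operatorname{Real}_D\) to be a quasi-inverse of \(Lp^*\), so \(\operatorname{Real}_D \dashv \operatorname{Sing}_D\) is also a Quillen equivalence.

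The main obstacle is the characterization of fibrant objects in \(\mathsf{W}_\Delta\) as (canonically isomorphic to) constant simplicial presheaves on fibrant objects of \(\mathsf{W}\), which drives the derived counit argument and therefore sits at the heart of the proof.  This characterization requires propagating the contractibility of \(\Delta^1\)—built into \(\mathsf{W}_\Delta\) by decree—through the whole simplicial structure by way of the Cisinski anodyne closure: that the inclusions \(A \square \Delta^0 \hookrightarrow A \square \Delta^n\) are trivial cofibrations for every \(n\) comes from anodyne closure applied to the base case \(n=1\), but the bijection conclusion requires the precise right lifting property against these anodynes, rather than merely the existence of a homotopy.
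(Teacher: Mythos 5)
This proposition is quoted from Cisinski (\cite{cisinski-book}*{Proposition 2.3.27}) and the paper supplies no proof of its own, so your proposal can only be judged on its internal merits. The architecture of the first half (unit is an isomorphism, \(p^*F\) fibrant for \(F\) fibrant, then analyze the counit on fibrant objects) is reasonable, but the step you yourself identify as the heart of the argument is wrong. From the fact that \(A\square\Delta^0\hookrightarrow A\square\Delta^n\) is a trivial cofibration and \(X\) is fibrant, the right lifting property gives only that every map \(A\square\Delta^0\to X\) \emph{extends} to \(A\square\Delta^n\); taking \(A=h_a\) representable, this says the vertex maps \(X(a,[n])\to X(a,[0])\) are \emph{surjective}, not bijective. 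Injectivity would require lifting against maps of the form \(V\sqcup_U V\to V\), which are not monomorphisms and are not among the anodynes; two extensions agreeing on \(U\) are merely homotopic rel \(U\), not equal. Consequently the conclusion that a fibrant \(X\) ``is literally the constant simplicial presheaf on \(p_*X\)'' is false, and with it the claim that the counit is an isomorphism on fibrant objects. A sanity check: for \(\mathcal{A}=\ast\) the simplicial completions of localizers on \(\mathbf{Set}\) include model structures on \(\psh{\Delta}\) whose fibrant objects are genuinely non-discrete simplicial sets (the whole point of \(\mathsf{W}_\Delta\) is that \(\Delta^1\) is nullified only homotopically); if all fibrant objects were constant, \(\ho(\psh{\mathcal{A}\times\Delta},\mathsf{W}_\Delta)\) would be a quotient of the \(1\)-category \(\psh{\mathcal{A}}\), which it is not. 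What one actually needs is that the counit \(p^*p_*X\to X\) is a \(\mathsf{W}_\Delta\)-\emph{equivalence} for fibrant \(X\) (equivalently, that the simplicial direction of \(X\) is homotopically, not strictly, constant), and that requires a genuinely different argument.

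Two secondary gaps: (i) in the derived unit argument you apply \(p_*\) to the trivial cofibration \(p^*F\to Rp^*F\) and assert the result is a weak equivalence, but \(p_*\) is a right Quillen functor and does not preserve trivial cofibrations; the usual repair is to combine the counit statement with the fact that \(p^*\) reflects weak equivalences (which is exactly the injectivity of the Galois connection \(\mathsf{W}\mapsto\mathsf{W}_\Delta\) and itself needs proof). (ii) ``Preservation of trivial cofibrations follows because each \(D^n\to\ast\) lies in \(\mathsf{W}_{\min}\)'' is not an argument: the generating trivial cofibrations of the localized structure are not explicit, and one should instead invoke the general theory of cosimplicial resolutions/framings to see that \(\operatorname{Real}_D\dashv\operatorname{Sing}_D\) is a Quillen pair. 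The final deduction that \(L\operatorname{Real}_D\) is a quasi-inverse of \(Lp^*\) once \(\operatorname{Real}_D\circ p^*\cong\id\) is fine, but it inherits the unproven claim that \(Lp^*\) is an equivalence.
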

\begin{cor}
	The simplicial completion defines a bijective Galois connection between \(\mathcal{A}\)-localizers and discrete \(\mathcal{A}\times \Delta\)-localizers.
\end{cor}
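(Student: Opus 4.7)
The plan is to exhibit an explicit right adjoint to simplicial completion and then to deduce injectivity of simplicial completion directly from the Quillen equivalence established in the previous proposition. Define, for any \(\mathcal{A}\times\Delta\)-localizer \(\mathsf{V}\), the restricted class
\[
R(\mathsf{V}) = \{\, f \in \mathrm{Mor}(\psh{\mathcal{A}}) \mid p^{*}f \in \mathsf{V}\,\},
\]
where \(p: \mathcal{A}\times\Delta\to \mathcal{A}\) is the projection, so that \(p^{*}F\) is the constant simplicial presheaf on \(F\). The first step is to verify that \(R(\mathsf{V})\) is an \(\mathcal{A}\)-localizer: the \(3\)-for-\(2\) property pulls back along the functor \(p^{*}\); trivial fibrations of \(\psh{\mathcal{A}}\) are sent to trivial fibrations of \(\psh{\mathcal{A}\times\Delta}\) because \(p^{*}\) preserves all limits and monomorphisms, so lifts transfer; and closure under monomorphic pushouts and transfinite compositions follows from the fact that \(p^{*}\) is a left adjoint preserving monomorphisms. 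The operation \(R\) is clearly order-preserving.

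Next I would check the Galois connection
\[
\mathsf{W}_\Delta \subseteq \mathsf{V} \iff \mathsf{W} \subseteq R(\mathsf{V}).
\]
The forward direction is immediate from the definition of \(\mathsf{W}_\Delta\): given \(f \in \mathsf{W}\), the map \(p^{*}f\) is a map of constant simplicial presheaves whose \(n\)-th component is \(f\), and this is by construction one of the generators of \(\mathsf{W}_\Delta\), hence lies in \(\mathsf{V}\). For the reverse direction, assuming \(\mathsf{W}\subseteq R(\mathsf{V})\) one shows that every generator of \(\mathsf{W}_\Delta\) lies in \(\mathsf{V}\); a levelwise-\(\mathsf{W}\) map can be built using the skeletal filtration of simplicial objects from maps of the form \(p^{*}g\) with \(g\in\mathsf{W}\), together with the canonical \(\Delta^{1}\)-projections.

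Finally, to obtain the stated bijection I invoke the preceding proposition: for any accessible \(\mathcal{A}\)-localizer \(\mathsf{W}\), the functor \(p^{*}\) is a left Quillen equivalence between the Cisinski model structure associated to \(\mathsf{W}\) and that associated to \(\mathsf{W}_\Delta\). Since every object of a Cisinski model structure is cofibrant, the Quillen equivalence both preserves and reflects weak equivalences, so \(f\in \mathsf{W}\) if and only if \(p^{*}f \in \mathsf{W}_\Delta\). This is precisely the identity \(\mathsf{W} = R(\mathsf{W}_\Delta)\); in particular the simplicial completion is injective on \(\mathcal{A}\)-localizers, and is therefore a bijection onto its image, which is by definition the class of discrete \(\mathcal{A}\times\Delta\)-localizers.

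The main obstacle lies in the reverse direction of the Galois connection, because an arbitrary \(\mathcal{A}\times\Delta\)-localizer \(\mathsf{V}\) is not automatically required to contain the projections \(X\times\Delta^{1}\to X\) that appear as generators of the simplicial completion. This is in fact inherent to the distinction between general and discrete localizers; restricting the codomain of the Galois connection to discrete localizers (equivalently, to localizers containing the simplicial completion of the minimal \(\mathcal{A}\)-localizer) is exactly what is needed for \((-)_\Delta\circ R\) to be the identity on that side, which then collapses the Galois connection into the claimed bijection.
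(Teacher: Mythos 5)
Your core argument is the one the paper intends: the corollary is stated without proof as a consequence of the preceding proposition, and the substantive content is injectivity of \(\mathsf{W}\mapsto\mathsf{W}_\Delta\) (surjectivity onto discrete localizers holds by the very definition of \emph{discrete}). Deducing \(\mathsf{W}=R(\mathsf{W}_\Delta)\) from the Quillen equivalence \(p^\ast\), using that every object of a Cisinski model structure is cofibrant so that a left Quillen equivalence both preserves and reflects weak equivalences between cofibrant objects, is exactly the intended route. Note only that the preceding proposition is stated for \emph{accessible} localizers, so your injectivity argument as written covers that case.

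One auxiliary step is wrong as stated: \(p^\ast\) does \emph{not} send trivial fibrations to trivial fibrations. A lifting problem of a monomorphism \(i\) against \(p^\ast f\) transposes to a lifting problem of \(p_!i\) against \(f\), and \(p_!\) (which collapses the simplicial direction to its set of connected components) does not preserve monomorphisms. Concretely, for \(\mathcal{A}=\ast\) the surjection \(\{0,1\}\to\{\ast\}\) is a trivial fibration of sets, but the induced map of constant simplicial sets fails the right lifting property against \(\partial\Delta^1\hookrightarrow\Delta^1\). Consequently \(R(\mathsf{V})\) need not be an \(\mathcal{A}\)-localizer for an arbitrary \(\mathcal{A}\times\Delta\)-localizer \(\mathsf{V}\). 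The repair is the restriction you already invoke in your final paragraph: once \(\mathsf{V}\) contains the simplicial completion of the minimal \(\mathcal{A}\)-localizer, \(p^\ast\) of any trivial fibration is a levelwise map belonging to \(\mathsf{W}_{\mathrm{min}}\) and hence lies in \(\mathsf{V}\) by the definition of the generators of a simplicial completion, so \(R(\mathsf{V})\) does contain the trivial fibrations. With the Galois connection taken between \(\mathcal{A}\)-localizers and \(\mathcal{A}\times\Delta\)-localizers above \((\mathsf{W}_{\mathrm{min}})_\Delta\) throughout (which is also what the reverse implication requires, as you observe), your argument goes through.
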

\subsection{Regularity}
An important property of simplicial sets is no longer present in the case of a general \(\mathcal{A}\)-localizer, namely the property that every object is the canonical homotopy colimit of its diagram of representables.  This leads to the following definition:
\begin{defn}
	A presheaf \(X\) on \(\mathcal{A}\) is called \(\mathsf{W}\)-\emph{regular} with respect to a localizer \(\mathsf{W}\) if the canonical map 
	\[
		\hocoliml_{A\to X\in \overcat{\mathcal{A}}{X}} A \to \coliml_{A\to X\in \overcat{\mathcal{A}}{X}} A \to X
	\]
	is a \(\mathsf{W}\)-equivalence.
	A localizer \(\mathsf{W}\) on \(\mathcal{A}\) is called a \emph{regular localizer} if every presheaf \(X\) on \(A\) is \(\mathsf{W}\)-regular.
\end{defn}
\begin{prop}[\cite{cisinski-book}*{Remark 3.4.14}]\label{regularinclusion}
	If \(\mathsf{W} \subseteq \mathsf{W}^\prime\) is an inclusion of localizers and \(\mathsf{W}\) is regular, then so too is \(\mathsf{W^\prime}\).  
\end{prop}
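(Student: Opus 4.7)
The proof should be almost immediate once we unpack the definition. The plan is to observe that the canonical comparison map $\hocolim_{A \to X} A \to \colim_{A \to X} A \to X$ appearing in the definition of $\mathsf{W}$-regularity is a specific morphism of presheaves that does not itself depend on the choice of localizer: the homotopy colimit is built as a fixed (e.g.\ bar-construction or Reedy cofibrant) replacement, and the natural augmentation to $X$ is defined independently of any model-theoretic input. Only the question of whether this fixed map lies in the class of weak equivalences varies with the localizer.

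Given that, first I would fix an arbitrary presheaf $X$ on $\mathcal{A}$ and write down the canonical map
\[
\eta_X \noloc \hocoliml_{A\to X \in \overcat{\mathcal{A}}{X}} A \;\lra\; \coliml_{A\to X \in \overcat{\mathcal{A}}{X}} A \;\lra\; X.
\]
By the hypothesis that $\mathsf{W}$ is regular, $\eta_X$ belongs to $\mathsf{W}$. Next I would invoke the assumed inclusion $\mathsf{W} \subseteq \mathsf{W}^\prime$ to conclude $\eta_X \in \mathsf{W}^\prime$. Since $X$ was arbitrary, this exhibits every presheaf on $\mathcal{A}$ as $\mathsf{W}^\prime$-regular, which by definition means that $\mathsf{W}^\prime$ is a regular localizer.

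There is essentially no obstacle here: the entire content of the argument is the tautology that the class of weak equivalences of a larger localizer contains every map that was already a weak equivalence for the smaller one. The only point worth double-checking is that the homotopy colimit used in the definition of regularity really is model-independent (so that we compare the same map against the two different localizers); this is standard, as $\hocolim$ can be computed via the two-sided bar construction $B(\ast, \overcat{\mathcal{A}}{X}, A)$, which is manifestly functorial in $X$ and independent of any choice of localizer on $\psh{\mathcal{A}}$.
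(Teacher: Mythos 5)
Your proposal is correct: the paper states this result only as a citation to Cisinski's Remark 3.4.14 and gives no proof, and your argument is precisely the standard one behind that remark. The single point requiring care — that the homotopy colimit and its augmentation to \(X\) do not depend on the localizer — is the one you address, and it holds because all Cisinski model structures on \(\psh{\mathcal{A}}\) share the same cofibrations (the monomorphisms) and hence the same trivial fibrations and cofibrant replacements, so the comparison map is a fixed morphism whose membership in \(\mathsf{W}\subseteq\mathsf{W}^\prime\) is all that varies.
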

\begin{defn}\label{regcompletion}
	The \emph{regular completion} of a localizer \(\mathsf{W}\) is the smallest regular localizer containing \(\mathsf{W}\).  In particular, it follows from the preceding proposition that the regular completion is the smallest localizer generated by \(\mathsf{W}\) and the regular completion of the minimal localizer.
\end{defn}
\begin{prop}[\cite{cisinski-book}*{Corollary 3.4.24}]
	The regular completion of an accessible localizer is accessible.
\end{prop}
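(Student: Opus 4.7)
The plan is to reduce the statement to a single key fact about the minimal localizer, then handle that fact by a small-object/presentability argument. First, recall from \Cref{regularinclusion} that any localizer containing a regular localizer is again regular. It follows that if $\mathsf{W}$ is any localizer and $\mathsf{W}_{\min}^{\mathrm{reg}}$ denotes the regular completion of the minimal localizer on $\mathcal{A}$, then the regular completion of $\mathsf{W}$ coincides with the localizer generated by $\mathsf{W} \cup \mathsf{W}_{\min}^{\mathrm{reg}}$ (since the latter is regular by \Cref{regularinclusion}, contains $\mathsf{W}$, and any regular localizer containing $\mathsf{W}$ contains $\mathsf{W}_{\min}^{\mathrm{reg}}$ by minimality). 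Since a union of two accessible localizers is accessible (take the union of their generating sets), the whole problem reduces to showing that $\mathsf{W}_{\min}^{\mathrm{reg}}$ itself is accessible.

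For this, I would exploit the presentability of $\psh{\mathcal{A}}$. Choose a regular cardinal $\kappa$ such that $\psh{\mathcal{A}}$ is $\kappa$-accessible, and fix a small set $\mathcal{K}$ of representatives for the $\kappa$-presentable presheaves. For each $X \in \mathcal{K}$, choose a Reedy-cofibrant replacement of the canonical cocone $\overcat{\mathcal{A}}{X} \to \psh{\mathcal{A}}$ and let $c_X : \operatorname{hocolim}_{A \to X \in \overcat{\mathcal{A}}{X}} A \to X$ be the resulting canonical map. Let $\Sigma = \{c_X \mid X \in \mathcal{K}\}$, and let $\mathsf{W}_\Sigma$ be the accessible localizer generated by $\Sigma$. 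The claim is that $\mathsf{W}_\Sigma = \mathsf{W}_{\min}^{\mathrm{reg}}$. One containment is by definition (every $c_X$ is inverted in $\mathsf{W}_{\min}^{\mathrm{reg}}$); for the other, I need to verify that $\mathsf{W}_\Sigma$ is regular, i.e.\ that $c_Y \in \mathsf{W}_\Sigma$ for every presheaf $Y$ on $\mathcal{A}$.

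The strategy for showing regularity is to write $Y$ as a $\kappa$-filtered colimit $Y = \colim_{i \in I} X_i$ of presheaves from $\mathcal{K}$ along monomorphisms, and to compare the canonical cocone on $Y$ with the filtered system of canonical cocones on the $X_i$. Filtered colimits of monomorphisms are homotopy colimits in every accessible localizer (the class of monic weak equivalences is stable under transfinite composition and pushout along monomorphisms by the localizer axioms), so $Y$ is the homotopy colimit of the $X_i$. On the other hand, the category of elements $\overcat{\mathcal{A}}{Y} = \colim_i \overcat{\mathcal{A}}{X_i}$ is a filtered colimit of its subcategories, which lets one rewrite $\operatorname{hocolim}_{A \to Y} A$ as the homotopy colimit over $I$ of the $\operatorname{hocolim}_{A \to X_i} A$. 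Under $\mathsf{W}_\Sigma$ each $c_{X_i}$ is a weak equivalence, so by homotopy-invariance of homotopy colimits along the $\kappa$-filtered diagram $I$ (which again reduces to stability of $\mathsf{W}_\Sigma$ under pushouts and transfinite composition of monic weak equivalences), the induced map on homotopy colimits is a weak equivalence; this is precisely $c_Y$.

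The main obstacle I anticipate is the homotopy-invariance and interchange step in the last paragraph: identifying the homotopy colimit over $\overcat{\mathcal{A}}{Y}$ with the filtered homotopy colimit of the homotopy colimits over the $\overcat{\mathcal{A}}{X_i}$, and ensuring that the $c_{X_i}$ being weak equivalences really propagates to a weak equivalence on the filtered colimit. The cleanest way is probably to set things up inside the minimal Cisinski model structure localized at $\Sigma$, where one has a genuine model category and can appeal to the standard fact that $\kappa$-filtered (homotopy) colimits of weak equivalences between cofibrant objects are weak equivalences; this in turn uses that the localizer generated by a set is still compatible with filtered colimits of monomorphisms, which is built into the axioms for a localizer together with accessibility.
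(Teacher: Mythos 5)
First, a point of comparison: the paper does not actually prove this statement --- it is quoted directly from \cite{cisinski-book}*{Corollary 3.4.24} as part of the appendix's recollections --- so there is no in-paper argument to measure yours against, and I am assessing the proposal on its own terms. Your opening reduction is correct and is precisely what \Cref{regularinclusion} and \Cref{regcompletion} record: the regular completion of $\mathsf{W}(S)$ is $\mathsf{W}(S\cup S_0)$ where $S_0$ is any generating set for the regular completion of the minimal localizer, so everything hinges on the accessibility of that one localizer, and your candidate generating set $\Sigma=\{c_X\}$ with $X$ ranging over a set of $\kappa$-presentable presheaves is the right one.

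The gap is in the final step, and it is not cosmetic. To prove that $\mathsf{W}_\Sigma$ is regular you invoke closure of $\mathsf{W}_\Sigma$ under $\kappa$-filtered colimits (of monomorphisms), calling it a ``standard fact.'' For an accessible localizer not yet known to be regular this is exactly what one may not assume: by \Cref{filteredcolims}, closure under filtered colimits is a \emph{consequence} of regularity, and it is not a formal consequence of the localizer axioms, which only give closure of the monic weak equivalences under pushout and transfinite composition. The general accessibility theorem for weak equivalences in a combinatorial model category does give closure under $\mu$-filtered colimits for \emph{some} regular cardinal $\mu$, but $\mu$ depends on $\mathsf{W}_\Sigma$ and may well exceed the $\kappa$ used to define $\Sigma$; enlarging $\kappa$ to $\mu$ changes $\Sigma$, hence $\mathsf{W}_\Sigma$, hence $\mu$ again, and the argument does not visibly terminate. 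What the axioms do yield is a ``ladder lemma'': a natural monic weak equivalence between ordinal-indexed, cocontinuous sequences of monomorphisms induces a weak equivalence on colimits (such sequences are projectively cofibrant for the direct Reedy structure on the ordinal, and $\colim$ is left Quillen). The repair is therefore to trade your $\kappa$-filtered induction for a cellular transfinite induction: present $Y$ as a transfinite composition of cell attachments along the boundary inclusions of the cellular model, establish $c_{Y_i}\in\mathsf{W}_\Sigma$ at successor stages via the gluing (cube) lemma for pushouts along monomorphisms --- itself derived from the pushout axiom --- and at limit stages via the ladder lemma, with base cases the representables and their boundaries (all of which are $\kappa$-presentable once $\kappa$ exceeds the size of $\mathcal{A}$). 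This is, in substance, how Cisinski's own argument runs: regularity is verified cell by cell against the cellular model rather than against arbitrary filtered systems of small subobjects.
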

\begin{prop}[\cite{cisinski-book}*{Proposition 3.4.34}]\label{injregular}
	The localizer of the injective model structure on simplicial presheaves on \(\mathcal{A}\) consisting of the maps of simplicial presheaves \(X\to X^\prime\) whose components are weak homotopy equivalences \(X_A\to  X^\prime_A\) is the regular completion of the localizer of the free homotopy theory on \(\mathcal{A}\).
\end{prop}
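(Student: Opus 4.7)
I would prove the stated equality by establishing both inclusions between the injective localizer $\mathsf{W}_{\mathrm{inj}}$ on $\psh{\mathcal{A}\times\Delta}$ and the regular completion $\widetilde{\mathsf{W}}_{\mathrm{free}}$ of the free homotopy theory localizer $\mathsf{W}_{\mathrm{free}}$.

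For the inclusion $\widetilde{\mathsf{W}}_{\mathrm{free}}\subseteq\mathsf{W}_{\mathrm{inj}}$, I would proceed in two steps. First, I would show $\mathsf{W}_{\mathrm{free}}\subseteq\mathsf{W}_{\mathrm{inj}}$: the projection $\ast\square\Delta^1\to\ast$ evaluates componentwise at $A\in\mathcal{A}$ to the trivial fibration $\mathcal{A}(A,\ast)\times\Delta^1\to\mathcal{A}(A,\ast)$ of simplicial sets, so $\ast\square\Delta^1$ is an interval object in the injective structure, and by the minimality of $\mathsf{W}_{\mathrm{free}}$ among $\mathcal{A}\times\Delta$-localizers for which it is an interval object, this inclusion follows. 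Second, I would show $\mathsf{W}_{\mathrm{inj}}$ is regular: the canonical cocone $(A,[n],\sigma)\mapsto A\square\Delta^n\to X$ indexed over $\overcat{\mathcal{A}\times\Delta}{X}$ exhibits $X$ as the colimit, and since this diagram is projectively (hence injectively) cofibrant, the colimit computes the homotopy colimit in $\mathsf{W}_{\mathrm{inj}}$. Combined with \cite{cisinski-book}*{Remark 3.4.14} and the minimality of the regular completion, I obtain $\widetilde{\mathsf{W}}_{\mathrm{free}}\subseteq\mathsf{W}_{\mathrm{inj}}$.

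For the reverse inclusion $\mathsf{W}_{\mathrm{inj}}\subseteq\widetilde{\mathsf{W}}_{\mathrm{free}}$, I would argue as follows. Let $f: X\to Y$ be a componentwise weak equivalence. Using regularity of $\widetilde{\mathsf{W}}_{\mathrm{free}}$, we have
\[
X \simeq \hocoliml_{(A,[n],\sigma)\in\overcat{\mathcal{A}\times\Delta}{X}} A\square\Delta^n, \qquad Y \simeq \hocoliml_{(A,[n],\tau)\in\overcat{\mathcal{A}\times\Delta}{Y}} A\square\Delta^n,
\]
in $\widetilde{\mathsf{W}}_{\mathrm{free}}$. Since each $A\square\Delta^n\to A$ is a $\mathsf{W}_{\mathrm{free}}$-equivalence, I may replace every representable summand by $A$ in both diagrams. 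The projection $\overcat{\mathcal{A}\times\Delta}{X}\to\mathcal{A}$ is a Grothendieck opfibration with fibre $\overcat{\Delta}{X_A}$ over $A\in\mathcal{A}$, and the nerve $N(\overcat{\Delta}{X_A})$ is weakly equivalent to $X_A$ via the last-vertex map. Decomposing the homotopy colimit along this opfibration identifies $X$, modulo $\widetilde{\mathsf{W}}_{\mathrm{free}}$, with $\int^{A\in\mathcal{A}} A\square X_A$, and similarly for $Y$; under this identification, $f$ corresponds to the map induced by the fibrewise weak equivalences $f_A: X_A\to Y_A$, which is a $\widetilde{\mathsf{W}}_{\mathrm{free}}$-equivalence by the pointwise invariance of homotopy colimits.

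The hard part will be the reverse inclusion, specifically verifying that the Grothendieck-style decomposition of the homotopy colimit and the $\mathsf{W}_{\mathrm{free}}$-equivalent replacement of $A\square\Delta^n$ by $A$ can be performed consistently within $\widetilde{\mathsf{W}}_{\mathrm{free}}$. The delicate point is ensuring that pointwise weak equivalences of diagrams indexed over $\overcat{\mathcal{A}\times\Delta}{X}$ really do yield $\widetilde{\mathsf{W}}_{\mathrm{free}}$-equivalences after taking homotopy colimits; this in turn follows from the fact that $\widetilde{\mathsf{W}}_{\mathrm{free}}$ defines a Cisinski model structure, whose associated homotopy colimits are therefore homotopy-invariant.
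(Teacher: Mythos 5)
The paper offers no proof of this statement—it is imported verbatim from \cite{cisinski-book}*{Proposition 3.4.34}—so your argument must stand on its own, and it has two genuine gaps. The first is your justification of regularity of $\mathsf{W}_{\mathrm{inj}}$. The canonical diagram $(A,[n],\sigma)\mapsto A\square\Delta^n$ over $\overcat{\mathcal{A}\times\Delta}{X}$ is \emph{not} projectively cofibrant in general. Indeed, all Cisinski model structures on $\psh{\mathcal{A}\times\Delta}$ have the same cofibrations (the monomorphisms), hence the same trivial fibrations, hence the same projective cofibrations of diagrams; since $\colim$ is left Quillen from the projective structure for \emph{any} model structure on the target, projective cofibrancy of the canonical diagram would force $\hocolim \xrightarrow{\sim} \colim = X$ for every accessible localizer, i.e.\ every localizer would be regular. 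This contradicts the existence of non-regular localizers (the free homotopy theory on $B\mathbb{G}$ is not regular, which is the whole point of the remark following its definition). Regularity of the objectwise localizer is a real theorem (Dugger's ``every simplicial presheaf is a homotopy colimit of representables''), proved via a bar-construction cofibrant replacement or via the last-vertex equivalence $N\overcat{\Delta}{X_A}\to X_A$; it cannot be obtained from cofibrancy of the uncorrected diagram. (Your parenthetical ``hence injectively cofibrant'' also points the wrong way: injective cofibrancy is of no use for computing colimits.)

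The second gap is a circularity in the reverse inclusion. The identification $X\cong\int^{A}A\square X_A$ is just co-Yoneda and carries no homotopical content; the substance is your final claim that the map induced by the componentwise equivalences $f_A$ is a $\widetilde{\mathsf{W}}_{\mathrm{free}}$-equivalence ``by pointwise invariance of homotopy colimits.'' Pointwise invariance presupposes that each $A\square X_A\to A\square Y_A$ is already a $\widetilde{\mathsf{W}}_{\mathrm{free}}$-equivalence—but that is precisely an instance of the inclusion $\mathsf{W}_{\mathrm{inj}}\subseteq\widetilde{\mathsf{W}}_{\mathrm{free}}$ you are proving. What is missing is the independent lemma that, for fixed $A$, the functor $K\mapsto A\square K$ carries weak homotopy equivalences of simplicial sets into $\widetilde{\mathsf{W}}_{\mathrm{free}}$; this is where the choice of interval $\ast\square\Delta^1$ actually does its work (anodyne extensions of simplicial sets are sent into the generating anodynes of the free theory, and one concludes with Ken Brown's lemma and the localizer axioms). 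You also need a Thomason-type cofinality argument to compare the two homotopy colimits, since they are indexed by the distinct categories $\overcat{\mathcal{A}\times\Delta}{X}$ and $\overcat{\mathcal{A}\times\Delta}{Y}$; ``pointwise invariance'' does not apply to diagrams of different shapes. The overall two-inclusion skeleton and the observation that the generating anodynes of the free theory are objectwise trivial cofibrations are correct; it is these two supporting lemmas that must be supplied before the proof closes.
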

\begin{cor}
	The Cisinski model structure obtained from the simplicial completion \(\mathsf{W}_\Delta\) of an accessible localizer \(\mathsf{W}\) on \(\mathcal{A}\) is a left-Bousfield localization of the injective model structure on simplicial presheaves if and only if it is regular.  In particular, the Galois correspondence between localizers on \(\mathcal{A}\) and localizers containing the simplicial completion of the minimal localizer restricts to a bijective Galois correspondence between regular localizers on \(\mathcal{A}\) and discrete localizers on \(\mathcal{A}\times \Delta\) containing the objectwise weak homotopy equivalences.
\end{cor}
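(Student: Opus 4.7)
The plan is to combine Proposition 3.4.34, which identifies the injective localizer as the regular completion of the simplicial completion of the minimal \(\mathcal{A}\)-localizer, with the inheritance property from Remark 3.4.14 that any localizer containing a regular one is itself regular. For the first equivalence, I would first observe that \(\mathsf{W}_\Delta\) automatically contains the simplicial completion of the minimal \(\mathcal{A}\)-localizer, since simplicial completion is order-preserving. In the forward direction, if \(\mathsf{W}_\Delta\) is a left-Bousfield localization of the injective model structure then its localizer contains the injective weak equivalences, and these form a regular localizer by Proposition 3.4.34, so \(\mathsf{W}_\Delta\) is regular by Remark 3.4.14. For the converse, if \(\mathsf{W}_\Delta\) is regular then it is a regular localizer containing the simplicial completion of the minimal one, so by the minimality clause in the definition of the regular completion it must contain the regular completion of that, which is precisely the injective localizer by Proposition 3.4.34.

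For the second assertion, the existing Galois correspondence \(\mathsf{W} \mapsto \mathsf{W}_\Delta\) already yields a bijection between accessible \(\mathcal{A}\)-localizers and discrete \(\mathcal{A}\times\Delta\)-localizers containing the simplicial completion of the minimal localizer. By the first equivalence, those discrete localizers that contain the injective equivalences are exactly the regular ones, so it suffices to prove that \(\mathsf{W}\) is regular on \(\mathcal{A}\) if and only if \(\mathsf{W}_\Delta\) is regular on \(\mathcal{A}\times\Delta\). The easier direction is \(\mathsf{W}_\Delta\) regular \(\Rightarrow\) \(\mathsf{W}\) regular: the embedding of \(\mathcal{A}\) into \(\mathcal{A}\times\Delta\) in simplicial degree zero preserves representables and their canonical cocones, and a regularity witness for a presheaf on \(\mathcal{A}\) extends uniquely to one for its discrete simplicial extension; the resulting map is a \(\mathsf{W}_\Delta\)-equivalence by hypothesis and therefore a \(\mathsf{W}\)-equivalence by the construction of the simplicial completion together with the established bijection.

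The main obstacle will be the converse, that regularity of \(\mathsf{W}\) on \(\mathcal{A}\) implies regularity of \(\mathsf{W}_\Delta\). I would handle this bi-simplicially. Given a simplicial presheaf \(X\) on \(\mathcal{A}\), view it as a Reedy-cofibrant simplicial object in \(\psh{\mathcal{A}}\); each \(X_n\) is then \(\mathsf{W}\)-equivalent to the homotopy colimit of its \(\mathcal{A}\)-representables by regularity of \(\mathsf{W}\), and hence a \(\mathsf{W}_\Delta\)-equivalence at each degree, while \(X\) itself is a \(\mathsf{W}_\Delta\)-homotopy colimit of its simplicial skeleta by the discreteness built into \(\mathsf{W}_\Delta\) (the forced triviality of the projections \(X\times \Delta^1\to X\)). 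A Fubini-type commutation of these two homotopy colimits then exhibits \(X\) as the \(\mathsf{W}_\Delta\)-homotopy colimit of its representable cells \(A\times\Delta^n\), which is regularity of \(\mathsf{W}_\Delta\). Justifying the exchange of homotopy colimits is the delicate point; the cleanest route is probably to use the explicit Reedy-cofibrant resolution \(\operatorname{Real}_D\) of Proposition 2.3.27 to realize both homotopy colimits as strict ones on a common bi-simplicial resolution, after which the comparison becomes a formal consequence of cocontinuity along each axis.
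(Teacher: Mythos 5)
Your first paragraph is essentially the argument the paper intends for the first sentence: the corollary sits in a recollection section with no written proof, and both implications do follow formally from \Cref{injregular}, \Cref{regularinclusion}, and the description of the regular completion in \Cref{regcompletion}, once one observes (as you do) that \(\mathsf{W}_\Delta\) always contains the simplicial completion of the minimal localizer and hence the localizer of the free homotopy theory. Your mild conflation of those two localizers is harmless, since both lie in \(\mathsf{W}_\Delta\) and generate the same regular completion relative to it.

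The second assertion is where the substance lies, and you have correctly isolated the needed bridge: \(\mathsf{W}\) is regular if and only if \(\mathsf{W}_\Delta\) is. The paper does not prove this; it is imported from Cisinski (it is part of Th\'eor\`eme 3.4.36 of \cite{cisinski-book}, cited elsewhere in the paper). Your attempt to re-derive it has a genuine gap in the hard direction. The step ``\(X\) is a \(\mathsf{W}_\Delta\)-homotopy colimit of its simplicial skeleta by the discreteness built into \(\mathsf{W}_\Delta\)'' does not follow from the forced triviality of the projections \(X\times\Delta^1\to X\): contractibility of the interval makes each \(\Delta^n\) weakly trivial, but it does not identify an arbitrary simplicial presheaf with the homotopy colimit, over its category of elements in the \(\Delta\)-direction, of the pieces \(X_n\times\Delta^n\). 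That identification is exactly the \(\Delta\)-direction half of the regularity you are trying to establish, so as written the Fubini argument is circular; and since \(\mathcal{A}\times\Delta\) need not be a regular skeletal category for general \(\mathcal{A}\), the cell-attachment shortcut of \Cref{normskelcat} is unavailable. The honest route is Cisinski's: transport regularity across the two left Quillen equivalences \(p^\ast\) and \(\operatorname{Real}_D\) using the d\'ecomposition lemmas of \cite{cisinski-book}*{Section 3.4}. Your closing suggestion to use \(\operatorname{Real}_D\) points in that direction, but it still requires a non-formal cofinality comparison between \(\overcat{\mathcal{A}\times\Delta}{X}\) and \(\overcat{\mathcal{A}}{\operatorname{Real}_D(X)}\); a smaller instance of the same issue appears in your ``easier'' direction, where the canonical cocone for a discrete simplicial presheaf ranges over all cells \(A\times\Delta^n\), not only those with \(n=0\). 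For the purposes of this paper the efficient fix is to cite Cisinski's theorem rather than reprove it.
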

We also make note of the following technical fact:
\begin{prop}[\cite{cisinski-book}*{Corollary 3.4.41}]\label{filteredcolims}
	Let \(\mathcal{A}\) be a small category, and let \(\mathsf{W}\) be a regular \(\mathcal{A}\)-localizer.  Then \(\mathsf{W}\) is closed under filtered colimits.
\end{prop}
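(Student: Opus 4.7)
The plan is to reduce the statement via simplicial completion to an analogous closure property in the injective model structure on simplicial presheaves, where the result is much easier to access.

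Since \(\mathsf{W}\) is a regular \(\mathcal{A}\)-localizer, \Cref{regularinclusion} combined with \Cref{injregular} and the corollary immediately preceding the present proposition identifies the simplicial completion \(\mathsf{W}_\Delta\) as a discrete \(\mathcal{A}\times\Delta\)-localizer presenting a left-Bousfield localization of the injective model structure \(\spsh_{\mathrm{inj}}\). The functor \(p^\ast\colon\psh{\mathcal{A}}\to\spsh\) is a left adjoint and hence commutes with all colimits, and it is a left Quillen equivalence between Cisinski model structures in which every object is cofibrant, so it both preserves and reflects weak equivalences on arbitrary maps. Consequently, for any natural transformation \(\alpha\colon F\Rightarrow G\) of filtered diagrams \(I\to\psh{\mathcal{A}}\) whose components lie in \(\mathsf{W}\), the colimit \(\colim\alpha\) lies in \(\mathsf{W}\) if and only if \(p^\ast\colim\alpha\cong\colim p^\ast\alpha\) lies in \(\mathsf{W}_\Delta\). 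This reduces everything to showing filtered-colimit stability of \(\mathsf{W}_\Delta\).

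For the ambient injective localizer on \(\spsh\), stability under filtered colimits is immediate: such colimits are computed pointwise in \(\psh{\Delta}\), and filtered colimits of weak homotopy equivalences of simplicial sets are weak homotopy equivalences because the homotopy groups commute with filtered colimits. To promote this to the local equivalences of \(\mathsf{W}_\Delta\), I would construct a fibrant replacement functor \(R\) by the small object argument applied to the accessible generating set of \(\mathsf{W}_\Delta\), argue that \(R\) commutes up to injective weak equivalence with filtered colimits (using the bounded presentability of the generators), and then observe that a map belongs to \(\mathsf{W}_\Delta\) iff \(R\) carries it to an injective weak equivalence; filtered-colimit closure then transfers from the injective localizer.

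The main obstacle is the last step: ensuring that \(R\) commutes with filtered colimits, not just \(\kappa\)-filtered ones for some large \(\kappa\). The cleanest route is to use the regularity of \(\mathsf{W}\) to arrange a presentation of \(\mathsf{W}_\Delta\) by cofibrations between presheaves of bounded cardinality (built from representables, their boundaries, and \(\Delta^n\)), so that the transfinite construction of \(R\) stabilizes with respect to the cofinality of any filtered diagram. Regularity does the genuine work here, because without it one would only recover closure under \(\kappa\)-filtered colimits for some large \(\kappa\) rather than closure under all filtered colimits.
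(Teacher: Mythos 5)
The paper offers no argument here — the proposition is quoted verbatim from Cisinski — so your proposal has to be measured against his proof. Your reduction along \(p^\ast\) to the discrete localizer \(\mathsf{W}_\Delta\), and the observation that levelwise weak homotopy equivalences of simplicial presheaves are closed under filtered colimits, are both fine (granting accessibility of \(\mathsf{W}\), which the statement does not actually assume). The genuine gap is the final promotion step. A small-object-argument fibrant replacement \(R\) for a left Bousfield localization commutes with \(\kappa\)-filtered colimits only when the domains and codomains of the generating trivial cofibrations are \(\kappa\)-presentable; to handle \emph{all} filtered colimits you would need them to be \(\omega\)-presentable, i.e.\ finite, not merely ``of bounded cardinality''. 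Regularity provides no such bound: the regular completion of the localizer generated by a single map into an arbitrarily large presheaf is regular, and the corresponding generating anodynes \(\Lambda_I(S,\mathscr{M})\) involve that presheaf. So your argument, even repaired, would only yield closure under \(\kappa\)-filtered colimits for some large \(\kappa\) — a fact that holds in any combinatorial model category and makes no use of regularity, whereas the point of the proposition is precisely the strengthening to all filtered colimits.

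The mechanism by which regularity actually enters is different. Regularity says that each presheaf \(X\) is the homotopy colimit of the composite \(\overcat{\mathcal{A}}{X}\to\mathcal{A}\to\psh{\mathcal{A}}\). The category-of-elements construction preserves filtered colimits, so \(\overcat{\mathcal{A}}{\colim_i X_i}\cong\colim_i\overcat{\mathcal{A}}{X_i}\), and a filtered colimit of nerves of small categories is already a homotopy colimit — this is where the finitary nature of simplices is used, at the level of the indexing categories rather than of the generators of \(\mathsf{W}\). It follows that for a regular localizer, filtered colimits in \(\psh{\mathcal{A}}\) compute homotopy colimits, and homotopy colimits of objectwise \(\mathsf{W}\)-equivalences are \(\mathsf{W}\)-equivalences. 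If you want to salvage your outline, replace the fibrant-replacement step with this ``filtered colimits are homotopy colimits'' argument; the reduction to \(\mathsf{W}_\Delta\) then becomes unnecessary.
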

\subsection{Skeletal categories}
In this section, we recall Cisinski's theory of skeletal categories (cat\'egories squelettiques).  These are generalized Reedy categories \(\mathcal{A}\) with a dimension grading and satisfying certain axioms. Under the strong condition of \emph{normality}, the category \(\psh{\mathcal{A}}\) admits a canonical cellular model given by the boundary inclusions.  Under a further strong assumption of \emph{regularity}, every \(\mathcal{A}\)-localizer will be shown to be regular.
\begin{defn}
	A \emph{skeletal category} is given by the data of a small category \(\mathcal{A}\), subcategories \(\mathcal{A}^-\) and \(\mathcal{A}^+\) together with a grading function \(\operatorname{dim}:\Ob \mathcal{A} \to \mathbf{N}\) satisfying the following axioms:
	\begin{itemize}
		\item Every isomorphism belongs to both \(\mathcal{A}^-\) and \(\mathcal{A}^+\).
		\item If \(f:A\to A^\prime\) belongs to \(\mathcal{A}^+\) (resp. \(\mathcal{A}^-\)), then \(\dim(A) \leq \dim(A^\prime)\) (resp. \(\dim(A^\prime) \leq \dim(A)\)).
		\item Every map \(f\) of \(\mathcal{A}\) admits a factorization, unique up to unique isomorphism of factorizations, into a composite \(\delta \circ \pi\) with \(\delta \in \mathcal{A}^+\) and \(\delta \in \mathcal{A}^-\).
		\item Two arrows \(f,g:A\to A^\prime\) of \(\mathcal{A}^-\) are equal if and only if they have the same sections.
	\end{itemize}
\end{defn}
\begin{defn}
	Given a natural number \(n\) and a presheaf \(X\) on \(\mathcal{A}\), we define the \(n\)-skeleton to be the sieve
	\[
		\operatorname{Sk}^n(X)_A\defeq \{u:A\to X \mid \exists \alpha:A\to A^\prime, \quad \dim(A^\prime)\leq n, \quad \exists u^\prime:A^\prime \to X, \quad u=u^\prime\circ \alpha\}.
	\]
	If \(A\) is a representable object of \(\mathcal{A}\), we define the boundary \(\partial A\) of \(A\) to be  \(\operatorname{Sk}^{\dim(A)-1}(A)\), and we denote its inclusion by \(\delta^A:\partial A \hookrightarrow A\).  
\end{defn}
We take the following as a definition, but it is in fact a characterization from \cite{cisinski-book}*{8.1.37}
\begin{defn}
	A skeletal category is called \emph{normal} if its objects have no nontrivial automorphisms.
\end{defn}
\begin{prop}[\cite{cisinski-book}*{Proposition 8.1.37}]\label{normskelcat}
	If \(\mathcal{A}\) is a normal skeletal category, then the set of maps \(\mathscr{M}\defeq \{\delta^A\}_{A\in \mathcal{A}}\) gives a cellular model for \(\psh{\mathcal{A}}\). Moreover, the class of monomorphisms of \(\mathcal{A}\) is exactly \(\operatorname{Cell}(\mathscr{M})\).
\end{prop}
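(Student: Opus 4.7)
The plan is to show directly that every monomorphism $i\colon X \hookrightarrow Y$ in $\psh{\mathcal{A}}$ is a relative $\mathscr{M}$-cell complex. Once this is established, the cellular-model claim is immediate: each $\delta^A$ is itself a monomorphism (it is a sieve inclusion by construction), and the class of monomorphisms is closed under pushout, transfinite composition, and retracts, so $\operatorname{Cell}(\mathscr{M})$ sits between the monomorphisms and $\operatorname{llp}(\operatorname{rlp}(\mathscr{M}))$, forcing all three classes to coincide by the small object argument applied to the set $\mathscr{M}$.

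To build the cell structure, I would use the skeletal filtration $Z_n \defeq X \cup \operatorname{Sk}^n(Y)$, with $Z_{-1}=X$. Because every object of $\mathcal{A}$ has a finite dimension and the $(\mathcal{A}^+,\mathcal{A}^-)$-factorization sends any section $A \to Y$ into $Z_{\dim A}$, the colimit $\coliml_n Z_n$ equals $Y$, and $i$ is exhibited as the transfinite composite of the inclusions $Z_{n-1} \hookrightarrow Z_n$. Let $E_n$ denote the set of sections $u\colon A_u \to Y$ with $\dim(A_u) = n$ such that $u$ is nondegenerate (its $\mathcal{A}^-$-component is an isomorphism) and $u$ does not factor through $Z_{n-1}$. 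The key claim is that the square whose top row is the attaching map $\coprod_{u\in E_n} \partial A_u \to Z_{n-1}$, whose bottom row is $\coprod_{u\in E_n} A_u \to Z_n$ (given by the maps $u$), and whose left column is $\coprod_{u\in E_n} \delta^{A_u}$, is cocartesian. Granted this, each $Z_{n-1} \hookrightarrow Z_n$ lies in $\operatorname{Cell}(\mathscr{M})$, and the result follows.

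The main obstacle is verifying this pushout claim, which rests on a unique-parametrization lemma: every section $A' \to Z_n$ either factors through $Z_{n-1}$, or factors uniquely as $u \circ \pi$ with $u \in E_n$ and $\pi \in \mathcal{A}^-$. Existence follows from the $(\mathcal{A}^+,\mathcal{A}^-)$-factorization together with the observation that any $\mathcal{A}^+$-arrow into $Y$ whose image is not already in $Z_{n-1}$ must land on a nondegenerate cell of dimension exactly $n$. Uniqueness is precisely what normality buys us: in a general skeletal category, the representative $u$ would be determined only up to automorphisms of its source, and one would be forced to quotient the coproduct by these automorphisms rather than take an honest pushout. Normality forbids nontrivial automorphisms, so $u$ is literally unique, and the universal property of the pushout then follows by assembling morphisms cell-by-cell. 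The fourth skeletal axiom (that arrows of $\mathcal{A}^-$ are determined by their sections) is needed to verify that boundary compatibility on each $\partial A_u$ suffices to extend consistently across the attached cell.
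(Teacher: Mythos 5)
The paper does not prove this proposition itself; it is quoted verbatim from Cisinski (\emph{Les pr\'efaisceaux comme mod\`eles des types d'homotopie}, Prop.\ 8.1.37), and your argument is a correct reconstruction of exactly the proof given there: the skeletal filtration $Z_n = X \cup \operatorname{Sk}^n(Y)$, the attaching squares indexed by nondegenerate $n$-dimensional sections not factoring through $Z_{n-1}$, and the use of normality to make the Eilenberg--Zilber-type parametrization literally unique so that the squares are honest pushouts. Your reduction of the cellular-model claim to the $\operatorname{Cell}(\mathscr{M})$ claim via the small object argument and closure of monomorphisms under pushout, transfinite composition, and retract is also correct.
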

\begin{rem}
	Cisinski shows that whenever \(X\) is a normal presheaf (we omit this definition, but in the case where \(\mathcal{A}\) is normal skeletal, every presheaf satisfies this property), its \(n\)-skeleton can be computed as the image of \(X\) under the composite  adjunction induced by the inclusion of the full subcategory \(\mathcal{A}_{\leq n}\hookrightarrow \mathcal{A}\), similar to the case of \(\Delta\).  In particular, normal skeletal categories have a well-behaved skeleton-coskeleton adjunction.  
\end{rem}
\begin{defn}\label{regskelcat}
	We say that presheaf \(X\) on a skeletal category is \emph{regular} if every nondegenerate section of \(X\) is monic. Additionally, we say that a skeletal category is \emph{regular} if it is normal and every representable presheaf is regular.
\end{defn}
\begin{thm}[\cite{cisinski-book}*{Proposition 8.2.9}]
	Every localizer \(\mathsf{W}\) on a regular skeletal category is regular.
\end{thm}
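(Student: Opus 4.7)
My plan is to reduce to the case of the minimal localizer $\mathsf{W}_{\min}$ on $\psh{\mathcal{A}}$ and then verify regularity by a skeletal induction. The reduction is immediate: by \Cref{regularinclusion}, any localizer containing a regular one is itself regular, so it suffices to show that the canonical map
\[
\hocoliml_{A\to X\in \overcat{\mathcal{A}}{X}} A \longrightarrow X
\]
lies in $\mathsf{W}_{\min}$ for every presheaf $X$ on $\mathcal{A}$, where the homotopy colimit is computed in the minimal Cisinski model structure (in which the cofibrations are the monomorphisms).

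The core of the argument proceeds by induction on the skeletal filtration of $X$. Since $\mathcal{A}$ is normal skeletal, we have $X=\colim_n \Sk^n X$ with each $\Sk^{n-1}X\hookrightarrow \Sk^n X$ a monomorphism, and this inclusion is the pushout of $\coprod \partial A \hookrightarrow \coprod A$ along the natural attaching map, where the coproducts are indexed by nondegenerate sections $A\to X$ with $\dim A = n$. Here regularity of $\mathcal{A}$ enters decisively: by \Cref{regskelcat}, every nondegenerate section is monic, so the attaching maps are genuine subobject inclusions and the pushout square is automatically a homotopy pushout. I would then show by induction on $n$ that
\[
\hocoliml_{\overcat{\mathcal{A}}{X}_{\leq n}} A \longrightarrow \Sk^n X
\]
is a $\mathsf{W}_{\min}$-equivalence, where $\overcat{\mathcal{A}}{X}_{\leq n}$ denotes the full subcategory of sections of dimension at most $n$. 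The base case $n=0$ is immediate because $\overcat{\mathcal{A}}{X}_{\leq 0}$ is a set and $\Sk^0 X$ is the corresponding coproduct. The induction step compares the diagram over $\overcat{\mathcal{A}}{X}_{\leq n}$ with that over $\overcat{\mathcal{A}}{X}_{\leq n-1}$ and identifies the difference with the pushout decomposition described above.

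The main technical obstacle is the homotopy-colimit comparison in the induction step: one must see that the tautological diagram $A\mapsto A$ on $\overcat{\mathcal{A}}{X}$ is Reedy cofibrant with respect to the dimension grading inherited from $\mathcal{A}$. Equivalently, for each nondegenerate section $A\to X$ of dimension $n$ the latching object of the tautological diagram at $A$ must coincide with the boundary $\partial A$ and the latching map must be the monic inclusion $\partial A \hookrightarrow A$. This latching computation uses both the unique Reedy factorization in $\mathcal{A}$ (to decompose maps in $\overcat{\mathcal{A}}{X}$) and regularity of every presheaf on $\mathcal{A}$ (to ensure the union of proper faces inside $A$ assembles precisely into $\partial A$). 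Once Reedy cofibrancy of the tautological diagram is in place, the ordinary colimit agrees with the homotopy colimit, giving the induction step.

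To finish, I would take the colimit over $n$. Since $\overcat{\mathcal{A}}{X} = \colim_n \overcat{\mathcal{A}}{X}_{\leq n}$ and each inclusion is along monomorphisms (hence cofibrations), and since the skeletal filtration of $X$ likewise consists of cofibrations, the filtered colimit commutes with the homotopy colimit, yielding
\[
\hocoliml_{\overcat{\mathcal{A}}{X}} A \;\simeq\; \coliml_n \hocoliml_{\overcat{\mathcal{A}}{X}_{\leq n}} A \;\simeq\; \coliml_n \Sk^n X \;=\; X,
\]
which exhibits $X$ as the homotopy colimit of its representables and establishes regularity of $\mathsf{W}_{\min}$, hence of every localizer on $\mathcal{A}$.
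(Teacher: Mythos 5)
The paper does not prove this statement; it is quoted verbatim from \cite{cisinski-book}*{Proposition 8.2.9}, so your proposal can only be measured against Cisinski's argument and against its own internal correctness. The reduction to the minimal localizer via \Cref{regularinclusion} is right, and a skeletal induction is the correct overall shape, but the step you flag as the ``main technical obstacle'' is not merely deferred --- the conclusion you draw from it is false as stated. Reedy cofibrancy of the tautological diagram on $\overcat{\mathcal{A}}{X}$ does \emph{not} imply that its ordinary colimit computes its homotopy colimit: for a Reedy category with nontrivial inverse part, $\coliml$ is left Quillen for the Reedy model structure only when the indexing category has cofibrant constants, i.e.\ when the nerves of its matching categories are empty or connected. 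The category $\overcat{\mathcal{A}}{X}$ has a large inverse part consisting of the degenerate sections, and its matching category at $u\colon A\to X$ is the category of proper $\mathcal{A}^-$-quotients of $A$; nothing in your sketch addresses this. (The standard counterexample to the principle you invoke: $\Delta^1/\partial\Delta^1$ is a Reedy cofibrant $\Delta^{\op}$-diagram of sets whose colimit is a point but whose homotopy colimit is the simplicial circle.) Reconciling the homotopy colimit over \emph{all} sections with the cellular model built from the nondegenerate ones is precisely the content of the theorem, so the proposal relocates the difficulty rather than resolving it.

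A second, independent problem is your appeal to ``regularity of every presheaf on $\mathcal{A}$.'' This is not available: by \Cref{regskelcat}, a regular skeletal category only guarantees that the \emph{representable} presheaves (hence their subpresheaves, such as the boundaries $\partial A$) are regular; arbitrary presheaves need not be --- $\Delta^1/\partial\Delta^1$ is again a counterexample on the regular skeletal category $\Delta$. What the latching computation actually needs, and what the hypothesis actually supplies, is that every map of $\mathcal{A}^+$ induces a monomorphism of representables, so that the latching object of the tautological diagram at $u\colon A\to X$ is the union $\partial A$ of the proper faces rather than a non-monic quotient of their colimit; getting this input right matters, because it is exactly the point at which the statement fails for skeletal categories that are normal but not regular. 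Two smaller repairs: $\overcat{\mathcal{A}}{X}_{\leq 0}$ need not be discrete (start the induction at $n=-1$ with the empty presheaf), and $\overcat{\mathcal{A}}{X}_{\leq n-1}$ is not a sieve in $\overcat{\mathcal{A}}{X}_{\leq n}$, since a degeneracy can carry an $n$-dimensional section to an $(n-1)$-dimensional one; the induction is therefore better organized over the sieves $\overcat{\mathcal{A}}{\operatorname{Sk}^n X}\subseteq \overcat{\mathcal{A}}{X}$, along which the standard decomposition lemmas for homotopy colimits actually apply.
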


\begin{bibdiv}
	\begin{biblist}

		\bib{berger-iterated-wreath}{article}{
			author={Berger, C.},
			title={Iterated wreath product of the simplex category and iterated loop spaces},
			journal={Adv. Math.},
			volume={213},
			date={2007},
			number={1},
			pages={230--270},
			issn={0001-8708},
			review={\MR {2331244 (2008f:55010)}},
		}

		\bib{bergner-rezk-1}{article}{
			author={Bergner, J. E.},
			author={Rezk, C.},
			title={Comparison of models for \((\infty,n)\)-Categories, I},
			journal={Geometry \& Topology},
			date={2013},
			volume={17},
		}

		\bib{bergner-rezk-2}{article}{
			author={Bergner, J. E.},
			author={Rezk, C.},
			title={Comparison of models for \((\infty,n)\)-Categories, II},
			eprint={arXiv:1406.4182v3},
			date={2018},
		}

		\bib{bergner-rezk-reedy}{article}{
			author={Bergner, J. E.},
			author={Rezk, C.},
			title={Reedy categories and the \(\Theta\)-Construction},
			journal={Mathematische Zeitschrift},
			date={2011},
			number={1-2},
		}

		\bib{cisinski-book}{book}{
			author={Cisinski, D.-C.},
			title={Les pr\'efaisceaux comme mod\`eles des types d'homotopie},
			publisher={Soc. Math. France},
			date={2006},
			series={Ast\'erisque},
			volume={308},
		}

		\bib{ds1}{article}{
			author={Dugger, D.},
			author={Spivak, D.},
			title={Rigidification of quasi-categories},
			journal={Algebr. Geom. Topol.},
			volume={11},
			date={2011},
			number={1},
			pages={225--261},
			review={\MR{2764042}},
		}

		\bib{ds2}{article}{
			author={Dugger, D.},
			author={Spivak, D.},
			title={Mapping spaces in quasi-categories},
			journal={Algebr. Geom. Topol.},
			volume={11},
			date={2011},
			number={1},
			pages={263--325},
			review={\MR{2764043}},
		}

		\bib{htt}{book}{
			title={Higher Topos Theory},
			author={Lurie, Jacob},
			date={2009},
			publisher={Princeton University Press},
		}

		\bib{jtsegal}{article}{
			author={Joyal, A.},
			author={Tierney, M.},
			title={Quasi-categories vs Segal spaces},
			journal={Contemporary Mathematics},
			volume={431},
			date={2007},
			pages={277--326},
		}
		
		\bib{lack-paoli}{article}{
			author={Lack, S.},
			author={Paoli, S.},
			title={2-nerves for Bicategories},
			journal={K-Theory},
			volume={38},
			date={2008},
		}

		\bib{nlabwlim}{webpage}{
			author={nLab authors},
			title={Weighted Limit},
			url={http://ncatlab.org/nlab/show/weighted\%20limit},
			note={\href{http://ncatlab.org/nlab/revision/weighted\%20limit/29}{Revision 29}},
			date={2018},
		}

		\bib{oury}{thesis}{
			author={Oury, D.},
			title={Duality for Joyal’s category \(\Theta\) and homotopy concepts for \(\Theta_2\)-sets},
			organization={Macquarie University},
			date={2010},
		}

		\bib{rezk-theta-n-spaces}{article}{
			author={Rezk, C.},
			title={A Cartesian presentation of weak \(n\)-categories},
			journal={Geom. Topol.},
			volume={14},
			date={2010},
			number={1},
			pages={521--571},
			issn={1465-3060},
			review={\MR {2578310}},
			doi={10.2140/gt.2010.14.521},
		}

		\bib{danny}{article}{
			author={Stevenson, D.},
			title={Model Structures for Correspondences and Bifibrations},
			eprint={arXiv:1807.08226v1},
			pages={41},
		}

	\end{biblist}
\end{bibdiv}
\end{document}